\newtheorem{theorem}{Theorem}[section]
\newtheorem{lemma}[theorem]{Lemma}
\newtheorem{corollary}[theorem]{Corollary}
\theoremstyle{definition}
\newtheorem{definition}[theorem]{Definition}
\newtheorem{remark}[theorem]{Remark}
\newtheorem{example}[theorem]{Example} 
\numberwithin{equation}{section}  
\title{\sc On summability, multipliability, product integrability, and parallel translation}
\date{}
\author {Seppo Heikkil\"a$^*$ and Anton\'in Slav\'ik$^{**}$\\
\normalsize $^*$ Department of Mathematical Sciences, University of Oulu\\
\normalsize BOX 3000, FIN-90014, Oulu, Finland\\
\normalsize E-mail: heikki.sep@gmail.com\\
\normalsize $^{**}$  Charles University in Prague, Faculty of Mathematics and Physics\\
\normalsize Sokolovsk\'{a} 83, 186 75 Praha 8, Czech Republic\\ 
\normalsize E-mail: slavik@karlin.mff.cuni.cz\\ \\
\normalsize Dedicated to Professor Heikki Haahti on the occasion of his 85th birthday}
\begin{document}
\maketitle 

\begin{abstract} 
\noindent In this paper we  provide necessary and sufficient conditions for the existence of the  Kurzweil, McShane and Riemann product integrals of step mappings with well-ordered steps, and for right regulated mappings with values in Banach algebras. 
Our basic tools are the concepts of summability and multipliability of families in normed algebras indexed by well-ordered subsets of the real line. These concepts also lead to the generalization of some results from the usual theory of infinite series and products.
Finally, we present two applications of product integrals: First, we describe the relation between Stieltjes-type product integrals, Haahti products, and parallel translation operators. Second, we provide a link between the theory of strong Kurzweil product integrals and strong solutions of linear generalized differential equations.

\smallskip

\noindent{\bf Keywords:} product integral, well-ordered set, summability, multipliability, parallel translation,\\ generalized ordinary differential equation 

\smallskip

\noindent\textbf{MSC classification:}  28B05, 46G10, 26A39, 26A42, 40A05, 40A20, 34G10     

\end{abstract}

\baselineskip 13.5pt

\section{Introduction}\label{S1} 
The aim of this paper is to apply the concepts of summability and multipliability in order to generalize some results in the theory of infinite series and products,
and also to derive criteria for product integrability of mappings which take values in Banach algebras. Product integrals and Haahti products are then used to define parallel translation operators and to study their properties. The last part of the paper is devoted to
the relation between product integrals and linear generalized differential equations. 

The paper is organized as follows.

In Section 2, we begin by recalling the definition of summability introduced by S.~Heikkil\"a in \cite{SH13}. We consider sums of the form $\underset{\alpha\in\Lambda}{\Sigma}x_\alpha$, where the index set $\Lambda$ is a well-ordered subset of $\mathbb R\cup\{\infty\}$, and $x_\alpha$ are elements of 
a~normed vector space; sums of this type were used in \cite{SH13} as a tool in the  study of integrability and impulsive differential equations.   
Then we proceed to the related novel concept of multipliability and consider products of the form $\underset{\alpha\in\Lambda}{\Pi}x_\alpha$, where $x_\alpha$ are elements of a normed algebra.
In the case when $\Lambda=\mathbb N$, our definitions and results correspond to the usual  theory of infinite series and products in normed spaces and algebras.  

In Section 3, we recall the general definition of the Kurzweil and McShane product integrals of the form $\prod_a^b V(t,{\rm d}t)$, which were studied in \cite{JK,Sch94,Sch90,S15,ASSS08}, and which include the product integrals $\prod_a^b(I+A(t)\,{\rm d}t)$ and $\prod_a^b(I+{\rm d}A(t))$ considered in the next sections. In infinite-dimensional Banach algebras, the Kurzweil and McShane product integrals lose some of their pleasant properties. To overcome this difficulty, we follow the ideas from A.~Slav\'\i k's paper \cite{S15}, introduce the strong Kurzweil and McShane product integrals $\prod_a^b V(t,{\rm d}t)$, and establish some of their basic properties.

In Sections 4 and 5, we focus on the product integrals $\prod_a^b(I+A(t)\,{\rm d}t)$ in the sense of Kurzweil, McShane and Riemann. We apply the  results from Sections 2, 3 and from the papers \cite{SH13,S15} to derive new sufficient and necessary conditions for 
product integrability of right-continuous step mappings having well-ordered steps, and then for right regulated mappings.  

Section 6 is devoted to the Riemann-Stieltjes and  Kurzweil-Stieltjes product integrals $\prod_a^b(I+{\rm d}A(t))$.
The main result here is concerned with Kurzweil-Stieltjes product integrability of right-continuous step mappings with well-ordered steps. The results from Sections 4, 5, 6 are illustrated on a number of examples.

In Section 7, we present an application of Stieltjes-type product integrals to differential geometry. 
In \cite{HH78}, H.~Haahti and S.~Heikkil\"a studied operators corresponding to parallel translation of vectors along paths on manifolds, and used product and Riemann-Stieltjes product integration techniques to establish the existence of these operators; their results are recalled and generalized in Section 7. 

In Section 8, we provide a link between the theory of Kurzweil product integrals $\prod_a^b V(t,{\rm d}t)$ and  generalized differential equations. We show that under fairly general assumptions, strong Kurzweil product integrability is equivalent to the existence of a strong Kurzweil-Henstock solution of a certain linear generalized differential equation.

\section{Summability, multipliability, and their properties}\label{S2}
\setcounter{equation}{0}

In this section, we generalize some results of the theory of infinite series and products.
A nonempty subset $\Lambda$ of $\mathbb R\cup\{\infty\}$,  ordered by the natural ordering $<$ of $\mathbb R$ 
together with the relation $t<\infty$ for every $t\in\mathbb R$, is well-ordered if    
every nonempty subset of $\Lambda$ has the smallest element. In particular, to every number $\beta$ of 
$\Lambda$, different from its possible maximum, there corresponds the smallest element in $\Lambda$ that is greater than~$\beta$. It is called the successor of $\beta$ and is denoted by $S(\beta)$.  
There are no numbers of $\Lambda$ in the open interval $(\beta,S(\beta))$.  If an element $\gamma$ of $\Lambda$ is not a 
successor or the minimum of $\Lambda$, it is called a limit element. For every $\gamma\in\mathbb R$, we denote 
\begin{eqnarray*}
\Lambda^{<\gamma}&=&\{\alpha\in\Lambda;\,\alpha<\gamma\},\\
\Lambda^{\le\gamma}&=&\{\alpha\in\Lambda;\,\alpha \le \gamma\}.
\end{eqnarray*} 

One of our basic tools in this paper is the following principle of transfinite induction:

\smallskip
{\it If $\Lambda$ is well-ordered and $\mathcal P$ is a property such that  $\mathcal P(\gamma)$ is true whenever $\mathcal P(\beta)$ is true for 
all $\beta\in\Lambda^{<\gamma}$, then $\mathcal P(\gamma)$ is true of all $\gamma\in \Lambda$.}

\smallskip

The following definition of summability is adopted from \cite{SH13}.

\begin{definition}\label{D21} Let $E$ be a normed space, and let  $\Lambda$ be a well-ordered 
subset of $\mathbb R\cup\{\infty\}$. Denote $a=\min \Lambda$, and  $b=\sup\Lambda$. 
The family $(x_\alpha)_{\alpha\in\Lambda}$ with elements $x_\alpha\in E$  is called summable if for every $\gamma\in\Lambda\cup\{b\}$, there is an element 
$\underset{\alpha\in\Lambda^{<\gamma}}{\Sigma}x_\alpha$ of $E$, called the sum of the family $(x_\alpha)_{\alpha\in\Lambda^{<\gamma}}$, satisfying the following conditions: 
\begin{itemize}
\item[(i)] $\underset{\alpha\in\Lambda^{<a}}{\Sigma}x_\alpha=0$, and if $\gamma=S(\beta)$ for some $\beta\in\Lambda$, then 
$\underset{\alpha\in\Lambda^{<\gamma}}{\Sigma}x_\alpha=x_\beta+\underset{\alpha\in\Lambda^{<\beta}}{\Sigma}x_\alpha$.
\item[(ii)] If $\gamma$ is a limit element, then for each $\varepsilon > 0$  there is a  $\beta_\varepsilon\in\Lambda^{<\gamma}$ such that
$$\left\|\underset{\alpha\in\Lambda^{<\beta}}{\Sigma}x_\alpha -\underset{\alpha\in\Lambda^{<\gamma}}{\Sigma}x_\alpha\right\|
<\varepsilon,\quad \beta\in\Lambda\cap[\beta_\varepsilon,\gamma).$$
\end{itemize}
We define the sum $\underset{\alpha\in\Lambda}{\Sigma}x_\alpha$ of a summable family $(x_\alpha)_{\alpha\in\Lambda}$ as $\underset{\alpha\in\Lambda^{<b}}{\Sigma}x_\alpha$ if 
$b\not\in\Lambda$, and $x_b+\underset{\alpha\in\Lambda^{<b}}{\Sigma}x_\alpha$ if $b\in\Lambda$.
A~family  $(x_\alpha)_{\alpha\in\Lambda}$ is called absolutely summable if $(\|x_\alpha\|)_{\alpha\in\Lambda}$ is summable.
\end{definition}

Obviously, for a fixed well-ordered set $\Lambda$, the set of all summable families $(x_\alpha)_{\alpha\in\Lambda}$ forms a linear space.

\smallskip

In a unital normed algebra (i.e., a normed algebra with a unit element whose norm is 1), we introduce the following related concept of multipliability.
We point out that our concept of multipliable families is different from the definition of multipliable sequences given in the appendix of \cite{Bour1}.

\begin{definition}\label{D1} Let $E$ be a unital normed algebra with a unit element $I$, and let $\Lambda$ be a well-ordered 
subset of $\mathbb R\cup\{\infty\}$. Denote $a=\min \Lambda$, and  $b=\sup\Lambda$.
The family $(x_\alpha)_{\alpha\in\Lambda}$ with elements $x_\alpha\in E$  is called multipliable if for every $\gamma\in\Lambda\cup\{b\}$, 
there is an element $\underset{\alpha\in\Lambda^{<\gamma}}{\Pi}x_\alpha$ of $E$, called the product of the family $(x_\alpha)_{\alpha\in\Lambda^{<\gamma}}$, satisfying the following conditions: 
\begin{itemize}
\item[(i)] $\underset{\alpha\in\Lambda^{<a}}{\Pi}x_\alpha=I$, and if $\gamma=S(\beta)$ for some $\beta\in\Lambda$, then 
$\underset{\alpha\in\Lambda^{<\gamma}}{\Pi}x_\alpha=x_\beta\cdot\underset{\alpha\in\Lambda^{<\beta}}{\Pi}x_\alpha$.
\item[(ii)] If $\gamma$ is a limit element, then for each $\varepsilon > 0$  there is a  $\beta_\varepsilon\in\Lambda^{<\gamma}$ such that
$$\left\|\underset{\alpha\in\Lambda^{<\beta}}{\Pi}x_\alpha -\underset{\alpha\in\Lambda^{<\gamma}}{\Pi}x_\alpha\right\|
<\varepsilon,\quad \beta\in\Lambda\cap[\beta_\varepsilon,\gamma).$$
\end{itemize}
We define the product $\underset{\alpha\in\Lambda}{\Pi}x_\alpha$ of a multipliable family $(x_\alpha)_{\alpha\in\Lambda}$ as $\underset{\alpha\in\Lambda^{<b}}{\Pi}x_\alpha$ if 
$b\not\in\Lambda$, and $x_b\cdot\underset{\alpha\in\Lambda^{<b}}{\Pi}x_\alpha$ if $b\in\Lambda$.
\end{definition}

\begin{remark}
In Definition \ref{D21}, condition (ii) can be rephrased by saying that for each limit element $\gamma\in\Lambda$, we have $\lim\limits_{\beta\to\gamma-}\left(\underset{\alpha\in\Lambda^{<\beta}}{\Sigma}x_\alpha\right)=\underset{\alpha\in\Lambda^{<\gamma}}{\Sigma}x_\alpha$.
Similarly, condition (ii) in Definition \ref{D1} says that for each limit element $\gamma\in\Lambda$, we have $\lim\limits_{\beta\to\gamma-}\left(\underset{\alpha\in\Lambda^{<\beta}}{\Pi}x_\alpha\right)=\underset{\alpha\in\Lambda^{<\gamma}}{\Pi}x_\alpha$.
\end{remark}

In the rest of this section,  we assume that  $\Lambda$ is a well-ordered set in $\mathbb R\cup\{\infty\}$ with $a=\min\Lambda$ and $b=\sup \Lambda$.

\smallskip

Observe that if $(x_\alpha)_{\alpha\in\Lambda}$ is a summable family and $c\in(a,b)$, then $(x_\alpha)_{\alpha\in\Lambda\cap[c,b]}$ is summable, too; 
the corresponding partial sums from Definition~\ref{D21} are simply $\underset{\alpha\in(\Lambda\cap[c,b])^{<\gamma}}{\Sigma}x_\alpha
=\underset{\alpha\in\Lambda^{<\gamma}}{\Sigma}x_\alpha-\underset{\alpha\in\Lambda^{<c}}{\Sigma}x_\alpha$. 

On the other hand,
multipliability of $(x_\alpha)_{\alpha\in\Lambda}$ does not necessarily imply the multipliability of $(x_\alpha)_{\alpha\in\Lambda\cap[c,b]}$. However,
the statement becomes true if we assume that the elements of $(x_\alpha)_{\alpha\in\Lambda}$ and its product are invertible. In this case,
the partial products from Definition~\ref{D1} are given by $\underset{\alpha\in(\Lambda\cap[c,b])^{<\gamma}}{\Pi}x_\alpha
=\left(\underset{\alpha\in\Lambda^{<\gamma}}{\Pi}x_\alpha\right)\left(\underset{\alpha\in\Lambda^{<c}}{\Pi}x_\alpha\right)^{\!\!-1}$,
where the invertibility of the last product is guaranteed by the next lemma. 

\begin{lemma}\label{invertible-partial-products}
 Suppose that $(x_\alpha)_{\alpha\in\Lambda^{<b}}$  is a multipliable family in a unital Banach algebra, and that its members and product
are invertible.
Then all products $\underset{\alpha\in\Lambda^{<\gamma}}{\Pi}x_\alpha$, $\gamma\in\Lambda$, are invertible.  
\end{lemma}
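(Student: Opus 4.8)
\emph{Proof plan.}
The plan is to argue by contradiction, using two standard facts about a unital Banach algebra $E$: the set of its non-invertible elements is closed (equivalently, the invertible elements form an open set, via the Neumann series), and if $u\in E$ is invertible while $v\in E$ is not, then $uv$ is not invertible either (otherwise $v=u^{-1}(uv)$ would be). Throughout, write $P_\gamma=\underset{\alpha\in\Lambda^{<\gamma}}{\Pi}x_\alpha$ for $\gamma\in\Lambda\cup\{b\}$, where $b=\sup\Lambda$; all these elements exist because $(x_\alpha)_{\alpha\in\Lambda^{<b}}$ is multipliable, and $P_b$ is the product of the family, which is invertible by hypothesis.

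First I would assume the conclusion fails, so that $N=\{\gamma\in\Lambda\cup\{b\}:P_\gamma\ \text{is not invertible}\}$ is nonempty, and put $\gamma_0=\min N$, which exists since $\Lambda\cup\{b\}$ is well-ordered. A short inspection shows $\gamma_0$ is a limit element: it cannot be $\min\Lambda$ since $P_{\min\Lambda}=I$, and it cannot be a successor $S(\beta)$, for then $\beta<\gamma_0$ would give $P_\beta$ invertible, hence $P_{\gamma_0}=x_\beta P_\beta$ invertible by condition~(i) of Definition~\ref{D1} and invertibility of $x_\beta$. In particular $\gamma_0<b$. This also pinpoints why one cannot simply prove invertibility of $P_\gamma$ by transfinite induction from the bottom: a limit of invertible elements need not be invertible, so such an induction would break precisely at a $\gamma_0$ of this kind; the hypothesis on $P_b$ is exactly what lets us instead propagate non-invertibility forward.

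The core step is a transfinite induction proving that $P_\gamma$ is non-invertible for every $\gamma\in\Lambda\cup\{b\}$ with $\gamma\ge\gamma_0$; applied with $\gamma=b$ this contradicts the invertibility of $P_b$ and finishes the proof. The base case $\gamma=\gamma_0$ is the definition of $\gamma_0$. For $\gamma>\gamma_0$, assume $P_\delta$ is non-invertible for all $\delta\in\Lambda\cup\{b\}$ with $\gamma_0\le\delta<\gamma$. If $\gamma=S(\beta)$, then $\beta\ge\gamma_0$ (otherwise $S(\beta)\le\gamma_0$), so $P_\beta$ is non-invertible, and $P_\gamma=x_\beta P_\beta$ is non-invertible by the second fact. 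If $\gamma$ is a limit element (including $\gamma=b$ when $b\notin\Lambda$), then condition~(ii) of Definition~\ref{D1} gives $P_\gamma=\lim_{\beta\to\gamma-}P_\beta$, and since cofinally many such $\beta$ satisfy $\beta\ge\gamma_0$, $P_\gamma$ is a limit of non-invertible elements, hence non-invertible by the first fact.

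What remains is only routine order-theoretic bookkeeping — that $\gamma_0$ is a limit element, that $\beta\ge\gamma_0$ in the successor step, and that the elements $\beta<\gamma$ approaching a limit $\gamma>\gamma_0$ may be chosen $\ge\gamma_0$ — all immediate from well-orderedness and the definition of limit elements. The one non-formal ingredient is the closedness of the set of non-invertible elements, which is exactly the place where completeness of the Banach algebra is used, and I expect this (rather than any computation) to be the conceptual crux.
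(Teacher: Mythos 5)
Your argument is correct and coincides with the paper's own proof: both propagate non-invertibility forward from a first bad index $\gamma_0$ by transfinite induction, using invertibility of $x_\beta$ at successor steps and the openness of the set of invertible elements at limit steps, and then contradict the invertibility of the product of the whole family. The only difference is cosmetic — you explicitly take $\gamma_0$ as the minimum of the bad set and note it must be a limit element, which the paper does not bother to record.
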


\begin{proof}
Assume there is a $\gamma_0\in\Lambda$ such that $\underset{\alpha\in\Lambda^{<\gamma_0}}{\Pi}x_\alpha$ is not invertible. We use transfinite induction to show that for every $\gamma\in\Lambda$ such that $\gamma\ge\gamma_0$, the product $\underset{\alpha\in\Lambda^{<\gamma}}{\Pi}x_\alpha$ is not invertible; this will be in contradiction with the assumption that the product of the whole family is invertible.

We already know that  $\underset{\alpha\in\Lambda^{<\gamma_0}}{\Pi}x_\alpha$ is not invertible.

Given $\gamma>\gamma_0$, assume that $\underset{\alpha\in\Lambda^{<\beta}}{\Pi}x_\alpha$ is not invertible for any $\beta\in\Lambda\cap[\gamma_0,\gamma)$. If $\gamma=S(\beta)$, then
$$\underset{\alpha\in\Lambda^{<\gamma}}{\Pi}x_\alpha=x_\beta\cdot\underset{\alpha\in\Lambda^{<\beta}}{\Pi}x_\alpha$$
and $\underset{\alpha\in\Lambda^{<\gamma}}{\Pi}x_\alpha$ cannot be invertible; otherwise, 
$$\underset{\alpha\in\Lambda^{<\beta}}{\Pi}x_\alpha=x_\beta^{-1}\underset{\alpha\in\Lambda^{<\gamma}}{\Pi}x_\alpha$$
would be invertible, too.

If  $\gamma$ is a limit element, then
$$\underset{\alpha\in\Lambda^{<\gamma}}{\Pi}x_\alpha=\lim_{\beta\to\gamma-}\left(\underset{\alpha\in\Lambda^{<\beta}}{\Pi}x_\alpha\right)$$
cannot be invertible, since the limit of noninvertible elements is always noninvertible (the set of all invertible elements is open).
\end{proof}

The next lemma generalizes two well-known results from the theory of infinite series and products.

\begin{lemma}\label{L22}
 \begin{enumerate}
\item[(a)] If $(x_\alpha)_{\alpha\in\Lambda}$ is a summable family in a normed space, then $\underset{\alpha\to\gamma-}{\lim}x_\alpha =0$  for every limit element $\gamma\in\Lambda$.
\item[(b)] If $(x_\alpha)_{\alpha\in\Lambda}$ is a multipliable family in a unital Banach algebra, and if its elements as well as its product are invertible,
then $\underset{\alpha\to\gamma-}{\lim}x_\alpha =I$ for every limit element $\gamma\in\Lambda$.
\end{enumerate}
\end{lemma}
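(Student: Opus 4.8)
The plan is to prove both parts by reducing the behavior near a limit element $\gamma$ to a Cauchy-type condition on partial sums (resp. partial products), which is exactly what condition (ii) of Definitions \ref{D21} and \ref{D1} supplies. For part (a), fix a limit element $\gamma\in\Lambda$ and let $\varepsilon>0$. By condition (ii) there is $\beta_\varepsilon\in\Lambda^{<\gamma}$ with $\bigl\|\underset{\alpha\in\Lambda^{<\beta}}{\Sigma}x_\alpha-\underset{\alpha\in\Lambda^{<\gamma}}{\Sigma}x_\alpha\bigr\|<\varepsilon/2$ for all $\beta\in\Lambda\cap[\beta_\varepsilon,\gamma)$. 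For any such $\beta$ that is not the minimum of $\Lambda$, write $\beta=S(\beta')$ for its predecessor $\beta'$; then by condition (i), $x_{\beta'}=\underset{\alpha\in\Lambda^{<\beta}}{\Sigma}x_\alpha-\underset{\alpha\in\Lambda^{<\beta'}}{\Sigma}x_\alpha$. As $\beta\to\gamma-$ along $\Lambda$, its predecessor $\beta'$ also ranges over elements approaching $\gamma$ from below (and eventually lies in $[\beta_\varepsilon,\gamma)$), so $\|x_{\beta'}\|\le\bigl\|\underset{\alpha\in\Lambda^{<\beta}}{\Sigma}x_\alpha-\underset{\alpha\in\Lambda^{<\gamma}}{\Sigma}x_\alpha\bigr\|+\bigl\|\underset{\alpha\in\Lambda^{<\gamma}}{\Sigma}x_\alpha-\underset{\alpha\in\Lambda^{<\beta'}}{\Sigma}x_\alpha\bigr\|<\varepsilon$. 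The cleanest way to phrase this is: for $\alpha\in\Lambda^{<\gamma}$ sufficiently close to $\gamma$ we have $\alpha=S(\beta')$ with $\beta'$ also close to $\gamma$, or $\alpha$ itself is close, so both $\underset{\delta\in\Lambda^{<\alpha}}{\Sigma}x_\delta$ and $\underset{\delta\in\Lambda^{<S(\alpha)}}{\Sigma}x_\delta$ lie within $\varepsilon/2$ of $\underset{\delta\in\Lambda^{<\gamma}}{\Sigma}x_\delta$, and subtracting gives $\|x_\alpha\|<\varepsilon$. Hence $\underset{\alpha\to\gamma-}{\lim}x_\alpha=0$.

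For part (b), I would run the same argument multiplicatively. Fix a limit element $\gamma$ and let $P_\beta=\underset{\alpha\in\Lambda^{<\beta}}{\Pi}x_\alpha$, $P_\gamma=\underset{\alpha\in\Lambda^{<\gamma}}{\Pi}x_\alpha$. By Lemma \ref{invertible-partial-products}, every $P_\beta$ with $\beta\in\Lambda$ is invertible, and by condition (ii), $P_\beta\to P_\gamma$ as $\beta\to\gamma-$; since $P_\gamma$ is a limit of invertible elements lying on the invertible set, and the product of the full family is invertible, $P_\gamma$ is invertible as well (same reasoning as in the cited lemma, or directly: $P_\gamma$ is one of the partial products of the restricted family — actually it suffices that $P_\gamma$ is invertible, which again follows since inversion is continuous on the open set of invertibles once we know $P_\gamma$ is a genuine partial product appearing in Lemma \ref{invertible-partial-products}). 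For $\alpha\in\Lambda^{<\gamma}$ close to $\gamma$ we have, using condition (i), $x_\alpha=P_{S(\alpha)}P_\alpha^{-1}$. Writing $x_\alpha-I=(P_{S(\alpha)}-P_\gamma)P_\alpha^{-1}+P_\gamma(P_\alpha^{-1}-P_\gamma^{-1})=(P_{S(\alpha)}-P_\gamma)P_\alpha^{-1}+P_\gamma P_\alpha^{-1}(P_\gamma-P_\alpha)P_\gamma^{-1}$, and using that $\|P_\alpha^{-1}\|$ stays bounded near $\gamma$ (because $P_\alpha^{-1}\to P_\gamma^{-1}$ by continuity of inversion), both terms tend to $0$ as $\alpha\to\gamma-$. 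Therefore $\underset{\alpha\to\gamma-}{\lim}x_\alpha=I$.

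The main obstacle I anticipate is the bookkeeping around "$\alpha\to\gamma-$": one must be careful that as $\alpha$ ranges over $\Lambda^{<\gamma}$ approaching $\gamma$, the successor $S(\alpha)$ is still $<\gamma$ (true since $\gamma$ is a limit element, so there is no largest element of $\Lambda$ below $\gamma$, hence $S(\alpha)\in\Lambda^{<\gamma}$ for every $\alpha\in\Lambda^{<\gamma}$) and that $S(\alpha)$ also approaches $\gamma$; both are immediate once phrased correctly but deserve a sentence. The only genuinely analytic point is the uniform boundedness of $\|P_\alpha^{-1}\|$ near $\gamma$ in part (b), which follows from continuity of the inversion map at the invertible element $P_\gamma$; everything else is a direct translation of the additive argument into multiplicative form.
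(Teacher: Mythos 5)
Your proposal is correct and follows essentially the same route as the paper: the paper also writes $x_\beta-I=\bigl(\underset{\alpha\in\Lambda^{<S(\beta)}}{\Pi}x_\alpha-\underset{\alpha\in\Lambda^{<\beta}}{\Pi}x_\alpha\bigr)\bigl(\underset{\alpha\in\Lambda^{<\beta}}{\Pi}x_\alpha\bigr)^{-1}$, notes that $S(\beta)$ stays below $\gamma$ and the difference of partial products tends to $0$ by condition (ii), and invokes Lemma \ref{invertible-partial-products} together with continuity of inversion to control the second factor. Your slightly different algebraic decomposition of $x_\alpha-I$ is only a cosmetic variation on the same argument, and your treatment of part (a) matches what the paper leaves as ``similar.''
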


\begin{proof}
We prove only the second statement; the proof of the first one is similar.
Assume that $(x_\alpha)_{\alpha\in\Lambda}$ is multipliable and its elements as well as its product are invertible.  
Let $\gamma\in\Lambda$ be an arbitrary limit element. 
Given $\varepsilon > 0$, let $\beta_\varepsilon$ be as in Definition \ref{D1} (ii). For every 
$\beta\in \Lambda\cap[\beta_\varepsilon,\gamma)$, we have
$S(\beta)\in \Lambda\cap[\beta_\varepsilon,\gamma)$. Also, applying Definition~\ref{D1}~(ii) and the triangle inequality, we obtain
$$
\left\|\underset{\alpha\in\Lambda^{<S(\beta)}}{\Pi}x_\alpha-\underset{\alpha\in\Lambda^{<\beta}}{\Pi}x_\alpha\right\| < 2\varepsilon,\quad\quad \beta\in \Lambda\cap[\beta_\varepsilon,\gamma).
$$
Consequently,
$$\underset{\beta\to\gamma-}{\lim}\left(\underset{\alpha\in\Lambda^{<S(\beta)}}{\Pi}x_\alpha-\underset{\alpha\in\Lambda^{<\beta}}{\Pi}x_\alpha\right)=0.$$
In view of this result, Lemma \ref{invertible-partial-products} and the continuity of $x\mapsto x^{-1}$, we get  
\begin{equation*}
\lim_{\beta\to\gamma-}\left(x_{\beta}-I\right)=\lim_{\beta\to\gamma-}\left(\left(\underset{\alpha\in\Lambda^{<S(\beta)}}{\Pi}x_\alpha-\underset{\alpha\in\Lambda^{<\beta}}{\Pi}x_\alpha\right)\cdot
\left(\underset{\alpha\in\Lambda^{<\beta}}{\Pi}x_\alpha\right)^{-1}\right)=0\cdot\left(\underset{\alpha\in\Lambda^{<\gamma}}{\Pi}x_\alpha\right)^{-1}=0.\tag*{\qedhere}
\end{equation*}
\end{proof}

The next lemma generalizes the well-known result that in a Banach space, every absolutely convergent series is convergent in the ordinary sense.
The proof is based on the relation between summability and strong Henstock-Kurzweil
integrability of vector-valued step mappings described in \cite{SH13}. 
(For the definition of the strong Henstock-Kurzweil integral, see e.g.~\cite{Sye05,S15}. 
In \cite{SH13}, this integral is referred to as the Henstock-Lebesgue integral.)  

\begin{lemma}\label{L211} Assume that  $(x_\alpha)_{\alpha\in\Lambda^{<b}}$ is an absolutely summable family in a Banach space $E$. Then  $(x_\alpha)_{\alpha\in\Lambda^{<b}}$ is  summable.
\end{lemma}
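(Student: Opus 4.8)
The plan is to reduce the claim to the already-known equivalence between summability of a vector-valued family indexed by a well-ordered set and strong Henstock--Kurzweil (i.e.\ Henstock--Lebesgue) integrability of an associated step mapping, as developed in \cite{SH13}. Concretely, given the absolutely summable family $(x_\alpha)_{\alpha\in\Lambda^{<b}}$, I would encode it as a step mapping $f\colon[a,b]\to E$ whose ``jump'' at each point $\alpha\in\Lambda^{<b}$ carries the weight $x_\alpha$ (this is precisely the correspondence used in \cite{SH13} to translate between sums $\underset{\alpha\in\Lambda^{<\gamma}}{\Sigma}x_\alpha$ and partial strong integrals $\int_a^\gamma f$). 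Under this correspondence, absolute summability of $(x_\alpha)_{\alpha\in\Lambda^{<b}}$ translates into absolute strong Henstock--Kurzweil integrability of $f$, i.e.\ strong integrability of both $f$ and $\|f\|$.

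The key step is then to invoke the Banach-space analogue of the classical fact that absolute Henstock--Kurzweil integrability implies Henstock--Kurzweil integrability in the \emph{strong} (Henstock--Lebesgue) sense; equivalently, a strongly measurable, Bochner-dominated mapping is strongly Henstock--Kurzweil integrable. Since $E$ is complete, the partial integrals $\gamma\mapsto\int_a^\gamma f$ exist for every $\gamma\in[a,b]$ and inherit the requisite regularity: at a successor $\gamma=S(\beta)$ the integral picks up exactly the jump $x_\beta$, matching Definition~\ref{D21}(i), and at a limit element $\gamma$ the indefinite strong integral is continuous from the left, which is exactly Definition~\ref{D21}(ii). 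Translating back through the correspondence, the family $(x_\alpha)_{\alpha\in\Lambda^{<b}}$ satisfies both axioms of summability, so it is summable, with $\underset{\alpha\in\Lambda^{<\gamma}}{\Sigma}x_\alpha$ realized as $\int_a^\gamma f$.

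I expect the main obstacle to be purely bookkeeping rather than conceptual: one must verify carefully that the step-mapping encoding of a family indexed by an arbitrary well-ordered $\Lambda\subseteq\mathbb{R}\cup\{\infty\}$ (in particular when $b=\infty$, so that one works on a possibly unbounded interval, or when $\Lambda$ has order type beyond $\omega$) genuinely lies in the class of mappings for which the results of \cite{SH13} apply, and that the limit-element condition (ii) corresponds precisely to left-continuity of the indefinite strong integral at the relevant points. Once the dictionary between $(x_\alpha)$ and $f$ is set up and the absolute-implies-strong integrability theorem is cited, the conclusion is immediate. An alternative, self-contained route would avoid integration theory entirely and argue directly by transfinite induction: define the candidate sums $s_\gamma=\underset{\alpha\in\Lambda^{<\gamma}}{\Sigma}x_\alpha$ recursively, and use the summability of $(\|x_\alpha\|)$ to show that at each limit element the net $(s_\beta)_{\beta<\gamma}$ is Cauchy (its increments are bounded by increments of the convergent scalar partial sums of $\|x_\alpha\|$), hence convergent by completeness of $E$; but since the paper explicitly advertises the integration-theoretic proof, I would follow that line.
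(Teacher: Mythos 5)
Your proposal is essentially the paper's own proof: the authors encode the family as the step mapping $A(t)=x_\alpha/(S(\alpha)-\alpha)$ on $[\alpha,S(\alpha))$, invoke \cite[Proposition 3.4]{SH13} to convert absolute summability into Bochner integrability of $A$, pass from Bochner to strong Henstock--Kurzweil integrability, and then apply \cite[Proposition 3.1]{SH13} to translate back into summability of $(x_\alpha)_{\alpha\in\Lambda^{<b}}$ --- exactly your ``strongly measurable, Bochner-dominated $\Rightarrow$ strongly Henstock--Kurzweil integrable'' step. The only bookkeeping point you flag but leave open, the case $b=\infty$, is dispatched in the paper by the order-preserving substitution $\tilde\Lambda=\{1-\exp(a-\alpha);\,\alpha\in\Lambda\}$, which reduces everything to a bounded interval.
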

\begin{proof} 
Without loss of generality, we can suppose that $b=\sup \Lambda<\infty$.
Otherwise, we can replace $\Lambda$ by the well-ordered set $\tilde\Lambda=\{1-\exp(a-\alpha);\,\alpha\in\Lambda\}$ 
with $\min\tilde\Lambda=0$ and $\sup\tilde\Lambda=1$; this transformation preserves (absolute) summability.

For every $\alpha\in\Lambda^{<b}$, let $z_\alpha=\frac{x_\alpha}{S(\alpha)-\alpha}$. Consider the mapping $A:[a,b]\to E$ given by
\begin{equation*}
A(t)=\begin{cases}
z_\alpha,& t\in[\alpha,S(\alpha)),\; \alpha\in\Lambda^{<b},\\
0,& t=b.\end{cases}
\end{equation*}
By \cite[Proposition 3.4]{SH13}, the absolute summability of $(x_\alpha)_{\alpha\in\Lambda^{<b}}=((S(\alpha)-\alpha)z_\alpha)_{\alpha\in\Lambda^{<b}}$ implies
 that $A$ is Bochner integrable. Consequently, $A$ is strongly Henstock-Kurzweil integrable, which means by \cite[Proposition 3.1]{SH13} that 
$((S(\alpha)-\alpha)z_\alpha)_{\alpha\in\Lambda^{<b}}=(x_\alpha)_{\alpha\in\Lambda^{<b}}$ is summable.
\end{proof}

\begin{remark}\label{expAndLogProperties}
In a unital Banach algebra $E$, we may introduce the exponential and logarithm function as follows:
$$\begin{aligned}
\exp x&=\sum_{n=0}^\infty\frac{x^n}{n!}=I+x+\frac{x^2}{2!}+\cdots+\frac{x^n}{n!}+\cdots, \quad x\in E,\\
\log x&=\sum_{n=1}^\infty (-1)^{n-1}\frac{(x-I)^n}{n},\quad \|x-I\|<1.
\end{aligned}$$
These functions have similar properties as in the familiar case when $E=\mathbb R^{n\times n}$, in particular:
\begin{enumerate}
\item The exponential and logarithm are continuous functions.
\item For every $x\in E$, $\exp x$ is an invertible element and its inverse is $\exp(-x)$. 
\item If $x$, $y\in E$ are such that $xy=yx$, then $\exp(x+y)=\exp x\exp y$.
\item $\log (\exp x)=x$ if $\|x\|<\log 2$, and $\exp(\log x)=x$ if $\|x-I\|<1$.
\item We have the estimates 
$$\begin{aligned}
\|\exp x\|&\le\exp\|x\|,\quad\quad x\in E,\\
\|\exp x-I\|&\le\|x\|\exp \|x\|,\quad\quad x\in E,
\end{aligned}$$ 
which follow easily from the definition of the exponential function.
\end{enumerate}
\end{remark}

We now show that the formula $\exp(x)\exp(y)=\exp(x+y)$ can be generalized to families of commutative elements.

\begin{lemma}\label{L25} Let $(x_\alpha)_{\alpha\in\Lambda^{<b}}$ be a summable family in a unital Banach algebra. If
$x_\alpha x_\beta=x_\beta x_\alpha$ whenever $\alpha,\beta\in\Lambda$, then the family $(\exp x_\alpha)_{\alpha\in\Lambda^{<b}}$ is multipliable, and 
\begin{equation}\label{E21}
\underset{\alpha\in\Lambda^{<b}}{\Pi}\exp x_\alpha=\exp\left(\underset{\alpha\in\Lambda^{<b}}{\Sigma}x_\alpha\right).
\end{equation}
\end{lemma}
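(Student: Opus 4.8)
The plan is to prove the statement by transfinite induction, exhibiting explicit candidates for the partial products required by Definition~\ref{D1}. For each $\gamma$ in the index set $\Lambda^{<b}$ and for its supremum, put $\sigma_\gamma=\underset{\alpha\in\Lambda^{<\gamma}}{\Sigma}x_\alpha$ (this element exists because $(x_\alpha)_{\alpha\in\Lambda^{<b}}$ is summable) and $p_\gamma=\exp\sigma_\gamma$. I would show that the elements $p_\gamma$ satisfy conditions (i) and (ii) of Definition~\ref{D1}, so that $(\exp x_\alpha)_{\alpha\in\Lambda^{<b}}$ is multipliable with $\underset{\alpha\in\Lambda^{<\gamma}}{\Pi}\exp x_\alpha=p_\gamma$; evaluating at the supremum of $\Lambda^{<b}$, and invoking property~3 of Remark~\ref{expAndLogProperties} in the case when this supremum is attained, then yields \eqref{E21}.

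Before checking the conditions I would record, by a separate transfinite induction, that every partial sum $\sigma_\gamma$ commutes with $x_\delta$ for all $\delta\in\Lambda$: this is trivial for $\sigma_a=0$; if $\gamma=S(\beta)$ then $\sigma_\gamma=x_\beta+\sigma_\beta$, and both summands commute with $x_\delta$, by hypothesis and the inductive assumption respectively; and if $\gamma$ is a limit element, then $\sigma_\gamma=\lim\limits_{\beta\to\gamma-}\sigma_\beta$ commutes with $x_\delta$ by continuity of multiplication. In particular $x_\beta\sigma_\beta=\sigma_\beta x_\beta$ for every $\beta$, which is the only algebraic input needed. Now condition~(i) is immediate: $p_a=\exp 0=I$, and if $\gamma=S(\beta)$ then Definition~\ref{D21}~(i) gives $\sigma_\gamma=x_\beta+\sigma_\beta$, so by property~3 of Remark~\ref{expAndLogProperties} together with the commutativity just noted,
$$p_\gamma=\exp(x_\beta+\sigma_\beta)=\exp x_\beta\cdot\exp\sigma_\beta=\exp x_\beta\cdot p_\beta.$$

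For condition~(ii), let $\gamma$ be a limit element; summability of the family means $\sigma_\beta\to\sigma_\gamma$ as $\beta\to\gamma-$ (Definition~\ref{D21}~(ii)), and then continuity of the exponential (property~1 of Remark~\ref{expAndLogProperties}) gives $p_\beta=\exp\sigma_\beta\to\exp\sigma_\gamma=p_\gamma$, which is precisely condition~(ii). This establishes multipliability and the identity $\underset{\alpha\in\Lambda^{<\gamma}}{\Pi}\exp x_\alpha=\exp\sigma_\gamma$, and reading it off at the supremum of $\Lambda^{<b}$ proves \eqref{E21}. Apart from routine bookkeeping, the only delicate point is the commutativity of $x_\beta$ with the partial sum $\sigma_\beta$ at successor steps, which is why the auxiliary induction above is isolated; its limit step uses continuity of multiplication, and completeness of the algebra is needed only to make sense of $\exp$.
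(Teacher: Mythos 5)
Your proposal is correct and follows essentially the same route as the paper's proof: an auxiliary transfinite induction establishing that each partial sum commutes with the individual terms, verification of conditions (i) and (ii) of Definition~\ref{D1} with the candidate partial products $\exp\bigl(\underset{\alpha\in\Lambda^{<\gamma}}{\Sigma}x_\alpha\bigr)$ using property~3 of Remark~\ref{expAndLogProperties} at successor steps and continuity of $\exp$ at limit elements, and evaluation at the supremum. No gaps.
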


\begin{proof} 
Using the assumption that $x_\alpha x_\beta=x_\beta x_\alpha$ for all $\alpha,\beta\in\Lambda$,
it follows by transfinite induction with respect to $\beta$ that
\begin{equation}\label{commutativity}
x_\beta\left(\underset{\alpha\in\Lambda^{<\beta}}{\Sigma}x_\alpha\right)=\left(\underset{\alpha\in\Lambda^{<\beta}}{\Sigma}x_\alpha\right)x_\beta,\quad \beta\in\Lambda.
\end{equation}
To prove that the  family $(\exp x_\alpha)_{\alpha\in\Lambda^{<b}}$ is multipliable and \eqref{E21} holds, it is enough to check that
the conditions from Definition \ref{D1} are satisfied with  
\begin{equation}\label{E22}
\underset{\alpha\in\Lambda^{<\gamma}}{\Pi}\exp x_\alpha=\exp\left(\underset{\alpha\in\Lambda^{<\gamma}}{\Sigma}x_\alpha\right), \ \gamma\in\Lambda\cup\{b\}.
\end{equation}
Clearly,
$$\underset{\alpha\in\Lambda^{< a}}{\Pi}\exp x_\alpha=\exp\left(\underset{\alpha\in\Lambda^{<a}}{\Sigma}x_\alpha\right)=\exp 0=I.$$
If $\gamma=S(\beta)$ for some $\beta\in\Lambda$, it follows that 
$$
\underset{\alpha\in\Lambda^{<\gamma}}{\Pi}\exp x_\alpha=
\exp\left(\underset{\alpha\in\Lambda^{<\gamma}}{\Sigma}x_\alpha\right)=\exp\left(x_\beta+\!\!\underset{\alpha\in\Lambda^{<\beta}}{\Sigma}x_\alpha\right)
=\exp x_\beta\cdot\exp\left(\underset{\alpha\in\Lambda^{<\beta}}{\Sigma}x_\alpha\right)
=\exp x_\beta\cdot \underset{\alpha\in\Lambda^{<\beta}}{\Pi}\exp x_\alpha,
$$
where the third equality is a consequence of \eqref{commutativity} and the third property mentioned in Remark \ref{expAndLogProperties}.
Thus condition (i) of Definition \ref{D1} is satisfied.

Assume next that $\gamma$ is a limit element of $\Lambda\cup\{b\}$, and let $\varepsilon > 0$ be given. 
Since the exponential is a~continuous function, it is possible to find $\delta>0$ such that
$$
\left\|\exp x -\exp\left(\underset{\alpha\in\Lambda^{<\gamma}}{\Sigma}x_\alpha\right)\right\|<\varepsilon
$$
for all $x\in E$ satisfying $\left\|x-\underset{\alpha\in\Lambda^{<\gamma}}{\Sigma}x_\alpha\right\|<\delta$.
Because the family $(x_\alpha)_{\alpha\in\Lambda^{<b}}$ is summable, there exists a~$\beta_\delta\in\Lambda^{<\gamma}$ such that 
$$
\left\|\underset{\alpha\in\Lambda^{<\beta}}{\Sigma}x_\alpha -\underset{\alpha\in\Lambda^{<\gamma}}{\Sigma}x_\alpha\right\|
<\delta, \quad \beta\in\Lambda\cap[\beta_\delta,\gamma).
$$
It then follows that 
\begin{equation*}
\left\|\underset{\alpha\in\Lambda^{<\beta}}{\Pi}\exp x_\alpha -\underset{\alpha\in\Lambda^{<\gamma}}{\Pi}\exp x_\alpha\right\|
=\left\|\exp\left(\underset{\alpha\in\Lambda^{<\beta}}{\Sigma}x_\alpha\right)-\exp\left(\underset{\alpha\in\Lambda^{<\gamma}}{\Sigma}x_\alpha\right)\right\|
<\varepsilon, \quad \beta\in\Lambda\cap[\beta_\delta,\gamma).
\end{equation*}
This proves that condition (ii) of Definition \ref{D1} is satisfied. To conclude the proof, we substitute $\gamma=b$ in \eqref{E22} to get \eqref{E21}. 
\end{proof}

\begin{lemma}\label{L25b} Let $(p_\alpha)_{\alpha\in\Lambda^{<b}}$ be a family of real numbers. If
$(\exp p_\alpha)_{\alpha\in\Lambda^{<b}}$ is multipliable and its product is nonzero, then $(p_\alpha)_{\alpha\in\Lambda^{<b}}$ is summable, and 
$$\underset{\alpha\in\Lambda^{<b}}{\Sigma}p_\alpha=\log\left(\underset{\alpha\in\Lambda^{<b}}{\Pi}\exp p_\alpha\right).$$
\end{lemma}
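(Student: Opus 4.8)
The plan is to mimic the structure of the proof of Lemma \ref{L25}, but in the reverse direction: we already possess a multipliable family of exponentials with nonzero product, and we want to recover summability of the exponents. The key observation is that for real numbers $p_\alpha$ the partial products $P_\gamma=\underset{\alpha\in\Lambda^{<\gamma}}{\Pi}\exp p_\alpha$ are positive real numbers (each $\exp p_\alpha>0$, the empty product is $1$, and by Lemma \ref{invertible-partial-products}, using that $\mathbb R$ is a unital Banach algebra whose invertible elements are the nonzero reals, every $P_\gamma$ is nonzero; since positivity is preserved under multiplication by positive factors and under limits of positive numbers bounded away from $0$, a transfinite induction shows $P_\gamma>0$ for all $\gamma$). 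Hence $\log P_\gamma$ is well-defined for every $\gamma\in\Lambda\cup\{b\}$, where here $\log$ is the ordinary real logarithm, which agrees with the Banach-algebra logarithm of Remark \ref{expAndLogProperties} on a neighbourhood of $1$ but, more importantly, is defined and continuous on all of $(0,\infty)$.

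First I would define $s_\gamma=\log P_\gamma$ for $\gamma\in\Lambda\cup\{b\}$ and verify that the family $(s_\gamma)$ witnesses the summability of $(p_\alpha)_{\alpha\in\Lambda^{<b}}$ in the sense of Definition \ref{D21}, i.e.\ that $s_\gamma=\underset{\alpha\in\Lambda^{<\gamma}}{\Sigma}p_\alpha$. For condition (i): $s_a=\log P_a=\log 1=0$, and if $\gamma=S(\beta)$ then $P_\gamma=\exp(p_\beta)\cdot P_\beta$, so taking real logarithms and using $\log(xy)=\log x+\log y$ for positive reals together with $\log\exp p_\beta=p_\beta$, we get $s_\gamma=p_\beta+s_\beta$. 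For condition (ii): if $\gamma$ is a limit element, then by condition (ii) of Definition \ref{D1} we have $P_\beta\to P_\gamma$ as $\beta\to\gamma-$, and since $P_\gamma>0$ and $\log$ is continuous at $P_\gamma$, it follows that $s_\beta=\log P_\beta\to\log P_\gamma=s_\gamma$, which is exactly the limit condition (ii) for the sums (using the Remark's reformulation). Thus $(p_\alpha)_{\alpha\in\Lambda^{<b}}$ is summable with partial sums $s_\gamma$.

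Finally I would read off the claimed identity by specialising $\gamma=b$. If $b\notin\Lambda$ then $\underset{\alpha\in\Lambda^{<b}}{\Sigma}p_\alpha=s_b=\log P_b=\log\left(\underset{\alpha\in\Lambda^{<b}}{\Pi}\exp p_\alpha\right)$; if $b\in\Lambda$ the product over $\Lambda^{<b}$ is still what appears in the statement, and the same computation applies. I would need to be slightly careful that the $\log$ appearing in the statement is interpreted as the real logarithm on $(0,\infty)$ rather than the power-series logarithm of Remark \ref{expAndLogProperties}, which is only defined for $\|x-I\|<1$; either one notes this, or one observes that the statement is only claimed for the product, and that when the product lies outside the ball $\|x-1\|<1$ the intended meaning is the natural logarithm of a positive real. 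The main obstacle — really the only subtlety — is establishing positivity of all the partial products $P_\gamma$, since without it $\log P_\gamma$ is not defined and the argument collapses; this is handled by the transfinite induction sketched above, invoking Lemma \ref{invertible-partial-products} to exclude $P_\gamma=0$ at successor and limit stages and the openness/closedness of $(0,\infty)$ versus $(-\infty,0]$ inside $\mathbb R^\times$ to propagate the sign.
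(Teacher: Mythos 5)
Your proof is correct, but it takes a genuinely different route from the paper's. You argue directly: you first establish by transfinite induction that every partial product $P_\gamma=\underset{\alpha\in\Lambda^{<\gamma}}{\Pi}\exp p_\alpha$ is strictly positive (nonzero by Lemma \ref{invertible-partial-products}, positive by sign propagation through successors and limits), and then verify that $s_\gamma=\log P_\gamma$ satisfies conditions (i) and (ii) of Definition \ref{D21}, so that the partial sums are exhibited explicitly and the identity $\underset{\alpha\in\Lambda^{<b}}{\Sigma}p_\alpha=\log P_b$ drops out at $\gamma=b$. The paper instead argues by contradiction: it takes the least limit element $\gamma$ at which summability fails, applies Lemma \ref{L25} on the initial segments $\Lambda^{<\beta}$ (where summability is already known) to get $\underset{\alpha\in\Lambda^{<\beta}}{\Sigma}p_\alpha=\log\bigl(\underset{\alpha\in\Lambda^{<\beta}}{\Pi}\exp p_\alpha\bigr)$, and then uses continuity of $\log$ to show these partial sums converge as $\beta\to\gamma-$, contradicting the failure at $\gamma$. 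Your approach is more self-contained (it does not invoke Lemma \ref{L25} at all) and constructs the sums rather than merely proving their existence; its price is the explicit positivity induction, which the paper sidesteps for $\beta<\gamma$ because there the partial products are exponentials of real partial sums and hence automatically positive. Your remark that the $\log$ in the statement must be read as the real logarithm on $(0,\infty)$, not the power-series logarithm of Remark \ref{expAndLogProperties}, is a legitimate and worthwhile clarification that the paper leaves implicit.
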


\begin{proof} 
Assume for contradiction that $(p_\alpha)_{\alpha\in\Lambda^{<b}}$ is not summable. Then there is a limit element $\gamma\in \Lambda\cup\{b\}$ such that $(p_\alpha)_{\alpha\in\Lambda^{<\beta}}$ is summable for every $\beta\in\Lambda^{<\gamma}$, but $(p_\alpha)_{\alpha\in\Lambda^{<\gamma}}$ is not summable. Lemma \ref{L25} implies that
$$
\underset{\alpha\in\Lambda^{<\beta}}{\Pi}\exp p_\alpha=\exp\left(\underset{\alpha\in\Lambda^{<\beta}}{\Sigma}p_\alpha\right), \quad \beta\in\Lambda^{<\gamma}.
$$
Since all partial products of $(\exp p_\alpha)_{\alpha\in\Lambda^{<b}}$ are nonzero by Lemma \ref{invertible-partial-products}, we get
$$\underset{\alpha\in\Lambda^{<\beta}}{\Sigma}p_\alpha=\log\left(\underset{\alpha\in\Lambda^{<\beta}}{\Pi}\exp p_\alpha\right), \quad \beta\in\Lambda^{<\gamma}.
$$
Using the continuity of the logarithm function, we obtain
$$\lim\limits_{\beta\to\gamma-}\left(\underset{\alpha\in\Lambda^{<\beta}}{\Sigma}p_\alpha\right)=\log\left(\underset{\alpha\in\Lambda^{<\gamma}}{\Pi}\exp p_\alpha\right), \quad \beta\in\Lambda^{<\gamma},
$$
which contradicts the fact that $(p_\alpha)_{\alpha\in\Lambda^{<\gamma}}$ is not summable.
\end{proof}

The following consequence of Lemmas \ref{L25} and \ref{L25b} shows that absolute summability of $(x_\alpha)_{\alpha\in\Lambda^{<b}}$ and 
multipliability of $(\exp\|x_\alpha\|)_{\alpha\in\Lambda^{<b}}$ are equivalent.

\begin{lemma}\label{L26}  
A family $(x_\alpha)_{\alpha\in\Lambda^{<b}}$ in a normed space is absolutely summable if and only if the family 
$(\exp \|x_\alpha\|)_{\alpha\in\Lambda^{<b}}$ is multipliable. In this case,
\begin{equation}\label{E200}
\underset{\alpha\in\Lambda^{<b}}{\Sigma}\|x_\alpha\|=\log\left(\underset{\alpha\in\Lambda^{<b}}{\Pi}\exp \|x_\alpha\|\right).
\end{equation}
\end{lemma}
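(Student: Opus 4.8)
The plan is to derive both implications from the two preceding lemmas, applied to the commuting family $(p_\alpha)_{\alpha\in\Lambda^{<b}}$ with $p_\alpha=\|x_\alpha\|\in\mathbb R$. First I would prove the forward direction: assume $(x_\alpha)_{\alpha\in\Lambda^{<b}}$ is absolutely summable, i.e.\ $(\|x_\alpha\|)_{\alpha\in\Lambda^{<b}}$ is summable in $\mathbb R$. Since real numbers commute, Lemma \ref{L25} (applied in the unital Banach algebra $\mathbb R$, or $\mathbb C$ if one prefers) immediately yields that $(\exp\|x_\alpha\|)_{\alpha\in\Lambda^{<b}}$ is multipliable and that $\underset{\alpha\in\Lambda^{<b}}{\Pi}\exp\|x_\alpha\|=\exp\bigl(\underset{\alpha\in\Lambda^{<b}}{\Sigma}\|x_\alpha\|\bigr)$. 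Applying $\log$ to both sides and using property 4 from Remark \ref{expAndLogProperties} gives \eqref{E200}.

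For the converse, suppose $(\exp\|x_\alpha\|)_{\alpha\in\Lambda^{<b}}$ is multipliable. The point is that each factor $\exp\|x_\alpha\|\ge 1>0$, so the partial products are all $\ge 1$, and in particular the product of the family is nonzero (a finite positive number). Hence Lemma \ref{L25b}, applied with $p_\alpha=\|x_\alpha\|$, tells us that $(\|x_\alpha\|)_{\alpha\in\Lambda^{<b}}$ is summable — which is exactly the absolute summability of $(x_\alpha)_{\alpha\in\Lambda^{<b}}$ — and that $\underset{\alpha\in\Lambda^{<b}}{\Sigma}\|x_\alpha\|=\log\bigl(\underset{\alpha\in\Lambda^{<b}}{\Pi}\exp\|x_\alpha\|\bigr)$, which is \eqref{E200} again. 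In fact \eqref{E200} in the converse direction also follows from the forward direction once we know the family is absolutely summable, so one could simply combine the two.

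The only point requiring a little care — and the closest thing to an obstacle — is justifying that the product $\underset{\alpha\in\Lambda^{<b}}{\Pi}\exp\|x_\alpha\|$ is nonzero, since Lemma \ref{L25b} needs this hypothesis. This is where I would spell out that all factors satisfy $\exp\|x_\alpha\|\ge 1$, so by condition (i) of Definition \ref{D1} every partial product over a successor stage is $\ge$ the previous one and $\ge 1$; combined with condition (ii) (the limit-stage continuity), transfinite induction shows every partial product $\underset{\alpha\in\Lambda^{<\gamma}}{\Pi}\exp\|x_\alpha\|$ is a real number $\ge 1$, hence in particular the full product is a positive real number, so nonzero. Everything else is a direct invocation of Lemmas \ref{L25} and \ref{L25b} and the $\exp$/$\log$ inversion identity from Remark \ref{expAndLogProperties}, so no genuine computation remains.
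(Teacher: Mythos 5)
Your proposal is correct and follows essentially the same route as the paper's own proof: the forward direction is Lemma \ref{L25} applied to the commuting real family $(\|x_\alpha\|)$, and the converse is Lemma \ref{L25b} with $p_\alpha=\|x_\alpha\|$ after noting that all factors $\exp\|x_\alpha\|\ge 1$ force the product to be nonzero. Your extra care in justifying the nonvanishing of the product by transfinite induction is exactly what the paper dismisses as ``obvious,'' so there is no substantive difference.
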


\begin{proof} By Lemma \ref{L25}, summability of $(\|x_\alpha\|)_{\alpha\in\Lambda^{<b}}$ implies multipliability of 
$(\exp \|x_\alpha\|)_{\alpha\in\Lambda^{<b}}$ and the relation \eqref{E200}. 
Conversely, if $(\exp \|x_\alpha\|)_{\alpha\in\Lambda^{<b}}$ is multipliable, then it is obvious that $\underset{\alpha\in\Lambda^{<b}}{\Pi}\exp \|x_\alpha\|\ge 1$. It follows from Lemma \ref{L25b} with $p_\alpha=\|x_\alpha\|$, that $(\|x_\alpha\|)_{\alpha\in\Lambda^{<b}}$ is summable.
\end{proof}

The next two results generalize the well-known relations between infinite series and products.

\begin{lemma}\label{L27} A family $(x_\alpha)_{\alpha\in\Lambda^{<b}}$ in a normed space is absolutely summable if and only if
the family of real numbers $(1+\|x_\alpha\|)_{\alpha\in\Lambda^{<b}}$ is multipliable.
\end{lemma}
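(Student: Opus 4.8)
The plan is to derive both implications from a single comparison principle for families of nonnegative reals, which I would isolate first as $(\star)$: \emph{if $(q_\alpha)_{\alpha\in\Lambda^{<b}}$ and $(r_\alpha)_{\alpha\in\Lambda^{<b}}$ are families of nonnegative reals with $q_\alpha\le r_\alpha$ for every $\alpha$ and $(r_\alpha)_{\alpha\in\Lambda^{<b}}$ is summable, then $(q_\alpha)_{\alpha\in\Lambda^{<b}}$ is summable and $\underset{\alpha\in\Lambda^{<b}}{\Sigma}q_\alpha\le\underset{\alpha\in\Lambda^{<b}}{\Sigma}r_\alpha$.} Granting $(\star)$, I would set $p_\alpha=\|x_\alpha\|\ge 0$ and work in $\mathbb R$, a commutative unital Banach algebra, where $1+p_\alpha=\exp(\log(1+p_\alpha))$, $p_\alpha=\exp(\log(1+p_\alpha))-1$, and $0\le\log(1+p_\alpha)\le p_\alpha$; thus Lemmas \ref{L25} and \ref{L25b} apply with $E=\mathbb R$.

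For the forward implication, assume $(x_\alpha)_{\alpha\in\Lambda^{<b}}$ is absolutely summable, i.e.\ $(p_\alpha)_{\alpha\in\Lambda^{<b}}$ is summable. Since $0\le\log(1+p_\alpha)\le p_\alpha$, $(\star)$ yields that $(\log(1+p_\alpha))_{\alpha\in\Lambda^{<b}}$ is summable, and Lemma \ref{L25} then shows that $(1+p_\alpha)_{\alpha\in\Lambda^{<b}}=(\exp\log(1+p_\alpha))_{\alpha\in\Lambda^{<b}}$ is multipliable.

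For the converse, assume $(1+p_\alpha)_{\alpha\in\Lambda^{<b}}=(\exp\log(1+p_\alpha))_{\alpha\in\Lambda^{<b}}$ is multipliable. Its partial products are all $\ge 1$ (transfinite induction: $I=1$ at the start, multiplication by factors $\ge 1$ at successors, and limits of reals $\ge 1$ at limit elements), so its product $P$ satisfies $P\ge 1$ and is nonzero; by Lemma \ref{L25b}, $(\log(1+p_\alpha))_{\alpha\in\Lambda^{<b}}$ is summable with sum $L:=\log P$. Because the partial sums of a summable family of nonnegative reals are nondecreasing and bounded above by the total sum, each term satisfies $\log(1+p_\alpha)\le L$, so the elementary inequality $e^t-1\le t e^t$ for $t\ge 0$ gives $p_\alpha=\exp(\log(1+p_\alpha))-1\le\log(1+p_\alpha)\exp(\log(1+p_\alpha))\le e^{L}\log(1+p_\alpha)$. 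Since $(e^{L}\log(1+p_\alpha))_{\alpha\in\Lambda^{<b}}$ is summable as a scalar multiple of a summable family, $(\star)$ shows that $(p_\alpha)_{\alpha\in\Lambda^{<b}}$ is summable, i.e.\ $(x_\alpha)_{\alpha\in\Lambda^{<b}}$ is absolutely summable.

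It remains to prove $(\star)$, which I would do by transfinite induction, establishing for each $\gamma$ that $(q_\alpha)_{\alpha\in\Lambda^{<\gamma}}$ is summable with $\underset{\alpha\in\Lambda^{<\gamma}}{\Sigma}q_\alpha\le\underset{\alpha\in\Lambda^{<\gamma}}{\Sigma}r_\alpha$. The initial case and the successor case follow at once from Definition \ref{D21}(i) together with $q_\beta\le r_\beta$. For a limit element $\gamma$, the induction hypothesis provides the partial sums $\underset{\alpha\in\Lambda^{<\beta}}{\Sigma}q_\alpha$ for $\beta\in\Lambda^{<\gamma}$; by the observation recorded after Definition \ref{D21}, sub-families of summable families are summable, so for $\beta<\beta'<\gamma$ we have $\underset{\alpha\in\Lambda^{<\beta'}}{\Sigma}q_\alpha-\underset{\alpha\in\Lambda^{<\beta}}{\Sigma}q_\alpha=\underset{\alpha\in\Lambda\cap[\beta,\beta')}{\Sigma}q_\alpha\ge 0$. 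Thus $\beta\mapsto\underset{\alpha\in\Lambda^{<\beta}}{\Sigma}q_\alpha$ is nondecreasing on $\Lambda^{<\gamma}$ and bounded above by $\underset{\alpha\in\Lambda^{<\gamma}}{\Sigma}r_\alpha$, so its left limit at $\gamma$ exists; declaring this limit to be the value of $\underset{\alpha\in\Lambda^{<\gamma}}{\Sigma}q_\alpha$ verifies Definition \ref{D21}(ii) and preserves the bound. The main obstacle is exactly this limit step: one must be certain that the partial-sum ``function'' is genuinely monotone --- which is where the sub-family remark enters --- so that the left limit demanded by Definition \ref{D21}(ii) is furnished by monotone bounded convergence in $\mathbb R$; everything else reduces to bookkeeping with the inequalities $\log(1+t)\le t$ and $e^t-1\le t e^t$ for $t\ge 0$ together with Lemmas \ref{L25} and \ref{L25b}.
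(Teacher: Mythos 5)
Your argument is correct, but it takes a genuinely different route from the paper. The paper proves both implications by direct transfinite recursion on the partial products/sums of the two families: for the forward direction it constructs $\underset{\alpha\in\Lambda^{<\gamma}}{\Pi}(1+\|x_\alpha\|)$ and controls it at limit elements by the bound $\underset{\alpha\in\Lambda^{<\beta}}{\Pi}(1+\|x_\alpha\|)\le\underset{\alpha\in\Lambda^{<\gamma}}{\Pi}\exp\|x_\alpha\|$ (the right-hand side supplied by Lemma \ref{L25}); for the converse it constructs the partial sums while proving the inequality $\underset{\alpha\in\Lambda^{<\beta}}{\Sigma}\|x_\alpha\|\le\underset{\alpha\in\Lambda^{<\beta}}{\Pi}(1+\|x_\alpha\|)$ inductively alongside. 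You instead isolate a domination principle $(\star)$ for nonnegative families and then reduce everything to Lemmas \ref{L25} and \ref{L25b} through the substitution $q_\alpha=\log(1+\|x_\alpha\|)$, using $\log(1+t)\le t$ in one direction and $e^t-1\le te^t$ together with the uniform bound $\log(1+\|x_\alpha\|)\le L$ in the other. Both approaches hinge on the same monotone bounded convergence argument at limit elements; yours packages it once inside $(\star)$ (plus the short induction showing the partial products are $\ge 1$, needed to invoke Lemma \ref{L25b}), which makes the comparison principle reusable and avoids running the recursion twice, whereas the paper's version is more self-contained and yields the explicit estimate $\underset{\alpha\in\Lambda^{<b}}{\Sigma}\|x_\alpha\|\le\underset{\alpha\in\Lambda^{<b}}{\Pi}(1+\|x_\alpha\|)$ as a by-product. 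Your steps all check out, including the tacit use of the uniqueness of partial sums (forced by Definition \ref{D21}(i)--(ii) via transfinite induction), which justifies treating the restricted partial sums consistently in the limit step of $(\star)$.
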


\begin{proof} Assume first that $(x_\alpha)_{\alpha\in\Lambda^{<b}}$ is absolutely summable.
We use transfinite recursion to define the partial products $\underset{\alpha\in\Lambda^{<\gamma}}{\Pi}(1+\|x_\alpha\|)$, $\gamma\in\Lambda\cup\{b\}$,
so that the conditions of Definition \ref{D1} will be satisfied.
 
First, let $\underset{\alpha\in\Lambda^{<a}}{\Pi}(1+\|x_\alpha\|)=1$. 
Next, assume that
$\underset{\alpha\in\Lambda^{<\beta}}{\Pi}(1+\|x_\alpha\|)$ is defined for each $\beta\in \Lambda^{<\gamma}$, where
$\gamma\in(\Lambda\cup\{b\})\setminus\{a\}$. 
If $\gamma=S(\beta)$, we let 
$$\underset{\alpha\in\Lambda^{<\gamma}}{\Pi}(1+\|x_\beta\|)= 
(1+\|x_\beta\|)\cdot\underset{\alpha\in\Lambda^{<\beta}}{\Pi}(1+\|x_\alpha\|),$$
which ensures  that condition (i) of Definition \ref{D1} is satisfied.

Finally, assume that $\gamma$ is a limit element of $\Lambda\cup\{b\}$.  
By Lemma~\ref{L25}, the family $(\exp \|x_\alpha\|)_{\alpha\in\Lambda^{<b}}$ is multipliable. Moreover, 
it is easy to show by transfinite induction that
$$
 \underset{\alpha\in\Lambda^{<\beta}}{\Pi}(1+\|x_\alpha\|)\le\underset{\alpha\in\Lambda^{<\gamma}}{\Pi}\exp \|x_\alpha\|, \quad\quad \beta\in\Lambda^{<\gamma}.
$$ 
Thus,
$$
s=\sup_{\beta\in \Lambda^{<\gamma}}\left(\underset{\alpha\in\Lambda^{<\beta}}{\Pi}(1+\|x_\alpha\|)\right)
$$
is finite. Given $\varepsilon > 0$, there exists a $\beta_\varepsilon\in\Lambda^{<\gamma}$ such that
$$\left|\underset{\alpha\in\Lambda^{<\beta_\varepsilon}}{\Pi}(1+\|x_\alpha\|)-s\right|<\varepsilon.$$
However, since $\left(\underset{\alpha\in\Lambda^{<\beta}}{\Pi}(1+\|x_\alpha\|)\right)_{\beta\in\Lambda^{<\gamma}}$ is a nondecreasing transfinite sequence, it follows that 
$$
\left|\underset{\alpha\in\Lambda^{<\beta}}{\Pi}(1+\|x_\alpha\|)-s\right|<\varepsilon, \quad\quad \beta\in [\beta_\varepsilon,\gamma)\cap\Lambda.
$$ 
Thus, condition (ii) of Definition~\ref{D1} will be satisfied if we define 
$\underset{\alpha\in\Lambda^{<\gamma}}{\Pi}(1+\|x_\alpha\|)=s$.

The above reasoning implies that $\underset{\alpha\in\Lambda^{<\gamma}}{\Pi}(1+\|x_\alpha\|)$
is defined for each $\gamma\in\Lambda\cup\{b\}$, whence the family  $(1+\|x_\alpha\|)_{\alpha\in\Lambda^{<b}}$ is multipliable.

Assume conversely that  $(1+\|x_\alpha\|)_{\alpha\in\Lambda^{<b}}$ is multipliable. We use transfinite recursion to define the 
partial sums $\underset{\alpha\in\Lambda^{<\gamma}}{\Sigma}\|x_\alpha\|$, $\gamma\in\Lambda\cup\{b\}$,
so that the conditions of Definition \ref{D21} will be satisfied.
At the same time, we are going to prove that
\begin{equation}\label{E203}
\underset{\alpha\in\Lambda^{<\beta}}{\Sigma}\|x_\alpha\|\le \underset{\alpha\in\Lambda^{<\beta}}{\Pi}(1+\|x_\alpha\|)
\end{equation}
for all $\beta\in\Lambda\cup\{b\}$.
First, let  $\underset{\alpha\in\Lambda^{<a}}{\Sigma}\|x_\alpha\|=0$ and note that 
$\underset{\alpha\in\Lambda^{<a}}{\Sigma}\|x_\alpha\|=0<1=\underset{\alpha\in\Lambda^{<a}}{\Pi}(1+\|x_\alpha\|)$. 
Next, assume
that $\underset{\alpha\in\Lambda^{<\beta}}{\Sigma}\|x_\alpha\|$ is defined for each $\beta\in\Lambda^{<\gamma}$, where $\gamma\in(\Lambda\cup\{b\})\setminus\{a\}$, and that \eqref{E203} holds for all $\beta\in\Lambda^{<\gamma}$.
If $\gamma=S(\beta)$, we let 
$$\underset{\alpha\in\Lambda^{<\gamma}}{\Sigma}\|x_\alpha\|=\|x_\beta\|+\underset{\alpha\in\Lambda^{<\beta}}{\Sigma}\|x_\alpha\|,$$
which ensures that condition (i) of Definition \ref{D21} is satisfied. Also, note that
$$
\begin{aligned}
\underset{\alpha\in\Lambda^{<\gamma}}{\Sigma}\|x_\alpha\|&=\|x_\beta\|+\underset{\alpha\in\Lambda^{<\beta}}{\Sigma}\|x_\alpha\|\le \|x_\beta\|+ \underset{\alpha\in\Lambda^{<\beta}}{\Pi}(1+\|x_\alpha\|)\\
&\le \|x_\beta\|\underset{\alpha\in\Lambda^{<\beta}}{\Pi}(1+\|x_\alpha\|)+\underset{\alpha\in\Lambda^{<\beta}}{\Pi}(1+\|x_\alpha\|)\\
&=(1+\|x_\beta\|)\underset{\alpha\in\Lambda^{<\beta}}{\Pi}(1+\|x_\alpha\|)=\underset{\alpha\in\Lambda^{<\gamma}}{\Pi}(1+\|x_\alpha\|),
\end{aligned}
$$
i.e., \eqref{E203} holds when $\beta=\gamma$.

Finally, assume that $\gamma$ is a limit element of $\Lambda\cup\{b\}$.  
We know from \eqref{E203}  that
$$
\underset{\alpha\in\Lambda^{<\beta}}{\Sigma}\|x_\alpha\|\le \underset{\alpha\in\Lambda^{<\gamma}}{\Pi}(1+\|x_\alpha\|), \quad \beta\in\Lambda^{<\gamma}.
$$
Thus,
$$
s'=\sup_{\beta\in \Lambda^{<\gamma}}\left(\underset{\alpha\in\Lambda^{<\beta}}{\Sigma}\|x_\alpha\|\right)
$$
is finite.
Given $\varepsilon > 0$, there exists a $\beta_\varepsilon\in\Lambda^{<\gamma}$ such that
$$\left|\underset{\alpha\in\Lambda^{<\beta_\varepsilon}}{\Sigma}\|x_\alpha\|-s'\right|<\varepsilon.$$
However, since $\left(\underset{\alpha\in\Lambda^{<\beta}}{\Sigma}\|x_\alpha\|\right)_{\beta\in\Lambda^{<\gamma}}$ is a nondecreasing 
transfinite sequence, it follows that 
$$
\left|\underset{\alpha\in\Lambda^{<\beta}}{\Sigma}\|x_\alpha\|-s'\right|<\varepsilon, \quad\quad \beta\in [\beta_\varepsilon,\gamma)\cap\Lambda.
$$ 
Thus, condition (ii) of Definition~\ref{D21} will be satisfied if we define 
$\underset{\alpha\in\Lambda^{<\gamma}}{\Sigma}\|x_\alpha\|= s'$. Also, we have
$$\underset{\alpha\in\Lambda^{<\gamma}}{\Sigma}\|x_\alpha\|=\sup_{\beta\in \Lambda^{<\gamma}}\left(\underset{\alpha\in\Lambda^{<\beta}}{\Sigma}\|x_\alpha\|\right)
\le \sup_{\beta\in \Lambda^{<\gamma}}\left(\underset{\alpha\in\Lambda^{<\beta}}{\Pi}(1+\|x_\alpha\|)\right)=\underset{\alpha\in\Lambda^{<\gamma}}{\Pi}(1+\|x_\alpha\|),$$
i.e., \eqref{E203} holds when $\beta=\gamma$.

The above reasoning implies that $\underset{\alpha\in\Lambda^{<\gamma}}{\Sigma}\|x_\alpha\|$
is defined for each $\gamma\in\Lambda\cup\{b\}$, whence the family  $(\|x_\alpha\|)_{\alpha\in\Lambda^{<b}}$ is summable.
\end{proof}

\begin{lemma}\label{L29} Let $(x_\alpha)_{\alpha\in\Lambda^{<b}}$ be a family in a normed space. 
Assume that $0\le\|x_\alpha\|< 1$ for all $\alpha\in\Lambda^{<b}$. Then  $(x_\alpha)_{\alpha\in\Lambda^{<b}}$ is absolutely summable 
if and only if the product of the family $(1-\|x_\alpha\|)_{\alpha\in\Lambda^{<b}}$ is positive.
\end{lemma}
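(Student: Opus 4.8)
The plan is to reduce the statement to Lemma~\ref{L27} via the substitution $y_\alpha=x_\alpha/(1-\|x_\alpha\|)$. Write $p_\alpha=\|x_\alpha\|\in[0,1)$ and $q_\alpha=\|y_\alpha\|=p_\alpha/(1-p_\alpha)\ge0$, so that $1-p_\alpha=(1+q_\alpha)^{-1}$. For $\gamma\in\Lambda\cup\{b\}$, the partial products $P_\gamma=\underset{\alpha\in\Lambda^{<\gamma}}{\Pi}(1-p_\alpha)$ (defined by $P_a=1$, $P_{S(\beta)}=(1-p_\beta)P_\beta$, and $P_\gamma=\lim_{\beta\to\gamma-}P_\beta$ at a limit element) form a nonincreasing transfinite sequence with values in $[0,1]$, so $(1-p_\alpha)_{\alpha\in\Lambda^{<b}}$ is always multipliable and positivity of its product means exactly $\inf_\gamma P_\gamma>0$.

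The first step is the equivalence: the product of $(1-p_\alpha)_{\alpha\in\Lambda^{<b}}$ is positive if and only if $(1+q_\alpha)_{\alpha\in\Lambda^{<b}}$ is multipliable. Indeed, putting $Q_\gamma=1/P_\gamma$ one has $Q_a=1$, $Q_{S(\beta)}=(1+q_\beta)Q_\beta$, and at a limit element $\gamma$ with $P_\gamma>0$ also $Q_\gamma=\lim_{\beta\to\gamma-}Q_\beta$ by continuity of $t\mapsto1/t$ on $(0,\infty)$; hence if $\inf_\gamma P_\gamma>0$, then all $P_\gamma>0$ and the numbers $Q_\gamma$ are the partial products of the multipliable family $(1+q_\alpha)_{\alpha\in\Lambda^{<b}}$, whereas if $(1+q_\alpha)_{\alpha\in\Lambda^{<b}}$ is multipliable with partial products $Q_\gamma$, then an induction on $\gamma$ shows $P_\gamma=1/Q_\gamma$, and since the $Q_\gamma$ are finite and nondecreasing, hence bounded by some $K$, we get $P_\gamma\ge1/K>0$ for every $\gamma$.

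Combining this with Lemma~\ref{L27} applied to $(y_\alpha)_{\alpha\in\Lambda^{<b}}$, which states that $(y_\alpha)_{\alpha\in\Lambda^{<b}}$ is absolutely summable if and only if $(1+q_\alpha)_{\alpha\in\Lambda^{<b}}$ is multipliable, it remains to prove that $(x_\alpha)_{\alpha\in\Lambda^{<b}}$ is absolutely summable if and only if $(y_\alpha)_{\alpha\in\Lambda^{<b}}$ is. The implication from $(y_\alpha)$ absolutely summable to $(x_\alpha)$ absolutely summable is immediate from $0\le\|x_\alpha\|=q_\alpha/(1+q_\alpha)\le q_\alpha=\|y_\alpha\|$ together with a comparison argument for summability of nonnegative families (of the kind used in the proof of Lemma~\ref{L27}). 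For the reverse implication one first needs $M:=\sup_{\alpha\in\Lambda^{<b}}\|x_\alpha\|<1$: if the set $\{\alpha\in\Lambda^{<b}:\|x_\alpha\|>1/2\}$ were infinite it would contain a strictly increasing sequence $\alpha_1<\alpha_2<\cdots$, and then the partial sums $\underset{\alpha\in\Lambda^{<\alpha_{n+1}}}{\Sigma}\|x_\alpha\|\ge\|x_{\alpha_1}\|+\cdots+\|x_{\alpha_n}\|>n/2$ would be unbounded, contradicting the fact that the partial sums of a summable nonnegative family form a nondecreasing transfinite sequence bounded above by the total sum; hence $M<1$, and then $\|y_\alpha\|=q_\alpha\le\|x_\alpha\|/(1-M)$, so absolute summability of $(x_\alpha)_{\alpha\in\Lambda^{<b}}$, together with the linearity of the space of summable families and the same comparison argument, gives absolute summability of $(y_\alpha)_{\alpha\in\Lambda^{<b}}$.

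I expect the only genuinely delicate point to be the bound $M<1$ in this last implication. In the classical case of a sequence $(p_n)_{n\in\mathbb N}$ with $p_n\in[0,1)$, summability of $(p_n)$ forces $p_n\to0$ and hence $\sup_np_n<1$ automatically, whereas over a general well-ordered index set summability only forces $\|x_\alpha\|\to0$ as $\alpha$ approaches a limit element; one therefore has to exploit the well-ordering of $\Lambda$ in order to preclude the norms $\|x_\alpha\|$ from clustering near $1$. The remaining ingredients — the comparison principle for nonnegative summable families, the reciprocal relation between the two transfinite sequences of partial products, and the verification of conditions (i)--(ii) of Definition~\ref{D21} — are routine and follow the pattern already established in the proof of Lemma~\ref{L27}. (An alternative to the transfinite bookkeeping in the first two paragraphs is to note $p_\alpha\le-\log(1-p_\alpha)\le(1-M)^{-1}p_\alpha$ and use Lemma~\ref{L25} with $\mathbb R$ in place of the algebra to pass between $(\log(1-p_\alpha))_{\alpha\in\Lambda^{<b}}$ and $(1-p_\alpha)_{\alpha\in\Lambda^{<b}}=(\exp\log(1-p_\alpha))_{\alpha\in\Lambda^{<b}}$.)
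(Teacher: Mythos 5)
Your proof is correct, but it takes a genuinely different route from the paper's. The paper works directly with the exponential comparisons: for the forward direction it argues by contradiction at the smallest $\gamma$ where the partial product vanishes (necessarily a limit element), uses Lemma \ref{L22}(a) to get $\|x_\alpha\|\le\tfrac12$ near $\gamma$, and then the bound $1-p\ge\exp(-2p)$ together with Lemma \ref{L25} to force the product to stay positive; for the converse it uses $1-p\le\exp(-p)$ and Lemma \ref{L25b} with $p_\alpha=-\|x_\alpha\|$. You instead substitute $q_\alpha=\|x_\alpha\|/(1-\|x_\alpha\|)$, exploit the exact identity $1-p_\alpha=(1+q_\alpha)^{-1}$ to set up a reciprocal correspondence between the two transfinite sequences of partial products, and then reduce everything to Lemma \ref{L27}. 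Both arguments are sound. What the paper's route buys is economy: it reuses the already-proved exponential lemmas and needs no global information about the family, only the local bound $\|x_\alpha\|\le\tfrac12$ near one limit element. Your route is more algebraic and makes the duality between $\Pi(1-p_\alpha)$ and $\Pi(1+q_\alpha)$ explicit, but it requires two auxiliary facts the paper never states: a comparison principle for nonnegative summable families, and the global bound $\sup_\alpha\|x_\alpha\|<1$ for an absolutely summable family with terms in $[0,1)$. You correctly identify both as the delicate points; your argument for $\sup_\alpha\|x_\alpha\|<1$ (only finitely many indices with $\|x_\alpha\|>\tfrac12$, else the partial sums along an increasing sequence of such indices would be unbounded) is valid, and the comparison principle does follow by the same transfinite recursion pattern as the second half of the proof of Lemma \ref{L27}, so these gaps are genuinely routine rather than substantive.
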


\begin{proof} Since $0\le\|x_\alpha\|< 1$ for all $\alpha\in\Lambda^{<b}$,
it can be shown by transfinite induction that the family $(1-\|x_\alpha\|)_{\alpha\in\Lambda^{<b}}$ is multipliable, and that the  products
$\underset{\alpha\in\Lambda^{<\gamma}}{\Pi}(1-\|x_\alpha\|)$, $\gamma\in\Lambda\cup\{b\}$, form a decreasing transfinite sequence with values in $[0,1]$.

Suppose that  $(x_\alpha)_{\alpha\in\Lambda^{<b}}$ is absolutely summable. 
Assume for contradiction that $\underset{\alpha\in\Lambda^{<\gamma}}{\Pi}(1-\|x_\alpha\|)=0$ for some $\gamma\in\Lambda\cup\{b\}$. 
Because $\Lambda\cup\{b\}$ is well-ordered, there is the smallest element $\gamma\in\Lambda\cup\{b\}$ with that property. It is a limit element of 
$\Lambda\cup\{b\}$ since $1-\|x_\alpha\|>0$ for each $\alpha\in\Lambda^{<b}$. The assumption that $(x_\alpha)_{\alpha\in\Lambda^{<b}}$ is summable implies by 
Lemma \ref{L27} the existence of a~$\beta\in \Lambda^{<\gamma}$ such that $\|x_\alpha\|\le\frac 12$ when $\alpha\in[\beta,\gamma)\cap\Lambda$. 
Thus $1-\|x_\alpha\|\ge\exp(-2\|x_\alpha\|)$ when $\alpha\in[\beta,\gamma)\cap\Lambda$, so that
$$\underset{\alpha\in[\beta,\gamma)\cap\Lambda}{\Pi}(1-\|x_\alpha\|)\ge\underset{\alpha\in[\beta,\gamma)\cap\Lambda}{\Pi}\exp(-2\|x_\alpha\|)
=\exp\left(-2\underset{\alpha\in[\beta,\gamma)\cap\Lambda}{\Sigma}\|x_\alpha\|\right)>0.$$
Consequently,
$$\underset{\alpha\in\Lambda^{<\gamma}}{\Pi}(1-\|x_\alpha\|)
\ge\exp\left(-2\underset{\alpha\in[\beta,\gamma)\cap\Lambda}{\Sigma}\|x_\alpha\|\right)\underset{\alpha\in\Lambda^{<\beta}}{\Pi}(1-\|x_\alpha\|)>0,$$
a contradiction. Thus $\underset{\alpha\in\Lambda^{<\gamma}}{\Pi}(1-\|x_\alpha\|)>0$ for every $\gamma\in\Lambda\cup\{b\}$, and hence also when $\gamma=b$.

Assume conversely that $\underset{\alpha\in\Lambda^{<b}}{\Pi}(1-\|x_\alpha\|)>0$. Since  $0\le\|x_\alpha\|< 1$, we have 
$1-\|x_\alpha\|\le \exp(-\|x_\alpha\|)\le 1$  for all $\alpha\in\Lambda^{<b}$. Using transfinite induction,
we conclude that the family $(\exp(-\|x_\alpha\|))_{\alpha\in\Lambda^{<b}}$ is multipliable and 
$$\underset{\alpha\in\Lambda^{<b}}{\Pi}\exp(-\|x_\alpha\|)\ge \underset{\alpha\in\Lambda^{<b}}{\Pi}(1-\|x_\alpha\|)>0.$$
By Lemma \ref{L25b} with $p_\alpha=-\|x_\alpha\|$, 
the family  $(-\|x_\alpha\|)_{\alpha\in\Lambda^{<b}}$ is summable. Consequently, the family $(x_\alpha)_{\alpha\in\Lambda^{<b}}$  is absolutely summable.
\end{proof}

If a family of nonnegative real numbers $(p_\alpha)_{\alpha\in\Lambda^{<b}}$ is not summable, we 
write $\underset{\alpha\in\Lambda^{<b}}{\Sigma}p_\alpha=\infty$.
Then we get the following consequence of Lemma \ref{L29}, which generalizes 
\cite[Lemma 8.3.3]{D89}.

\begin{corollary}\label{C210} 
Let $(p_\alpha)_{\alpha\in\Lambda^{<b}}$ be a family of real numbers. Assume that  $0\le p_\alpha < 1$ for all $\alpha\in\Lambda^{<b}$. Then $\underset{\alpha\in\Lambda^{<b}}{\Pi}(1-p_\alpha)=0$ if and only if $\underset{\alpha\in\Lambda^{<b}}{\Sigma}p_\alpha=\infty$. 
\end{corollary}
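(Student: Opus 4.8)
The plan is to obtain the statement as an immediate consequence of Lemma \ref{L29}. First I would specialize that lemma to the one–dimensional normed space $E=\mathbb R$ with $x_\alpha=p_\alpha$. Since $\|x_\alpha\|=|p_\alpha|=p_\alpha$, the hypothesis $0\le\|x_\alpha\|<1$ is precisely the assumption $0\le p_\alpha<1$, and Lemma \ref{L29} then asserts that $(p_\alpha)_{\alpha\in\Lambda^{<b}}$ is absolutely summable if and only if $\underset{\alpha\in\Lambda^{<b}}{\Pi}(1-p_\alpha)>0$. Moreover, because $\|x_\alpha\|=p_\alpha$, absolute summability of $(p_\alpha)_{\alpha\in\Lambda^{<b}}$ coincides with ordinary summability of $(p_\alpha)_{\alpha\in\Lambda^{<b}}$, which, by the convention adopted immediately before the corollary, is exactly the negation of $\underset{\alpha\in\Lambda^{<b}}{\Sigma}p_\alpha=\infty$.

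Next I would record that $\underset{\alpha\in\Lambda^{<b}}{\Pi}(1-p_\alpha)$ is a well-defined real number lying in $[0,1]$. This is already contained in the opening paragraph of the proof of Lemma \ref{L29}: since $0\le p_\alpha<1$, one shows by transfinite induction that $(1-p_\alpha)_{\alpha\in\Lambda^{<b}}$ is multipliable and that its partial products $\underset{\alpha\in\Lambda^{<\gamma}}{\Pi}(1-p_\alpha)$, $\gamma\in\Lambda\cup\{b\}$, form a nonincreasing transfinite sequence with values in $[0,1]$. In particular the full product is either strictly positive or equal to $0$, and these two cases are mutually exclusive and exhaustive, so ``$\underset{\alpha\in\Lambda^{<b}}{\Pi}(1-p_\alpha)$ is not positive'' is synonymous with ``$\underset{\alpha\in\Lambda^{<b}}{\Pi}(1-p_\alpha)=0$''.

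Combining these two observations yields the chain of equivalences
$$\underset{\alpha\in\Lambda^{<b}}{\Pi}(1-p_\alpha)=0 \;\Longleftrightarrow\; (p_\alpha)_{\alpha\in\Lambda^{<b}}\text{ is not summable} \;\Longleftrightarrow\; \underset{\alpha\in\Lambda^{<b}}{\Sigma}p_\alpha=\infty,$$
which is exactly the assertion of the corollary. I do not anticipate any genuine difficulty here; the only points requiring a little care are making explicit that the product exists and lies in $[0,1]$ (so that ``fails to be positive'' really means ``equals $0$''), and invoking the correct convention identifying $\underset{\alpha\in\Lambda^{<b}}{\Sigma}p_\alpha=\infty$ with non-summability of $(p_\alpha)_{\alpha\in\Lambda^{<b}}$. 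One could also remark that for $\Lambda=\mathbb N$ this recovers the classical statement \cite[Lemma 8.3.3]{D89}.
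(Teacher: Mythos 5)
Your proposal is correct and follows exactly the route the paper intends: the corollary is stated as an immediate consequence of Lemma \ref{L29} (specialized to $E=\mathbb R$, where absolute summability coincides with summability and the product of $(1-p_\alpha)$ lies in $[0,1]$), and the paper itself offers no further proof. Your careful spelling out of the two small points --- that the product exists in $[0,1]$ so ``not positive'' means ``zero'', and that the convention identifies $\underset{\alpha\in\Lambda^{<b}}{\Sigma}p_\alpha=\infty$ with non-summability --- is precisely what the paper leaves implicit.
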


\begin{example}\label{Ex201} 
The increasing sequence formed by the numbers
\begin{equation}\label{E201}
b-2^{-n}(b-a), \quad n\in\mathbb N_0,
\end{equation}
is a well-ordered subset of the interval $[a,b)\subset\mathbb R$.
The smallest number of this sequence is $a$ and its supremum is $b$. When $a=0$ and $b=1$, the numbers in \eqref{E201} form the increasing sequence
$$
\Lambda_0=\{\alpha(n_0)=1-2^{-n_0};\,n_0\in\mathbb N_0\}.
$$
Clearly, $\Lambda_0$ is a well-ordered subset of $[0,1)$.
The points of $\Lambda_0$ divide the interval $[0,1)$ into disjoint subintervals $[1-2^{-n_0},
1-2^{-n_0-1})$, $n_0\in\mathbb N_0$. Choosing $a=1-2^{-n_0}$, $b=1-2^{-n_0-1}$ in (\ref{E201}) and renaming $n$ to $n_1$, 
we obtain in each of these subintervals increasing sequences, which together form the 
well-ordered set
$$
\Lambda_1=\{\alpha(n_0,n_1)=1-2^{-n_0-1}-2^{-n_0-n_1-1};\,n_0,n_1\in\mathbb N_0\}.
$$
All numbers of $\Lambda_0\setminus\{0\}$ are limit elements of $\Lambda_1$. 

If the above process is repeated, one can obtain additional examples of well-ordered sets
$\Lambda_m$, $m\in\mathbb N$, with a~more complicated structure; see \cite[Example 2.1]{SH13}.

We now construct a family $(x_{\alpha})_{\alpha\in\Lambda_1}$ in the following way:
Choose a vector $z\ne 0$ of $E$, and let
$$
x_{\alpha(n_0,n_1)}=\frac{(-1)^{n_0+n_1}}{(n_0+1)(n_1+1)}z, \quad n_0,n_1\in\mathbb N_0.
$$
The family $(x_{\alpha})_{\alpha\in\Lambda_1}$ is summable, and its sum can be evaluated as the double sum (cf.~\cite[page 4]{SH13}) 
$$
\underset{\alpha\in\Lambda_1}{\Sigma}x_\alpha=\sum_{n_0=0}^\infty\sum_{n_1=0}^\infty\frac{(-1)^{n_0+n_1}}{(n_0+1)(n_1+1)}z
=\sum_{n_0=0}^\infty\left(\frac{(-1)^{n_0}}{n_0+1}\sum_{n_1=0}^\infty\frac{(-1)^{n_1}}{n_1+1}\right)z=(\log 2)^2 z.
$$
Clearly, $x_\alpha x_\beta=x_\beta x_\alpha$ whenever $\alpha,\beta\in\Lambda_1$. 
It then follows from Lemma \ref{L25} that the  family $(\exp x_\alpha)_{\alpha\in\Lambda_1}$ is multipliable,
and its  product is
$$
\underset{\alpha\in\Lambda_1}{\Pi}\exp x_\alpha=\exp\left(\sum_{n_0=0}^\infty\sum_{n_1=0}^\infty\frac{(-1)^{n_0+n_1}}{(n_0+1)(n_1+1)}z\right)= 
\exp\left((\log 2)^2 z\right).
$$
Note that $(x_{\alpha})_{\alpha\in\Lambda_1}$ is not absolutely summable. Thus neither  $(\exp \|x_\alpha\|)_{\alpha\in\Lambda_1}$ nor $(1+\|x_\alpha\|)_{\alpha\in\Lambda_1}$ is multipliable, and the product of $(1-\|x_\alpha\|)_{\alpha\in\Lambda_1}$ is zero.
\end{example}

\section{Product integrals and their properties}\label{S3}

The concept of product integration was originally introduced by V.~Volterra (see e.g.~\cite{AS07,VH}): Given a~continuous matrix-valued function $A:[a,b]\to\mathbb R^{n\times n}$, he considered products of the form
\begin{equation}\label{product}
(I+A(\xi_m)(t_m-t_{m-1}))(I+A(\xi_{m-1})(t_{m-1}-t_{m-2}))\cdots(I+A(\xi_1)(t_1-t_{0})),
\end{equation}
where $a=t_0<t_1<\cdots <t_m=b$ and $\xi_i\in[t_{i-1},t_i]$, $i\in\{1,\ldots, m\}$. The product integral $\prod_a^b(I+A(t)\,{\rm d}t)$ is then defined as the limit of the product \eqref{product} when the lengths of all subintervals  $[t_{i-1},t_i]$ approach zero.
The motivation for introducing this concept stems from the fact that the indefinite product integral
$t\mapsto\prod_a^t(I+A(s)\,{\rm d}s)$, $t\in[a,b]$, corresponds to the fundamental matrix of a system of $n$ homogeneous linear ordinary differential equations $x'(t)=A(t)x(t)$. In \cite{Mas}, P.~R.~Masani generalized this concept to mappings $A:[a,b]\to E$, where $E$ is a unital normed algebra, and $A$ is Riemann integrable. Other authors have considerably extended the class of product integrable mappings by introducing new definitions of product integrals in the spirit of Lebesgue, Bochner, Kurzweil, or McShane; see \cite{DF,JK,Sch94,Sch90,S15,AS07,ASSS08}. 
Product integration of vector-valued functions is applicable in the study of various evolution equations; see e.g.~\cite{DF,LM,SS}.

If the products \eqref{product} are replaced by
$$(I+A(t_m)-A(t_{m-1}))(I+A(t_{m-1})-A(t_{m-2}))\cdots(I+A(t_1)-A(t_{0})),$$
we obtain the Stieltjes-type product integral $\prod_a^b(I+{\rm d}A(t))$. The basic references on this topic are the books \cite{DN, DN2} by R.~M.~Dudley and R.~Norvai\v{s}a, and the paper  \cite{GJ} by R.~D.~Gill and S.~Johansen, who also provide a detailed overview of applications to survival analysis and Markov processes. 
Another motivation for considering Stieltjes-type product integrals comes from the theory of integral equations (also known as generalized linear differential equations; see \cite{Sch,Sch90,MT}) of the form
\begin{equation}\label{GODE}
x(t)=x(a)+\int_a^t{\rm d}[A(s)]x(s),\quad t\in[a,b],
\end{equation}
where $A:[a,b]\to\mathbb R^{n\times n}$, the unknown function $x$ takes values in $\mathbb R^n$, and  the integral on the right-hand side is the Kurzweil-Stieltjes integral. Equations of this form encompass other types of  equations, such as ordinary differential equations with impulses, dynamic equations on time scales, or functional differential equations (see~\cite{MS, MT, Sch, sla}). It turns out that under certain assumptions on $A$, the indefinite Stieltjes product integral $t\mapsto\prod_a^t(I+{\rm d}A(s))$, $t\in[a,b]$, corresponds to the fundamental matrix of Eq.~\eqref{GODE}; see \cite{Sch90}. In Section 7, we show that Stieltjes product integrals are also interesting because of their relation to the differential-geometric concept of parallel translation.

\smallskip

We now summarize some basic facts about product integration that will be needed later, including several new results about strong Kurzweil product integrals. Throughout the rest of the paper, we assume that $E$ is a unital Banach algebra.
 
\smallskip

A tagged partition of an interval $[a,b]$ is a collection of point-interval pairs $D=(\xi_i,[t_{i-1},t_i])_{i=1}^m$, where
$a=t_0<t_1< \cdots<t_m=b$ and $\xi_i\in[t_{i-1},t_i]$ for every $i\in\{1,\ldots,m\}$. 
If we relax the assumption $\xi_i\in[t_{i-1},t_i]$ and replace it by $\xi_i\in[a,b]$, then the collection $D$ is called a free tagged partition. (Note that each tagged partition is also a free tagged partition.)  

Given a function $\delta:[a,b]\to\mathbb R^+$ (called a gauge on $[a,b]$), a free tagged partition is called $\delta$-fine if 
$$[t_{i-1},t_i]\subset(\xi_i-\delta(\xi_i),\xi_i+\delta(\xi_i)),\quad i\in\{1,\ldots,m\}.$$

Let  $\mathcal I$ be the set of all compact subintervals of
$[a,b]$. Assume that a~point-interval function $V:[a,b] \times
\mathcal I \to E$ is given. For an arbitrary free tagged partition $D=(\xi_i,[t_{i-1},t_i])_{i=1}^m$ of the interval $[a,b]$,  
we denote
$$P(V,D)=\prod_{i=m}^1 V(\xi_i,[t_{i-1},t_i])=V(\xi_m,[t_{m-1},t_m])V(\xi_{m-1},[t_{m-2},t_{m-1}])\cdots V(\xi_1,[t_{0},t_1]).$$

\begin{definition}\label{E4def1}
A function $V:[a,b] \times \mathcal I \to E$ is called 
Kurzweil product integrable, if there exists an invertible element $P_V\in E$ 
with the following property: For each $\varepsilon > 0$, there exists a gauge $\delta:[a,b]\to\mathbb R^+$ such that 
\begin{equation}\label{E41}
\left\|P(V,D)-P_V\right\|<\varepsilon
\end{equation}
for all $\delta$-fine tagged partitions of $[a,b]$. In this case, $P_V$ is called the Kurzweil product integral of $V$ and will be denoted 
by ${\prod}_a^b V(t,{\rm d}t)$. 

If \eqref{E41} holds for all $\delta$-fine free tagged partitions of $[a,b]$,
then $V$ is called McShane product integrable over $[a,b]$. The McShane product integral $P_V$ will again be denoted by ${\prod}_a^b V(t,{\rm d}t)$.

The definition of Riemann product integrability  is obtained from the definition 
of Kurzweil product integrability if the gauge $\delta$ is assumed to be constant on $[a,b]$. In this case, the integral 
${\prod}_a^b V(t,{\rm d}t)$  is called the Riemann product integral.
\end{definition}

It follows from the definition that Riemann or McShane product integrability implies Kurzweil product integrability.

In practice, the most common types of product integrals are obtained by taking a function $A:[a,b]\to E$ and defining 
$V:[a,b] \times \mathcal I \to E$  as follows:
\begin{itemize}
\item For $V(t,[x,y])=I+A(t)(y-x)$,  the corresponding product integrals ${\prod}_a^b V(t,{\rm d}t)$ are 
simply referred to as the product integrals of $A$ and  are usually denoted by $\prod_a^b(I+A(t)\,{\rm d}t)$; see Definition \ref{K-prod-int}
\item For $V(t,[x,y])=\exp(A(t)(y-x))$,  the corresponding product integrals ${\prod}_a^b V(t,{\rm d}t)$ are 
called the exponential product integrals of $A$ and  are denoted by $\prod_a^b\exp(A(t)\,{\rm d}t)$; see \cite[Definition 3.7]{S15}.
\item For $V(t,[x,y])=I+A(y)-A(x)$,  the corresponding product integrals ${\prod}_a^b V(t,{\rm d}t)$ are 
called the Stieltjes product integrals of $A$ and  are denoted by $\prod_a^b (I+{\rm d}A(t))$; see Definition \ref{KS-prod-int}.
\end{itemize}

Although the definition of the general product integral $\prod_a^b V(t,{\rm d}t)$ seems unmotivated, it is a convenient concept since it includes the above-mentioned types of product integrals as special cases. By developing the theory of this general product integral, we can avoid the process of repeatedly proving similar theorems for the three particular types of integrals. Moreover, we will demonstrate in Section 8 that the strong version of the product integral $\prod_a^b V(t,{\rm d}t)$ is closely related to J.~Kurzweil's theory of generalized differential equations.

\smallskip
To obtain a reasonable theory, we need to impose certain additional assumptions on the function\break $V:[a,b] \times \mathcal I \to E$.
The following conditions are taken over from \cite{Sch90}, where they are collectively referred to as the condition $\mathcal C$:
\begin{itemize}
\item[(V1)] $V(t,[t,t])=I$ for every $t\in[a,b]$. 
\item[(V2)] For every $t\in[a,b]$ and $\varepsilon > 0$ there is a $\sigma> 0$ such that
$$
\|V(t,[x,y])-V(t,[t,y])V(t,[x,t])\|< \varepsilon$$
for all $x,y\in[a,b]$, $t-\sigma<x\le t\le y< t+\sigma$.
\item[(V3)]  For every $t\in[a,b)$, there exists an invertible element $V_+(t)\in E$ such that
$\underset{y\to t+}{\lim} V(t,[t,y])=V_+(t)$.
\item[(V4)]  For every $t\in(a,b]$, there exists an invertible element $V_-(t)\in E$ such that
$\underset{x\to t-}{\lim} V(t,[x,t])=V_-(t)$.
\end{itemize}

The next statement from \cite[Theorem 1.7]{Sch90} summarizes some basic properties of the Kurzweil/McShane product integrals.
(In \cite{Sch90}, the statement is formulated for $E=\mathbb R^{n\times n}$, but the proof remains valid in every unital Banach algebra; see also \cite[Remark 1.17]{Sch90}.)

\begin{theorem}
Assume that $V:[a,b]\times \mathcal I \to E$ satisfies conditions (V1)--(V4) and  the Kurzweil/McShane product
integral $\prod _a^bV(t,{\rm d}t)$ exists. Then for every $c \in (a,b)$, the Kurzweil/McShane
product integrals
$\prod _a^cV(t,{\rm d}t)$ and $\prod _c^bV(t,{\rm d}t)$ exist and
the equality
\begin{equation}\label{union-equality}
\prod _c^bV(t,{\rm d}t) \prod _a^cV(t,{\rm d}t) = \prod _a^bV(t,{\rm d}t).
\end{equation}
holds. Moreover, the functions $s\mapsto \prod _a^sV(t,{\rm d}t)$ and $s\mapsto \left(\prod_a^sV(t,{\rm d}t)\right)^{-1}$ are bounded on $[a,b]$.
\end{theorem}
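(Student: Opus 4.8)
The statement has two essentially independent parts: the existence of $\prod_a^cV(t,{\rm d}t)$ and $\prod_c^bV(t,{\rm d}t)$ together with the composition rule \eqref{union-equality}, and the boundedness of $s\mapsto\prod_a^sV(t,{\rm d}t)$ and of its inverse. For the first part the plan is a Cauchy-type argument obtained by splitting tagged partitions of $[a,b]$ at the point $c$. A preliminary observation makes the partition products that arise invertible: by (V2) together with the invertibility of $V_+(t)$, $V_-(t)$ from (V3)--(V4), for each $t\in[a,b]$ there is $\delta_0(t)>0$ such that $V(t,[x,y])$ lies within $\tfrac12\bigl\|(V_+(t)V_-(t))^{-1}\bigr\|^{-1}$ of the invertible element $V_+(t)V_-(t)$ (with the obvious modification at $t=a$ and $t=b$) whenever $t-\delta_0(t)<x\le t\le y<t+\delta_0(t)$, hence is itself invertible; thus $\delta_0$ is a gauge on $[a,b]$ for which every factor of $P(V,D)$, and therefore $P(V,D)$ itself, is invertible whenever $D$ is a $\delta_0$-fine tagged partition of a subinterval of $[a,b]$.

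Fix $c\in(a,b)$ and write $P_V=\prod_a^bV(t,{\rm d}t)$. Given $\varepsilon\in\bigl(0,\tfrac12\|P_V^{-1}\|^{-1}\bigr)$, choose a gauge $\delta\le\delta_0$ witnessing product integrability of $V$ over $[a,b]$ with accuracy $\varepsilon$. If $D_1,D_2$ are $\delta$-fine tagged partitions of $[a,c]$ and $D'$ is a $\delta$-fine tagged partition of $[c,b]$, then $D_1\cup D'$ and $D_2\cup D'$ are $\delta$-fine tagged partitions of $[a,b]$ and, from the definition of $P(V,\cdot)$, $P(V,D_i\cup D')=P(V,D')\,P(V,D_i)$; both lie within $\varepsilon$ of $P_V$, so $\|P(V,D')(P(V,D_1)-P(V,D_2))\|<2\varepsilon$, whence $\|P(V,D_1)-P(V,D_2)\|\le 2\varepsilon\,\|P(V,D')^{-1}\|$. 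Granting a uniform bound $\|P(V,D')^{-1}\|\le B$ over all sufficiently fine $D'$ (see the last paragraph), the partition products over $[a,c]$ form a Cauchy net and converge to some $Q_{ac}\in E$, and likewise those over $[c,b]$ converge to some $Q_{cb}$. Refining all partitions in $P(V,D\cup D')=P(V,D')\,P(V,D)$ gives $Q_{cb}Q_{ac}=P_V$; fixing one $\delta$-fine tagged partition $D'$ of $[c,b]$ and refining only $D$ gives $\|P(V,D')\,Q_{ac}-P_V\|\le\varepsilon$, so $P(V,D')\,Q_{ac}$ is invertible, and since $P(V,D')$ is invertible as well, $Q_{ac}=P(V,D')^{-1}(P(V,D')\,Q_{ac})$ is invertible; symmetrically $Q_{cb}$ is invertible. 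Hence $\prod_a^cV(t,{\rm d}t)=Q_{ac}$ and $\prod_c^bV(t,{\rm d}t)=Q_{cb}$ exist in the sense of Definition \ref{E4def1}, and \eqref{union-equality} holds. (The McShane case is entirely analogous, with free tagged partitions throughout.)

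For the boundedness, iterating the first part shows that $\prod_u^vV(t,{\rm d}t)$ exists and is invertible for all $a\le u<v\le b$, and \eqref{union-equality} with $c=s$ gives $\prod_a^sV(t,{\rm d}t)=\bigl(\prod_s^bV(t,{\rm d}t)\bigr)^{-1}P_V$; thus it suffices to bound $\prod_s^bV(t,{\rm d}t)$ and its inverse uniformly in $s$. I would do this by showing that $F:s\mapsto\prod_a^sV(t,{\rm d}t)$ is a regulated function, which amounts to the local relations $\prod_s^rV(t,{\rm d}t)\to V_+(s)$ as $r\to s+$ and $\prod_r^sV(t,{\rm d}t)\to V_-(s)$ as $r\to s-$, so that $F(s+)=V_+(s)F(s)$ and $F(s-)=V_-(s)^{-1}F(s)$ exist at every point; these relations follow from (V1)--(V4) and the uniform estimate of the final paragraph. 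A regulated function on a compact interval is bounded, and, all the values and one-sided limits involved being invertible, $s\mapsto F(s)^{-1}$ is regulated as well (inversion is continuous on the open set of invertibles), hence also bounded.

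The main obstacle is the uniform estimate invoked above: one must produce a gauge $\delta$ on $[a,b]$ and a constant $B<\infty$ such that $\|P(V,D)\|\le B$ and $\|P(V,D)^{-1}\|\le B$ for every $\delta$-fine tagged partition $D$ of every subinterval of $[a,b]$; equivalently, the local relations $\prod_s^rV(t,{\rm d}t)\to V_\pm(s)$ must be made quantitative and uniform in $s$. This cannot be obtained from product integrability over $[a,b]$ by soft estimates alone --- the obvious inequalities only reproduce the vacuous $1\le\|P_V\|\,\|P_V^{-1}\|$ --- and has to be proved by combining conditions (V1)--(V4) with a Cousin/compactness covering argument on $[a,b]$, in the spirit of \cite{Sch90}. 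The remaining steps are the routine partition-gluing bookkeeping indicated above.
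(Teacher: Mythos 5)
A preliminary remark on the comparison: the paper does not prove this statement at all --- it is quoted from \cite[Theorem 1.7]{Sch90} with only the observation that Schwabik's proof carries over from $\mathbb R^{n\times n}$ to a general unital Banach algebra. So your proposal is measured against Schwabik's argument, and its skeleton reproduces that argument faithfully: invertibility of the individual factors $V(\xi_i,[t_{i-1},t_i])$ of a fine partition via (V2)--(V4), splitting partitions at $c$, a Cauchy-net argument for the products over $[a,c]$ and $[c,b]$, and the derivation of \eqref{union-equality} and of the boundedness of $s\mapsto\prod_a^sV(t,{\rm d}t)$ and its inverse from a uniform bound on partition products over subintervals.

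The proposal nevertheless has a genuine gap, precisely where you flag one: the existence of a gauge $\delta$ and a constant $B$ with $\|P(V,D)\|\le B$ and $\|P(V,D)^{-1}\|\le B$ for every $\delta$-fine tagged partition $D$ of every subinterval of $[a,b]$. Every step you actually carry out --- the Cauchy criterion for the net over $[a,c]$, the invertibility of the limits $Q_{ac}$ and $Q_{cb}$, the regulatedness and hence boundedness of the indefinite integral --- is conditional on this estimate, and, as you yourself observe, it cannot be extracted from integrability over $[a,b]$ by soft manipulations: completing a fine partition of $[u,v]$ to one of $[a,b]$ expresses $P(V,D)^{-1}$ through the products over $[a,u]$ and $[v,b]$, which is circular. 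That bound is the corresponding boundedness lemma in \cite{Sch90}, and it is where essentially all of the analytic content of the theorem lives; its proof is a nontrivial covering/continuation argument along $[a,b]$ that uses (V3)--(V4) to control products locally near each point and then patches the local bounds together. Deferring it ``in the spirit of \cite{Sch90}'' therefore leaves the proof incomplete at its crux. Two minor repairable points besides: for $x=t<y$ (resp.\ $x<t=y$) the factor $V(t,[x,y])$ tends to $V_+(t)$ (resp.\ $V_-(t)$) rather than to $V_+(t)V_-(t)$, so the preliminary invertibility observation must treat these cases separately; and in the McShane case the tags of a free tagged partition need not lie in their intervals, so conditions (V1)--(V4) do not make the factors of an arbitrary $P(V,D')$ invertible --- one should take the auxiliary partition $D'$ of $[c,b]$ to be an ordinary tagged partition, which is still admissible as a free one.
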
 

According to the next proposition from \cite[Lemma 1.11]{Sch90}, conditions (V1)--(V4) imply that 
the indefinite Kurzweil product integral is a regulated function.

\begin{theorem}\label{indefinite-regulated}
Assume that $V:[a,b]\times \mathcal I \to E$ satisfies conditions (V1)--(V4) and  the Kurzweil  product
integral $\prod _a^bV(t,{\rm d}t)$ exists. Then 
\begin{eqnarray*}
\underset{\beta\to s-}{\lim}\prod_a^\beta V(t,{\rm d}t)&=&V_{-}(s)^{-1}\cdot\prod _a^s V(t,{\rm d}t),\quad\quad s\in(a,b],\\
\underset{\beta\to s+}{\lim}\prod_a^\beta V(t,{\rm d}t)&=&V_{+}(s)\cdot\prod _a^s V(t,{\rm d}t),\quad\quad\;\;\;\;s\in[a,b).
\end{eqnarray*}
\end{theorem}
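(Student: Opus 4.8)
The plan is to reduce both displayed formulas, via the multiplicativity identity \eqref{union-equality} of the preceding theorem, to the one-sided limits
$$\lim_{\beta\to s-}\prod_\beta^s V(t,{\rm d}t)=V_-(s)\quad (s\in(a,b]),$$
$$\lim_{\beta\to s+}\prod_s^\beta V(t,{\rm d}t)=V_+(s)\quad (s\in[a,b)).$$
Here, for $a<\beta<s$, the identity \eqref{union-equality} gives $\prod_a^\beta V(t,{\rm d}t)=\left(\prod_\beta^s V(t,{\rm d}t)\right)^{-1}\prod_a^s V(t,{\rm d}t)$, where $\prod_\beta^s V(t,{\rm d}t)=\prod_a^s V(t,{\rm d}t)\left(\prod_a^\beta V(t,{\rm d}t)\right)^{-1}$ is invertible (both factors being Kurzweil product integrals); since $V_-(s)$ is invertible and inversion is continuous on the open set of invertible elements of $E$, the first one-sided limit will deliver the stated formula for $\lim_{\beta\to s-}\prod_a^\beta V(t,{\rm d}t)$, and the right-hand formula follows symmetrically by using \eqref{union-equality} on $[s,b]$. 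So I would only prove the first one-sided limit; the second is handled analogously, working with a gauge fixed on $[s,b]$ and cancelling $\prod_\beta^b V(t,{\rm d}t)$ (whose inverse is uniformly bounded, again by the preceding theorem) instead of $\prod_a^\beta V(t,{\rm d}t)$.

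To prove $\lim_{\beta\to s-}\prod_\beta^s V(t,{\rm d}t)=V_-(s)$, I would fix $\varepsilon>0$ and proceed as follows. Since $\prod_a^s V(t,{\rm d}t)$ exists by the preceding theorem, fix a gauge $\delta$ on $[a,s]$ such that $\|P(V,D)-\prod_a^s V(t,{\rm d}t)\|<\varepsilon$ for every $\delta$-fine tagged partition $D$ of $[a,s]$; then $\delta(s)>0$ is a fixed number. Using (V4), the invertibility of $V_-(s)$, and continuity of inversion, choose $\sigma\in(0,\delta(s)]$ and a constant $M$ depending only on $V_-(s)$ so that for $x\in(s-\sigma,s)$ the element $V(s,[x,s])$ is invertible with $\|V(s,[x,s])\|\le M$ and $\|V(s,[x,s])-V_-(s)\|<\varepsilon$. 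Finally, let $K$ be a bound for $\left\|\left(\prod_a^\beta V(t,{\rm d}t)\right)^{-1}\right\|$, $\beta\in[a,s]$, which exists by the boundedness assertion of the preceding theorem.

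The decisive step, which is also where the main difficulty lies, is to show that for $\beta\in(s-\sigma,s)$ the integral $\prod_\beta^s V(t,{\rm d}t)$ is closely approximated by the single factor $V(s,[\beta,s])$. One cannot simply feed the one-interval partition of $[\beta,s]$ into the definition of the product integral, because the gauge certifying product integrability over $[\beta,s]$ depends on $\beta$ and may be arbitrarily small near $s$. Instead, I would keep the gauge $\delta$ fixed on $[a,s]$: for $\beta\in(s-\sigma,s)$ pick, via Cousin's lemma, a tagged partition $D_1$ of $[a,\beta]$ that is $\delta$-fine and fine enough that $\|P(V,D_1)-\prod_a^\beta V(t,{\rm d}t)\|<\varepsilon$; adjoining $(s,[\beta,s])$ yields a $\delta$-fine tagged partition $D$ of $[a,s]$ (here $\beta>s-\delta(s)$ is used), so $\|P(V,D)-\prod_a^s V(t,{\rm d}t)\|<\varepsilon$; and since $P(V,D)=V(s,[\beta,s])\,P(V,D_1)$, a short estimate using the triangle inequality, the bound $M$, multiplication on the right by $\left(\prod_a^\beta V(t,{\rm d}t)\right)^{-1}$, the bound $K$, and the identity $\prod_\beta^s V(t,{\rm d}t)=\prod_a^s V(t,{\rm d}t)\left(\prod_a^\beta V(t,{\rm d}t)\right)^{-1}$ gives $\|V(s,[\beta,s])-\prod_\beta^s V(t,{\rm d}t)\|\le K(M+1)\varepsilon$. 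Combined with $\|V(s,[\beta,s])-V_-(s)\|<\varepsilon$ this bounds $\|\prod_\beta^s V(t,{\rm d}t)-V_-(s)\|$ by $(K(M+1)+1)\varepsilon$ for all $\beta\in(s-\sigma,s)$, and since $K$ and $M$ are independent of $\varepsilon$, letting $\varepsilon\to0$ finishes the proof.
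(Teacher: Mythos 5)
Your argument is correct. Note that the paper itself gives no proof of this statement -- it is quoted from \v{S}.~Schwabik's paper (Lemma 1.11 of \cite{Sch90}) -- so there is nothing internal to compare against; your reduction via \eqref{union-equality} to the one-sided limits $\lim_{\beta\to s-}\prod_\beta^s V(t,{\rm d}t)=V_-(s)$ and $\lim_{\beta\to s+}\prod_s^\beta V(t,{\rm d}t)=V_+(s)$, and the key device of keeping the gauge $\delta$ for $[a,s]$ fixed, refining it on $[a,\beta]$ by the gauge certifying $\prod_a^\beta V(t,{\rm d}t)$, and adjoining the single tagged pair $(s,[\beta,s])$, is exactly the standard (Schwabik-style) proof, with all the needed uniform bounds ($K$, $M$) correctly identified as independent of $\varepsilon$.
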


We now define the concept of the strong product integral $\prod_a^b V(t,{\rm d}t)$, which generalizes the definitions of 
the strong product integrals 
$\prod_a^b(I+A(t)\,{\rm d}t)$ and $\prod_a^b\exp(A(t)\,{\rm d}t)$ from \cite[Definitions 3.4 and 3.8]{S15}. The motivation for
introducing  strong product integrals is explained in \cite[Section 3]{S15}; the main reason is that in infinite dimension,
ordinary product integrals no longer possess the same pleasant properties as their finite-dimensional counterparts,
while the theory of strong product integrals closely parallels the finite-dimensional case.

\begin{definition}\label{strongDef}
{\em A function $V:[a,b] \times \mathcal I \to E$ is called strongly Kurzweil product integrable if there is a~function
$W:[a,b]\to E$ such that $W(t)^{-1}$ exists for all $t\in[a,b]$, both $W$ and $W^{-1}$ are bounded, and for every $\varepsilon>0$, 
there is a gauge $\delta:[a,b]\to\mathbb R^+$
such that
\begin{equation}\label{strong}
\sum_{i=1}^m\|V(\xi_i,[t_{i-1},t_i])-W(t_i)W(t_{i-1})^{-1}\|<\varepsilon
\end{equation}
for every $\delta$-fine tagged partition of $[a,b]$. In this case, we define the strong Kurzweil product integral as $\prod_a^b V(t,{\rm d}t)=W(b)W(a)^{-1}$.

If \eqref{strong} holds for all $\delta$-fine free tagged partitions of $[a,b]$,
then $A$ is called strongly McShane product integrable over $[a,b]$. The strong McShane product integral is again defined  as $\prod_a^b V(t,{\rm d}t)=W(b)W(a)^{-1}$.}
\end{definition}

The next statement  is a generalization of \cite[Theorem 3.5]{S15}.

\begin{theorem}\label{K-prod-existence}
If $V:[a,b] \times \mathcal I \to E$ is strongly Kurzweil/McShane product integrable, 
then it is also Kurzweil/McShane product integrable and the values of the product integrals coincide.
\end{theorem}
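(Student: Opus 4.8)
The plan is to verify that the element $W(b)W(a)^{-1}$ furnished by Definition~\ref{strongDef} is also the Kurzweil (resp.\ McShane) product integral of $V$ in the sense of Definition~\ref{E4def1}. First I would fix $W$ as in Definition~\ref{strongDef} and set $M_1=\sup_{t\in[a,b]}\|W(t)\|$, $M_2=\sup_{t\in[a,b]}\|W(t)^{-1}\|$ and $M=M_1M_2$; all three are finite by hypothesis. The crucial step is a conjugation by $W$. Given a free tagged partition $D=(\xi_i,[t_{i-1},t_i])_{i=1}^m$ of $[a,b]$, put
\[
\tilde V_i=W(t_i)^{-1}\,V(\xi_i,[t_{i-1},t_i])\,W(t_{i-1}),\qquad i\in\{1,\dots,m\}.
\]
Inserting the factors $W(t_j)W(t_j)^{-1}=I$ between consecutive terms of $P(V,D)=V(\xi_m,[t_{m-1},t_m])\cdots V(\xi_1,[t_0,t_1])$ yields the telescoping identity $\prod_{i=m}^1\tilde V_i=W(b)^{-1}\,P(V,D)\,W(a)$, i.e.\ $P(V,D)=W(b)\left(\prod_{i=m}^1\tilde V_i\right)W(a)^{-1}$. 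Since $\tilde V_i-I=W(t_i)^{-1}\left(V(\xi_i,[t_{i-1},t_i])-W(t_i)W(t_{i-1})^{-1}\right)W(t_{i-1})$, this gives $\sum_{i=1}^m\|\tilde V_i-I\|\le M\sum_{i=1}^m\|V(\xi_i,[t_{i-1},t_i])-W(t_i)W(t_{i-1})^{-1}\|$.

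Next I would invoke the elementary estimate, proved by induction on $m$: for any $S_1,\dots,S_m\in E$,
\[
\left\|\prod_{i=m}^1(I+S_i)-I\right\|\le\prod_{i=1}^m(1+\|S_i\|)-1\le\exp\left(\sum_{i=1}^m\|S_i\|\right)-1.
\]
Taking $S_i=\tilde V_i-I$ and combining with the conjugation identity gives, for every free tagged partition $D$,
\[
\left\|P(V,D)-W(b)W(a)^{-1}\right\|\le M\left(\exp\left(M\sum_{i=1}^m\|V(\xi_i,[t_{i-1},t_i])-W(t_i)W(t_{i-1})^{-1}\|\right)-1\right).
\]

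To finish, given $\varepsilon>0$ I would choose $\eta>0$ with $M(e^{M\eta}-1)<\varepsilon$ (possible because $M(e^{M\eta}-1)\to 0$ as $\eta\to 0+$), and take $\delta$ to be the gauge supplied by Definition~\ref{strongDef} for this $\eta$. Then for every $\delta$-fine tagged partition (in the Kurzweil case) or $\delta$-fine free tagged partition (in the McShane case) the sum inside the exponential above is less than $\eta$, so $\|P(V,D)-W(b)W(a)^{-1}\|<\varepsilon$. As $W(b)W(a)^{-1}$ is a product of invertible elements it is invertible, hence it satisfies the requirements of Definition~\ref{E4def1}; therefore $V$ is Kurzweil (resp.\ McShane) product integrable with $\prod_a^b V(t,{\rm d}t)=W(b)W(a)^{-1}$, which is precisely the value of the strong product integral. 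I do not expect a genuine obstacle here: the only real idea is the conjugation by $W$, which recasts the partition factors as perturbations of the identity; after that the classical ``product close to $I$'' bound closes the argument, and it applies word for word to tagged partitions and to free tagged partitions alike.
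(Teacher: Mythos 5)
Your proof is correct and follows essentially the same route as the paper: conjugate each factor by $W$ to reduce to a product of near-identity elements, bound that product, and undo the conjugation. The only difference is cosmetic: the paper cites the quadratic estimate from Jarn\'{\i}k--Kurzweil (valid when $\sum_i\|y_i\|\le 1$, hence the restriction $\varepsilon<1/M^2$ there), whereas you prove an exponential bound $\prod_i(1+\|S_i\|)-1\le\exp(\sum_i\|S_i\|)-1$ inline, which works equally well.
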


\begin{proof}
Let us prove the statement concerning Kurzweil product integrals; the proof of the McShane counterpart is a straightforward modification.
Consider the function $W$ from Definition \ref{strongDef}. There exists a constant $M>0$ such that 
$\|W(t)\|\le M$ and $\|W(t)^{-1}\|\le M$ for all $t\in[a,b]$. Take an arbitrary $\varepsilon\in\left(0,\frac{1}{M^2}\right)$. There exists a~gauge $\delta:[a,b]\to\mathbb R^+$
such that
\begin{equation*}
\sum_{i=1}^m\|V(\xi_i,[t_{i-1},t_i])-W(t_i)W(t_{i-1})^{-1}\|<\varepsilon
\end{equation*}
for every $\delta$-fine tagged partition of $[a,b]$. Consequently,
$$\sum_{i=1}^m\|W(t_i)^{-1}V(\xi_i,[t_{i-1},t_i])W(t_{i-1})-I\|<M^2\varepsilon<1.$$
We need the following estimate, which follows from \cite[Lemma 2.1]{JK}: 
If $y_1,\ldots,y_m\in E$ are such that $\sum_{i=1}^m\|y_i\|\le 1$, then
$$\left\|(I+y_m)\cdots(I+y_1)-I\right\|\le \sum_{i=1}^m \|y_i\|+\left(\sum_{i=1}^m\|y_i\|\right)^2.$$

By letting $y_i=W(t_i)^{-1}V(\xi_i,[t_{i-1},t_i])W(t_{i-1})-I$, $i\in\{1,\ldots,m\}$, we get
$$\left\|W(t_m)^{-1}\left(\prod_{i=m}^1 V(\xi_i,[t_{i-1},t_i])\right)W(t_{0})-I\right\|
=\left\|\prod_{i=m}^1 W(t_i)^{-1}V(\xi_i,[t_{i-1},t_i])W(t_{i-1})-I\right\|$$
$$=\|(I+y_m)\cdots(I+y_1)-I\|\le  \sum_{i=1}^m\|y_i\|+\left(\sum_{i=1}^m\|y_i\|\right)^2
<M^2\varepsilon+M^4\varepsilon^2.$$
It follows that 
$$\left\|\prod_{i=m}^1 V(\xi_i,[t_{i-1},t_i])-W(b)W(a)^{-1}\right\|= \left\|\prod_{i=m}^1 V(\xi_i,[t_{i-1},t_i])-W(t_m)W(t_0)^{-1}\right\|<M^4\varepsilon+M^6\varepsilon^2$$
for every $\delta$-fine tagged partition of $[a,b]$, which proves that the Kurzweil product integral $\prod_a^b V(t,{\rm d}t)$ exists and equals $W(b)W(a)^{-1}$. 
\end{proof}

It is straightforward to see that strong Kurzweil product integrability on $[a,b]$ implies strong Kurzweil product integrability on every subinterval of $[a,b]$.
In the next theorem, we show that strong Kurzweil product integrability on two adjacent intervals $[a,c]$ and $[c,b]$ implies strong Kurzweil product integrability on $[a,b]$.

\begin{theorem}
Assume that $V:[a,b]\times \mathcal I \to E$ satisfies conditions (V1)--(V4). Moreover, suppose that for a~certain $c \in [a,b]$,
the strong Kurzweil product integrals
$\prod _a^cV(t,{\rm d}t)$ and $\prod _c^bV(t,{\rm d}t)$ exist. Then the strong Kurzweil product integral  $\prod _a^bV(t,{\rm d}t)$ exists as well.
\end{theorem}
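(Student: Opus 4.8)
If $c\in\{a,b\}$ the claim is trivial, so assume $c\in(a,b)$. The plan is to glue the witnesses on the two pieces into a single witness $W:[a,b]\to E$ in the sense of Definition~\ref{strongDef}. Let $W_1:[a,c]\to E$ and $W_2:[c,b]\to E$ be the functions associated with $\prod_a^cV(t,{\rm d}t)$ and $\prod_c^bV(t,{\rm d}t)$, and fix $M\ge 1$ with $\|W_i(t)\|\le M$ and $\|W_i(t)^{-1}\|\le M$ on the respective domains. I would set
\begin{equation*}
W(t)=\begin{cases} W_1(t),& t\in[a,c],\\[2pt] W_2(t)W_2(c)^{-1}W_1(c),& t\in[c,b].\end{cases}
\end{equation*}
The two branches agree at $t=c$, each $W(t)$ is invertible, and $W$ and $W^{-1}$ are bounded on $[a,b]$, being finite products of uniformly bounded invertible elements. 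The candidate value of the integral is then $W(b)W(a)^{-1}=W_2(b)W_2(c)^{-1}W_1(c)W_1(a)^{-1}=\big(\prod_c^bV(t,{\rm d}t)\big)\big(\prod_a^cV(t,{\rm d}t)\big)$, in agreement with \eqref{union-equality}.

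Given $\varepsilon>0$, I would construct the gauge $\delta$ on $[a,b]$ as follows. First use (V2) at $t=c$ to pick $\sigma_c>0$ with $\|V(c,[x,y])-V(c,[c,y])V(c,[x,c])\|<\varepsilon/2$ whenever $c-\sigma_c<x\le c\le y<c+\sigma_c$, and use (V3) to pick $\eta_0>0$ with $\|V(c,[c,y])\|\le\|V_+(c)\|+1$ for $c\le y<c+\eta_0$; set $C=\max\{1,\|V_+(c)\|+1,M^2\}$. Next let $\delta_1$ on $[a,c]$ and $\delta_2$ on $[c,b]$ be gauges witnessing \eqref{strong} for $V$ with tolerance $\varepsilon/(4C)$, via $W_1$ and $W_2$ respectively. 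Finally put $\delta(t)=\min\{\delta_1(t),c-t\}$ for $t\in[a,c)$, $\delta(t)=\min\{\delta_2(t),t-c\}$ for $t\in(c,b]$, and $\delta(c)=\min\{\delta_1(c),\delta_2(c),\sigma_c,\eta_0\}$. The decisive feature is that $\delta(t)\le|t-c|$ for $t\ne c$, which forces $c$ to be a tag of every $\delta$-fine tagged partition $D$ of $[a,b]$.

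For such a $D$, I would split the unique interval of $D$ that contains $c$ in its interior into its two halves, both tagged by $c$; if $c$ is already a division point of $D$ this step is vacuous. This produces a $\delta$-fine tagged partition $D'=D_1'\cup D_2'$ in which $D_1'$ is a $\delta_1$-fine tagged partition of $[a,c]$ and $D_2'$ a $\delta_2$-fine tagged partition of $[c,b]$. On the unchanged intervals the summands of \eqref{strong} for $D$ and for $D'$ coincide, so it suffices to compare the single summand $\|V(c,[u,v])-W(v)W(u)^{-1}\|$ of $D$, where $u<c<v$, with the two summands of $D'$ replacing it. Writing $W(v)W(u)^{-1}=\big(W(v)W(c)^{-1}\big)\big(W(c)W(u)^{-1}\big)$, inserting $V(c,[c,v])V(c,[u,c])$ by the triangle inequality, and applying $\|PQ-RS\|\le\|P\|\,\|Q-S\|+\|P-R\|\,\|S\|$, that summand is at most $\varepsilon/2$ (from (V2)) plus $C$ times the sum of the two new summands, since $\|V(c,[c,v])\|\le C$ and $\|W(c)W(u)^{-1}\|=\|W_1(c)W_1(u)^{-1}\|\le M^2\le C$. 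Summing over $D$ and using $C\ge1$, the left side of \eqref{strong} for $D$ is therefore at most $\varepsilon/2$ plus $C$ times the total of \eqref{strong} over $D_1'$ plus $C$ times the total over $D_2'$, hence less than $\varepsilon/2+C\cdot\frac{\varepsilon}{4C}+C\cdot\frac{\varepsilon}{4C}=\varepsilon$. This gives \eqref{strong} on $[a,b]$, so the strong Kurzweil product integral $\prod_a^bV(t,{\rm d}t)$ exists and equals $W(b)W(a)^{-1}$.

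The gluing of $W$ and the check that $D_1'$, $D_2'$ are genuinely $\delta_1$-, $\delta_2$-fine partitions of the two subintervals are routine. The only place the structural hypotheses enter is the treatment of the single interval straddling $c$: there (V2) pays for replacing $V(c,[u,v])$ by the product over the split, and (V3) together with the a priori bound $M$ controls the extra multiplicative factors. I expect this splitting estimate to be the main obstacle; the remainder is bookkeeping.
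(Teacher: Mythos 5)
Your proof is correct and follows essentially the same route as the paper's: glue the two witness functions at $c$ (after normalizing so they agree there), choose the gauge so that $\delta(t)\le|t-c|$ forces $c$ to be the tag of the straddling interval, and control that single interval via condition (V2) together with a one-sided bound from (V3)/(V4) and the boundedness of $W$ and $W^{-1}$. The only cosmetic differences are that you bound $\|V(c,[c,v])\|$ using (V3) where the paper bounds $\|V(c,[x,c])\|$ using (V4), and your explicit splitting of the straddling interval into two halves tagged by $c$ makes the reduction to full $\delta_1$- and $\delta_2$-fine partitions of $[a,c]$ and $[c,b]$ slightly cleaner than the paper's direct estimate.
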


\begin{proof}
By the assumption, we have a pair of functions $W_1:[a,c]\to E$, $W_2:[c,b]\to E$
with the properties specified in Definition \ref{strongDef}. Without loss of generality, assume that $W_1(c)=W_2(c)$; otherwise, we can replace $W_2$ by the function $\tilde W_2$ given by $\tilde W_2(t)=W_2(t)W_2(c)^{-1}W_1(c)$. 
Let $W:[a,b]\to E$ be given by $W(t) = W_1(t)$
for $t\in[a, c]$, and $W(t) = W_2(t)$ for $t\in[c, b]$.

There exists an $M>0$ such that $\|W_i(t)\|\le M$ and $\|W_i(t)^{-1}\|\le M$ for all $t$ and $i\in\{1,2\}$. 

For an arbitrary $\varepsilon>0$, we have a pair of gauges $\delta_1:[a,c]\to\mathbb R^+$, $\delta_2:[c,b]\to\mathbb R^+$ having the properties specified in Definition \ref{strongDef}. Also, thanks to the conditions (V2) and (V4), there exists a $\delta_c>0$ such that
\begin{equation}\label{H-prop}
\|V(c,[x,y])-V(c,[c,y])V(c,[x,c])\|<\varepsilon\quad\mbox{ and }\quad \|V(c,[x,c])-V_-(c)\|<\varepsilon
\end{equation}
for all $x,y\in [a,c]$ with $c-\delta_c<x\le c\le y<c+\delta_c$.
Let $\delta:[a,b]\to\mathbb R^+$ be given by
$$\delta(t)=\begin{cases}
\min(\delta_1(t),c-t),& t\in[a,c),\\
\delta_c, & t=c,\\
\min(\delta_2(t),t-c), & t\in(c,b].
\end{cases}$$
Consider an arbitrary $\delta$-fine partition $(\xi_i,[t_{i-1},t_i])_{i=1}^m$ of $[a,b]$. Our choice of $\delta$ implies the existence of a~unique index $j\in\{1,\ldots,m\}$ such that $t_{j-1}\le\xi_j=c\le t_j$. Obviously, we have
\begin{align*}
\sum_{i=1}^{j-1}\|V(\xi_i,[t_{i-1},t_i])-W(t_i)W(t_{i-1})^{-1}\|&=\sum_{i=1}^{j-1}\|V(\xi_i,[t_{i-1},t_i])-W_1(t_i)W_1(t_{i-1})^{-1}\|<\varepsilon,\\
\sum_{i=j+1}^{m}\|V(\xi_i,[t_{i-1},t_i])-W(t_i)W(t_{i-1})^{-1}\|&=\sum_{i=1}^{j-1}\|V(\xi_i,[t_{i-1},t_i])-W_2(t_i)W_2(t_{i-1})^{-1}\|<\varepsilon.
\end{align*}
Moreover, using \eqref{H-prop}, we get
$$\|V(\xi_j,[t_{j-1},t_j])-W(t_j)W(t_{j-1})^{-1}\|=\|V(c,[t_{j-1},t_j])-V(c,[c,t_j])V(c,[t_{j-1},c])\|$$
$$+\|V(c,[c,t_j])V(c,[t_{j-1},c])-W(t_j)W(t_{j-1})^{-1}\|<\varepsilon+\|V(c,[c,t_j])V(c,[t_{j-1},c])-W(t_j)W(c)^{-1}V(c,[t_{j-1},c])\|$$
$$+\|W(t_j)W(c)^{-1}V(c,[t_{j-1},c])-W(t_j)W(c)^{-1}W(c)W(t_{j-1})^{-1}\|$$
$$\leq \varepsilon+\|V(c,[c,t_j])-W(t_j)W(c)^{-1}\|\cdot\|V(c,[t_{j-1},c])\|+\|W(t_j)W(c)^{-1}\|\cdot\|V(c,[t_{j-1},c])-W(c)W(t_{j-1})^{-1}\|$$
$$\leq \varepsilon+\varepsilon\cdot(\varepsilon+\|V_-(c)\|)+M^2\varepsilon=\varepsilon\cdot(1+\varepsilon+\|V_-(c)\|+M^2).$$
Consequently, 
$$\sum_{i=1}^{m}\|V(\xi_i,[t_{i-1},t_i])-W(t_i)W(t_{i-1})^{-1}\|<\varepsilon\cdot(3+\varepsilon+\|V_-(c)\|+M^2),$$
which proves that $V$ is strongly Kurzweil product integrable on $[a,b]$.
\end{proof} 

For strong Kurzweil product integrals, we have the following Hake-type theorem (the corresponding statement for ordinary Kurzweil product integrals can be found in \cite[Theorem 1.13]{Sch90}; the proof still works in unital Banach algebras). 

\begin{theorem}\label{strongHake}
Assume that $V:[a,b]\times \mathcal I \to E$ satisfies conditions (V1)--(V4) and that for every $c\in[a,b)$, the strong Kurzweil product
integral $\prod _a^c V(t,{\rm d}t)$ exists. Suppose also that 
\begin{equation}\label{eqq}
\underset{c\to b-}{\lim}V(b,[c,b])\prod _a^c V(t,{\rm d}t)=L,
\end{equation}
where $L\in E$ is invertible. Then the strong Kurzweil product integral $\prod_a^b V(t,{\rm d}t)$ exists and equals $L$.
\end{theorem}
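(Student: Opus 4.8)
The plan is to combine the hypothesis \eqref{eqq} with the strong product integrability on each $[a,c]$ in order to build a single function $W:[a,b]\to E$ witnessing strong Kurzweil product integrability on $[a,b]$ in the sense of Definition \ref{strongDef}. First I would fix an increasing sequence $c_n\to b-$, say $c_n = b - 2^{-n}(b-a)$; on each $[a,c_n]$ there is a function $W_n$ (with $W_n$, $W_n^{-1}$ bounded and invertible everywhere) realizing the strong integral, and by rescaling $W_n$ as in the proof of the union theorem I may assume these functions are compatible, i.e. $W_{n+1}$ restricted to $[a,c_n]$ agrees with $W_n$ on a suitable common piece; more precisely, I would replace $W_{n+1}$ by $t\mapsto W_{n+1}(t)W_{n+1}(a)^{-1}W_n(a)$ and then argue inductively so that all the $W_n$ glue to a well-defined function $W$ on $[a,b) = \bigcup_n[a,c_n]$. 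Then I extend $W$ to $b$ by setting $W(b) = L\,W(a)$ (this is the only sensible choice, since we want $\prod_a^b V(t,\mathrm{d}t) = W(b)W(a)^{-1} = L$); invertibility of $W(b)$ follows from invertibility of $L$ and of $W(a)$.

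Next I would verify that $W$ and $W^{-1}$ are bounded on all of $[a,b]$. Boundedness on $[a,c_n]$ for each fixed $n$ is immediate, but a uniform bound requires an argument: here I would use condition (V3) together with \eqref{eqq}. Indeed, \eqref{eqq} says $V(b,[c,b])\prod_a^c V(t,\mathrm{d}t)\to L$, and since $V_-(b) = \lim_{c\to b-}V(b,[c,b])$ exists and is invertible by (V4), it follows that $\prod_a^c V(t,\mathrm{d}t) = W(c)W(a)^{-1}$ converges to $V_-(b)^{-1}L$ as $c\to b-$; hence $\{W(c)W(a)^{-1} : c\in[a,b)\}$ is a convergent, therefore bounded, family, and likewise its inverses (the limit $V_-(b)^{-1}L$ being invertible), so $W$ and $W^{-1}$ are bounded on $[a,b)$ and, after adjoining the single point $b$, on $[a,b]$.

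The core of the proof is the gauge construction. Given $\varepsilon>0$, pick $n$ with $c_n$ close enough to $b$ that, on $[c_n,b)$, the partial sums controlled by (V2), (V4) and the convergence in \eqref{eqq} are all smaller than $\varepsilon$; concretely I would choose $c_n$ so that $\|V(b,[c,b])\prod_a^c V(t,\mathrm{d}t) - L\| < \varepsilon$ and $\|W(c)W(a)^{-1} - W(b)W(a)^{-1}\| < \varepsilon$ for all $c\in[c_n,b)$, and also so that $\|V(b,[x,y]) - V(b,[t,y])V(b,[x,t])\| < \varepsilon$ for the relevant $x,y$ near $b$ via (V2). On $[a,c_n]$ use the gauge $\delta_n$ from the strong integral there, truncated so that $\delta_n(t) \le c_n - t$ for $t < c_n$ and so that the tag $b$ is forced to sit in the last subinterval; near $b$ use a gauge coming from (V2)/(V4) for the point $b$, analogous to the $\delta_c$ construction in the previous theorem. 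For a $\delta$-fine tagged partition of $[a,b]$, the last subinterval $[t_{m-1},b]$ has tag $b$ and lies in $[c_n,b]$; I split $\|V(b,[t_{m-1},b]) - W(b)W(t_{m-1})^{-1}\|$ using (V2) into a term $\|V(b,[t_{m-1},b]) - V(b,[c_n? \dots])\dots\|$ — actually I only need to compare $V(b,[t_{m-1},b])$ with $W(b)W(t_{m-1})^{-1}$ directly, writing $W(b)W(t_{m-1})^{-1} = (W(b)W(a)^{-1})(W(t_{m-1})W(a)^{-1})^{-1}$ and using that both $W(b)W(a)^{-1}$ and $W(t_{m-1})W(a)^{-1}$ are within $\varepsilon$ of the limit together with a boundedness estimate, plus the (V2) and (V4) estimates to bring $V(b,[t_{m-1},b])$ into play; the remaining subintervals $[t_{i-1},t_i]$ with $i<m$ all lie in $[a,c_n]$, so their contribution is $<\varepsilon$ by the strong integrability there. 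Summing, the total is $\le C\varepsilon$ for a constant $C$ depending only on the uniform bound $M$ and $\|V_-(b)\|$, which is what Definition \ref{strongDef} demands.

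The main obstacle I anticipate is the gluing-and-uniform-boundedness step: one must be careful that the individually-normalized functions $W_n$ on the nested intervals $[a,c_n]$ can genuinely be chosen consistently (a transfinite- or ordinary-induction bookkeeping), and that the resulting $W$ has bounded inverse up to $b$ — this is exactly where the invertibility of $L$ in \eqref{eqq} and of $V_-(b)$ in (V4) are essential, since without them $W^{-1}$ could blow up near $b$ and $W(b)$ would fail to be invertible. The estimate near the endpoint, handling the subinterval containing the tag $b$, is the second delicate point, but it parallels the $\delta_c$-argument in the preceding theorem and should go through with the same bookkeeping.
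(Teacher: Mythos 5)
Your overall architecture matches the paper's: take $W(t)=\prod_a^t V(s,{\rm d}s)$ on $[a,b)$ (your glued, normalized version differs from this only by a constant right factor, which is harmless), extend to $b$ so that $W(b)W(a)^{-1}=L$, get boundedness of $W$ and $W^{-1}$ near $b$ from \eqref{eqq} and (V4), and treat the last subinterval, whose tag is forced to be $b$, via the estimates $\|V(b,[t_{m-1},b])-V_-(b)\|<\varepsilon$ and the convergence of $W(c)^{-1}$. (One small inconsistency there: $W(c)W(a)^{-1}\to V_-(b)^{-1}L$, not to $W(b)W(a)^{-1}=L$, so your requirement $\|W(c)W(a)^{-1}-W(b)W(a)^{-1}\|<\varepsilon$ is not achievable unless $V_-(b)=I$; the correct quantity to control is $\|W(c)^{-1}-W(b)^{-1}V_-(b)\|$, as in the paper.)

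The genuine gap is in the gauge construction on the "middle zone.'' You fix a single $n$, use the truncated gauge $\delta_n$ on $[a,c_n]$, and then claim that every subinterval $[t_{i-1},t_i]$ with $i<m$ lies in $[a,c_n]$. That is false: a gauge cannot prevent points of $(c_n,b)$ from appearing as tags, and any $\xi\in(c_n,b)$ may tag an arbitrarily small subinterval contained in $(c_n,b)$. Such a subinterval is neither the last one (its tag is not $b$) nor contained in $[a,c_n]$, and conditions (V2)/(V4) \emph{at the point $b$} say nothing about $V(\xi,[t_{i-1},t_i])$ for $\xi<b$; so these terms are completely uncontrolled, and there may be arbitrarily many of them. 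The missing idea is the standard Cauchy-extension device used in the paper: take the whole exhaustion $b_n\uparrow b$, on each $[a,b_n]$ choose a gauge $\delta_n$ achieving tolerance $\varepsilon 2^{-n}$, and define $\delta(t)<\min(\delta_n(t),b_n-t)$ where $n$ is chosen per point $t$ (say the least $n$ with $t<b_n$). Then every tag $\xi_i<b$ has its subinterval inside the corresponding $[a,b_n]$, and grouping the terms by $n$ (completing each group to a full $\delta_n$-fine partition of $[a,b_n]$ and using that the defining sum has nonnegative terms) bounds the total contribution of all $i<m$ by $\sum_n\varepsilon 2^{-n}=\varepsilon$. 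Without this summable-tolerances bookkeeping your plan does not close.
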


\begin{proof}
Let $W(t)=\prod_a^t V(s,{\rm d}s)$, $t\in[a,b)$, and $W(b)=L$. 
Eq.~\eqref{eqq} together with condition (V4) imply that $\underset{c\to b-}{\lim}W(c)=V_-(b)^{-1}W(b)$, and therefore
\begin{equation}\label{eqq2}
\underset{c\to b-}{\lim}W(c)^{-1}=W(b)^{-1}V_-(b).
\end{equation}
Let $M>0$ be such that $\|W(t)\|\le M$ for all $t\in[a,b]$. 

Consider an arbitrary $\varepsilon>0$. 
Let $\{b_n\}_{n=1}^\infty$ be an increasing sequence in $(a,b)$ with $\underset{n\to\infty}{\lim}b_n=b$.  For every $n\in\mathbb N$, there exists a~gauge
$\delta_n:[a,b_n]\to\mathbb R^+$ such that the inequality
$$\sum_{i=1}^{m}\|V(\xi_i,[t_{i-1},t_i])-W(t_i)W(t_{i-1})^{-1}\|<\frac{\varepsilon}{2^{n}}$$
holds for each $\delta$-fine partition $(\xi_i,[t_{i-1},t_i])_{i=1}^m$ of $[a,b_n]$.

For an arbitrary $t\in[a,b)$, there is an $n\in\mathbb N$ such that $t\in[a,b_n)$. Let $\delta(t)>0$ be an arbitrary number satisfying
$\delta(t)<\min(\delta_n(t),b_n-t)$. Also, thanks to condition (V4) and Eq.~\eqref{eqq2}, there is a $\delta(b)>0$ such that
$\|V(b,[t,b])-V_-(b)\|<\varepsilon$ and  $\|W(t)^{-1}-W(b)^{-1}V_-(b)\|<\varepsilon$ whenever $t\in(b-\delta(b),b]$. We have now defined a gauge $\delta:[a,b]\to\mathbb R^+$. 
Consider an arbitrary $\delta$-fine partition $(\xi_i,[t_{i-1},t_i])_{i=1}^m$ of $[a,b]$. Our choice of~$\delta$ guarantees that 
$\xi_i<b$ for $i\in\{1,\ldots,m-1\}$ and $\xi_m=b$. Moreover, if $\xi_i\in[a,b_n]$, then $[t_{i-1},t_i]\subset[a,b_n]$. 

Consequently,
$$\sum_{i=1}^{m}\|V(\xi_i,[t_{i-1},t_i])-W(t_i)W(t_{i-1})^{-1}\|=
\sum_{i=1}^{m-1}\|V(\xi_i,[t_{i-1},t_i])-W(t_i)W(t_{i-1})^{-1}\|$$
$$+\|V(b,[t_{m-1},b])-W(b)W(t_{i-1})^{-1}\|\le\sum_{n=1}^\infty\sum_{{i;\, \xi_i\in[a,b_n]}}\|V(\xi_i,[t_{i-1},t_i])-W(t_i)W(t_{i-1})^{-1}\|$$
$$+\|V(b,[t_{m-1},b])-V_-(b)\|+\|W(b)W(b)^{-1}V_-(b)-W(b)W(t_{i-1})^{-1}\|
\le \sum_{n=1}^\infty\frac{\varepsilon}{2^{n}}+\varepsilon+M\varepsilon=\varepsilon(2+M),$$
which proves that $V$ is strongly Kurzweil product integrable on $[a,b]$.
\end{proof} 

We conclude our overview of product integration theory with some information about the product integrals of the form 
$\prod_a^b(I+A(t)\,{\rm d}t)$, which are defined as follows.

\begin{definition}\label{K-prod-int}
A mapping $A:[a,b]\to E$ is called Kurzweil/McShane/Riemann product integrable if the function $V:[a,b] \times \mathcal I \to E$
given by $V(t,[x,y])=I+A(t)(y-x)$ is Kurzweil/McShane/Riemann product integrable in the sense of Definition \ref{E4def1}.
In this case, the product integral of $A$ is defined as $\prod_a^b(I+A(t)\,{\rm d}t)={\prod}_a^b V(t,{\rm d}t)$.

$A$ is called strongly Kurzweil/McShane product integrable if $V$ is strongly Kurzweil/McShane product integrable in the sense of Definition \ref{strongDef}. 
\end{definition}

\begin{remark}\label{prodIntTypes}
For an arbitrary $A:[a,b]\to E$, consider the function $V:[a,b] \times \mathcal I \to E$ given by $V(t,[x,y])=I+A(t)(y-x)$.
Then the condition (V1) is obviously satisfied, and (V3), (V4) hold with $V_+(t)=V_-(t)=I$. Finally, if $t\in[a,b]$ and $\varepsilon>0$,
take an arbitrary $\sigma>0$ such that $\|A(t)\|^2\sigma^2<\varepsilon$.   
Then, if $x,y\in[a,b]$ and 
$t-\sigma<x\le t\le y< t+\sigma$, we have
$$\|V(t,[x,y])-V(t,[t,y])V(t,[x,t])\|=\|A(t)^2\|(y-t)(t-x)<\|A(t)\|^2\sigma^2<\varepsilon,$$
which shows that (V2) is satisfied. 

According to Theorem \ref{indefinite-regulated}, the indefinite Kurzweil product integral $s\mapsto \prod _a^s(I+A(t)\,{\rm d}t)$ is  continuous. 
\end{remark}

The next theorem provides a simple criterion for the existence of the Riemann product integral; the proof can be found in \cite[Section 5]{Mas} or \cite[Section 5.5]{AS07}.

\begin{theorem}
A function $A:[a,b]\to E$ is Riemann product integrable if and only if it is Riemann integrable.
\end{theorem}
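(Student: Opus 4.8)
The plan is to prove the two implications separately. For the forward direction, suppose $A:[a,b]\to E$ is Riemann integrable. The key idea is that Riemann integrability gives good control over Riemann sums of $A$ on arbitrarily fine partitions, and the product $P(V,D)=\prod_{i=m}^1(I+A(\xi_i)(t_i-t_{i-1}))$ should be approximated by $\exp\bigl(\sum_i A(\xi_i)(t_i-t_{i-1})\bigr)$, which in turn converges (by continuity of $\exp$ and the definition of the Riemann integral) to $\exp\bigl(\int_a^b A(t)\,{\rm d}t\bigr)$. To make the comparison between the product and the exponential of the sum, I would use the estimates from Remark~\ref{expAndLogProperties}(5), namely $\|\exp x-I-x\|\le\|x\|^2\exp\|x\|$ applied to each factor $x=A(\xi_i)(t_i-t_{i-1})$; since each such $x$ has norm at most $\|A\|_\infty\cdot\mathrm{mesh}(D)$, the discrepancy per factor is of order $\mathrm{mesh}(D)^2$, and summing over the $O(1/\mathrm{mesh}(D))$ factors gives a total error of order $\mathrm{mesh}(D)$, which tends to $0$. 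One must also keep the partial products uniformly bounded; this follows from $\|P(V,D)\|\le\prod_i(1+\|A(\xi_i)\|(t_i-t_{i-1}))\le\exp\bigl(\|A\|_\infty(b-a)\bigr)$. So the Riemann product integral exists and equals $\exp\bigl(\int_a^b A(t)\,{\rm d}t\bigr)$ — in particular it is invertible, as required in Definition~\ref{E4def1}.

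For the converse, suppose $A$ is Riemann product integrable, with Riemann product integral $P\in E$ invertible. The goal is to recover Riemann integrability of $A$, and the natural route is to apply the logarithm. Fix $\varepsilon>0$ small; by Definition~\ref{E4def1} there is a constant gauge $\delta>0$ such that $\|P(V,D)-P\|<\varepsilon$ for all tagged partitions with $\mathrm{mesh}(D)<\delta$. Comparing two partitions $D$, $D'$ with the same division points but different tags (or a common refinement argument) and using the product structure, one extracts a Cauchy-type condition on the Riemann sums $S(A,D)=\sum_i A(\xi_i)(t_i-t_{i-1})$; the cleanest formulation is that $\|P(V,D)P(V,D')^{-1}-I\|$ is small whenever $D,D'$ are fine, and this together with $P(V,D)\approx\exp(S(A,D))$ (from the first part of the argument, which did not use Riemann integrability of $A$, only boundedness) and $\log(\exp(S(A,D))\exp(-S(A,D')))\approx S(A,D)-S(A,D')$ for commuting-enough or small arguments forces $\|S(A,D)-S(A,D')\|$ to be small. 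Hence the Riemann sums form a Cauchy net and $A$ is Riemann integrable.

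The main obstacle is the converse direction, and specifically the noncommutativity of $E$: the identity $\exp x\exp y=\exp(x+y)$ fails in general, so passing from a statement about products $P(V,D)$ to a statement about sums $S(A,D)$ via the logarithm is not as clean as in the scalar case. The way around this is to work with sufficiently fine partitions so that every factor $I+A(\xi_i)(t_i-t_{i-1})$ is within, say, $\|A\|_\infty\delta$ of $I$; then $\log$ of each factor is well-defined with $\|\log(I+A(\xi_i)(t_i-t_{i-1}))-A(\xi_i)(t_i-t_{i-1})\|=O(\mathrm{mesh}(D)^2)$ per factor, so $\log$-type quantities and $S(A,D)$ differ by $O(\mathrm{mesh}(D))$; and for the product one uses the Baker–Campbell–Hausdorff-type bound that $\log\bigl((I+y_m)\cdots(I+y_1)\bigr)$ differs from $\sum y_i$ by something controlled by $\bigl(\sum\|y_i\|\bigr)^2$ (an estimate in the same spirit as \cite[Lemma~2.1]{JK} used in the proof of Theorem~\ref{K-prod-existence}). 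Since $\sum\|y_i\|\le\|A\|_\infty(b-a)$ is bounded, this comparison constant is uniform, and one can push the estimates through. I would reference \cite[Section~5]{Mas} or \cite[Section~5.5]{AS07} for the detailed bookkeeping rather than reproduce it, since this equivalence is essentially classical and the excerpt already cites those sources for the proof.
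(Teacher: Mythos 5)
The paper does not prove this theorem itself; it cites Masani \cite[Section 5]{Mas} and \cite[Section 5.5]{AS07}, so there is no in-paper argument to compare against. Judged on its own merits, your sketch has a genuine gap, and it is precisely the noncommutativity issue you flag but do not actually overcome. The central claim of your forward direction — that $P(V,D)=\prod_{i=m}^1(I+A(\xi_i)\Delta t_i)$ is approximated by $\exp\bigl(\sum_i A(\xi_i)\Delta t_i\bigr)$, so that the Riemann product integral equals $\exp\bigl(\int_a^b A(t)\,{\rm d}t\bigr)$ — is false in a noncommutative Banach algebra. Each factor is indeed within $O(\Delta t_i^2)$ of $\exp(A(\xi_i)\Delta t_i)$, so the product is close to the \emph{ordered} product $\prod_i\exp(A(\xi_i)\Delta t_i)$; but collapsing that into a single exponential of the sum requires the terms to commute, and the discrepancy is governed by commutators $[A(\xi_i)\Delta t_i,\,A(\xi_j)\Delta t_j]$ whose total size is $O\bigl((\sum_i\|A(\xi_i)\|\Delta t_i)^2\bigr)=O(1)$, not $O(\mathrm{mesh}(D))$. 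Already for $E=\mathbb R^{2\times 2}$ the product integral is the time-ordered exponential and differs from $\exp(\int A)$. The correct forward argument is a Cauchy-net argument: compare $P(V,D)$ with $P(V,D'')$ for a refinement $D''$, bound each factor-wise discrepancy by the oscillation of $A$ on $[t_{i-1},t_i]$ times $\Delta t_i$ plus an $O(\Delta t_i^2)$ term, and feed the resulting sum into the multiplicative perturbation estimate of \cite[Lemma 2.1]{JK}; Riemann integrability is used exactly through the vanishing of $\sum_i\omega(A,[t_{i-1},t_i])\Delta t_i$, not through convergence of Riemann sums to $\exp$ of anything.

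The same problem sinks the converse as written. Your proposed fix — that $\log\bigl((I+y_m)\cdots(I+y_1)\bigr)$ differs from $\sum_i y_i$ by something controlled by $\bigl(\sum_i\|y_i\|\bigr)^2$ — does not help, because $\sum_i\|y_i\|\le\|A\|_\infty(b-a)$ is merely \emph{bounded}, so the error term is $O(1)$ and does not tend to zero with the mesh; a uniformly bounded but non-vanishing discrepancy cannot force the Riemann sums $S(A,D)$ to form a Cauchy net. (Moreover $\log P(V,D)$ need not even be defined, since $P(V,D)$ need not lie in the ball $\|x-I\|<1$.) Two further points need attention in the converse: boundedness of $A$ is itself a consequence of Riemann product integrability that must be established, not assumed (compare Theorem \ref{RiemannStep}); and the workable strategy is local — use product integrability over short subintervals, where the partial products \emph{are} close to $I$ and the linear term $\sum_i A(\xi_i)\Delta t_i$ dominates the quadratic remainder, to deduce smallness of the oscillation sums, and then patch. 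Deferring the bookkeeping to \cite{Mas} or \cite{AS07} is legitimate (the paper does exactly that), but the heuristic you offer in its place is not a correct outline of that argument.
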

 
Next, let us recall the so-called strong Luzin condition. 

\begin{definition}\label{SL}
{\em A mapping $W:[a,b]\to E$ is said to satisfy the strong Luzin condition on $[a,b]$ if for every 
$\varepsilon >0$ and $Z\subset [a,b]$ of measure zero, there
exists a function $\delta:Z\to\mathbb R^+$ such that 
\begin{equation*}
\sum_{j=1}^m \|W(v_j) - W(u_j)\| < \varepsilon
\end{equation*}
for every  collection of point-interval pairs $(\tau_j,[u_j,v_j])_{j=1}^m$ with 
$[u_j,v_j]\subset[a,b]$, $\tau_j \in Z$, and $\tau_j-\delta(\tau_j)<u_j\le\tau_j\le v_j<\tau_j+\delta(\tau_j)$ for all $j\in\{1,2,\dots,m\}$.}
\end{definition}

It is easily verified that every mapping which satisfies the strong Luzin condition is necessarily continuous, and
that a product of two mappings satisfying the strong Luzin condition again satisfies the same condition.

\smallskip

The strong Luzin condition appears in the 
following characterization of strongly Kurzweil product integrable mappings from \cite[Corollary 4.8]{S15}. 

\begin{theorem}\label{T22}
For every mapping $A:[a,b]\to E$, the following statements are equivalent:
\begin{enumerate}
\item $A$ is strongly Kurzweil product integrable.
\item There is a mapping $W:[a,b]\to E$ which satisfies the strong Luzin condition, $W(t)^{-1}$ exists for all $t\in[a,b]$, and $W'(t)=A(t)W(t)$ 
for almost all $t\in[a,b]$.   
\end{enumerate}
\end{theorem}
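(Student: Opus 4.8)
The plan is to prove the two implications separately, the bridge being the observation that $A$ is strongly Kurzweil product integrable with the mapping $W$ of Definition~\ref{strongDef} precisely when $W-W(a)$ is a strong Henstock--Kurzweil primitive (the Henstock--Lebesgue primitive of \cite{SH13}) of the mapping $t\mapsto A(t)W(t)$; throughout I write $V(t,[x,y])=I+A(t)(y-x)$, so that by Remark~\ref{prodIntTypes} the conditions (V1)--(V4) hold and the indefinite Kurzweil product integral of $A$ is continuous. For \textbf{(1)$\Rightarrow$(2)}, take the mapping $W$ furnished by Definition~\ref{strongDef}; it is pointwise invertible with $W$, $W^{-1}$ bounded, say by $M$, and it is continuous, since by Theorem~\ref{K-prod-existence} applied on each $[a,s]$ one has $\prod_a^s(I+A(t)\,{\rm d}t)=W(s)W(a)^{-1}$ and this indefinite integral is continuous by Remark~\ref{prodIntTypes}. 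Now the identity $I+A(\xi_i)(t_i-t_{i-1})-W(t_i)W(t_{i-1})^{-1}=-\bigl(W(t_i)-W(t_{i-1})-A(\xi_i)(t_i-t_{i-1})W(t_{i-1})\bigr)W(t_{i-1})^{-1}$ turns the estimate \eqref{strong} into $\sum_i\|W(t_i)-W(t_{i-1})-A(\xi_i)(t_i-t_{i-1})W(t_{i-1})\|<M\varepsilon$ for a suitable gauge; refining that gauge so that $\|W(s)-W(t)\|<\varepsilon/(1+\|A(t)\|)$ whenever $|s-t|<\delta(t)$ — possible by continuity of $W$, and requiring no measurability of $A$ — lets me replace $W(t_{i-1})$ by $W(\xi_i)$ at the cost of an additional error $\le\varepsilon(b-a)$. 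Hence $W-W(a)$ is the strong Henstock--Kurzweil primitive of $t\mapsto A(t)W(t)$, and the classical properties of such primitives (see e.g.~\cite{Sye05,SH13}) give both that $W$ satisfies the strong Luzin condition of Definition~\ref{SL} and, via the fundamental theorem, that $W'(t)=A(t)W(t)$ for almost every $t$, which is statement (2).

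For \textbf{(2)$\Rightarrow$(1)}, let $W$ be as in (2). Since $W$ satisfies the strong Luzin condition it is continuous, hence bounded on the compact interval $[a,b]$, and $t\mapsto W(t)^{-1}$ is then continuous and bounded as well; let $M$ bound both. Let $Z$ be the null set of points where $W'$ fails to exist or to equal $AW$, and fix $\varepsilon>0$. At each $t\in[a,b]\setminus Z$ I would use differentiability of $W$ at $t$: pick $\delta(t)>0$ so small that $\|W(y)-W(x)-W'(t)(y-x)\|\le\eta(y-x)$ for $x\le t\le y$ in $(t-\delta(t),t+\delta(t))$ (obtained by splitting this difference at $t$) and also $\|W(t)-W(x)\|\le\eta/(1+\|A(t)\|)$ there; since $W'(t)=A(t)W(t)$, each such interval then contributes at most $2\eta(y-x)$ to $\sum_i\|W(t_i)-W(t_{i-1})-A(\xi_i)(t_i-t_{i-1})W(t_{i-1})\|$, for a total of at most $2\eta(b-a)$. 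At the points of $Z$ I would combine the strong Luzin gauge for $W$, which makes $\sum_{\xi_i\in Z}\|W(t_i)-W(t_{i-1})\|<\varepsilon$, with a gauge coming from open sets $G_n\supseteq\{t\in Z:n-1\le\|A(t)\|<n\}$ of length $<\varepsilon/(n2^n)$, chosen so that every $\delta$-fine interval tagged in that slice lies in $G_n$; this forces $\sum_{\xi_i\in Z}\|A(\xi_i)\|(t_i-t_{i-1})\le\varepsilon$ and hence bounds the $Z$-part of the sum by $\varepsilon(1+M)$. Taking the common refinement of these gauges and choosing $\eta$, $\varepsilon$ small enough makes $\sum_i\|W(t_i)-W(t_{i-1})-A(\xi_i)(t_i-t_{i-1})W(t_{i-1})\|$ arbitrarily small; multiplying the summands on the right by $W(t_{i-1})^{-1}$ and using $\|W(t_{i-1})^{-1}\|\le M$ recovers \eqref{strong}, so $A$ is strongly Kurzweil product integrable with this same $W$.

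The step I expect to be the main obstacle is the passage, in \textbf{(1)$\Rightarrow$(2)}, from the multiplicative estimate \eqref{strong} to an additive strong Henstock--Kurzweil estimate for $t\mapsto A(t)W(t)$: the delicate point is the uniform replacement of $W(t_{i-1})$ by $W(\xi_i)$ despite $A$ being an arbitrary, possibly unbounded and non-measurable mapping, which I handle by letting the gauge depend on $\|A(t)\|$ together with the continuity of $W$ (itself secured by Theorem~\ref{K-prod-existence} and Remark~\ref{prodIntTypes}). Once that reduction is in place, the two conclusions of (2) — the almost-everywhere identity $W'=AW$ and the strong Luzin property of $W$ — are the standard facts about primitives of strong Henstock--Kurzweil integrable functions, and the converse direction is then a routine gauge argument splitting $[a,b]$ into the set where $W'=AW$ and a null set controlled via open covers and the strong Luzin condition.
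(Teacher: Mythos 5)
Your argument is correct. Note that the paper itself gives no proof of Theorem \ref{T22} --- it is quoted verbatim from \cite[Corollary 4.8]{S15} --- so the comparison is with that source rather than with anything in the present text; your route (identify strong Kurzweil product integrability of $A$ with the statement that $W-W(a)$ is the strong Henstock--Kurzweil primitive of $t\mapsto A(t)W(t)$, then invoke the standard characterization of such primitives by the strong Luzin condition together with $W'=AW$ a.e., and for the converse run the usual three-part gauge argument splitting off the null set via open covers $G_n$ graded by $\|A\|$) is exactly the mechanism behind the cited corollary. The two details that could have gone wrong are handled properly: the replacement of $W(t_{i-1})$ by $W(\xi_i)$ is made uniform by letting the gauge at $t$ depend on $1+\|A(t)\|$ (no measurability of $A$ is needed), and the continuity of $W$ that this requires is legitimately extracted from Theorem \ref{K-prod-existence} together with Theorem \ref{indefinite-regulated} and Remark \ref{prodIntTypes}.
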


\begin{remark}\label{T23}
If $A$ is strongly Kurzweil product integrable, then the mapping $W$ from the second statement of Theorem \ref{T22}
can be chosen as the indefinite product integral  $W(t)=\prod_a^t(I+A(s)\,{\rm d}s)$, $t\in[a,b]$; this follows from \cite[Theorems 4.2 and 4.6]{S15}.
Consequently, the indefinite product integral provides a solution of the linear differential equation $W'(t)=A(t)W(t)$ a.e.~in $[a,b]$, $W(a)=I$.
\end{remark}

\smallskip

The next theorem is concerned with the question whether the sum $A_1+A_2$ of two strongly Kurzweil product integrable mappings $A_1, A_2$
is again strongly Kurzweil product integrable. Although we do not know the answer in general,
the next result, which is sufficient for our purposes and will be needed in Section \ref{S6}, provides an affirmative answer in the simpler case when one of the mappings
is Bochner integrable.  Recall that by \cite[Theorem 4.14]{S15}, Bochner integrability is equivalent to strong McShane product integrability,
which  in turn implies strong Kurzweil product integrability.

\begin{lemma}\label{sum}
If $A_1,A_2:[a,b]\to E$ are such that $A_1$ is strongly Kurzweil product integrable and $A_2$ is Bochner integrable, then $A_1+A_2$
is strongly Kurzweil product integrable. 
\end{lemma}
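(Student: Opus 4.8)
The plan is to use the characterization of strong Kurzweil product integrability provided by Theorem \ref{T22}, together with Remark \ref{T23}. Since $A_1$ is strongly Kurzweil product integrable, let $W_1(t)=\prod_a^t(I+A_1(s)\,{\rm d}s)$; by Remark \ref{T23} the mapping $W_1$ satisfies the strong Luzin condition, $W_1(t)^{-1}$ exists for all $t$, and $W_1'(t)=A_1(t)W_1(t)$ for almost all $t\in[a,b]$. The idea is to produce the corresponding $W$ for $A_1+A_2$ in the factored form $W(t)=W_1(t)U(t)$, where $U$ absorbs the Bochner-integrable perturbation $A_2$; formally one wants $W'(t)=(A_1(t)+A_2(t))W(t)$, which upon substituting $W=W_1U$ and using $W_1'=A_1W_1$ forces $U'(t)=W_1(t)^{-1}A_2(t)W_1(t)U(t)$. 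This is again a linear equation of the same type, now driven by $B(t):=W_1(t)^{-1}A_2(t)W_1(t)$.

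First I would check that $B$ is Bochner integrable: $A_2$ is Bochner integrable by hypothesis, $W_1$ and $W_1^{-1}$ are bounded (they are continuous on $[a,b]$, being regulated indefinite product integrals, hence bounded, and $x\mapsto x^{-1}$ is continuous), and multiplication in $E$ is continuous and bilinear, so $t\mapsto W_1(t)^{-1}A_2(t)W_1(t)$ is strongly measurable with integrable norm. Then, since Bochner integrability implies strong McShane product integrability and hence strong Kurzweil product integrability (the remark preceding the lemma, citing \cite[Theorem 4.14]{S15}), the mapping $B$ is strongly Kurzweil product integrable; let $U(t)=\prod_a^t(I+B(s)\,{\rm d}s)$. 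By Remark \ref{T23} again, $U$ satisfies the strong Luzin condition, $U(t)^{-1}$ exists for all $t$, and $U'(t)=B(t)U(t)$ a.e. Now set $W(t)=W_1(t)U(t)$. Then $W(t)^{-1}=U(t)^{-1}W_1(t)^{-1}$ exists for all $t$; $W$ satisfies the strong Luzin condition because a product of two mappings satisfying it does (stated in the excerpt after Definition \ref{SL}); and for a.e.\ $t$, using the product rule for derivatives in a Banach algebra,
\[
W'(t)=W_1'(t)U(t)+W_1(t)U'(t)=A_1(t)W_1(t)U(t)+W_1(t)B(t)U(t)=\bigl(A_1(t)+A_2(t)\bigr)W(t).
\]
Hence the pair $(A_1+A_2, W)$ satisfies condition (2) of Theorem \ref{T22}, so $A_1+A_2$ is strongly Kurzweil product integrable.

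The main obstacle, and the one point needing care, is the product rule: one must justify that $W=W_1U$ is differentiable a.e.\ with $W'=W_1'U+W_1U'$, given only that $W_1$ and $U$ are differentiable \emph{almost} everywhere (plus continuous and satisfying the strong Luzin condition). The Leibniz rule for a product of two Banach-algebra-valued functions holds at any point where both factors are differentiable; since each factor is differentiable off a null set, the product is differentiable off the union of two null sets, which suffices. I would also want to confirm that $W_1'(t)=A_1(t)W_1(t)$ and $U'(t)=B(t)U(t)$ hold off a common null set — again just the union of two null sets — and that boundedness of $W=W_1U$ and $W^{-1}=U^{-1}W_1^{-1}$ follows from boundedness of each factor and its inverse, which holds since all four are continuous on the compact interval $[a,b]$. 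With these routine verifications in place the argument is complete.
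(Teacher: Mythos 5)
Your proposal is correct and follows essentially the same route as the paper: the paper also writes the candidate function as $W_1V$ where $V$ is the indefinite product integral of the conjugated mapping $W_1^{-1}A_2W_1$ (your $B$), notes that this conjugate is Bochner integrable as a product of continuous mappings with a Bochner integrable one, and then verifies the differential equation a.e.\ and the strong Luzin condition before invoking Theorem \ref{T22}. The routine points you flag (union of null sets, Leibniz rule, boundedness of the factors) are handled implicitly in the paper in exactly the way you describe.
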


\begin{proof}
According to Remark \ref{T23}, the indefinite product integrals
$$W_i(t)=\prod_a^t (I+A_i(s)\,{\rm d}s),\quad\quad t\in[a,b],\quad i\in\{1,2\},$$
satisfy the strong Luzin condition, $W_i(t)^{-1}$ exists for every $t\in[a,b]$, and 
$$W_i'(t)W_i(t)^{-1}=A_i(t)\quad\quad\mbox{for almost all }t\in[a,b].$$
Next, observe that $W_1^{-1}A_2W_1$ is the product of two continuous mappings and one Bochner integrable mapping,
and is therefore Bochner integrable. Let
\begin{eqnarray*}
V(t)&=&\prod_a^t(I+W_1(s)^{-1}A_2(s)W_1(s)\,{\rm d}s),\quad\quad t\in[a,b],\\
U(t)&=&W_1(t)V(t),\quad\quad t\in[a,b].
\end{eqnarray*}
By Remark \ref{T23},  $V(t)^{-1}$ exists for all $t\in[a,b]$, and we have 
$$V'(t)V(t)^{-1}=W_1(t)^{-1}A_2(t)W_1(t)$$
for almost all $t\in[a,b]$.   Consequently,
$$U'(t)U(t)^{-1}=(W_1'(t)V(t)+W_1(t)V'(t))V(t)^{-1}W_1(t)^{-1}$$
$$=A_1(t)W_1(t)V(t)V(t)^{-1}W_1(t)^{-1}+W_1(t)W_1(t)^{-1}A_2(t)W_1(t)V(t)V(t)^{-1}W_1(t)^{-1}=A_1(t)+A_2(t)$$
for almost all $t\in[a,b]$.   
Since $W_1$ and $V$ satisfy the strong Luzin condition,
it follows that $U$ satisfies the same condition.
By Theorem \ref{T22}, the existence of a mapping $U$ with the properties described above implies that $A_1+A_2$ is strongly Kurzweil product integrable.   
\end{proof} 

\section{Product integrability of step mappings}\label{S4}

In the present section, we focus on the existence of the product integral 
$\prod_a^b(I+A(t)\,{\rm d}t)$ corresponding to a~mapping $A:[a,b]\to E$, where 
$E$ is a unital Banach algebra (see Definition \ref{K-prod-int}).

For step mappings with finitely many steps, the Riemann, strong Kurzweil and strong McShane product integrals always exist 
and are easy to calculate: if there is a partition $a=t_0<t_1<\cdots<t_m=b$ and $A(t)=A_i\in E$ for all $t\in(t_{i-1},t_i)$, then
it was shown in \cite[Example 4.15]{S15} that
$$\prod_a^b(I+A(t)\,{\rm d}t)=\prod_{i=m}^1 \prod_{t_{i-1}}^{t_i}(I+A(t)\,{\rm d}t)=\prod_{i=m}^1\exp(A_i(t_i-t_{i-1})).$$

In this section, we study the existence of the product integral $\prod_a^b(I+A(t)\,{\rm d}t)$ in
the case when $A$~is a step mapping with well-ordered steps. More precisely, we assume the existence 
of a well-ordered subset $\Lambda$ of $[a,b]$ such that
$\min\Lambda=a$ and $\max\Lambda = b$, and a family $(z_\alpha)_{\alpha\in\Lambda}$ of $E$ such that
\begin{equation}\label{E30}
A(t)=\begin{cases}
z_\alpha,& t\in[\alpha,S(\alpha)),\; \alpha\in\Lambda^{<b},\\
z_b,& t=b.\end{cases}
\end{equation}

Because $[a,b)$ is a countable union of the disjoint intervals $[\alpha,S(\alpha))$, $\alpha\in\Lambda$, $A$ is well-defined on $[a,b]$ by \eqref{E30}. 
Each mapping of this form has at most countably many discontinuities. Hence, the following result from \cite[Theorem 5.3]{S15} is applicable
in our situation. 

\begin{theorem}\label{K-right-regulated}
If $A:[a,b]\to E$ has countably many discontinuities,
then the following conditions are equivalent: 
\begin{enumerate}
\item $A$ is strongly Kurzweil product integrable.
\item $A$ is Kurzweil product integrable.
\item There is a continuous mapping $W:[a,b]\to E$ such that $W(t)^{-1}$ exists for all $t\in[a,b]$ and $W'(t)=A(t)W(t)$ for all $t\in[a,b]\backslash Z$,
where $Z$ is countable.
\end{enumerate}
\end{theorem}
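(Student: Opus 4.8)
The plan is to prove the theorem by establishing the cycle of implications $(1)\Rightarrow(2)\Rightarrow(3)\Rightarrow(1)$, drawing on the machinery already assembled in Section 3. The implication $(1)\Rightarrow(2)$ is immediate from Theorem \ref{K-prod-existence}: strong Kurzweil product integrability always implies ordinary Kurzweil product integrability, with no restriction on the number of discontinuities. So the two substantive tasks are $(2)\Rightarrow(3)$ and $(3)\Rightarrow(1)$.

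For $(2)\Rightarrow(3)$, suppose $A$ is Kurzweil product integrable. By Remark \ref{prodIntTypes}, the point-interval function $V(t,[x,y])=I+A(t)(y-x)$ satisfies conditions (V1)--(V4) with $V_+\equiv V_-\equiv I$, so Theorem \ref{indefinite-regulated} applies. Define $W(t)=\prod_a^t(I+A(s)\,{\rm d}s)$ for $t\in[a,b]$. Since $V_\pm\equiv I$, Theorem \ref{indefinite-regulated} tells us $\lim_{\beta\to s-}W(\beta)=W(s)$ and $\lim_{\beta\to s+}W(\beta)=W(s)$, i.e.\ $W$ is continuous on $[a,b]$; it is invertible at every point because the product integral takes invertible values by definition, and the factorization $W(t)=\prod_c^t(I+A(s)\,{\rm d}s)\cdot W(c)$ is a product of invertibles. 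It remains to show $W'(t)=A(t)W(t)$ off a countable set $Z$. Here I would take $Z$ to be the (countable) set of discontinuities of $A$. At a continuity point $t_0\notin Z$, fix $\varepsilon>0$ and use continuity of $A$ at $t_0$ to pick $\sigma>0$ with $\|A(s)-A(t_0)\|<\varepsilon$ for $|s-t_0|<\sigma$; then for $h$ small, $W(t_0+h)W(t_0)^{-1}=\prod_{t_0}^{t_0+h}(I+A(s)\,{\rm d}s)$, and one estimates this product integral against $\exp(A(t_0)h)=I+A(t_0)h+O(h^2)$ using the basic bounds from \cite[Lemma 2.1]{JK} together with the estimates in Remark \ref{expAndLogProperties}, obtaining $\|W(t_0+h)W(t_0)^{-1}-I-A(t_0)h\|=o(h)$. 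Dividing by $h$ and multiplying by $W(t_0)$ on the right gives $W'(t_0)=A(t_0)W(t_0)$. This differentiability estimate is the main technical obstacle, though it is essentially the standard product-integral derivative computation; I expect it can be quoted or adapted from \cite{S15} rather than rebuilt from scratch.

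For $(3)\Rightarrow(1)$, suppose $W$ is continuous, everywhere invertible, and satisfies $W'(t)=A(t)W(t)$ for $t\in[a,b]\setminus Z$ with $Z$ countable. By Theorem \ref{T22}, it suffices to upgrade $W$ to a mapping satisfying the strong Luzin condition (keeping invertibility and the differential equation a.e.). A countable set $Z$ has measure zero, so the differential equation already holds almost everywhere. The only thing missing from Theorem \ref{T22}(2) is the strong Luzin condition. The key point is that a continuous function $W$ with $W'=AW$ on the complement of a \emph{countable} set automatically satisfies the strong Luzin condition: given $\varepsilon>0$ and a null set $Z'\subset[a,b]$, one covers $Z'$ by open intervals of small total length and, at each point of $Z'$, uses continuity of $W$ to choose a gauge controlling the oscillation of $W$; because $W$ is also differentiable except on the countable set $Z$, the increments $\|W(v_j)-W(u_j)\|$ over a tagged family anchored in $Z'$ can be made to sum to less than $\varepsilon$ by a standard Vitali-type covering argument. (Alternatively, one can invoke that a continuous function which is differentiable outside a countable set is automatically absolutely continuous, hence satisfies the strong Luzin condition.) With this, Theorem \ref{T22} yields that $A$ is strongly Kurzweil product integrable, closing the cycle.

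Summarizing the anticipated difficulty: the two nontrivial arrows both hinge on a derivative/increment estimate for indefinite product integrals, and the cleanest route is to lean on the results already imported from \cite{S15} (Theorems \ref{T22}, \ref{K-prod-existence}, and \ref{indefinite-regulated}) so that the only genuinely new computation is the $o(h)$ expansion $\prod_{t_0}^{t_0+h}(I+A(s)\,{\rm d}s)=I+A(t_0)h+o(h)$ at continuity points of $A$, which follows from \cite[Lemma 2.1]{JK} and Remark \ref{expAndLogProperties}. Everything else is bookkeeping with countable exceptional sets.
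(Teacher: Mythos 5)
Your overall architecture is sound, and it is worth noting that the paper itself gives no proof of this statement: it is imported verbatim from \cite[Theorem 5.3]{S15}, so your reconstruction from the in-paper toolkit (Theorems \ref{K-prod-existence}, \ref{indefinite-regulated}, \ref{T22}) has to be judged on its own merits rather than against an in-text argument. The step $(1)\Rightarrow(2)$ is indeed immediate from Theorem \ref{K-prod-existence}. Your $(2)\Rightarrow(3)$ is correct: once $|h|$ is small enough that the whole interval between $t_0$ and $t_0+h$ lies in the neighbourhood where $\|A(s)-A(t_0)\|<\varepsilon$, \emph{every} tagged partition $D$ of that interval satisfies $\|P(V,D)-I-A(t_0)h\|\le\varepsilon|h|+Ch^2$ (expand the product and bound the terms of order at least two by $\bigl(\sum_i\|A(\xi_i)\|\,\Delta t_i\bigr)^2$), and since the Kurzweil product integral is approximated arbitrarily well by such products, the same bound passes to $\prod_{t_0}^{t_0+h}(I+A(s)\,{\rm d}s)=W(t_0+h)W(t_0)^{-1}$; dividing by $h$ gives $W'(t_0)=A(t_0)W(t_0)$ at every continuity point of $A$, and the exceptional set is countable by hypothesis.

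The one genuine error is the parenthetical ``alternative'' in $(3)\Rightarrow(1)$: a continuous function that is differentiable outside a countable set need \emph{not} be absolutely continuous (e.g.\ $t\mapsto t^2\sin(t^{-2})$ is differentiable everywhere but not of bounded variation), so that shortcut is unavailable. Your primary route survives without it, but the covering argument should be organized into two cases rather than attributed to ``continuity of $W$ at each point of $Z'$''. Given a null set $Z'$ and $\varepsilon>0$: for tags in the countable set $Z=\{z_1,z_2,\dots\}$, continuity of $W$ alone lets you choose $\delta(z_k)$ so that each of the at most two non-overlapping intervals tagged at $z_k$ contributes less than $\varepsilon2^{-k-1}$; for tags $\tau\in Z'\setminus Z$, where $W$ \emph{is} differentiable, split according to $n-1\le\|W'(\tau)\|<n$, cover the $n$-th piece by an open set of measure less than $\varepsilon n^{-1}2^{-n}$, and use the derivative to get $\|W(v)-W(u)\|\le(n+1)(v-u)$, so the intervals tagged in that piece contribute at most $2\varepsilon2^{-n}$. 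Mere continuity at the points of $Z'\setminus Z$ would not suffice here. With the strong Luzin condition so verified, Theorem \ref{T22} (whose remaining hypotheses---everywhere invertibility of $W$ and $W'=AW$ a.e.---you already have) closes the cycle.
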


We now show that Kurzweil product integrability of step mappings is closely related to the concept of multipliability introduced in Section \ref{S2}.
The proof is inspired by the proof of \cite[Proposition 3.1]{SH13}.

\begin{theorem}\label{Kurzweil-step} Let $A:[a,b]\to E$ be a step mapping with representation (\ref{E30}).
Then the following conditions are equivalent: 
\begin{enumerate}
\item $A$ is strongly Kurzweil product integrable.
\item The family $(\exp({(S(\alpha)-\alpha)z_\alpha}))_{\alpha\in\Lambda^{<b}}$ is multipliable and its
product is invertible.
\end{enumerate}
If any of these conditions is satisfied, we have 
$$
\prod_a^b(I+A(t)\,{\rm d}t)=\underset{\alpha\in\Lambda^{<b}}{\Pi}\exp((S(\alpha)-\alpha)z_\alpha).
$$
In particular, the product on the right-hand side is an invertible element of $E$.
\end{theorem}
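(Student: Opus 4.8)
The plan is to establish the equivalence of the two conditions by carefully relating the sum/product structure of a $\delta$-fine tagged partition to the transfinite products from Definition~\ref{D1}. The natural strategy is to prove $(1)\Rightarrow(2)$ and $(2)\Rightarrow(1)$ separately, in both cases passing through the characterization in Theorem~\ref{K-right-regulated} (which applies since $A$ has countably many discontinuities by construction) and exploiting the explicit formula for the product integral of step mappings with \emph{finitely} many steps recalled just before the statement, namely $\prod_a^b(I+A(t)\,{\rm d}t)=\prod_{i=m}^1\exp(A_i(t_i-t_{i-1}))$. On each interval $[\alpha,S(\alpha))$ the mapping $A$ is constant equal to $z_\alpha$, so the indefinite product integral over such a step contributes exactly the factor $\exp((S(\alpha)-\alpha)z_\alpha)$; the content of the theorem is that assembling these factors transfinitely is governed precisely by multipliability.

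For $(2)\Rightarrow(1)$, I would build the mapping $W:[a,b]\to E$ witnessing strong Kurzweil product integrability directly from the partial products. Define $W(\gamma)=\Pi_{\alpha\in\Lambda^{<\gamma}}\exp((S(\alpha)-\alpha)z_\alpha)$ for $\gamma\in\Lambda$, and interpolate on each step $[\alpha,S(\alpha))$ by $W(t)=\exp((t-\alpha)z_\alpha)\cdot W(\alpha)$, finally setting $W(b)=x_b\cdot\Pi_{\alpha\in\Lambda^{<b}}\exp(\ldots)$ appropriately; this $W$ solves $W'(t)=A(t)W(t)$ off the countable set $\Lambda$. Condition~(ii) of Definition~\ref{D1} at limit elements $\gamma$ gives exactly the left-continuity of $W$ at such points, so $W$ is continuous on $[a,b]$; invertibility of $W$ follows from Lemma~\ref{invertible-partial-products} together with invertibility of the exponential (Remark~\ref{expAndLogProperties}). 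Boundedness of $W$ and $W^{-1}$ needs the partial products to be uniformly bounded — here Lemma~\ref{L22}(b) forces $\exp((S(\alpha)-\alpha)z_\alpha)\to I$ at limit elements, which combined with a transfinite-induction boundedness argument (of the kind used in the proof of Lemma~\ref{L27}) yields the bound. Then Theorem~\ref{K-right-regulated} (equivalence of (2) and (3) there, or a direct appeal to Theorem~\ref{T22}) delivers strong Kurzweil product integrability, and the value $W(b)W(a)^{-1}$ is the asserted transfinite product.

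For $(1)\Rightarrow(2)$, I would go the other direction: strong Kurzweil product integrability gives a continuous invertible bounded $W$ with $W'=AW$ a.e.; restricting to any $[a,\gamma]$ with $\gamma\in\Lambda$ and using the finite-step formula on $[a,\gamma]$ (valid because between consecutive elements of $\Lambda$ below $\gamma$ the map is constant, and there are only finitely many such below a non-limit $\gamma$, or one argues by transfinite induction) shows that the partial products $\Pi_{\alpha\in\Lambda^{<\gamma}}\exp((S(\alpha)-\alpha)z_\alpha)$ all exist and equal $W(\gamma)W(a)^{-1}$. The successor condition~(i) of Definition~\ref{D1} is immediate from $W(S(\beta))=\exp((S(\beta)-\beta)z_\beta)W(\beta)$, and the limit condition~(ii) is precisely the continuity of $W$ from the left at limit elements. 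Invertibility of the final product is invertibility of $W(b)W(a)^{-1}$. The main obstacle I anticipate is the boundedness clause in the $(2)\Rightarrow(1)$ direction: multipliability only asserts convergence of partial products at limit elements, not a global uniform bound, so one must run a transfinite induction showing that on the closed set $\Lambda\cup\{b\}$ the partial products stay bounded — the subtle point is that at each limit element local convergence controls a tail, and one must patch finitely (or countably, via a well-ordering argument) many such tails together without the bound blowing up; the proof of Lemma~\ref{L27} is the right template, and the same care is needed for $W^{-1}$ using that $x\mapsto x^{-1}$ is continuous and Lemma~\ref{L22}(b).
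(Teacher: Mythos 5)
Your proposal is correct and follows essentially the same route as the paper: $(1)\Rightarrow(2)$ by identifying the partial products with the indefinite product integrals $\prod_a^\beta(I+A(t)\,{\rm d}t)$ and using their continuity at limit elements, and $(2)\Rightarrow(1)$ by building the exponentially interpolated mapping $W$ and invoking the characterization in Theorem~\ref{K-right-regulated}. The one obstacle you single out --- uniform boundedness of $W$ and $W^{-1}$ --- is not actually an issue, since condition 3 of Theorem~\ref{K-right-regulated} asks only for continuity and pointwise invertibility (which on a compact interval already give boundedness of both $W$ and $W^{-1}$, inversion being continuous), so no Lemma~\ref{L27}-style transfinite induction is needed; where Lemma~\ref{L22}(b) is genuinely required is in proving left-continuity of $W$ at limit elements, to control the interpolation factor $\exp((t-\beta)z_\beta)$.
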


\begin{proof} 
We begin by proving the implication $1\Rightarrow 2$. Denote $x_\alpha=\exp((S(\alpha)-\alpha)z_\alpha)$, $\alpha\in\Lambda^{<b}$. To prove that the family $(x_\alpha)_{\alpha\in\Lambda^{<b}}$ is multipliable,
it suffices to show that conditions (i) and (ii) of Definition~\ref{D1} are satisfied with 
\begin{equation*}
\underset{\alpha\in\Lambda^{<\beta}}{\Pi}x_\alpha= \prod_a^\beta(I+A(t)\,{\rm d}t), \quad \beta\in\Lambda.
\end{equation*}
Clearly, $\underset{\alpha\in\Lambda^{<a}}{\Pi}x_\alpha=\prod_a^a (I+A(t)\,{\rm d}t)=I$.
Assume next that  $\gamma\in\Lambda$ is a successor, i.e., $\gamma=S(\beta)$ for some $\beta\in\Lambda$.
Then \cite[Example 4.11]{S15} implies that $\prod_\beta^\gamma (I+A(t)\,{\rm d}t)=\exp((S(\beta)-\beta)z_\beta)=x_\beta$.
Thus
$$
x_\beta\cdot\underset{\alpha\in\Lambda^{<\beta}}{\Pi}x_\alpha=\prod_\beta^{\gamma}(I+A(t)\,{\rm d}t)\cdot\prod_a^\beta (I+A(t)\,{\rm d}t)= \prod_a^{\gamma} (I+A(t)\,{\rm d}t)=\underset{\alpha\in\Lambda^{<\gamma}}{\Pi}x_\alpha
$$
and condition (i) of Definition \ref{D1} is satisfied.

Assume finally that $\gamma$ is a limit element, and let $\varepsilon > 0$ be given.  
Since $t\mapsto \overset{t}{\underset{a}{\prod}}(I+A(s)\,{\rm d}s)$ is continuous at $t=\gamma$
and $\gamma$ is a limit element, there exists a $\beta_\varepsilon \in \Lambda^{<\gamma}$ such that
$$
\left\|\prod_a^\beta (I+A(t)\,{\rm d}t)-\prod_a^\gamma (I+A(t)\,{\rm d}t)\right\|< \varepsilon,\quad \beta\in\Lambda\cap[\beta_\varepsilon,\gamma).
$$
Consequently,
$$\left\|\underset{\alpha\in\Lambda^{<\beta}}{\Pi}x_\alpha -\underset{\alpha\in\Lambda^{<\gamma}}{\Pi}x_\alpha\right\|
<\varepsilon,\quad \beta\in\Lambda\cap[\beta_\varepsilon,\gamma)$$
and condition (ii) of Definition \ref{D1} is also satisfied. 

It remains to prove the implication $2\Rightarrow 1$. 
Assume that the family $(\exp((S(\alpha)-\alpha)z_\alpha))_{\alpha\in\Lambda^{<b}}$ is multipliable 
and its product is invertible. Consider the mapping $W:[a,b]\to E$ given by
$$\begin{aligned}
W(t)&=\exp((t-\gamma)z_\gamma)\left(\underset{\alpha\in\Lambda^{<\gamma}}{\Pi}\exp((S(\alpha)-\alpha)z_\alpha)\right),\quad\quad t\in[\gamma,S(\gamma)),\quad\gamma\in\Lambda^{<b},\\
W(b)&=\underset{\alpha\in\Lambda^{<b}}{\Pi}\exp((S(\alpha)-\alpha)z_\alpha).
\end{aligned}$$
To finish the proof, it is enough to verify that $W$ satisfies condition 3 of Theorem \ref{K-right-regulated}.
Note that $W(t)^{-1}$ exists for every $t\in[a,b]$, and  
$$W'(t)=z_\gamma\exp((t-\gamma)z_\gamma)\left(\underset{\alpha\in\Lambda^{<\gamma}}{\Pi}\exp((S(\alpha)-\alpha)z_\alpha)\right)=A(t)W(t),\quad\quad t\in(\gamma,S(\gamma)),\quad\gamma\in\Lambda^{<b},$$
i.e., $W'(t)=A(t)W(t)$ for every $t\in(a,b)\backslash\Lambda^{<b}$.
In particular, $W$ is continuous at every point $t\in(a,b)\backslash\Lambda^{<b}$.
Let us show that $W$ is in fact continuous on the whole interval $[a,b]$. By definition, $W$ is right-continuous at every point $t\in[a,b)$.
We need to show that $W$ is left-continuous at every point $\gamma\in\Lambda$.

If $\gamma=S(\beta)$ for some $\beta\in\Lambda$, then
$$\underset{t\to\gamma-}{\lim}W(t)=\exp((\gamma-\beta)z_\beta)\left(\underset{\alpha\in\Lambda^{<\beta}}{\Pi}\exp((S(\alpha)-\alpha)z_\alpha)\right)
=\left(\underset{\alpha\in\Lambda^{<\gamma}}{\Pi}\exp((S(\alpha)-\alpha)z_\alpha)\right)=W(\gamma).$$

If $\gamma$ is a limit element, we know that  
\begin{equation}\label{finiteLimit}
\underset{\beta\to\gamma-}{\lim}\underset{\alpha\in\Lambda^{<\beta}}{\Pi}\exp((S(\alpha)-\alpha)z_\alpha)=\underset{\alpha\in\Lambda^{<\gamma}}{\Pi}\exp((S(\alpha)-\alpha)z_\alpha)
\end{equation}
Also, the second part of Lemma \ref{L22} implies ${\lim}_{\beta\to\gamma-}\exp((S(\beta)-\beta)z_\beta) =I$;
using the continuity of the logarithm function, we get 
\begin{equation}\label{zeroLimit}
\underset{\beta\to\gamma-}{\lim}(S(\beta)-\beta)z_\beta =
\underset{\beta\to\gamma-}{\lim}\log\left(\exp((S(\beta)-\beta)z_\beta)\right)=\log I=0.
\end{equation}
Now, for an arbitrary $t\in[a,\gamma)$, there exists a $\beta\in\Lambda\cap[a,\gamma)$ such that $t\in[\beta,S(\beta))$.
Note that
$$\|W(t)-W(\beta)\|= 
\left\|\left(\exp((t-\beta)z_\beta)-I\right)\underset{\alpha\in\Lambda^{<\beta}}{\Pi}\exp((S(\alpha)-\alpha)z_\alpha)\right\|$$
$$\le\|(t-\beta)z_\beta\|\exp(\|(t-\beta)z_\beta\|)\left\|\underset{\alpha\in\Lambda^{<\beta}}{\Pi}\exp((S(\alpha)-\alpha)z_\alpha)\right\|$$
\begin{equation}\label{rhs}
\le\|(S(\beta)-\beta)z_\beta\|\exp(\|(S(\beta)-\beta)z_\beta\|)\left\|\underset{\alpha\in\Lambda^{<\beta}}{\Pi}\exp((S(\alpha)-\alpha)z_\alpha)\right\|.
\end{equation}
For $t\to\gamma -$, we have $\beta\to\gamma -$
and the expression in \eqref{rhs} tends to 0 because of \eqref{finiteLimit} and \eqref{zeroLimit}. 
Hence,
$$\underset{t\to\gamma-}{\lim}W(t)=\underset{t\to\gamma-}{\lim}W(\beta)+\underset{t\to\gamma-}{\lim}(W(t)-W(\beta))=\lim_{\substack{\beta\to\gamma-,\\\beta\in\Lambda}}W(\beta)=W(\gamma),$$ 
where the last equality follows from \eqref{finiteLimit}. This proves that $W$ is left-continuous at every point $\gamma\in\Lambda$.
\end{proof}

In the commutative case, we obtain the following criterion. 

\begin{theorem}\label{T61}
Let $A:[a,b]\to E$ be a step mapping with representation (\ref{E30}).
Assume that 
$z_\alpha z_\beta=z_\beta z_\alpha$ whenever $\alpha,\beta\in\Lambda^{<b}$.
Then the following conditions are equivalent:
\begin{enumerate}
\item $A$ is strongly Kurzweil product integrable.
\item $A$ is strongly Henstock-Kurzweil integrable.
\item The family $((S(\alpha)-\alpha)z_\alpha)_{\alpha\in\Lambda^{<b}}$ is summable. 
\item The family  $(\exp({(S(\alpha)-\alpha)z_\alpha}))_{\alpha\in\Lambda^{<b}}$ is multipliable and its
product is invertible.
\end{enumerate}
If any of these conditions is satisfied, we have
\begin{equation}\label{E61}
\prod_a^b(I+A(t)\,{\rm d}t)=\exp\left(\int_a^bA(t)\,{\rm d}t\right)=\underset{\alpha\in\Lambda^{<b}}{\Pi}\exp((S(\alpha)-\alpha)z_\alpha)=\exp\left(\underset{\alpha\in\Lambda^{<b}}{\Sigma}(S(\alpha)-\alpha)z_\alpha\right).
\end{equation}
\end{theorem}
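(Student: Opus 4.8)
The plan is to establish the equivalences by chaining together results already at our disposal, using the commutativity hypothesis to bring the summability machinery of Section \ref{S2} into play. First I would observe that the equivalence $1\Leftrightarrow 4$ together with the last formula in the chain \eqref{E61} has already been proved in Theorem \ref{Kurzweil-step} without any commutativity assumption, so the only new content is tying in conditions $2$ and $3$ and identifying the remaining expressions in \eqref{E61}.

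The heart of the argument is the equivalence $3\Leftrightarrow 4$. Writing $x_\alpha=(S(\alpha)-\alpha)z_\alpha$, the commutativity hypothesis gives $x_\alpha x_\beta=x_\beta x_\alpha$ for all $\alpha,\beta\in\Lambda^{<b}$, so Lemma \ref{L25} applies: if $(x_\alpha)_{\alpha\in\Lambda^{<b}}$ is summable then $(\exp x_\alpha)_{\alpha\in\Lambda^{<b}}$ is multipliable and $\underset{\alpha\in\Lambda^{<b}}{\Pi}\exp x_\alpha=\exp\!\left(\underset{\alpha\in\Lambda^{<b}}{\Sigma}x_\alpha\right)$; by property~2 of Remark \ref{expAndLogProperties}, this product is automatically invertible, so $3\Rightarrow 4$. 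For the converse $4\Rightarrow 3$, I would first invoke Lemma \ref{invertible-partial-products} to see that all partial products $\underset{\alpha\in\Lambda^{<\gamma}}{\Pi}\exp x_\alpha$ are invertible, hence nonzero in the sense needed, and then use Lemma \ref{L25} on the partial products together with a transfinite-induction argument (mirroring the proof of Lemma \ref{L25b}) to define the partial sums $\underset{\alpha\in\Lambda^{<\gamma}}{\Sigma}x_\alpha$ as logarithms of the partial products; the only delicate point is at limit elements, where condition (ii) of Definition \ref{D1} for the products, combined with the continuity of $\log$ (valid since the limiting product is invertible and the logarithm extends continuously to a neighborhood of any invertible element via $\log x = \log\|x\| \cdot I$-type localization — more precisely, one works in the commutative closed subalgebra generated by the $x_\alpha$ and uses that $\log$ is defined and continuous near invertible elements there), yields condition (ii) of Definition \ref{D21} for the sums. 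I expect this limit-element step to be the main obstacle, because the logarithm is only locally defined; the cleanest fix is to restrict attention to the commutative Banach subalgebra generated by $\{z_\alpha\}$ and the unit, where one can reduce to the scalar-type situation of Lemma \ref{L25b}.

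For $2\Leftrightarrow 3$, I would quote \cite[Proposition 3.1]{SH13}: strong Henstock-Kurzweil integrability of the step mapping $A$ with representation \eqref{E30} is equivalent to summability of the family $((S(\alpha)-\alpha)z_\alpha)_{\alpha\in\Lambda^{<b}}$ (this is exactly the correspondence the authors cite and which was already used in the proof of Lemma \ref{L211}). This requires no commutativity at all. Finally, for the displayed identities \eqref{E61}: the equality $\prod_a^b(I+A(t)\,{\rm d}t)=\underset{\alpha\in\Lambda^{<b}}{\Pi}\exp((S(\alpha)-\alpha)z_\alpha)$ and invertibility are from Theorem \ref{Kurzweil-step}; the equality $\int_a^bA(t)\,{\rm d}t=\underset{\alpha\in\Lambda^{<b}}{\Sigma}(S(\alpha)-\alpha)z_\alpha$ is part of the same \cite[Proposition 3.1]{SH13} correspondence (the value of the strong Henstock-Kurzweil integral equals the sum of the family); and $\underset{\alpha\in\Lambda^{<b}}{\Pi}\exp((S(\alpha)-\alpha)z_\alpha)=\exp\!\left(\underset{\alpha\in\Lambda^{<b}}{\Sigma}(S(\alpha)-\alpha)z_\alpha\right)$ is the identity \eqref{E21} from Lemma \ref{L25}, applicable by commutativity. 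Assembling: $1\Leftrightarrow 4$ by Theorem \ref{Kurzweil-step}, $3\Leftrightarrow 4$ as above, $2\Leftrightarrow 3$ by \cite{SH13}, and the formula chain then follows, completing the proof.
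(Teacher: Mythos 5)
Your proposal coincides with the paper's proof on three of the four links: $1\Leftrightarrow 4$ is quoted from Theorem \ref{Kurzweil-step}, $2\Leftrightarrow 3$ from \cite[Proposition 3.1]{SH13}, and the last equality in \eqref{E61} from Lemma \ref{L25} — exactly as in the paper. Where you diverge is in how the cycle is closed. The paper does \emph{not} prove $4\Rightarrow 3$ directly: it closes the loop with $1\Leftrightarrow 2$, citing \cite[Theorems 3.12, 3.13]{S15} (which, under the commutativity hypothesis, identify strong Kurzweil product integrability with strong Henstock--Kurzweil integrability and yield the first equality $\prod_a^b(I+A(t)\,{\rm d}t)=\exp(\int_a^b A(t)\,{\rm d}t)$ in \eqref{E61}). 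You instead attempt a direct transfinite-induction proof of $4\Rightarrow 3$ by taking logarithms of the partial products, and this is where your argument has a genuine gap.

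The problem is that $\exp$ is not injective in a Banach algebra, so convergence of $\exp\bigl(\underset{\alpha\in\Lambda^{<\beta}}{\Sigma}x_\alpha\bigr)$ as $\beta\to\gamma-$ to an invertible limit does not force the partial sums themselves to converge: already in the commutative unital algebra $\mathbb C$ one can have $s_\beta=w_0+2\pi i\,n_\beta$ with $n_\beta$ unbounded while $\exp(s_\beta)$ is constant. Lemma \ref{L25b} is deliberately confined to \emph{real} families precisely because $\exp:\mathbb R\to(0,\infty)$ is a homeomorphism with a globally defined continuous inverse; in $E$ the logarithm exists only on $\{x:\|x-I\|<1\}$ and $\log(\exp x)=x$ only for $\|x\|<\log 2$, so there is no continuous branch of $\log$ near an arbitrary invertible element to feed into condition (ii) of Definition \ref{D21}. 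Your proposed fix — passing to the closed commutative subalgebra generated by the $x_\alpha$ and the unit — does not repair this, since that subalgebra can itself contain a copy of $\mathbb C$ and the same $2\pi i$ ambiguity persists there. A telling piece of internal evidence: the paper states the implication $4\Rightarrow 3$ separately as Lemma \ref{converse-L25}, and proves it only as a \emph{corollary} of Theorem \ref{T61} (via the product-integration detour), not by any direct summability argument. To complete your proof you should replace the direct $4\Rightarrow 3$ step by the paper's $1\Leftrightarrow 2$ link through \cite[Theorems 3.12, 3.13]{S15}.
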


\begin{proof}
Conditions 1 and 4 are equivalent by  Theorem~\ref{Kurzweil-step}, 
and conditions 2 and 3 are equivalent by \cite[Proposition~3.1]{SH13}. 
The commutativity assumption and \cite[Theorems 3.12, 3.13]{S15}  imply that conditions 1 and~2 are equivalent and the first equality in~\eqref{E61} holds. 
The second equality follows from Theorem~\ref{Kurzweil-step}, and the third one from Lemma~\ref{L25}.
\end{proof}

For the Riemann product integral, we have an even simpler condition.

\begin{theorem}\label{RiemannStep}  Let $A:[a,b]\to E$ be a step mapping with representation (\ref{E30}).
Then the following conditions are equivalent: 
\begin{enumerate}
\item $A$ is Riemann product integrable.
\item The family $(z_\alpha)_{\alpha\in\Lambda}$ is bounded. 
\end{enumerate}
\end{theorem}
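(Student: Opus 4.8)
The plan is to lean on the theorem recalled just before Definition~\ref{SL}, which says that $A$ is Riemann product integrable if and only if it is Riemann integrable; it thus suffices to show that $A$ is Riemann integrable if and only if the family $(z_\alpha)_{\alpha\in\Lambda}$ is bounded. Since $A$ takes the value $z_\alpha$ on $[\alpha,S(\alpha))$ for $\alpha\in\Lambda^{<b}$ and the value $z_b$ at $b$, its range is $\{z_\alpha;\,\alpha\in\Lambda\}$, so $A$ is bounded on $[a,b]$ exactly when $(z_\alpha)_{\alpha\in\Lambda}$ is a bounded family. Hence the statement reduces to: $A$ is Riemann integrable if and only if $A$ is bounded.

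The direction ``Riemann integrable $\Rightarrow$ bounded'' (which gives $1\Rightarrow 2$) is the elementary fact that a Riemann integrable mapping must be bounded: if $A$ were unbounded, then for every partition $a=t_0<\cdots<t_m=b$ there would be an index $i$ on whose interval $\|A\|$ is unbounded, and by moving only the tag $\xi_i$ within $[t_{i-1},t_i]$ one could make the norm of the associated Riemann sum arbitrarily large, contradicting the Cauchy criterion for the Riemann integral. Hence $(z_\alpha)_{\alpha\in\Lambda}$ is bounded.

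For the converse (which gives $2\Rightarrow 1$), assume $(z_\alpha)_{\alpha\in\Lambda}$ is bounded, so $A$ is bounded. The set $\Lambda$, being well-ordered, is countable (the map sending each non-maximal $\alpha\in\Lambda$ to a rational number in the nonempty interval $(\alpha,S(\alpha))$ is injective), and since $A$ is constant on each $[\alpha,S(\alpha))$, $A$ is continuous outside a subset of $\Lambda$, that is, outside a set of Lebesgue measure zero. It then remains to apply the Lebesgue-type criterion: a bounded mapping $[a,b]\to E$ which is continuous almost everywhere is Riemann integrable.

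The one point needing care is this last criterion, since $E$ need not be finite-dimensional and the classical proof through upper and lower Darboux sums has no vector-valued counterpart; I would therefore supply the standard oscillation argument adapted to the Banach-valued case. For two tagged partitions $D_1,D_2$ refining a common partition with division points $a=t_0<\cdots<t_m=b$, one has $\bigl\|\sum_{i=1}^m A(\xi_i')(t_i-t_{i-1})-\sum_{i=1}^m A(\xi_i'')(t_i-t_{i-1})\bigr\|\le\sum_{i=1}^m\operatorname{osc}\bigl(A,[t_{i-1},t_i]\bigr)(t_i-t_{i-1})$. Given $\varepsilon>0$, the closed set $\{t\in[a,b];\,\operatorname{osc}(A,t)\ge\varepsilon\}$ has measure zero, hence is covered by finitely many open intervals of total length below any prescribed $\eta>0$; taking a partition that respects those intervals and is fine enough elsewhere bounds the right-hand side by $\varepsilon(b-a)+2\eta\sup_{[a,b]}\|A\|$, which is arbitrarily small. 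This establishes the Cauchy criterion for the Riemann integral of $A$ and completes the argument.
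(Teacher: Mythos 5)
Your proof is correct and follows the same route as the paper: both reduce the statement, via the theorem that Riemann product integrability is equivalent to Riemann integrability, to showing that $A$ is Riemann integrable if and only if the family $(z_\alpha)_{\alpha\in\Lambda}$ is bounded. The only difference is that the paper disposes of this second step by citing \cite[Proposition 3.5]{SH13}, whereas you prove it directly (and correctly), using the countability of the well-ordered set $\Lambda$ and the oscillation argument for the Lebesgue criterion, which is indeed the right way to handle the Banach-valued case in the direction you need.
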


\begin{proof} 
Recall that $A$ is Riemann product integrable if and only if it is Riemann integrable.
By \cite[Proposition 3.5]{SH13}, this happens if and only if the family $(z_\alpha)_{\alpha\in\Lambda}$ is bounded. 
\end{proof} 

Finally, we have the following characterization of strong McShane/Bochner product integrability.

\begin{theorem}\label{BochnerStep} 
Let $A:[a,b]\to E$ be a step mapping with representation (\ref{E30}).
Then the following conditions are equivalent: 
\begin{enumerate}
\item $A$ is strongly McShane product integrable.
\item $A$ is Bochner integrable.
\item The family $((S(\alpha)-\alpha)z_\alpha)_{\alpha\in\Lambda^{<b}}$ is absolutely summable.
\item The family $(\exp({(S(\alpha)-\alpha)\|z_\alpha\|}))_{\alpha\in\Lambda^{<b}}$ is multipliable.
\item The family $(1+(S(\alpha)-\alpha)\|z_\alpha\|)_{\alpha\in\Lambda^{<b}}$ is multipliable.
\end{enumerate}
\end{theorem}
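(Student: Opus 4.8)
The plan is to obtain the five-way equivalence by chaining together results already at our disposal, rather than by any new argument. I would begin by setting $x_\alpha=(S(\alpha)-\alpha)z_\alpha$ for $\alpha\in\Lambda^{<b}$ and observing that, since $S(\alpha)-\alpha>0$, we have $\|x_\alpha\|=(S(\alpha)-\alpha)\|z_\alpha\|$; hence the families occurring in conditions 3, 4, 5 are exactly $(x_\alpha)_{\alpha\in\Lambda^{<b}}$, $(\exp\|x_\alpha\|)_{\alpha\in\Lambda^{<b}}$, and $(1+\|x_\alpha\|)_{\alpha\in\Lambda^{<b}}$.

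First I would dispatch the equivalence of conditions 1 and 2: by \cite[Theorem 4.14]{S15}, a mapping into a unital Banach algebra is strongly McShane product integrable if and only if it is Bochner integrable, and this applies verbatim to $A$. Next, the equivalence of conditions 2 and 3 is precisely the characterization of Bochner integrability of step mappings with well-ordered steps furnished by \cite[Proposition 3.4]{SH13}: the step mapping $A$ with representation \eqref{E30} is Bochner integrable if and only if the family $((S(\alpha)-\alpha)z_\alpha)_{\alpha\in\Lambda^{<b}}$ is absolutely summable. (This is the same proposition invoked, in one direction only, in the proof of Lemma \ref{L211}.)

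Finally, conditions 3 and 4 are equivalent by Lemma \ref{L26} applied to the family $(x_\alpha)_{\alpha\in\Lambda^{<b}}$, and conditions 3 and 5 are equivalent by Lemma \ref{L27} applied to the same family; this closes the circle $1\Leftrightarrow 2\Leftrightarrow 3\Leftrightarrow 4$ together with $3\Leftrightarrow 5$. The argument involves no genuine obstacle; the only points requiring care are bookkeeping ones: making sure that \cite[Proposition 3.4]{SH13} is invoked as an equivalence in both directions, and checking that Lemmas \ref{L26} and \ref{L27}, which are stated for families in a normed space indexed by $\Lambda^{<b}$, are legitimately applied to $(x_\alpha)$, whose members lie in the Banach algebra $E$ regarded as a normed space and whose norms are $(S(\alpha)-\alpha)\|z_\alpha\|$.
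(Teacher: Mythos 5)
Your proposal matches the paper's own proof essentially verbatim: the paper also establishes $1\Leftrightarrow 2$ via \cite[Theorem 4.14]{S15}, $2\Leftrightarrow 3$ via \cite[Proposition 3.4]{SH13}, $3\Leftrightarrow 4$ via Lemma \ref{L26}, and $3\Leftrightarrow 5$ via Lemma \ref{L27}. The argument is correct and complete as stated.
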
 

\begin{proof}
Conditions 1 and 2 are equivalent by \cite[Theorem 4.14]{S15}, conditions 2 and 3 by \cite[Proposition 3.4]{SH13},
 conditions 3 and 4 by  Lemma \ref{L26}, and conditions 3 and 5 by Lemma \ref{L27}. 
\end{proof}

In the previous theorems, the concept of multipliability was used to obtain new criteria of product integrability.
Conversely, we can apply existing results about product integrals to obtain new results about multipliability. 
As an illustration, we prove the following converse to Lemma \ref{L25}.

\begin{lemma}\label{converse-L25}
Let $\Lambda$ be a well-ordered set in $\mathbb R\cup\{\infty\}$ with $a=\min\Lambda$ and $b=\sup \Lambda$. 
Assume that  $(x_\alpha)_{\alpha\in\Lambda^{<b}}$ is a family in a unital Banach algebra $E$ such that
$x_\alpha x_\beta=x_\beta x_\alpha$ whenever $\alpha,\beta\in\Lambda$.
If the family $(\exp x_\alpha)_{\alpha\in\Lambda^{<b}}$ is multipliable and its product is invertible, then $(x_\alpha)_{\alpha\in\Lambda^{<b}}$ is summable.
\end{lemma}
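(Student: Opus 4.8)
The plan is to mimic the argument for Lemma \ref{L25b}, but replace the logarithm-of-a-real-number step with the machinery of product integrals developed in Section \ref{S4}. Concretely, I would build a step mapping $A$ out of the family $(x_\alpha)_{\alpha\in\Lambda^{<b}}$ in exactly the way done in the proof of Lemma \ref{L211}: after the harmless reduction to $b<\infty$ (using the transformation $\tilde\Lambda=\{1-\exp(a-\alpha)\}$, which preserves summability, multipliability, and commutativity), set $z_\alpha=\frac{x_\alpha}{S(\alpha)-\alpha}$ and define
\begin{equation*}
A(t)=\begin{cases} z_\alpha,& t\in[\alpha,S(\alpha)),\ \alpha\in\Lambda^{<b},\\ 0,& t=b,\end{cases}
\end{equation*}
so that $(S(\alpha)-\alpha)z_\alpha=x_\alpha$ and $A$ is a step mapping with well-ordered steps in the sense of \eqref{E30} (with $z_b=0$). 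The commutativity hypothesis $x_\alpha x_\beta=x_\beta x_\alpha$ transfers to $z_\alpha z_\beta=z_\beta z_\alpha$, so Theorem \ref{T61} is available for this $A$.

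**Key steps.** First I would observe that the hypothesis — $(\exp x_\alpha)_{\alpha\in\Lambda^{<b}}=(\exp((S(\alpha)-\alpha)z_\alpha))_{\alpha\in\Lambda^{<b}}$ is multipliable with invertible product — is precisely condition (4) of Theorem \ref{T61} for the mapping $A$ just constructed. By the equivalence of conditions (3) and (4) in that theorem, it follows immediately that the family $((S(\alpha)-\alpha)z_\alpha)_{\alpha\in\Lambda^{<b}}=(x_\alpha)_{\alpha\in\Lambda^{<b}}$ is summable, which is the desired conclusion. (One could equally cite the chain: condition (4) $\Rightarrow$ condition (1), strong Kurzweil product integrability of $A$; then condition (1) $\Leftrightarrow$ condition (2), strong Henstock-Kurzweil integrability, which by \cite[Proposition~3.1]{SH13} is condition (3), summability of $(x_\alpha)_{\alpha\in\Lambda^{<b}}$.) If one wants to avoid invoking the full four-way equivalence and keep the argument self-contained with respect to Section \ref{S4}, one can instead follow the transfinite-induction template of Lemma \ref{L25b}: assume for contradiction that $(x_\alpha)_{\alpha\in\Lambda^{<b}}$ is not summable, pick the least limit element $\gamma$ at which summability fails, apply Lemma \ref{L25} on each $\Lambda^{<\beta}$ with $\beta<\gamma$ together with Lemma \ref{invertible-partial-products} to write $\underset{\alpha\in\Lambda^{<\beta}}{\Sigma}x_\alpha=\log\bigl(\underset{\alpha\in\Lambda^{<\beta}}{\Pi}\exp x_\alpha\bigr)$ once the partial products are close enough to $I$, and derive a contradiction with the non-summability at $\gamma$ from continuity of $\log$; this is the route taken for real scalars in Lemma \ref{L25b}, and the only genuinely new ingredient needed to run it in a Banach algebra is the convergence of the partial products to an invertible limit, which is Lemma \ref{L22}(b) applied to $(\exp x_\alpha)$.

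**Main obstacle.** The substantive point — and the reason the scalar proof of Lemma \ref{L25b} does not transcribe verbatim — is that $\log$ is only defined on the ball $\{\|x-I\|<1\}$, so from $\underset{\alpha\in\Lambda^{<\beta}}{\Pi}\exp x_\alpha$ alone one cannot in general recover $\underset{\alpha\in\Lambda^{<\beta}}{\Sigma}x_\alpha$ by taking a logarithm; the partial products need not lie near $I$. In the scalar case multipliability with nonzero product was enough to force the tail $\exp p_\alpha\to 1$ and hence control $\log$ on a tail, but in the noncommutative setting the cleanest fix is exactly to route through the product-integral characterization of Theorem \ref{T61}, where the analytic work (continuity of the indefinite product integral, the exponential formula $\prod_\beta^{S(\beta)}(I+A\,{\rm d}t)=\exp((S(\beta)-\beta)z_\beta)$, and the identification of strong Kurzweil product integrability with summability under commutativity) has already been done. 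I therefore expect the proof to be short: the only care needed is the reduction to $b<\infty$ and the verification that the commutativity hypothesis descends from $(x_\alpha)$ to $(z_\alpha)$, after which Theorem \ref{T61} closes the argument.
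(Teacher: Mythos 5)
Your proposal is correct and follows essentially the same route as the paper: after reducing to $b<\infty$, one sets $z_\alpha=x_\alpha/(S(\alpha)-\alpha)$, builds the step mapping $A$ as in the proof of Lemma \ref{L211}, and invokes the equivalence of conditions (3) and (4) in Theorem \ref{T61}. The additional remarks about the alternative transfinite-induction route and the domain restriction of $\log$ are sensible but not needed.
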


\begin{proof}
As in the proof of Lemma \ref{L211}, we can suppose that $b<\infty$. 
For every $\alpha\in\Lambda^{<b}$, let $z_\alpha=\frac{x_\alpha}{S(\alpha)-\alpha}$ and consider the mapping $A:[a,b]\to E$ given by
\begin{equation*}
A(t)=\begin{cases}
z_\alpha,& t\in[\alpha,S(\alpha)),\; \alpha\in\Lambda^{<b},\\
0,& t=b.\end{cases}
\end{equation*} 
We know that $(\exp x_\alpha)_{\alpha\in\Lambda^{<b}}=(\exp((S(\alpha)-\alpha)z_\alpha))_{\alpha\in\Lambda^{<b}}$ is multipliable and its product is invertible. By Theorem \ref{T61}, the family $((S(\alpha)-\alpha)z_\alpha)_{\alpha\in\Lambda^{<b}}=(x_\alpha)_{\alpha\in\Lambda^{<b}}$ is summable.
\end{proof}

Using a similar approach, we get the next statement, which generalizes one part of Lemma \ref{L26}.

\begin{lemma}
Let $\Lambda$ be a well-ordered set in $\mathbb R\cup\{\infty\}$ with $a=\min\Lambda$ and $b=\sup \Lambda$. 
Assume that  $(x_\alpha)_{\alpha\in\Lambda^{<b}}$ is an absolutely summable family in a unital Banach algebra $E$.
Then  $(\exp x_\alpha)_{\alpha\in\Lambda^{<b}}$ is multipliable and has an invertible product.
\end{lemma}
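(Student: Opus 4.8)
The plan is to follow the strategy of the proof of Lemma~\ref{converse-L25}, turning the assertion about multipliability of $(\exp x_\alpha)_{\alpha\in\Lambda^{<b}}$ into an assertion about product integrability of an associated step mapping, and then invoking Theorem~\ref{Kurzweil-step}. First I would reduce to the case $b=\sup\Lambda<\infty$, exactly as in the proof of Lemma~\ref{L211}: replacing $\Lambda$ by $\tilde\Lambda=\{1-\exp(a-\alpha);\,\alpha\in\Lambda\}$ is an order isomorphism that preserves absolute summability, so there is no loss of generality in assuming $b<\infty$.

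Next, for each $\alpha\in\Lambda^{<b}$ set $z_\alpha=\frac{x_\alpha}{S(\alpha)-\alpha}$ and define the step mapping $A:[a,b]\to E$ by
\begin{equation*}
A(t)=\begin{cases}
z_\alpha,& t\in[\alpha,S(\alpha)),\ \alpha\in\Lambda^{<b},\\
0,& t=b,
\end{cases}
\end{equation*}
so that $A$ has the form \eqref{E30} with the well-ordered index set $\Lambda^{<b}\cup\{b\}$ and $z_b=0$. Since $(x_\alpha)_{\alpha\in\Lambda^{<b}}=((S(\alpha)-\alpha)z_\alpha)_{\alpha\in\Lambda^{<b}}$ is absolutely summable, the equivalence of conditions~2 and~3 in Theorem~\ref{BochnerStep} shows that $A$ is Bochner integrable (equivalently, strongly McShane product integrable), and hence $A$ is strongly Kurzweil product integrable, as noted in the paragraph preceding Lemma~\ref{sum}.

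Finally, applying the implication $1\Rightarrow 2$ of Theorem~\ref{Kurzweil-step} to $A$ yields that the family $(\exp((S(\alpha)-\alpha)z_\alpha))_{\alpha\in\Lambda^{<b}}=(\exp x_\alpha)_{\alpha\in\Lambda^{<b}}$ is multipliable and that its product equals $\prod_a^b(I+A(t)\,{\rm d}t)$, which is an invertible element of $E$; this is the desired conclusion. I do not expect any genuine obstacle in this argument: all the heavy lifting is done by the previously established Theorems~\ref{BochnerStep} and~\ref{Kurzweil-step}, and the only point needing a little care is the bookkeeping of the indexing conventions, namely checking that after adjoining the endpoint $b$ the mapping $A$ is indeed a step mapping of the form \eqref{E30}, which is handled just as in the proofs of Lemmas~\ref{L211} and~\ref{converse-L25}.
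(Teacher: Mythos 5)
Your proof is correct and follows essentially the same route as the paper: reduce to $b<\infty$, build the associated step mapping $A$, deduce strong McShane (hence strong Kurzweil) product integrability from Theorem~\ref{BochnerStep}, and read off multipliability of $(\exp x_\alpha)_{\alpha\in\Lambda^{<b}}$ with invertible product. If anything, your citation of Theorem~\ref{Kurzweil-step} at the last step is slightly more careful than the paper's appeal to Theorem~\ref{T61}, since the latter carries a commutativity hypothesis that is not assumed in this lemma.
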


\begin{proof}
As in the proof of Lemma \ref{L211}, we can suppose that $b<\infty$. 
Let $(z_\alpha)_{\alpha\in\Lambda^{<b}}$ and $A:[a,b]\to E$ have the same meaning as in the proof of Lemma \ref{converse-L25}.
By Theorem \ref{BochnerStep}, the absolute summability of $(x_\alpha)_{\alpha\in\Lambda^{<b}}=((S(\alpha)-\alpha)z_\alpha)_{\alpha\in\Lambda^{<b}}$ 
implies that $A$ is strongly McShane product integrable, and therefore also strongly Kurzweil product integrable. By Theorem~\ref{T61},  
the family  $(\exp x_\alpha)_{\alpha\in\Lambda^{<b}}=(\exp ((S(\alpha)-\alpha)z_\alpha))_{\alpha\in\Lambda^{<b}}$ is multipliable and its product is invertible.
\end{proof}

\begin{example}\label{Ex301} As noticed in Example \ref{Ex201}, the set
$$
\Lambda_1=\{\alpha(n_0,n_1)=1-2^{-n_0-1}-2^{-n_0-n_1-1};\,n_0,n_1\in\mathbb N_0\}
$$
is a well-ordered subset of $[0,1)$. Routine calculations show that for every $\alpha=\alpha(n_0,n_1)\in\Lambda_1$, we have 
$$S(\alpha)-\alpha=\alpha(n_0,n_1+1)-\alpha(n_0,n_1)=2^{-n_0-n_1-2}.$$
Choose a vector $z\ne 0$ of $E$, and let $A:[0,1]\to E$ have the representation \eqref{E30}, where $\Lambda=\Lambda_1\cup\{1\}$ and
$$
z_\alpha=z_{\alpha(n_0,n_1)}=\frac{(-2)^{n_0+n_1+2}}{(n_0+1)(n_1+1)}z, \quad \alpha=\alpha(n_0,n_1)\in\Lambda_1, \quad z_1=0.
$$
Note that $$(S(\alpha)-\alpha)z_\alpha=\frac{(-1)^{n_0+n_1}}{(n_0+1)(n_1+1)}z.$$
Hence, the family 
$((S(\alpha)-\alpha)z_\alpha)_{\alpha\in\Lambda^{<1}}$ is equal to the family $(x_\alpha)_{\alpha\in\Lambda{_1}}$ considered in Example \ref{Ex201}, and
$$
\underset{\alpha\in\Lambda^{<1}}{\Sigma}(S(\alpha)-\alpha)z_\alpha=\underset{\alpha\in\Lambda_1}{\Sigma}x_\alpha=\sum_{n_0=0}^\infty\sum_{n_1=0}^\infty\frac{(-1)^{n_0+n_1}}{(n_0+1)(n_1+1)}z=(\log 2)^2 z.
$$
Since $z_\alpha z_\beta=z_\beta z_\alpha$ whenever $\alpha,\beta\in\Lambda$, 
it follows from Theorem \ref{T61} that $A$ is strongly Kurzweil product integrable and 
$$
\prod_0^1(I+A(t)\,{\rm d}t)=\exp\left(\int_0^1 A(t)\,{\rm d}t\right)=\exp\left(\underset{\alpha\in\Lambda^{<1}}{\Sigma}(S(\alpha)-\alpha)z_\alpha\right)=\exp\left((\log 2)^2 z\right).
$$
On the other hand, since $(x_\alpha)_{\alpha\in\Lambda{_1}}$ is neither bounded nor absolutely summable, 
Theorems \ref{RiemannStep} and \ref{BochnerStep} imply that $A$ is neither Riemann product integrable nor strongly McShane product integrable.
\end{example}

\begin{example}\label{Ex302} Let $\Lambda_1$ be as in Example \ref{Ex301}.
Choose a vector $z\ne 0$ of $E$, and let $A:[0,1]\to E$ have the representation \eqref{E30}, where $\Lambda=\Lambda_1\cup\{1\}$ and
$$
z_\alpha=z_{\alpha(n_0,n_1)}=\frac{2^{n_0+n_1+2}}{(n_0+1)^2(n_1+1)^2}z, \quad \alpha=\alpha(n_0,n_1)\in\Lambda_1, \quad z_1=0.
$$
In this case, the family $((S(\alpha)-\alpha)z_\alpha)_{\alpha\in\Lambda^{<1}}$ is absolutely summable, and
$$
\underset{\alpha\in\Lambda^{<1}}{\Sigma}(S(\alpha)-\alpha)z_\alpha=\sum_{n_0=0}^\infty\sum_{n_1=0}^\infty\frac 1{(n_0+1)^2(n_1+1)^2}z
=\left(\frac{\pi^2}{6}\right)^2z.
$$
It follows from Theorem \ref{BochnerStep} that $A$ is strongly McShane product integrable. By Theorem \ref{T61}, we get
$$
\prod_0^1(I+A(t)\,{\rm d}t)=\exp\left(\int_0^1 A(t)\,{\rm d}t\right)
=\exp\left(\underset{\alpha\in\Lambda^{<1}}{\Sigma}(S(\alpha)-\alpha)z_\alpha\right)=\exp\left(\left(\frac{\pi^2}{6}\right)^2z\right).
$$
On the other hand, $A$ is not Riemann product integrable because the family $(z_\alpha)_{\alpha\in\Lambda_1}$ is unbounded.
\end{example}

\section{Product integrability of right regulated mappings}\label{S6}

In this section we study product integrability of  mappings $A$ from $[a,b]$ to a unital Banach algebra $E$ which are right regulated, 
i.e.,which have right limits at all points of $[a,b)$.
The main difference between right regulated mappings and regulated mappings, 
which have also left limits at every point of $(a,b]$, is that the former ones 
may have discontinuities of the second kind, while regulated mappings can have only discontinuities of the first kind. 
Another difference is that regulated mappings are always Riemann product integrable, 
whereas right regulated mappings need not be even Kurzweil product integrable. 

By \cite[Lemma 2.6]{SH13}, every right regulated mapping  is strongly measurable and has at most countably many discontinuities. Thus,
Theorem \ref{K-right-regulated} is applicable.
In this section we  provide additional necessary and sufficient conditions
for Kurzweil product integrability of right regulated mappings.
Our basic tool is the following lemma; it is a consequence of \cite[Lemma 2.5]{SH13} and its proof, 
which is based on a generalized iteration method presented in \cite{HL94}.

\begin{lemma}\label{L400} 
Let $A:[a,b]\to E$ be right regulated. Then for every $\varepsilon>0$, there is a  well-ordered set $\Lambda_\varepsilon\subset [a,b]$  
such that $[a,b)$ is a disjoint union of the intervals $[\beta,S(\beta))$, $\beta\in \Lambda_\varepsilon^{<b}$, 
and $\|A(s)-A(t)\|\le \varepsilon$ whenever $s,\,t\in (\beta,S(\beta))$ and $\beta\in \Lambda_\varepsilon^{<b}$.  

$\Lambda_\varepsilon$ is determined by the following properties:
\begin{equation}\label{E403}
a=\min \Lambda_\varepsilon,\ \hbox{ and $a <\gamma\in \Lambda_\varepsilon$ if and only if } \ \gamma=\sup\{G_\varepsilon(x); x\in\Lambda_\varepsilon^{< \gamma}\},
\end{equation}
where $G_\varepsilon:[a,b]\to[a,b]$ is defined by
\begin{equation}\label{E402}
G_\varepsilon(x) = \sup\{y\in(x,b];\, \|A(s)-A(t)\|\le \varepsilon\ \hbox{ for all } s,\,t\in (x,y)\}, \quad x\in[a,b),\quad\quad G_\varepsilon(b)=b. 
\end{equation}
\end{lemma}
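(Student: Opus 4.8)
The goal is to prove Lemma~\ref{L400}: given a right regulated $A:[a,b]\to E$ and $\varepsilon>0$, there is a well-ordered set $\Lambda_\varepsilon\subseteq[a,b]$ characterized by \eqref{E403}--\eqref{E402}, partitioning $[a,b)$ into intervals $[\beta,S(\beta))$ on whose interiors $A$ oscillates by at most $\varepsilon$. The plan is to reduce this to the generalized iteration method of \cite[Lemma 2.5]{SH13} applied to the specific map $G_\varepsilon$ defined in \eqref{E402}, and then to verify that the abstract output of that method has the concrete geometric meaning claimed here.

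First I would check that $G_\varepsilon$ is a well-defined, inflationary map on $[a,b]$, i.e.\ $G_\varepsilon(x)>x$ for every $x\in[a,b)$ and $G_\varepsilon(b)=b$. The strict inequality $G_\varepsilon(x)>x$ is exactly where right regularity of $A$ enters: since $A$ has a right limit at $x$, there is $y>x$ with $\|A(s)-A(x^+)\|<\varepsilon/2$ for $s\in(x,y)$, hence $\|A(s)-A(t)\|<\varepsilon$ for $s,t\in(x,y)$, so the supremum in \eqref{E402} is taken over a nonempty set and is $>x$. I would also note the monotone-type property needed to feed the iteration lemma: if $x<G_\varepsilon(x)$ then $A$ oscillates by $\le\varepsilon$ on $(x,G_\varepsilon(x))$ — this is immediate from the definition as a supremum, using that a countable (in fact two-point) union of "good" open intervals sharing the left endpoint $x$ is again good, and the oscillation bound $\le\varepsilon$ (with non-strict inequality) passes to the limit interval $(x,G_\varepsilon(x))$.

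Next I would invoke \cite[Lemma 2.5]{SH13} (the generalized iteration principle from \cite{HL94}) with the map $G_\varepsilon$: it produces a well-ordered set $\Lambda_\varepsilon\subseteq[a,b]$ satisfying \eqref{E403}, namely $a=\min\Lambda_\varepsilon$ and, for $\gamma>a$, $\gamma\in\Lambda_\varepsilon$ iff $\gamma=\sup\{G_\varepsilon(x):x\in\Lambda_\varepsilon^{<\gamma}\}$; moreover the iteration terminates at $b$, so $\sup\Lambda_\varepsilon=b$ (and, depending on the exact statement, $b\in\Lambda_\varepsilon$ or $b=\sup\Lambda_\varepsilon$, which is all we need). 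The two structural facts to extract are: (a) for every non-maximal $\beta\in\Lambda_\varepsilon$ the successor is $S(\beta)=G_\varepsilon(\beta)$; and (b) distinct such intervals $[\beta,S(\beta))$ are disjoint and their union is $[a,b)$. Claim (a) holds because by \eqref{E403} the successor $S(\beta)$ is the least element of $\Lambda_\varepsilon$ exceeding $\beta$, and one checks $G_\varepsilon(\beta)$ itself satisfies the defining sup-condition for membership (it equals $\sup\{G_\varepsilon(x):x\in\Lambda_\varepsilon^{<G_\varepsilon(\beta)}\}$ since $G_\varepsilon$ is inflationary and monotone along the chain), while nothing strictly between $\beta$ and $G_\varepsilon(\beta)$ can lie in $\Lambda_\varepsilon$. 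For (b): disjointness is standard for successor intervals in a well-ordered chain; to cover $[a,b)$, take $t\in[a,b)$ and let $\beta=\max\{\alpha\in\Lambda_\varepsilon:\alpha\le t\}$ (this max exists because $\Lambda_\varepsilon^{\le t}$ is nonempty — it contains $a$ — and closed in $\Lambda_\varepsilon$, being a down-set whose sup is $\le t<b$ hence again in $\Lambda_\varepsilon$ when it is a limit); then $t<S(\beta)$ since otherwise $S(\beta)\le t$ contradicts maximality, so $t\in[\beta,S(\beta))$.

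Finally, with $S(\beta)=G_\varepsilon(\beta)$ established, the oscillation bound is exactly property (a) from the first paragraph: $\|A(s)-A(t)\|\le\varepsilon$ for all $s,t\in(\beta,S(\beta))=(\beta,G_\varepsilon(\beta))$. This completes the proof. The main obstacle I anticipate is the bookkeeping in the second-to-last step — cleanly identifying the successor $S(\beta)$ in $\Lambda_\varepsilon$ with $G_\varepsilon(\beta)$ and verifying the covering property at limit points of $\Lambda_\varepsilon$ — since the rest reduces to quoting \cite[Lemma 2.5]{SH13}. If the cited lemma is stated in a form that already packages $S(\beta)=G_\varepsilon(\beta)$ and the partition property (as its phrasing "$\Lambda_\varepsilon$ is determined by the following properties" suggests it is), then the proof is essentially the observation that right regularity makes $G_\varepsilon$ inflationary with the stated oscillation control, plus a reference.
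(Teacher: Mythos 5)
Your proposal is correct and follows the same route as the paper, which offers no written proof but simply refers the reader to \cite[Lemma 2.5]{SH13} and the generalized iteration method of \cite{HL94} applied to the map $G_\varepsilon$; the one genuinely substantive input, namely that right regularity makes $G_\varepsilon$ inflationary ($G_\varepsilon(x)>x$ for $x<b$) and that the oscillation bound persists on $(x,G_\varepsilon(x))$, is exactly what you identify and verify. The additional bookkeeping you supply (monotonicity of $G_\varepsilon$, $S(\beta)=G_\varepsilon(\beta)$, and the disjoint covering of $[a,b)$) is sound and merely makes explicit what the cited lemma packages.
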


For a right regulated mapping $A:[a,b]\to E$ and an arbitrary $\varepsilon>0$, we introduce the step mapping $A_\varepsilon:[a,b]\to E$ given by 
\begin{equation}\label{E405}
A_\varepsilon(t)=A(\beta+),\quad t\in[\beta,S(\beta)),\; \beta\in \Lambda_\varepsilon^{<b}, \quad\quad A_\varepsilon(b)=A(b).
\end{equation}
Note that $\|A_\varepsilon(t)-A(t)\|\le\varepsilon$ for all $t\in(\beta,S(\beta))$
and $\beta\in \Lambda_\varepsilon^{<b}$, i.e., for all $t\in[a,b]$ with countably many exceptions.
In this way, we can approximate right regulated mappings by step mappings. Moreover, the following results
show that this approximation preserves the existence or nonexistence of product integrals. Hence, we can use
criteria from Section \ref{S4} to study product integrability of right regulated mappings.

\smallskip 
Our first result provides necessary and sufficient conditions for strong Kurzweil product integrability of right regulated mappings.
The proof is inspired by the proof of \cite[Proposition 4.1]{SH13}; note however that it relies on Lemma~\ref{sum},
whose statement is far from obvious.

\begin{theorem}
Let $A:[a,b]\to E$ be a right regulated mapping. Given an arbitrary $\varepsilon>0$, 
let $\Lambda_\varepsilon$ be the well-ordered subset from Lemma \ref{L400}.
Then the following properties are equivalent:
\begin{enumerate} 
\item $A$ is strongly Kurzweil product integrable.
\item The step mapping $A_\varepsilon:[a,b]\to E$ given by \eqref{E405}
is strongly Kurzweil product integrable.
\item The family $(\exp((S(\beta)-\beta)A(\beta+)))_{\beta\in\Lambda_\varepsilon^{<b}}$ is multipliable and has an invertible product.
\end{enumerate}
\end{theorem}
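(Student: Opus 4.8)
The plan is to prove the cycle of equivalences by splitting it into the two essentially independent pieces $2\Leftrightarrow 3$ and $1\Leftrightarrow 2$. The first piece is immediate from Theorem \ref{Kurzweil-step}, because $A_\varepsilon$ admits a representation of the form \eqref{E30}. Concretely, I would set $\Lambda=\Lambda_\varepsilon\cup\{b\}$ (so that $\max\Lambda=b$ and $\Lambda^{<b}=\Lambda_\varepsilon^{<b}$, and the successor map on elements below $b$ is unchanged), put $z_\beta=A(\beta+)$ for $\beta\in\Lambda^{<b}$ and $z_b=A(b)$; then \eqref{E405} is exactly \eqref{E30} for these data. Theorem \ref{Kurzweil-step} now states that $A_\varepsilon$ is strongly Kurzweil product integrable if and only if the family $(\exp((S(\beta)-\beta)A(\beta+)))_{\beta\in\Lambda_\varepsilon^{<b}}$ is multipliable with invertible product, which is precisely condition 3.

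For $1\Leftrightarrow 2$, the idea is that $A$ and $A_\varepsilon$ differ only by a Bochner integrable perturbation, after which Lemma \ref{sum} supplies both implications at once. Writing $B=A-A_\varepsilon$, the implication $2\Rightarrow 1$ follows by applying Lemma \ref{sum} to the strongly Kurzweil product integrable mapping $A_\varepsilon$ and the Bochner integrable mapping $B$, yielding that $A=A_\varepsilon+B$ is strongly Kurzweil product integrable; the implication $1\Rightarrow 2$ follows symmetrically, since $-B$ is also Bochner integrable and $A_\varepsilon=A+(-B)$. Thus the only point requiring work is the Bochner integrability of $B$.

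To establish that, I would argue as follows. Both $A$ and $A_\varepsilon$ are right regulated — the latter is in fact right-continuous on $[a,b)$ by construction — so both are strongly measurable by \cite[Lemma 2.6]{SH13}, and hence so is $B$. Moreover, the remark recorded immediately after \eqref{E405} gives $\|B(t)\|=\|A(t)-A_\varepsilon(t)\|\le\varepsilon$ for all $t\in[a,b]$ with at most countably many exceptions, so $B$ is essentially bounded. A strongly measurable, essentially bounded mapping on the compact interval $[a,b]$ is Bochner integrable, which closes the argument.

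The place where the real content of the theorem sits is Lemma \ref{sum}, whose proof is considerably less routine than the rest (as the authors themselves note in the preceding discussion): without the stability of strong Kurzweil product integrability under Bochner integrable perturbations, one could not freely pass between $A$ and its step approximation $A_\varepsilon$, since strong Kurzweil product integrability is not obviously preserved under the relevant limiting operations. Everything else — recasting \eqref{E405} in the form \eqref{E30} in order to invoke Theorem \ref{Kurzweil-step}, and verifying that the essentially bounded, strongly measurable difference $A-A_\varepsilon$ is Bochner integrable — is bookkeeping.
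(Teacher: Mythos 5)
Your proposal is correct and follows essentially the same route as the paper: $2\Leftrightarrow 3$ by recasting \eqref{E405} in the form \eqref{E30} and invoking Theorem \ref{Kurzweil-step}, and $1\Leftrightarrow 2$ by observing that $A-A_\varepsilon$ is strongly measurable and essentially bounded (hence Bochner integrable) and applying Lemma \ref{sum} in both directions. No gaps.
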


\begin{proof}
The equivalence $2\Leftrightarrow 3$ follows immediately from Theorem \ref{Kurzweil-step}; it remains 
to prove the equivalence $1\Leftrightarrow 2$. Both $A_\varepsilon$ and $A$ are strongly measurable. 
We know that $\|A_\varepsilon(t)-A(t)\|\le\varepsilon$ for all $t\in(\beta,S(\beta))$
and $\beta\in \Lambda_\varepsilon^{<b}$. Consequently, the inequality 
$\|A_\varepsilon(t)-A(t)\|\le\varepsilon$ holds almost everywhere on $[a,b]$,
and $\|A_\varepsilon-A\|$ is Lebesgue integrable.
This means that both $A_\varepsilon-A$ and $A-A_\varepsilon$ are Bochner integrable.
According to Lemma~\ref{sum}, if $A$ is strongly Kurzweil product integrable, then $A_\varepsilon=A+(A_\varepsilon-A)$ is strongly Kurzweil product integrable;
conversely, if $A_\varepsilon$ is strongly Kurzweil product integrable, then $A=A_\varepsilon+(A-A_\varepsilon)$ is strongly Kurzweil product integrable.     
\end{proof}

The next theorem provides additional criteria applicable in the commutative case.

\begin{theorem}\label{P401} 
Let $A:[a,b]\to E$ be a right regulated mapping such that $A(t_1)A(t_2)=A(t_2)A(t_1)$ for all $t_1,t_2\in[a,b]$. Given an arbitrary $\varepsilon>0$, 
let $\Lambda_\varepsilon$ be the well-ordered subset from Lemma \ref{L400}.
Then the following properties are equivalent:
\begin{enumerate} 
\item $A$ is strongly Kurzweil product integrable.
\item $A$ is strongly Henstock-Kurzweil integrable.
\item The step mapping $A_\varepsilon:[a,b]\to E$ given by \eqref{E405}
 is  strongly Henstock-Kurzweil integrable.
\item The  family $((S(\beta)-\beta)A(\beta+))_{\Lambda_\varepsilon^{<b}}$ is summable.
\end{enumerate}
\end{theorem}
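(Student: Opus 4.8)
The plan is to reduce all four conditions to statements about the step mapping $A_\varepsilon$ from \eqref{E405}, for which the required equivalences were already obtained in Section~\ref{S4}. The first step is the equivalence $1\Leftrightarrow 2$: since $A$ is right regulated, it is strongly measurable by \cite[Lemma 2.6]{SH13}, and since its values commute, \cite[Theorems 3.12, 3.13]{S15} give at once that $A$ is strongly Kurzweil product integrable if and only if it is strongly Henstock-Kurzweil integrable; this is exactly the argument used for conditions 1 and 2 in the proof of Theorem~\ref{T61}.

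The remaining equivalences are obtained by passing through $A_\varepsilon$. First, the theorem preceding the present one already gives that $A$ is strongly Kurzweil product integrable if and only if $A_\varepsilon$ is (its proof uses only Lemma~\ref{sum} together with the Bochner integrability of $A_\varepsilon-A$, which follows from $\|A_\varepsilon-A\|\le\varepsilon$ a.e.\ and strong measurability). Next, I would observe that $A_\varepsilon$ is itself a step mapping of the form \eqref{E30}, with $\Lambda=\Lambda_\varepsilon$ (note $\min\Lambda_\varepsilon=a$ and $\max\Lambda_\varepsilon=b$ by Lemma~\ref{L400}), $z_\beta=A(\beta+)$ for $\beta\in\Lambda_\varepsilon^{<b}$, and $z_b=A(b)$. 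Since multiplication in $E$ is continuous and the values of $A$ commute, every pair of right limits $A(\beta+)$, $A(\gamma+)$ commutes, and each of them commutes with $A(b)$; hence $A_\varepsilon$ satisfies the commutativity hypothesis of Theorem~\ref{T61}. Applying Theorem~\ref{T61} to $A_\varepsilon$ then yields that ``$A_\varepsilon$ is strongly Kurzweil product integrable'', ``$A_\varepsilon$ is strongly Henstock-Kurzweil integrable'' (condition~3), and ``$((S(\beta)-\beta)A(\beta+))_{\beta\in\Lambda_\varepsilon^{<b}}$ is summable'' (condition~4) are all equivalent. Combining this with the two previous steps closes the cycle $1\Leftrightarrow 2\Leftrightarrow 3\Leftrightarrow 4$. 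As an alternative route to $2\Leftrightarrow 3$, one may argue directly: $A_\varepsilon-A$ is Bochner integrable, hence strongly Henstock-Kurzweil integrable, so by linearity of the strong Henstock-Kurzweil integral the mappings $A$ and $A_\varepsilon=A+(A_\varepsilon-A)$ are strongly Henstock-Kurzweil integrable simultaneously.

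I do not anticipate a real difficulty here; the statement is essentially a bookkeeping exercise built on Theorem~\ref{T61}, the preceding theorem, Lemma~\ref{sum}, and \cite[Theorems 3.12, 3.13]{S15}. The only two points deserving explicit verification are the transfer of commutativity from $A$ to $A_\varepsilon$ --- which needs nothing more than continuity of multiplication, since every value of $A_\varepsilon$ is either a one-sided limit $A(\beta+)$ or the value $A(b)$ --- and the fact that $\Lambda_\varepsilon$ from Lemma~\ref{L400} genuinely plays the role of the index set $\Lambda$ in \eqref{E30}, i.e.\ that $\min\Lambda_\varepsilon=a$, $\max\Lambda_\varepsilon=b$, and $[a,b)$ is the disjoint union of the intervals $[\beta,S(\beta))$, $\beta\in\Lambda_\varepsilon^{<b}$, all of which is guaranteed by Lemma~\ref{L400}.
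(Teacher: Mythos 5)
Your proposal is correct. For the equivalence $1\Leftrightarrow 2$ you argue exactly as the paper does (commutativity plus \cite[Theorems 3.12, 3.13]{S15}). Where you diverge is in the remaining equivalences: the paper disposes of $2\Leftrightarrow 3\Leftrightarrow 4$ with a single citation of \cite[Proposition 4.1]{SH13}, whereas you rebuild that chain from results internal to the paper --- the approximation theorem immediately preceding Theorem \ref{P401} (giving $1\Leftrightarrow$ ``$A_\varepsilon$ strongly Kurzweil product integrable'') followed by Theorem \ref{T61} applied to the step mapping $A_\varepsilon$, after checking that commutativity of the values of $A$ passes to the one-sided limits $A(\beta+)$ by continuity of multiplication. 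Your route is slightly longer and, unlike the paper's, makes the link $3\Leftrightarrow 4$ pass through a commutativity hypothesis that is not actually needed for that particular step (it follows from \cite[Proposition 3.1]{SH13} alone); on the other hand it has the virtue of being self-contained within the machinery already developed in Sections 4--5, and your alternative direct argument for $2\Leftrightarrow 3$ (Bochner integrability of $A_\varepsilon-A$ plus linearity of the strong Henstock--Kurzweil integral) is essentially the content of the cited \cite[Proposition 4.1]{SH13}. No gaps.
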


\begin{proof} 
The conditions 1 and 2 are equivalent by \cite[Theorems 3.12 and 3.13]{S15},
while conditions 2, 3 and 4 are equivalent by \cite[Proposition~4.1]{SH13}. 
\end{proof} 

The next result is concerned with strong McShane product integrability and extends \cite[Proposition 4.3]{SH13}.

\begin{theorem}\label{P402} 
Let $A:[a,b]\to E$ be a right regulated mapping. Given an arbitrary $\varepsilon>0$, 
let $\Lambda_\varepsilon$ be the well-ordered subset from Lemma \ref{L400}.
Then the following properties are equivalent:
\begin{enumerate}
\item  $A$ is strongly McShane product integrable.
\item  $A$ is Bochner integrable.
\item The step mapping $A_\varepsilon:[a,b]\to E$ given by \eqref{E405}
is Bochner integrable.
\item  The  family $((S(\beta)-\beta)A(\beta+))_{\Lambda_\varepsilon^{<b}}$ is absolutely summable.
\item   The  family $(\exp((S(\beta)-\beta)\|A(\beta+)\|))_{\Lambda_\varepsilon^{<b}}$ is  multipliable.
\item   The  family $(1+(S(\beta)-\beta)\|A(\beta+)\|)_{\Lambda_\varepsilon^{<b}}$ is  multipliable.
\end{enumerate}
\end{theorem}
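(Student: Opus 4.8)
The plan is to obtain all six equivalences from results already in hand, using the step mapping $A_\varepsilon$ from \eqref{E405} as a bridge between the right regulated mapping $A$ and the step-mapping criteria of Section \ref{S4}. The equivalence between statements 1 and 2 is immediate from \cite[Theorem 4.14]{S15}, by which strong McShane product integrability of $A$ is the same as Bochner integrability of $A$. The equivalences among statements 3, 4, 5, 6 will follow from Theorem \ref{BochnerStep} applied to $A_\varepsilon$: by construction, $A_\varepsilon$ is a step mapping with well-ordered steps in the sense of \eqref{E30}, associated with the well-ordered set $\Lambda_\varepsilon$ and the family $z_\beta=A(\beta+)$, $\beta\in\Lambda_\varepsilon^{<b}$. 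With this identification, conditions 2, 3, 4, 5 of Theorem \ref{BochnerStep} become, verbatim, conditions 3, 4, 5, 6 of the present statement.

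It remains to establish the key link between statements 2 and 3, i.e., that $A$ is Bochner integrable if and only if $A_\varepsilon$ is. Here I would use the estimate recorded right after \eqref{E405}: $\|A_\varepsilon(t)-A(t)\|\le\varepsilon$ for all $t\in[a,b]$ with at most countably many exceptions, hence almost everywhere. By \cite[Lemma 2.6]{SH13}, the mapping $A$ is strongly measurable, and so is $A_\varepsilon$; therefore $A_\varepsilon-A$ is a bounded, strongly measurable mapping on the compact interval $[a,b]$, and as such it is Bochner integrable. Since the Bochner integrable mappings on $[a,b]$ form a vector space, the identities $A_\varepsilon=A+(A_\varepsilon-A)$ and $A=A_\varepsilon-(A_\varepsilon-A)$ yield the desired equivalence.

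I do not anticipate a genuine obstacle; the only point deserving care is precisely the observation used in the step between statements 2 and 3, namely that $A_\varepsilon-A$ — the difference of two mappings neither of which is assumed integrable a priori — is nevertheless Bochner integrable because it is bounded and strongly measurable. This is exactly the device that transports the step-mapping theory of Section \ref{S4} to right regulated mappings, and it runs parallel to the proof of \cite[Proposition 4.3]{SH13}, which the present theorem extends.
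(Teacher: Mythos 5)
Your proposal is correct and follows essentially the same route as the paper: the equivalence $1\Leftrightarrow 2$ via \cite[Theorem 4.14]{S15}, the equivalences $3\Leftrightarrow 4\Leftrightarrow 5\Leftrightarrow 6$ via Theorem \ref{BochnerStep} applied to $A_\varepsilon$, and the bridge $2\Leftrightarrow 3$ through the a.e.\ bound $\|A_\varepsilon-A\|\le\varepsilon$. The only cosmetic difference is that the paper disposes of $2\Leftrightarrow 3$ by citing \cite[Proposition 4.3]{SH13}, whereas you prove it directly from the boundedness and strong measurability of $A_\varepsilon-A$; that direct argument is sound and is exactly the device the paper itself uses in the preceding theorem on strong Kurzweil product integrability of right regulated mappings.
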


\begin{proof}
The conditions 1 and 2 are equivalent by  \cite[Theorem 4.14]{S15}, conditions 2 and 3  are equivalent by \cite[Proposition 4.3]{SH13},
and conditions 3, 4, 5, 6 are equivalent by Theorem \ref{BochnerStep}. 
\end{proof}

\begin{remark}\label{R401}  
The  integrability results derived above have  analogous counterparts for left regulated mappings, i.e., for mappings
which have left limits at every point of $(a,b]$. 
\end{remark}

We now present an example of a product integrable right regulated mapping $A:[0,1]\to E$, which has discontinuities of the second kind at every rational point of $(0,1]$.  
The example is a slight modification of \cite[Example 4.1]{SH13}. We use the symbol $\left\lceil x \right\rceil$ to denote the smallest integer greater than or equal to $x$.

\begin{example}\label{Ex401} 
Let $E$ be an arbitrary unital Banach algebra $E$. Denote by $Z$ the set of all rational numbers in $[0,1]$, and
define a mapping $A:[0,1]\to E$ by $A(t)=I$ for all $t\in Z$, and 
\begin{equation}\label{E47}
	A(t)=\sum_{n=1}^\infty\frac 1{n^2}\left(2(nt-\left\lceil nt\right\rceil)\cos\left(\frac {\pi}{2(nt-\left\lceil nt\right\rceil)}\right)+\frac {\pi}{2}\sin\left(\frac{\pi}{2(nt-\left\lceil nt\right\rceil)}\right)\right)I, 
\end{equation}
for all $t\in [0,1]\setminus Z$.
For each $m\in\mathbb N$, denote 
$$
Z_m=\left\{\frac ij;\, j\in\{1,\dots,m\}, \ 0\le i\le m\right\}.
$$ 
Let $A_m:[0,1]\to E$ be a mapping such that $A_m(t)=I$ for all $t\in Z_m$, and $A_m(t)$ is the $m$-th partial sum  of the series (\ref{E47}) for all $t\in[0,1]\setminus Z_m$. It is easy to verify that $Z_m$ is the set of all discontinuity points of~$A_m$.
Thus $A$ is discontinuous at every point of the set $\cup_{m=1}^\infty Z_m$, which is the set $Z$ of all rational numbers in $[0,1]$. Moreover, if $t\in Z\setminus\{0\}$, then the sine term on the right hand side of (\ref{E47}) does not have a left limit at $t$, i.e., $t$ is a discontinuity of the second kind. 
On the other hand, for each $t\in [0,1]\setminus Z$, the functions $A_m$ are continuous at $t$. Since $(A_m)_{m=1}^\infty$ is uniformly convergent to $A$ on $[0,1]$, it follows that $A$ is continuous at each point $t\in [0,1]\setminus Z$. Similarly, since each $A_m$ has a right limit at all  points of $[0,1)$, then $A$ has the same property, i.e., it is right regulated.  Because $A$ is bounded, it is Riemann integrable, and hence also Riemann product integrable.
\end{example}

\begin{example} Define a mapping $B:[0,1]\to E$ by   $B(1)=I$, and 
	$$
	B(t)=\left(\cos\left(\frac{1}{t-1}\right) +\frac{\sin(\frac{1}{t-1})}{t-1}\right)I, \quad t\in[0,1). 
	$$
	The only discontinuity point of $B$ is $1$. The mapping $C:[0,1]\to E$ defined by  
	$$
	C(t)=(t-1)\cos\left(\frac 1{t-1}\right)I, \quad t\in[0,1), \quad C(1)=0, 
	$$ is continuous on $[0,1]$ and $C'(t)=B(t)$ for all $t\in[0,1)$. Consequently, $B$ is strongly Henstock-Kurzweil integrable with $\int_0^1 B(t)\,{\rm d}t=C(1)-C(0)$, but neither Riemann integrable (since $B$ is unbounded) nor Bochner integrable (since $C$ is not absolutely continuous).
	
	Let $A$ be the mapping from Example \ref{Ex401}. Then $A+B$
	is right regulated and has the same discontinuity points as $A$. It is  strongly Kurzweil product integrable, but neither Riemann product integrable nor strongly McShane product integrable.
\end{example}

\section{Stieltjes product integrability}\label{3}

This section is devoted to Stieltjes-type product integrals of the form $\prod_a^b(I+{\rm d}A(t))$, which are defined as follows.

\begin{definition}\label{KS-prod-int}
A mapping $A:[a,b]\to E$ is called Kurzweil-Stieltjes/Riemann-Stieltjes product integrable if the function $V:[a,b] \times \mathcal I \to E$
given by $V(t,[x,y])=I+A(y)-A(x)$ is Kurzweil/Riemann product integrable in the sense of Definition \ref{E4def1}.
In this case, the Kurzweil-Stieltjes/Riemann-Stieltjes product integral of $A$ is defined as $\prod_a^b(I+{\rm d}A(t))={\prod}_a^b V(t,{\rm d}t)$.

$A$ is called strongly Kurzweil-Stieltjes product integrable if $V$ is strongly Kurzweil product integrable 
in the sense of Definition \ref{strongDef}. 
\end{definition}

We begin by recalling an elegant criterion for Riemann-Stieltjes product integrability, which was derived by R.~M.~Dudley and R.~Norvai\v{s}a
and is based on the notion of $p$-variation.

\smallskip

Given a mapping $A:[a,b]\to E$ and a number $p>0$, the $p$-variation
of $A$ is defined as 
$$\sup\left\{\left(\sum_{i=1}^m\|A(t_i)-A(t_{i-1})\|^p\right)^{\frac{1}{p}}\!\!;\;\{t_i\}_{i=0}^m\mbox{ is a partition of }[a,b]\right\}.$$ 

It is known that each mapping with finite $p$-variation, for some $p\in(0,\infty)$, is regulated (see \cite[Lemma 2.4]{DN}).
We use the notation $\Delta^+A(t)=A(t+)-A(t)$ for $t\in[a,b)$, and $\Delta^-A(t)=A(t)-A(t-)$, $t\in(a,b]$. 

The next theorem  combines \cite[Theorem 4.26]{DN} and \cite[Proposition 4.30]{DN}.

\begin{theorem}\label{L31}
Assume that $A:[a,b]\to E$ satisfies the following conditions:
\begin{enumerate}
\item $A$ has a finite $p$-variation for a certain $p\in(0,2)$.
\item $A$ is left-continuous or right-continuous at each point of $(a,b)$.
\item $A$ is regulated, $I+\Delta^+A(t)$ is invertible for all $t\in[a,b)$, and $I+\Delta^-A(t)$ is invertible for all $t\in(a,b]$.
\end{enumerate}
Then $A$ is Riemann-Stieltjes product integrable.
\end{theorem}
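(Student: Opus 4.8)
The plan is to reduce the statement to the two building blocks behind \cite[Theorem 4.26]{DN} and \cite[Proposition 4.30]{DN}: a continuous mapping of bounded $p$-variation with $p<2$, and a pure jump (saltus) mapping. First I would split $A=A_c+A_j$, where $A_j$ collects the one-sided jumps of $A$, so $A_j(t)=\sum_{a\le s<t}\Delta^+A(s)+\sum_{a<s\le t}\Delta^-A(s)$, and $A_c=A-A_j$. Since $A$ is regulated with finite $p$-variation, it has at most countably many discontinuities, the saltus function $A_j$ is well defined and of finite $p$-variation, and $A_c$ is continuous with finite $p$-variation. One checks that $A_c$ satisfies condition~3 trivially, while for $A_j$ the jump factors are exactly the elements $I+\Delta^+A(t)$ and $I+\Delta^-A(t)$, all invertible by condition~3 for $A$; condition~2 guarantees that at each interior point at most one one-sided jump is present, which keeps the later bookkeeping simple.

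For the continuous part I would invoke Young's theory of integration. Taking both variation exponents to be $p$, we have $1/p+1/p>1$, so the Young (refinement Riemann--Stieltjes) integral $\int_a^t {\rm d}A_c(s)\,Y(s)$ is defined whenever $Y$ has bounded $p$-variation, and the indefinite product integral arises as the unique solution of the linear integral equation $Y(t)=I+\int_a^t {\rm d}A_c(s)\,Y(s)$. I would produce this solution by Picard iteration: on a subinterval over which the $p$-variation of $A_c$ is small, the Love--Young inequality makes the iteration map a contraction in the $p$-variation norm; finitely many such subintervals cover $[a,b]$ and the local solutions patch together. Because $A_c$ is continuous, the Riemann--Stieltjes product sums $P(V,D)$ converge to $Y(b)$ as the mesh of $D$ tends to zero, not merely along refinements, which is what Definition~\ref{E4def1} requires. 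Invertibility of $Y(b)$ is obtained by solving the companion equation $Z(t)=I-\int_a^t Z(s)\,{\rm d}A_c(s)$ and checking $Z(t)Y(t)\equiv I$; since $A_c$ has no jumps, nothing obstructs this.

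For the jump part, the Riemann--Stieltjes product integral of $A_j$ is the left-ordered product of the factors $I+\Delta^+A(t)$, $I+\Delta^-A(t)$ over the (at most countably many) jump points, together with the boundary jumps: this is \cite[Proposition 4.30]{DN}, and the convergence uses that only finitely many jumps exceed any $\varepsilon>0$ while the remaining factors are controlled by the $p$-variation and by the uniform bound on $\|(I+\Delta^{\pm}A(t))^{-1}\|$ coming from condition~3. To recombine $A=A_c+A_j$, I would use a variation-of-constants substitution in the spirit of the proof of Lemma~\ref{sum}: writing $Y=Y_c\,U$ with $Y_c$ the (invertible, continuous) indefinite product integral of $A_c$, the factor $U$ should be the indefinite product integral of the conjugated jump mapping $\widetilde A$ with ${\rm d}\widetilde A=Y_c^{-1}\,{\rm d}A_j\,Y_c$, which is again of bounded $p$-variation with invertible jump factors; the substitution is clean precisely because $Y_c$ is continuous, so no jump correction terms appear. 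Then $\prod_a^b(I+{\rm d}A)$ exists and is invertible if and only if $\prod_a^b(I+{\rm d}\widetilde A)$ is, and the latter follows from the jump case.

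I expect the main obstacle to be the continuous case: adapting the Love--Young inequality to the noncommutative, algebra-valued product setting, obtaining the Picard contraction with a uniform constant so that the finitely many local pieces fit together, and --- most delicately --- upgrading convergence along refinements to convergence of the Riemann product sums as the mesh tends to zero, which is exactly where continuity of $A_c$ is used. Secondary technical points are the verification that the continuous/jump decomposition behaves well with respect to $p$-variation when $E$ is infinite-dimensional, and the interlacing bookkeeping in the recombination step.
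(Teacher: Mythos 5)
First, a point of reference: the paper does not prove Theorem~\ref{L31} at all --- it is imported verbatim as a combination of \cite[Theorem 4.26]{DN} and \cite[Proposition 4.30]{DN}, so there is no in-paper argument to compare against. Your proposal is therefore an attempt at an actual proof, and it follows a recognizable classical strategy (continuous part via a Young-type integral equation, jump part as an ordered product, recombination by variation of constants). That strategy, however, is not the one Dudley and Norvai\v{s}a use, and it has a genuine gap at its very first step.

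The gap is the additive saltus decomposition $A=A_c+A_j$. For $1<p<2$, finiteness of the $p$-variation only gives $\sum_s\|\Delta^{\pm}A(s)\|^p<\infty$; the jumps need not be absolutely summable, and since the jump set can be dense in $[a,b]$ there is no canonical order in which a conditionally convergent rearrangement could be taken. Hence the series $\sum_{a\le s<t}\Delta^+A(s)+\sum_{a<s\le t}\Delta^-A(s)$ defining $A_j(t)$ simply need not converge, and neither $A_j$ nor $A_c=A-A_j$ exists in general. Everything downstream --- the Young/Picard treatment of $A_c$, the conjugated jump mapping $\widetilde A$ with ${\rm d}\widetilde A=Y_c^{-1}{\rm d}A_j\,Y_c$, and the recombination $Y=Y_cU$ --- presupposes this decomposition, so the architecture collapses exactly in the regime $1<p<2$ that the theorem is designed to cover (for $p\le 1$ the decomposition is fine, but then one is back in the classical bounded-variation case). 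The actual proof in \cite{DN} avoids any additive splitting: existence is obtained directly from a Cauchy estimate comparing the product sum over a partition with the product sum over a refinement, where the error contributed by each subinterval is controlled by (roughly) the square of its $p$-variation seminorm --- this is where $p<2$ enters, and it is the same mechanism that produces condition~3 of Theorem~\ref{scalarRS} --- while \cite[Proposition 4.30]{DN} then upgrades the refinement limit to the mesh limit required by Definition~\ref{E4def1}, using the one-sided continuity hypothesis. You correctly identified the refinement-versus-mesh issue and the role of condition~2, but the decomposition on which your whole proof rests is not available.
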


For real-valued functions, we have the following necessary condition for Riemann-Stieltjes product integrability; see \cite[Theorem 4.3]{DN}.

\begin{theorem}\label{2variation}
	If $f:[a,b]\to\mathbb R$ is Riemann-Stieltjes product integrable, then it has finite 2-variation.
\end{theorem}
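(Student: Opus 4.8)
Write-up of the plan.

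For a real-valued $f$ the product $P(V,D)=\prod_{i=1}^{m}\bigl(1+f(t_i)-f(t_{i-1})\bigr)$ of Definition~\ref{E4def1} does not depend on the tags $\xi_i$, so Riemann-Stieltjes product integrability of $f$ simply means: there is a number $P_V\neq 0$ such that for every $\varepsilon>0$ there is a $\delta>0$ with $\bigl|\prod_{i=1}^{m}(1+\Delta_i f)-P_V\bigr|<\varepsilon$ for every partition $a=t_0<\dots<t_m=b$ of mesh less than $\delta$, where $\Delta_i f=f(t_i)-f(t_{i-1})$. The plan has three steps: (i) deduce that $f$ is bounded and regulated; (ii) obtain a uniform bound $\sum_{i=1}^{m}(\Delta_i f)^2\le B$ over all sufficiently fine partitions; (iii) bootstrap this to arbitrary partitions by Cauchy--Schwarz, which yields finite $2$-variation (this follows Dudley--Norvai\v{s}a, cf.~\cite{DN}).

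For step (i) I would perturb a fixed fine partition near a single point. If $f$ were unbounded, one inserts a node $t^{\ast}$ and a neighbouring node $s$ at which $|f(s)|$ is huge; the two factors adjacent to $s$ then make $|P(V,D)|$ arbitrarily large, violating the Cauchy criterion. If $f$ had no right limit at some $t^{\ast}$, set $m^{\ast}=\limsup_{t\to t^{\ast}+}f-\liminf_{t\to t^{\ast}+}f>0$ and insert into a tiny interval $(t^{\ast},t^{\ast}+h)$ a long chain of nodes on which $f$ alternates between values within $m^{\ast}/10$ of the upper and lower limits. The chain factors are $1+\xi_j$ with $|\xi_j|$ comparable to $m^{\ast}$, $\sum_j\xi_j$ telescoping to a bounded quantity, and $\sum_j\xi_j^{2}$ of order (chain length)$\cdot(m^{\ast})^{2}$; taking logarithms (or logarithms of absolute values when $m^{\ast}$ is large) shows the chain contributes a factor that tends to $0$ or $\infty$, or oscillates in sign, as the chain grows, again contradicting the Cauchy criterion. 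Left limits are handled symmetrically. Thus $f$ is bounded and regulated, so for each $\eta>0$ only finitely many points $t$ satisfy $|\Delta^{+}f(t)|>\eta$ or $|\Delta^{-}f(t)|>\eta$.

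For step (ii), fix $\eta=\tfrac14$, pick a step function $\varphi$ with $\|f-\varphi\|_{\infty}$ small, and let $\delta_0>0$ be smaller than the minimal gap between successive jump points of $\varphi$ and between successive ``big jumps'' of $f$ (points with one-sided jump exceeding $\eta/5$). For a partition $\pi$ of mesh less than $\delta_0$ one checks, via the approximation by $\varphi$, that any interval $[t_{i-1},t_i]$ with $|\Delta_i f|\ge\eta$ must contain a big jump of $f$, so at most $N'$ of the intervals carry $|\Delta_i f|\ge\eta$, and these contribute at most $N'(2\|f\|_{\infty})^{2}$ to $\sum_i(\Delta_i f)^{2}$. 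On the remaining intervals $1+\Delta_i f\in(\tfrac34,\tfrac54)$, so write $\log\bigl|\prod_i(1+\Delta_i f)\bigr|=S_1+S_2$, where $S_1$ sums $\log|1+\Delta_i f|$ over the big-increment intervals and $S_2$ sums $\log(1+\Delta_i f)$ over the rest. Since $S_1$ has at most $N'$ terms and $|1+\Delta_i f|\le 1+2\|f\|_{\infty}$, $S_1$ is bounded above; and since $\sum_i\Delta_i f=f(b)-f(a)$ forces $S_2\le\sum_{\mathrm{small}}\Delta_i f$ to be bounded above while $S_1=\log|\prod_i(1+\Delta_i f)|-S_2$ with $\log|\prod_i(1+\Delta_i f)|\to\log|P_V|$, it follows that $S_1$, hence $S_2$, is bounded below. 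Using $\log(1+x)\le x-c_1x^{2}$ for $|x|\le\tfrac14$ with a fixed $c_1>0$, we get $c_1\sum_{\mathrm{small}}(\Delta_i f)^{2}\le\sum_{\mathrm{small}}\Delta_i f-S_2$, whose right-hand side is bounded above. Combining, $\sum_{i=1}^{m}(\Delta_i f)^{2}\le B$ for some $B<\infty$ and every partition of mesh less than $\delta_0$.

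For step (iii), take an arbitrary partition $\{s_0<\dots<s_k\}$ of $[a,b]$, subdivide each $[s_{j-1},s_j]$ into $n_j\le N_0:=\lceil(b-a)/\delta_0\rceil$ consecutive pieces of length less than $\delta_0$, and apply Cauchy--Schwarz: $(f(s_j)-f(s_{j-1}))^{2}\le n_j\sum_{\ell}(\Delta f)^{2}\le N_0\sum_{\ell}(\Delta f)^{2}$ over those pieces. Summing over $j$ gives $\sum_{j=1}^{k}(f(s_j)-f(s_{j-1}))^{2}\le N_0\sum(\Delta f)^{2}\le N_0B$, the last step because the refined partition has mesh less than $\delta_0$; hence the $2$-variation of $f$ is at most $(N_0B)^{1/2}<\infty$. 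I expect the main obstacle to be step (ii): specifically, rigorously showing that only boundedly many intervals of a fine partition carry a large increment, and controlling those few (a priori badly behaved) large-increment factors purely from the convergence of the whole product. The regularity reductions in step (i) are routine but technical, and step (iii) is elementary once step (ii) is in place.
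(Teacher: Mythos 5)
The paper does not actually prove Theorem \ref{2variation}: it is quoted directly from Dudley and Norvai\v{s}a \cite[Theorem 4.3]{DN}, so there is no in-text proof to compare yours against. Your three-step plan is, however, a correct and essentially self-contained reconstruction of the standard argument behind that citation. Step (ii), the crux, is sound as written: once $f$ is known to be bounded and regulated, only boundedly many intervals of a sufficiently fine partition can carry an increment of size at least $\tfrac14$ (each such interval must straddle a jump of a uniformly approximating step function); those finitely many factors are controlled crudely by $\|f\|_\infty$, the convergence of $\log|P(V,D)|$ to $\log|P_V|$ uses the invertibility requirement $P_V\ne 0$ in Definition \ref{E4def1} in an essential way, and the inequality $\log(1+x)\le x-c_1x^2$ on $|x|\le\tfrac14$ together with the telescoping of $\sum_i\Delta_i f$ yields the uniform bound on $\sum_i(\Delta_i f)^2$ over fine partitions. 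Step (iii) is elementary and correct. The one place needing more care than your sketch supplies is the regularity part of step (i): when the oscillation $m^*$ at a putative bad point is large, a pair of consecutive chain increments contributes a factor near $1-(m^*)^2$, and with your fixed tolerance $m^*/10$ the errors could in principle conspire to make each pair factor exactly $+1$, in which case the chain product does not misbehave at all. This is repaired by shrinking the tolerance (the chain points can be taken arbitrarily close to the two distinct subsequential limits), after which the pair factors converge to $1-(m^*)^2\ne 1$; the chain product then tends to $0$, tends to $\infty$ in modulus, or alternates in sign near $\pm1$ in the borderline case $(m^*)^2=2$, and each alternative contradicts convergence of $P(V,D)$ to a nonzero limit. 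With that adjustment the argument is complete.
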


The next theorem, which is a combination of Theorem 4.4 and Proposition 2.23 from \cite{DN}, provides a complete characterization of real-valued  Riemann-Stieltjes product integrable functions.

\begin{theorem}\label{scalarRS}
	A function $f:[a,b]\to\mathbb R$ is Riemann-Stieltjes product integrable if and only if the following conditions are satisfied:
	\begin{enumerate}
		\item $f$ is regulated, $1+\Delta^+f(t)\ne 0$ for all $t\in[a,b)$, and $1+\Delta^-f(t)\ne 0$ for all $t\in(a,b]$.
		\item If $\{\xi_j\}_{j}$ is a sequence consisting of all discontinuity points of $f$, then 
		$$\sum_{j} (|\Delta^-f(\xi_j)|^2+|\Delta^+f(\xi_j)|^2)<\infty.$$
		\item For each $\varepsilon>0$, there exists a partition $D$ of $[a,b]$ such that if $a=y_0<y_1<\cdots<y_m=b$ is a~refinement of $D$, then
		$$\sum_{i=1}^m|f(y_i-)-f(y_{i-1}+)|^2<\varepsilon.$$
	\end{enumerate}
\end{theorem}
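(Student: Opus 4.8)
The statement is, as the surrounding text makes clear, a repackaging of results of Dudley and Norvai\v{s}a, so the plan is to explain how \cite[Theorem 4.4]{DN} and \cite[Proposition 2.23]{DN} fit together rather than to reprove them. Theorem 4.4 of \cite{DN} characterizes Riemann-Stieltjes product integrability of a real-valued $f$ by three requirements: $f$ is regulated with nonvanishing jump factors $1+\Delta^\pm f$, and $f$ belongs to a class of functions whose increments are suitably controlled in the $2$-norm along refinements of partitions. Proposition 2.23 of \cite{DN} then identifies, for a regulated $f$, membership in that class with the conjunction of the squared-jump summability in condition 2 and the refinement-oscillation estimate in condition 3. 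Substituting the second result into the first produces exactly the equivalence claimed, with condition 1 carried over verbatim.

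To make the logic self-contained I would first settle condition 1. Necessity of regulatedness is standard, and if $1+\Delta^+f(t_0)=0$ (resp.\ $1+\Delta^-f(t_0)=0$) for some $t_0$, then by inserting a short subinterval just to the right (resp.\ left) of $t_0$ one drives the Riemann products arbitrarily close to $0$; this is incompatible with Definition~\ref{E4def1}, where the product integral is required to be invertible, hence nonzero in the scalar case. Necessity of condition 2 is essentially the finite-$2$-variation obstruction of Theorem~\ref{2variation} applied near the jumps, and necessity of condition 3 comes from the same refinement analysis underlying \cite[Proposition 2.23]{DN}.

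For sufficiency the guiding picture is the Gill--Johansen/Dudley--Norvai\v{s}a decomposition $f=f^{\mathrm c}+f^{\mathrm j}$ into a continuous part and a generalized jump part built from the at most countably many jumps $\Delta^\pm f(\xi_j)$. Condition 2 together with the elementary estimates $|1+x|\le e^{|x|}$ and $|\log(1+x)-x|\le C|x|^2$ for $|x|\le\frac12$ makes the infinite product of jump factors $\prod_j(1+\Delta^-f(\xi_j))\prod_j(1+\Delta^+f(\xi_j))$ absolutely convergent and, by condition 1, nonzero, while condition 3 is exactly what is needed for the Riemann products of the continuous part to converge. The latter is the genuinely delicate point, and the one where I would simply quote \cite{DN}: because the factors $1+f^{\mathrm c}(t_i)-f^{\mathrm c}(t_{i-1})$ need not be positive one cannot pass to logarithms globally, so one works on partitions fine enough that all factors are positive, compares $\sum_i\log(1+\delta_i)$ with the telescoping sum $f^{\mathrm c}(b)-f^{\mathrm c}(a)$ via $\sum_i|\log(1+\delta_i)-\delta_i|\le C\sum_i|\delta_i|^2\to 0$, and concludes with a Cauchy criterion along the directed family of partitions. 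Assembling the continuous and jump contributions then also yields the value of $\prod_a^b(I+{\rm d}f(t))$ and completes the argument.
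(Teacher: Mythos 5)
Your proposal matches the paper exactly: Theorem \ref{scalarRS} is stated there without proof, presented as a direct combination of \cite[Theorem 4.4]{DN} and \cite[Proposition 2.23]{DN}, which is precisely the reduction you identify. Your additional sketch of the underlying Dudley--Norvai\v{s}a arguments (the jump/continuous decomposition, the logarithm-versus-telescoping comparison on fine partitions) is consistent with their proofs but goes beyond anything the paper itself supplies.
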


We now turn our attention to Kurzweil-Stieltjes product integrals. If $F$ is a continuously differentiable mapping with $F'=f$, it is reasonable to expect that the Stieltjes product integral $\prod_a^b(I+{\rm d}F(t))$ can be reduced to the ordinary product integral $\prod_a^b(I+f(t)\,{\rm d}t)$. In fact, the statement holds under the following weaker assumptions on $F$.

\begin{theorem}\label{substitution}
	Assume that $f:[a,b]\to E$ is strongly Henstock-Kurzweil integrable and $F(t)=\int_a^t f(s)\,{\rm d}s$, $t\in[a,b]$. Then the strong Kurzweil-Stieltjes product integral $\prod_a^b(I+{\rm d}F(t))$ exists if and only if the strong Kurzweil product integral $\prod_a^b(I+f(t)\,{\rm d}t)$ exists; in this case, both integrals have the same value.
\end{theorem}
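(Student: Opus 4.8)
The plan is to compare, term by term, the Riemann-type sums appearing in the definition of the strong Kurzweil-Stieltjes product integral of $F$ and of the strong Kurzweil product integral of $f$, and to absorb their difference into the quantity that strong Henstock-Kurzweil integrability keeps under control. Writing $V_F(t,[x,y])=I+F(y)-F(x)$ and $V_f(t,[x,y])=I+f(t)(y-x)$, the starting point is the elementary observation that for any $W:[a,b]\to E$ and any tagged partition $(\xi_i,[t_{i-1},t_i])_{i=1}^m$ of $[a,b]$, the triangle inequality yields
\begin{multline*}
\left|\,\sum_{i=1}^m\|V_F(\xi_i,[t_{i-1},t_i])-W(t_i)W(t_{i-1})^{-1}\|-\sum_{i=1}^m\|V_f(\xi_i,[t_{i-1},t_i])-W(t_i)W(t_{i-1})^{-1}\|\,\right|\\
\le\sum_{i=1}^m\|F(t_i)-F(t_{i-1})-f(\xi_i)(t_i-t_{i-1})\|.
\end{multline*}
Since $f$ is strongly Henstock-Kurzweil integrable with primitive $F(t)=\int_a^t f(s)\,{\rm d}s$, the right-hand side can be made smaller than any prescribed $\eta>0$ by choosing a sufficiently fine gauge on $[a,b]$.

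With this inequality available, I would proceed as follows. Assume that the strong Kurzweil product integral $\prod_a^b(I+f(t)\,{\rm d}t)$ exists, witnessed by a function $W$ with the properties listed in Definition \ref{strongDef}. Given $\varepsilon>0$, I would pick a gauge $\delta_1$ furnished by that definition for accuracy $\varepsilon/2$, and a gauge $\delta_0$ furnished by strong Henstock-Kurzweil integrability of $f$ for accuracy $\varepsilon/2$; then $\delta=\min(\delta_0,\delta_1)$, inserted into the displayed estimate, shows that the very same $W$ also witnesses strong Kurzweil-Stieltjes product integrability of $F$. Because in both cases the product integral is by definition $W(b)W(a)^{-1}$, the two values coincide. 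The converse implication is entirely symmetric, with the roles of $V_F$ and $V_f$ interchanged.

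I do not expect a genuine obstacle here; the theorem is essentially a bookkeeping argument once the definitions are unwound. The one point that must be handled with care is the invocation of the strong Henstock-Kurzweil integral (the Henstock-Lebesgue integral of \cite{SH13}): what it controls is the sum $\sum_i\|F(t_i)-F(t_{i-1})-f(\xi_i)(t_i-t_{i-1})\|$ of the local discrepancies over a $\delta$-fine partition, and not merely the single Riemann-type quantity $\|\sum_i f(\xi_i)(t_i-t_{i-1})-(F(b)-F(a))\|$ that an ordinary Henstock-Kurzweil integral would give. It is precisely this stronger summability that legitimizes the term-by-term comparison above and hence the equivalence.
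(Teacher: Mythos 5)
Your proposal is correct and follows essentially the same route as the paper: both arguments take the witness $W$ for one of the two strong product integrals, intersect its gauge with the gauge supplied by strong Henstock--Kurzweil integrability of $f$, and use the termwise bound $\sum_i\|F(t_i)-F(t_{i-1})-f(\xi_i)(t_i-t_{i-1})\|<\varepsilon$ to transfer the defining inequality from $V_F$ to $V_f$ and back. Your closing remark correctly identifies the one essential point, namely that it is the \emph{strong} (Henstock--Lebesgue) form of integrability that controls the sum of local discrepancies and thus makes the term-by-term comparison legitimate.
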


\begin{proof}
	Consider an arbitrary $\varepsilon>0$. Since $f$ is strongly Henstock-Kurzweil integrable, there exists a gauge $\delta_0:[a,b]\to\mathbb R^+$ such that
	$$\sum_{i=1}^m\|f(\xi_i)(t_i-t_{i-1})-(F(t_i)-F(t_{i-1}))\|<\varepsilon$$
	for each $\delta_0$-fine partition of $[a,b]$.
	
	If the strong Kurzweil-Stieltjes product integral $\prod_a^b(I+{\rm d}F(t))$ exists, there is a gauge $\delta:[a,b]\to\mathbb R^+$ such that
	$$\sum_{i=1}^m\|I+F(t_i)-F(t_{i-1})-W(t_i)W(t_{i-1})^{-1}\|<\varepsilon$$
	holds for each $\delta$-fine partition of $[a,b]$, where $W(t)=\prod_a^t(I+{\mathrm d}F(s))$, $t\in[a,b]$. Without loss of generality, we can assume that $\delta\le\delta_0$ on $[a,b]$. Consequently,
	$$\sum_{i=1}^m\|I+f(\xi_i)(t_i-t_{i-1})-W(t_i)W(t_{i-1})^{-1}\|$$
	$$\le\sum_{i=1}^m\|I+F(t_i)-F(t_{i-1})-W(t_i)W(t_{i-1})^{-1}\|+\sum_{i=1}^m\|f(\xi_i)(t_i-t_{i-1})-(F(t_i)-F(t_{i-1}))\|<2\varepsilon,$$
	which proves that the strong Kurzweil product integral $\prod_a^b(I+f(t)\,{\rm d}t)$ exists and equals $W(b)W(a)^{-1}=\prod_a^b(I+{\mathrm d}F(t))$.
	 
	Conversely, if the strong Kurzweil product integral $\prod_a^b(I+f(t)\,{\rm d}t)$ exists, there is a gauge $\delta:[a,b]\to\mathbb R^+$ such that
	$$\sum_{i=1}^m\|I+f(\xi_i)(t_i-t_{i-1})-W(t_i)W(t_{i-1})^{-1}\|<\varepsilon$$
	holds for each $\delta$-fine partition of $[a,b]$, where $W(t)=\prod_a^t(I+f(s)\,{\mathrm d}s)$, $t\in[a,b]$. Without loss of generality, we can assume that $\delta\le\delta_0$ on $[a,b]$. Consequently,
	$$\sum_{i=1}^m\|I+F(t_i)-F(t_{i-1})-W(t_i)W(t_{i-1})^{-1}\|$$
	$$\le\sum_{i=1}^m\|I+f(\xi_i)(t_i-t_{i-1})-W(t_i)W(t_{i-1})^{-1}\|+\sum_{i=1}^m\|F(t_i)-F(t_{i-1})-f(\xi_i)(t_i-t_{i-1})\|<2\varepsilon,$$
	and therefore the strong Kurzweil-Stieltjes product integral $\prod_a^b(I+{\rm d}F(t))$ exists and equals $W(b)W(a)^{-1}=\prod_a^b(I+f(t)\,{\rm d}t)$.
\end{proof}

We now provide a simple example of a Kurzweil-Stieltjes product integrable mapping, which is not Riemann-Stieltjes product integrable.

\begin{example}
	Let $A:[0,1]\to E$ be given by 
	$$A(t)=\begin{cases}
	\left(\sqrt{t}\cos\frac{\pi}{t}\right)I,&t\ne 0,\\
	0,&t=0.
	\end{cases}
$$
This mapping is essentially a scalar one: We have $A(t)=F(t)I$, where
	$$F(t)=\begin{cases}
	\sqrt{t}\cos\frac{\pi}{t},&t\ne 0,\\
	0,&t=0.
	\end{cases}
	$$
$F$ is continuous on $[0,1]$ and differentiable on $(0,1]$. Let $f(t)=F'(t)$, $t\in(0,1]$, and $f(0)=0$. Then $f$ is Henstock-Kurzweil integrable and $F(t)=\int_0^t f(s)\,{\mathrm d}s$, $t\in[0,1]$. Since $\mathbb R$ is a commutative algebra, it follows from \cite[Theorem 3.7]{JK} that $f$ is Kurzweil product integrable and 
$$\prod_0^1(1+f(t)\,{\rm d}t)=\exp\left(\int_0^1 f(t)\,{\mathrm d}t\right)=\exp(F(1)-F(0))=\exp(-1).$$
 According to Theorem \ref{substitution}, $F$ is Kurzweil-Stieltjes product integrable and
$$\prod_0^1(1+{\rm d}F(t))=\prod_0^1(1+f(t)\,{\rm d}t)=\exp(-1).$$
On the other hand, $F$ is not Riemann-Stieltjes product integrable. To see this, choose an arbitrary $n\ge 2$ and consider a partition of $[0,1]$ consisting of division points $t_0=0$ and $t_i=\frac{1}{i}$, $i\in\{1,\ldots,n\}$. Then 
$$F(t_i)=\frac{(-1)^i}{\sqrt{i}},\quad i\in\{1,\ldots,n\},$$
$$\left|F(t_i)-F(t_{i-1})\right|=\frac{1}{\sqrt{i}}+\frac{1}{\sqrt{i-1}},\quad i\in\{2,\ldots,n\},$$
$$\left|F(t_i)-F(t_{i-1})\right|^2=\frac{1}{i}+\frac{2}{\sqrt{i(i-1)}}+\frac{1}{i-1}>\frac{1}{i},\quad i\in\{2,\ldots,n\},$$
$$\sum_{i=1}^n\left|F(t_i)-F(t_{i-1})\right|^2=1+\sum_{i=2}^n\left|F(t_i)-F(t_{i-1})\right|^2>\sum_{i=1}^n\frac{1}{i}.$$
Since $n$ can be arbitrarily large, we see that the $2$-variation of $F$ is infinite. By Theorem \ref{2variation}, $F$ is not Riemann-Stieltjes product integrable.

Taking into account the relation between $A$ and $F$, it is easily seen that $A$ is (strongly) Kurzweil-Stieltjes product integrable with $\prod_0^1(I+{\rm d}A(t))=\exp(-1)I$, but not Riemann-Stieltjes product integrable. 
\end{example}

The rest of this section will be devoted to Stieltjes product integrals of step mappings. We start with a~simple example.

\begin{example}\label{KS-example}
Let $z_a$, $z_b\in E$ be arbitrary and consider the function $A:[a,b]\to E$ given by $A(t)=z_a$ for $t\in[a,b)$, and $A(b)=z_b$.
Then the Riemann-Stieltjes and Kurzweil-Stieltjes product integrals $\prod_a^b(I+{\rm d}A(t))$ exist if and only if $I+z_b-z_a$ is invertible; this is an easy consequence of the fact that for an arbitrary partition $a=t_0<t_1<\cdots<t_m=b$, we have $\prod_{i=m}^1(I+A(t_i)-A(t_{i-1}))=I+z_b-z_a$. 

If $I+z_b-z_a$ is invertible, then $A$ is also strongly Kurzweil-Stieltjes product integrable. To see this, define $W:[a,b]\to E$ by $W(t)=I$ for $t\in[a,b)$ and $W(b)=I+z_b-z_a$.
For an arbitrary partition $a=t_0<t_1<\cdots<t_m=b$, we have
$$
\sum_{i=1}^m\|I+A(t_i)-A(t_{i-1})-W(t_i)W(t_{i-1})^{-1}\|=\|I+A(t_m)-A(t_{m-1})-W(t_m)W(t_{m-1})^{-1}\|=0,
$$
which proves that the strong Kurzweil-Stieltjes integral $\prod_a^b(I+{\rm d}A(t))$ exists and equals $I+z_b-z_a$. 
\end{example} 

Next, we focus on more complicated step mappings having the form \eqref{E30}.

\begin{theorem}\label{T31} Let $A:[a,b]\to E$  be a step mapping with representation \eqref{E30}. Assume that 
for each limit element $\gamma\in\Lambda$,  ${\lim}_{\beta\to\gamma-}(I+z_{\gamma}-z_{\beta})$ exists and is invertible. 
Then the following conditions are equivalent: 
\begin{enumerate}
\item $A$ is strongly Kurzweil-Stieltjes product integrable.
\item $A$ is Kurzweil-Stieltjes product integrable.
\item The family  $(x_\alpha)_{\alpha\in\Lambda}$ given by
\begin{equation}\label{familyDef}
x_\alpha=\begin{cases}
I& \mbox{ if }\alpha=a,\\
I+z_{\alpha}-z_{\beta}& \mbox{ if $\alpha=S(\beta)$},\\
\lim\limits_{\beta\to\alpha-}(I+z_{\alpha}-z_{\beta})&  \mbox{ if $\alpha$ is a limit element}
\end{cases}
\end{equation}
 is multipliable,  and its elements as well as its product are invertible.
\end{enumerate} 
If any of these conditions is satisfied, we have
$$\prod_a^b(I+{\rm d}A(t))=\underset{\alpha\in\Lambda}{\Pi}x_\alpha.$$
\end{theorem}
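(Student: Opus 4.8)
The plan is to establish the three equivalences by proving the cycle $1\Rightarrow 2\Rightarrow 3\Rightarrow 1$, reading off the value of the product integral along the way. The implication $1\Rightarrow 2$ is immediate from Theorem~\ref{K-prod-existence}. Everything else rests on a preliminary observation that I would record first: the point-interval function $V(t,[x,y])=I+A(y)-A(x)$ satisfies conditions (V1)--(V4). Here (V1) is trivial; since $A$ is right-continuous and constant on each interval $[\gamma,S(\gamma))$, the factor $A(y)-A(t)$ vanishes for $y$ slightly above $t$, so the left-hand side of (V2) is identically zero and $V_+\equiv I$; and a direct computation gives $V_-(t)=I$ for $t\notin\Lambda$, $V_-(S(\beta))=I+z_{S(\beta)}-z_\beta=x_{S(\beta)}$, and $V_-(\gamma)=\lim_{\beta\to\gamma-}(I+z_\gamma-z_\beta)=x_\gamma$ for a limit element $\gamma$ — the last limit existing and being invertible precisely by the hypothesis of the theorem. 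Thus $V_-(\gamma)=x_\gamma$ for every $\gamma\in\Lambda$, and the standard facts about (strong) Kurzweil product integrals of Section~3 — existence on subintervals, multiplicativity over adjacent intervals, Theorem~\ref{indefinite-regulated}, the Hake-type Theorem~\ref{strongHake} — become available.

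For $2\Rightarrow 3$ together with the formula, assume $A$ is Kurzweil-Stieltjes product integrable. Then $\prod_a^s(I+{\rm d}A(t))$ exists for every $s\in[a,b]$, it is constant on each $[\gamma,S(\gamma))$ (because $A$ is), and by Theorem~\ref{indefinite-regulated} it has left limits, with $\lim_{\beta\to s-}\prod_a^\beta(I+{\rm d}A(t))=x_s^{-1}\prod_a^s(I+{\rm d}A(t))$ for $s\in\Lambda\setminus\{a\}$. I would define $\underset{\alpha\in\Lambda^{<\gamma}}{\Pi}x_\alpha:=\lim_{\beta\to\gamma-}\prod_a^\beta(I+{\rm d}A(t))$ for $\gamma\in\Lambda\setminus\{a\}$ and $\underset{\alpha\in\Lambda^{<a}}{\Pi}x_\alpha:=I$, and verify conditions (i) and (ii) of Definition~\ref{D1} by transfinite induction: the successor step reduces to the identity $\prod_a^{S(\beta)}(I+{\rm d}A(t))=x_{S(\beta)}\prod_a^\beta(I+{\rm d}A(t))$ (which follows from Theorem~\ref{indefinite-regulated} and $V_-(S(\beta))=x_{S(\beta)}$), while condition (ii) at a limit element $\gamma$ follows from the existence of the left limit there together with the constancy of the indefinite Stieltjes integral on the intervals $[\beta,S(\beta))$, $\beta<\gamma$. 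Invertibility of the individual $x_\alpha$ comes from the hypothesis of the theorem (limit elements) and from Example~\ref{KS-example} applied on $[\beta,S(\beta)]$, where $\prod_\beta^{S(\beta)}(I+{\rm d}A(t))$ exists by the subinterval property (successors $S(\beta)$). Since Kurzweil product integrals are invertible by definition, the product of the family is invertible and equals $\prod_a^b(I+{\rm d}A(t))$, which is the asserted formula.

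For $3\Rightarrow 1$ — the delicate part — I would prove by transfinite induction that $A$ is strongly Kurzweil-Stieltjes product integrable on $[a,\gamma]$ with $\prod_a^\gamma(I+{\rm d}A(t))=x_\gamma\underset{\alpha\in\Lambda^{<\gamma}}{\Pi}x_\alpha$ for every $\gamma\in\Lambda$; taking $\gamma=b$ then finishes the proof, since $b\in\Lambda$ and $x_b\underset{\alpha\in\Lambda^{<b}}{\Pi}x_\alpha=\underset{\alpha\in\Lambda}{\Pi}x_\alpha$. If $\gamma=S(\beta)$ is a successor, I would split $[a,\gamma]=[a,\beta]\cup[\beta,\gamma]$, apply the induction hypothesis on $[a,\beta]$, observe that on $[\beta,\gamma]$ the mapping $A$ is two-valued so Example~\ref{KS-example} gives strong product integrability with value $x_\gamma=I+z_\gamma-z_\beta$ (invertible by hypothesis~3), and glue the pieces by the additivity theorem for strong Kurzweil product integrals over adjacent intervals. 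If $\gamma$ is a limit element, strong product integrability holds on each $[a,c]$ with $c<\gamma$ (restrict to the largest $[a,\beta]$, $\beta\in\Lambda^{<\gamma}$, where the hypothesis applies, and use that $A$ is constant on the remaining piece), and I would invoke Theorem~\ref{strongHake}: the required limit is
$$\underset{c\to\gamma-}{\lim}\big(I+z_\gamma-A(c)\big)\prod_a^c(I+{\rm d}A(t))=x_\gamma\cdot\underset{\alpha\in\Lambda^{<\gamma}}{\Pi}x_\alpha,$$
the first factor tending to $x_\gamma$ by the hypothesis at $\gamma$ and the second to $\underset{\alpha\in\Lambda^{<\gamma}}{\Pi}x_\alpha$ by condition (ii) of Definition~\ref{D1}, the limit being invertible by Lemma~\ref{invertible-partial-products}.

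The main obstacle is the $3\Rightarrow 1$ step: it requires a transfinite induction whose limit stages are carried by the Hake-type theorem, and for this one genuinely needs the verification of (V1)--(V4) — in particular the fact that the theorem's left-limit-invertibility hypothesis is exactly the invertibility of $V_-$ at limit elements demanded by Theorems~\ref{indefinite-regulated} and~\ref{strongHake}. A secondary source of friction, absent from the non-Stieltjes Theorem~\ref{Kurzweil-step}, is that the indefinite Stieltjes product integral is not continuous, so throughout the induction the partial products $\underset{\alpha\in\Lambda^{<\gamma}}{\Pi}x_\alpha$ must be matched with the \emph{left limit} $\lim_{\beta\to\gamma-}\prod_a^\beta(I+{\rm d}A(t))=x_\gamma^{-1}\prod_a^\gamma(I+{\rm d}A(t))$ rather than with $\prod_a^\gamma(I+{\rm d}A(t))$ itself; keeping this one-step shift consistent is where most of the bookkeeping lies.
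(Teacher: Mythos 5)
Your proposal is correct and follows essentially the same route as the paper: verification of (V1)--(V4) for $V(t,[x,y])=I+A(y)-A(x)$, the implication $3\Rightarrow 1$ by transfinite induction using Example~\ref{KS-example} plus additivity at successor stages and the Hake-type Theorem~\ref{strongHake} at limit stages, $1\Rightarrow 2$ via Theorem~\ref{K-prod-existence}, and $2\Rightarrow 3$ by identifying the partial products $\underset{\alpha\in\Lambda^{<\gamma}}{\Pi}x_\alpha$ with the left limits $\lim_{\beta\to\gamma-}\prod_a^\beta(I+{\rm d}A(t))$ supplied by Theorem~\ref{indefinite-regulated}. The one point worth stating more explicitly (as the paper does) is that the invertibility of $V_-(S(\beta))=I+z_{S(\beta)}-z_\beta$ required by (V4) is not part of the theorem's hypothesis and must be extracted from whichever of conditions 1--3 is currently assumed, via Example~\ref{KS-example} in the case of conditions 1 and 2; you do supply this later, so it is a matter of presentation rather than a gap.
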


\begin{proof}
Let us start by checking whether the function $V:[a,b] \times \mathcal I \to E$ given by $V(t,[x,y])=I+A(y)-A(x)$ satisfies conditions (V1)--(V4) from Section 3.

The statement of (V1) is obviously true. To prove that condition (V2) holds, assume first that 
$t\in [a,b)$. Because $A$ has the representation \eqref{E30}, then $t\in[\alpha,S(\alpha))$ for some $\alpha\in\Lambda$. 
Choosing $\sigma=S(\alpha)-t$, then  $A(y)=A(t)=z_\alpha$ when $t\le y < t+\sigma$, whence  
\begin{equation}\label{V2check}
\|I+A(y)-A(x)-(I+A(y)-A(t))(I+A(t)-A(x))\|=0
\end{equation}
for all $x,y\in[a,b]$ such that $t-\sigma<x\le t\le y< t+\sigma$.
Eq.~\eqref{V2check} also holds when $t=b$, since then $y=t$ and $A(y)=A(t)=z_b$.
This proves that (V2) is satisfied.

Since $A$ is right-continuous, it follows immediately that condition (V3) holds with $V_+(t)=I$. 
To prove~(V4), assume first that $t\in[a,b]\setminus\Lambda$. Then 
$t\in(\beta,S(\beta))$ for some $\beta\in\Lambda$, and 
$$V_-(t)=\underset{x\to t-}{\lim}(I+A(t)-A(x))=I+z_\beta-z_\beta=I$$ 
is invertible.
Next, if $t=\gamma$ for some limit element $\gamma\in\Lambda$, then 
$$V_-(t)=\underset{x\to t-}{\lim}(I+A(t)-A(x))=\underset{\beta\to\gamma-}{\lim}(I+z_\gamma-z_\beta),$$
and the last limit exists and is invertible.
Assume finally that $t=\gamma\in\Lambda$ is a successor, say $\gamma=S(\beta)$, $\beta\in \Lambda$. Then
$$V_-(t)=\underset{x\to t-}{\lim}(I+A(t)-A(x))=I+z_{\gamma}-z_\beta.$$
We do not a priori know whether the last element is invertible. However, if condition 1 or 2 is satisfied, then the product integral $\prod_\beta^\gamma(I+{\rm d}A(t))$ exists, and it follows 
from Example \ref{KS-example} that $I+z_{\gamma}-z_\beta$ has to be invertible. Also, if condition 3 is satisfied, then $I+z_{\gamma}-z_\beta$ is obviously invertible.  
This shows that condition (V4) is satisfied if at least one of the conditions 1, 2, and 3 holds.

Now, let us show that conditions 1, 2, and 3 are equivalent. 

We begin with the implication $3\Rightarrow 1$. We use transfinite induction to prove that for every $\gamma\in\Lambda$,
the strong product integral $\prod_a^\gamma(I+{\rm d}A(t))$ exists and   
$$\prod_a^\gamma(I+{\rm d}A(t))=\underset{\alpha\in\Lambda^{\le\gamma}}{\Pi}x_\alpha.$$
The statement is obvious for $\gamma=a$.
Next, we make an induction hypothesis: Suppose that $\gamma\in\Lambda\setminus\{a\}$ and
$\prod_a^\beta(I+{\rm d}A(t))=\underset{\alpha\in\Lambda^{\le\beta}}{\Pi}x_\alpha$ for every $\beta\in\Lambda^{<\gamma}$.
 
Assume first that $\gamma$ is a successor, i.e., $\gamma=S(\beta)$ for a certain $\beta\in\Lambda$. By Example \ref{KS-example}, the strong Kurzweil-Stieltjes product integral $\prod_\beta^{\gamma}(I+{\rm d}A(t))$ exists and equals $I+z_{\gamma}-z_\beta$. Consequently,
$$\prod_a^\gamma(I+{\rm d}A(t))=\prod_\beta^\gamma(I+{\rm d}A(t))\prod_a^\beta(I+{\rm d}A(t))
=(I+z_{\gamma}-z_\beta)\underset{\alpha\in\Lambda^{\le\beta}}{\Pi}x_\alpha=x_\gamma\underset{\alpha\in\Lambda^{\le\beta}}{\Pi}x_\alpha
=\underset{\alpha\in\Lambda^{\le\gamma}}{\Pi}x_\alpha.$$

Assume next that $\gamma\in\Lambda$ is a limit element. 
Then $\underset{\alpha\in\Lambda^{<\gamma}}{\Pi}x_\alpha=\lim\limits_{\beta\to\gamma-}\left(\underset{\alpha\in\Lambda^{<\beta}}{\Pi}x_\alpha\right)$.

For an arbitrary $s\in[a,\gamma)$, there is a  $\beta\in\Lambda$ such that $s\in[\beta,S(\beta))$.
Since $A$ is constant on $[\beta,S(\beta))$, we have $\prod_\beta^s(I+{\rm d}A(t))=I$, and
$$
\prod_a^s(I+{\rm d}A(t))=\prod_\beta^s(I+{\rm d}A(t))\cdot\prod_a^\beta(I+{\rm d}A(t))=\prod_a^\beta(I+{\rm d}A(t))
=\underset{\alpha\in\Lambda^{\le\beta}}{\Pi}x_\alpha.
$$ 
Consequently,
\begin{equation}\label{E33}
\underset{s\to\gamma-}{\lim}\prod_a^s(I+{\rm d}A(t)) 
=\lim\limits_{\beta\to\gamma-}\left(\underset{\alpha\in\Lambda^{\le\beta}}{\Pi}x_\alpha\right)
=\underset{\beta\to\gamma-}{\lim}\left(x_\beta\underset{\alpha\in\Lambda^{<\beta}}{\Pi}x_\alpha\right)
=\lim\limits_{\beta\to\gamma-}\left(\underset{\alpha\in\Lambda^{<\beta}}{\Pi}x_\alpha\right)=\underset{\alpha\in\Lambda^{<\gamma}}{\Pi}x_\alpha,
\end{equation}
where the third equality follows from Lemma \ref{L22}.
Note that by Lemma \ref{invertible-partial-products}, the product on the right-hand side of \eqref{E33} is invertible.
These facts imply that
\begin{equation}\label{E34}
\underset{s\to\gamma-}{\lim}\left((I+A(\gamma)-A(s))\cdot\prod_a^s(I+{\rm d}A(t))\right)
=\left(\underset{\beta\to\gamma-}{\lim}(I+z_{\gamma}-z_{\beta})\right)\underset{\alpha\in\Lambda^{<\gamma}}{\Pi}x_\alpha
=x_\gamma\underset{\alpha\in\Lambda^{<\gamma}}{\Pi}x_\alpha,
\end{equation}
and the right-hand side is invertible. According to Theorem \ref{strongHake},  
the strong Kurzweil-Stieltjes product integral  $\prod_a^\gamma(I+{\rm d}A(t))$ exists and equals 
$x_\gamma\underset{\alpha\in\Lambda^{<\gamma}}{\Pi}x_\alpha=\underset{\alpha\in\Lambda^{\le \gamma}}{\Pi}x_\alpha$,
which completes the proof by transfinite induction.

The implication $1\Rightarrow2$ follows immediately from Theorem \ref{K-prod-existence}. 

It remains to verify the implication $2\Rightarrow 3$. We use transfinite induction to prove that for every $\gamma\in\Lambda$,
the family $(x_\alpha)_{\alpha\in\Lambda^{\le\gamma}}$ is multipliable, and   
$$\underset{\alpha\in\Lambda^{\le\gamma}}{\Pi}x_\alpha=\prod_a^\gamma(I+{\rm d}A(t)).$$
The statement is obvious for $\gamma=a$. Next, we make an induction hypothesis: Suppose that $\gamma\in\Lambda\setminus\{a\}$ and for every $\beta\in\Lambda^{<\gamma}$,
$(x_\alpha)_{\alpha\in\Lambda^{\le\beta}}$ is multipliable and its product equals $\prod_a^\beta(I+{\rm d}A(t))$.

Assume first that $\gamma$ is a successor, i.e., $\gamma=S(\beta)$ for a certain $\beta\in\Lambda$. Then 
$(x_\alpha)_{\alpha\in\Lambda^{\le\gamma}}$ is obviously multipliable. By Example \ref{KS-example},
we have $\prod_\beta^{\gamma}(I+{\rm d}A(t))=I+z_{\gamma}-z_\beta$. Therefore, $x_\gamma=I+z_{\gamma}-z_\beta$ has to be invertible, and 
$$\underset{\alpha\in\Lambda^{\le\gamma}}{\Pi}x_\alpha=x_\gamma\underset{\alpha\in\Lambda^{\le\beta}}{\Pi}x_\alpha
=(I+z_{\gamma}-z_\beta)\underset{\alpha\in\Lambda^{\le\beta}}{\Pi}x_\alpha=
\prod_\beta^\gamma(I+{\rm d}A(t))\prod_a^\beta(I+{\rm d}A(t))=\prod_a^\gamma(I+{\rm d}A(t)).$$
Assume next that $\gamma\in\Lambda$ is a limit element. 
Then we claim that the family $(x_\alpha)_{\alpha\in\Lambda^{<\gamma}}$ is multipliable with its product being equal to
$$\underset{\alpha\in\Lambda^{<\gamma}}{\Pi}x_\alpha=\underset{\beta\to\gamma-}{\lim}\prod_a^\beta(I+{\rm d}A(t))$$
(the existence of the limit is guaranteed by Theorem \ref{indefinite-regulated}).
Indeed, the previous equality together with the induction hypothesis imply that
$$\underset{\alpha\in\Lambda^{<\gamma}}{\Pi}x_\alpha=\lim_{\beta\to\gamma-}\left(\underset{\alpha\in\Lambda^{\le\beta}}{\Pi}x_\alpha\right)
=\lim\limits_{\beta\to\gamma-}\left(\underset{\alpha\in\Lambda^{<\beta}}{\Pi}x_\alpha\right),$$
and therefore condition (ii) of Definition \ref{D1} is satisfied. Consequently,
the family $(x_\alpha)_{\alpha\in\Lambda^{\le\gamma}}$ is multipliable as well, and we get
$$\underset{\alpha\in\Lambda^{\le\gamma}}{\Pi}x_\alpha=x_\gamma\underset{\alpha\in\Lambda^{<\gamma}}{\Pi}x_\alpha
=\underset{\beta\to\gamma-}{\lim}(I+z_{\gamma}-z_{\beta})\underset{\beta\to\gamma-}{\lim}\prod_a^\beta(I+{\rm d}A(t))=V_-(\gamma)\underset{\beta\to\gamma-}{\lim}\prod_a^\beta(I+{\rm d}A(t))
=\prod_a^\gamma(I+{\rm d}A(t)),$$
where the last equality follows from Theorem \ref{indefinite-regulated}); this completes the proof by transfinite induction.
\end{proof}

\begin{remark}
In Theorem \ref{T31}, we encountered the assumptions that $I+z_{\alpha}-z_{\beta}$ is invertible whenever $\alpha=S(\beta)$,
and that $\underset{\beta\to\alpha-}{\lim}(I+z_{\alpha}-z_{\beta})$ exists and is invertible whenever $\alpha$ is a limit element. 
In terms of the step mapping~$A$, one can equivalently say that $I+\Delta^-A(t)$ exists and is invertible for all $t\in(a,b]$;
note that the symmetric expression $I+\Delta^+A(t)$ is always invertible since $A$ is right-continuous.
\end{remark}

The following consequence of Theorem \ref{T31} is useful in applications.

\begin{corollary}\label{C31} Let $A:[a,b]\to E$  be a step mapping with representation \eqref{E30}. Assume that 
for each limit element $\gamma\in\Lambda$, we have $\underset{\beta\to\gamma-}{\lim}z_\beta=z_\gamma$.
Then the following conditions are equivalent: 
\begin{enumerate}
\item $A$ is strongly Kurzweil-Stieltjes product integrable.
\item $A$ is Kurzweil-Stieltjes product integrable.
\item The family  $(x_\alpha)_{\alpha\in\Lambda}$ given by
\begin{equation}\label{E601}\begin{cases}
x_\alpha=I & \mbox{ if $\alpha=a$, or if $\alpha$ is a limit element},\\
x_{S(\beta)}=I+z_{S(\beta)}-z_{\beta}& \mbox{ if $\beta\in\Lambda^{<b}$}
\end{cases}
\end{equation}
 is multipliable,  and its elements as well as its product are invertible.
\end{enumerate} 
If any of these conditions is satisfied, we have
$$\prod_a^b(I+{\rm d}A(t))=\underset{\alpha\in\Lambda}{\Pi}x_\alpha.$$
\end{corollary}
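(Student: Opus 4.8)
The plan is to deduce Corollary \ref{C31} directly from Theorem \ref{T31}. The only things that need to be checked are that the standing hypothesis of that theorem is automatically satisfied under the assumption $\underset{\beta\to\gamma-}{\lim}z_\beta=z_\gamma$, and that the two descriptions of the family $(x_\alpha)_{\alpha\in\Lambda}$ — the one in \eqref{familyDef} and the one in \eqref{E601} — coincide.

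First I would verify the hypothesis of Theorem \ref{T31}: for each limit element $\gamma\in\Lambda$, the limit $\underset{\beta\to\gamma-}{\lim}(I+z_\gamma-z_\beta)$ exists and is invertible. This is immediate, since by continuity of addition in $E$ and the assumption $\underset{\beta\to\gamma-}{\lim}z_\beta=z_\gamma$ we get $\underset{\beta\to\gamma-}{\lim}(I+z_\gamma-z_\beta)=I+z_\gamma-z_\gamma=I$, which is invertible. Next I would observe that, under this same assumption, the family $(x_\alpha)_{\alpha\in\Lambda}$ prescribed by \eqref{familyDef} is exactly the family prescribed by \eqref{E601}: for $\alpha=a$ both give $x_a=I$; for $\alpha=S(\beta)$ both give $x_\alpha=I+z_\alpha-z_\beta=I+z_{S(\beta)}-z_\beta$; and for a limit element $\alpha$ the formula \eqref{familyDef} gives $x_\alpha=\underset{\beta\to\alpha-}{\lim}(I+z_\alpha-z_\beta)=I$, matching \eqref{E601}.

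With these two observations, the corollary follows at once from Theorem \ref{T31}: the equivalence of conditions 1, 2, and 3 and the identity $\prod_a^b(I+{\rm d}A(t))=\underset{\alpha\in\Lambda}{\Pi}x_\alpha$ are inherited verbatim. There is essentially no obstacle; the only point requiring a moment's care is the elementary limit computation $\underset{\beta\to\gamma-}{\lim}(I+z_\gamma-z_\beta)=I$, which is what allows the general family \eqref{familyDef} to collapse to the simpler form \eqref{E601}.
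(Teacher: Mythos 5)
Your proposal is correct and is exactly the route the paper intends: the paper states Corollary \ref{C31} as an immediate consequence of Theorem \ref{T31}, and the only content is precisely the two observations you make — that $\underset{\beta\to\gamma-}{\lim}(I+z_\gamma-z_\beta)=I$ verifies the hypothesis of Theorem \ref{T31}, and that this same computation collapses \eqref{familyDef} to \eqref{E601}.
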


In the following examples, we make the convention that $\sum_{n=1}^0 x_n=0$. 
The first example provides an illustration of Corollary \ref{C31} and Theorem \ref{L31}. 

\begin{example}\label{Ex32} Let $q\in(0,2)$, $C>\frac{1}{2}\left(\frac{2}{3}\right)^{1/q}$, and an interval $[a,b]\subset\mathbb R$ be given. Define    
\begin{equation*}
\begin{aligned}
&\Lambda=\{\alpha(n)=b-2^{-n}(b-a);\,n\in\mathbb N_0\}\cup\{b\},\ \hbox{ and}\\ 
&z_\alpha=z_{\alpha(n)}=\sum_{k=1}^n\frac{(-1)^{k+1}}{C\left(k+\frac{(-1)^{k+1}}2\right)^{1/q}+\frac{(-1)^k}2}I, \quad \alpha=\alpha(n)\in \Lambda^{<b}.
\end{aligned}
\end{equation*}
The fractions in the last sum make sense because the choice of $C$ ensures that the denominator of every fraction is positive.  

The only limit element of $\Lambda$ is $b$. Let 
$$
z_b=\underset{\alpha\to b-}{\lim}z_\alpha=\underset{n\to\infty}{\lim}z_{\alpha(n)}=\sum_{k=1}^\infty\frac{(-1)^{k+1}}{C\left(k+\frac{(-1)^{k+1}}2\right)^{1/q}+\frac{(-1)^k}2}I.
$$
We claim that the last series is convergent.
Indeed, the terms of this series approach zero as $k\to\infty$, and by summing pairs of consecutive terms corresponding to $k=2n-1$ and $k=2n$, $n\in\mathbb N$, we get
$$z_b=\sum_{n=1}^\infty \frac 1{C^2\left(2n-\frac 12\right)^{2/q}-\frac 14}I,$$
which is finite because $q\in(0,2)$.

The mapping $A:[a,b]\to E$ defined by
\begin{equation*}
A(t)= z_{\alpha(n)}, \quad t\in[\alpha(n),\alpha(n+1)), \quad n\in\mathbb N_0,\quad 
A(b)=z_b,
\end{equation*}
has the representation \eqref{E30}, and the  hypothesis of Corollary \ref{C31} is satisfied. 
We are now going to show that condition 3 of this corollary holds.
Indeed, consider the family $(x_\alpha)_{\alpha\in\Lambda}$ given by \eqref{E601}.
Because $S(\alpha(n))=\alpha(n+1)$, $n\in\mathbb N_0$, it follows that
\begin{equation}\label{E313}
x_{S(\alpha(n))}=I+z_{S(\alpha(n))}-z_{\alpha(n)}=\left(1+\frac{(-1)^{n}}{C\left(n+1+\frac{(-1)^{n}}2\right)^{1/q}+\frac{(-1)^{n+1}}2}\right)I, \quad n\in\mathbb N_0.
\end{equation}
The assumption $C>\frac{1}{2}\left(\frac{2}{3}\right)^{1/q}$ guarantees that the last element is a nonzero multiple of $I$, i.e., it is invertible.

Next, observe that 
\begin{equation}\label{E314}
\begin{aligned}
&\prod_{n=0}^\infty\left(1+\frac{(-1)^{n}}{C(n+1+\frac{(-1)^{n}}2))^{1/q}+\frac{(-1)^{n+1}}2}\right)\\
&=\left(1+\frac{1}{C(\frac 32)^{1/q}-\frac 12}\right)\left(1-\frac{1}{C(\frac 32)^{1/q}+\frac 12}\right)\left(1+\frac{1}{C(\frac 72)^{1/q}-\frac 12}\right)\left(1-\frac{1}{C(\frac 72)^{1/q}+\frac 12}\right)\cdots=1, 
\end{aligned}
\end{equation}
because the terms of this product approach 1 as $n\to\infty$, and the products of consecutive pairs of terms are equal to 1.
Thus it follows from \eqref{E313} and \eqref{E314} that  $(x_\alpha)_{\alpha\in{\Lambda^{<b}}}$ is multipliable and
$$
\underset{\alpha\in\Lambda^{<b}}{\Pi}x_\alpha=I. 
$$
By Corollary \ref{C31}, $A$ is strongly Kurzweil-Stieltjes product integrable, and 
$$
\prod_a^b(I+{\rm d}A(t))=\underset{\alpha\in\Lambda^{\le b}}{\Pi}x_\alpha=\underset{\alpha\in\Lambda^{<b}}{\Pi}x_\alpha=I.
$$
Since
$$\|z_{S(\alpha(n))}-z_{\alpha(n)}\|=\frac{1}{C\left(n+1+\frac{(-1)^{n}}2\right)^{1/q}+\frac{(-1)^{
n+1}}2}, \quad n\in\mathbb N_0,$$
it is not difficult to see that the $p$-variation of $A$ is finite when $0<p\le q$ and infinite when $p>q$.
Hence, by Theorem \ref{L31}, $A$ is also Riemann-Stieltjes product integrable.
\end{example}

Next, we present an example of a mapping $A:[a,b]\to E$ that does not have finite $p$-variation for any\break $p\in(0,2)$, but is both Riemann-Stieltjes product integrable and strongly Kurzweil-Stieltjes product integrable. The example provides an illustration of Theorem \ref{scalarRS} and Corollary \ref{C31}.

\begin{example}\label{Ex33} Let $\Lambda=\{\alpha(n)=b-2^{-n}(b-a);\,n\in\mathbb N_0\}\cup\{b\}$. 
Consider the family $(z_\alpha)_{\alpha\in\Lambda}$ given by
\begin{eqnarray}\label{E302}
z_\alpha&=&z_{\alpha(n)}=\sum_{k=1}^n\frac{(-1)^{k+1}}{\sqrt{k+1}\log(k+1)}I, \quad \alpha=\alpha(n)\in \Lambda^{<b},\\
z_b&=&\underset{\alpha\to b-}{\lim}z_\alpha=\underset{n\to\infty}{\lim}z_{\alpha(n)}=\sum_{k=1}^\infty{\frac{(-1)^{k+1}}{\sqrt{k+1}\log(k+1)}}I
\end{eqnarray}
According to the Leibniz criterion, the last infinite series  is convergent.
Define the mapping $A:[a,b]\to E$ by
$$
\begin{aligned}
&A(t)= z_{\alpha(n)}, \quad [\alpha(n),\alpha(n+1)), \quad n\in\mathbb N_0,\\ 
&A(b)=z_b.
\end{aligned}
$$
Since
$$\|z_{S(\alpha(n))}-z_{\alpha(n)}\|=\frac{1}{\sqrt{n+2}\log(n+2)}, \quad n\in\mathbb N_0,$$
it is not difficult to see that the $p$-variation of $A$ is finite for $p\ge 2$ and infinite for $p\in(0,2)$. Thus the assumptions of  Theorem \ref{L31} are not satisfied. Nevertheless, $A$ is Riemann-Stieltjes product integrable. Since $A(t)=f(t)I$, where $f$ is real-valued, it is enough to show that $f$ is Riemann-Stieltjes product integrable. Let us verify that $f$ satisfies the conditions of Theorem \ref{scalarRS}.

$f$ is right-continuous, and has discontinuities from the left at the points $\alpha(n)$, $n\in\mathbb N$. We have $$1+\Delta^-f(\alpha(n))=1+\frac{(-1)^{n+1}}{\sqrt{n+1}\log(n+1)}\ne 0,\quad n\in\mathbb N,$$
which shows that the first condition of Theorem \ref{scalarRS} is satisfied. To verify the second condition, it is enough to observe that
$$\sum_{n=1}^\infty |\Delta^-f(\alpha(n))|^2=\sum_{n=1}^\infty \frac{1}{(n+1)\log^2(n+1)}<\infty.$$
Finally, consider an arbitrary $\varepsilon>0$. There is an $n_0\in\mathbb N$ such that 
$$\sum_{n=n_0}^\infty \frac{1}{(n+1)\log^2(n+1)}<\varepsilon.$$
Denote by $D$ the partition of $[a,b]$ consisting of the division points $t_i=\alpha(i)$, $i\in\{0,\ldots,n_0\}$, and $t_{n_0+1}=b$.
Let $a=y_0<y_1<\cdots<y_m=b$ be a~refinement of $D$.
Note that for each $i$ with $y_i\le \alpha(n_0)$, the function $f$ is constant on $(y_{i-1},y_i)$. Therefore,
\begin{align*}
&\sum_{i=1}^m|f(y_i-)-f(y_{i-1}+)|^2=
\sum_{i;\, y_i>\alpha(n_0)}|f(y_i-)-f(y_{i-1}+)|^2\\
&<\sum_{n=n_0+1}^\infty\|z_{\alpha(n)}-z_{\alpha(n-1)}\|^2=\sum_{n=n_0}^\infty \frac{1}{(n+1)\log^2(n+1)}<\varepsilon.
\end{align*}
Hence, the third condition of Theorem \ref{scalarRS} holds, and $f$ is Riemann-Stieltjes product integrable.
Taking into account the relation between $f$ and $A$, we conclude that $A$ is Riemann-Stieltjes product integrable, and consequently also Kurzweil-Stieltjes product integrable.

Since the only limit element of $\Lambda$ is $b$ and
$\lim\limits_{\beta\to b-}z_\beta=z_b$, we see that the assumption of Corollary \ref{C31} is satisfied.  Therefore, $A$ is strongly Kurzweil-Stieltjes product integrable. Moreover, Corollary \ref{C31}  implies that the family $(x_\alpha)_{\alpha\in\Lambda}$ given by \eqref{E601} is multipliable, 
and (since $x_a=x_b=I$), we have
$$
\prod_a^b(I+{\rm d}A(t))=\underset{\alpha\in\Lambda}{\Pi}x_\alpha=\prod_{n=0}^\infty x_{S(\alpha(n))}=\prod_{n=0}^\infty \left(1+\frac{(-1)^{n}}{\sqrt{n+2}\log(n+2)}\right)I
=\prod_{n=2}^\infty \left(1+\frac{(-1)^{n}}{\sqrt{n}\log n}\right)I, 
$$
where the infinite product on the right-hand side is guaranteed to be convergent and nonzero.
\end{example}

The next theorem deals with Kurzweil-Stieltjes product integrability of step mappings with idempotent values, and will be needed in the following section.

\begin{theorem}\label{T41} Let $A:[a,b]\to E$ be a step mapping with representation \eqref{E30}. Assume that $A$
satisfies the following assumptions:
\begin{enumerate}
\item $z_\alpha\cdot z_\alpha=z_\alpha$ for all $\alpha\in\Lambda$.
\item For each limit element $\gamma\in\Lambda$, we have $\underset{\beta\to\gamma-}{\lim}z_\beta=z_\gamma$.
\item The family  $(x_\alpha)_{\alpha\in\Lambda}$ given by
\begin{equation*}
\begin{cases}
x_\alpha=I & \mbox{ if $\alpha=a$, or if $\alpha$ is a limit element},\\
x_{S(\beta)}=I+z_{S(\beta)}-z_{\beta}& \mbox{ if $\beta\in\Lambda^{<b}$}
\end{cases}
\end{equation*}
 is multipliable,  and its elements as well as its product are invertible.
\end{enumerate} 
Then $A$ is Kurzweil-Stieltjes product integrable, the family $(z_\alpha)_{\alpha\in\Lambda}$ is multipliable, and
\begin{equation}\label{E43}
\prod_a^b(I+{\rm d}A(t))\cdot A(a)=\left(\underset{\alpha\in\Lambda}{\Pi}x_\alpha\right)z_a=\underset{\alpha\in\Lambda}{\Pi}z_\alpha.
\end{equation}
\end{theorem}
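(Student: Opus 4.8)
The plan is to obtain the product-integrability statement and the first equality in \eqref{E43} at once from Corollary \ref{C31}, and then to derive the multipliability of $(z_\alpha)_{\alpha\in\Lambda}$ and the second equality by a transfinite induction identifying every partial product of the $z$-family. Indeed, assumptions~2 and~3 of the theorem are precisely the hypothesis of Corollary \ref{C31} and its condition~3, so that corollary applies and yields that $A$ is (strongly) Kurzweil-Stieltjes product integrable with $\prod_a^b(I+{\rm d}A(t))=\underset{\alpha\in\Lambda}{\Pi}x_\alpha$; since $A(a)=z_a$ by \eqref{E30}, this already gives the product-integrability claim and the first equality in \eqref{E43}. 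It therefore remains to show that $(z_\alpha)_{\alpha\in\Lambda}$ is multipliable and that $\big(\underset{\alpha\in\Lambda}{\Pi}x_\alpha\big)z_a=\underset{\alpha\in\Lambda}{\Pi}z_\alpha$.

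To this end I would prove by transfinite induction that for each $\gamma\in\Lambda$ the family $(z_\alpha)_{\alpha\in\Lambda^{\le\gamma}}$ is multipliable and that every partial product of the $z$-family up to index $\gamma$ is obtained from the corresponding partial product of the $x$-family (which exists by assumption~3) by multiplication on the right by $z_a$, with the sole exception of the empty product $\underset{\alpha\in\Lambda^{<a}}{\Pi}z_\alpha=I$. Taking $\gamma=b$ then yields both remaining claims. The base case $\gamma=a$ is immediate, since $x_a=I$ and $\underset{\alpha\in\Lambda^{\le a}}{\Pi}z_\alpha=z_a$. For the successor step $\gamma=S(\beta)$, condition~(i) of Definition \ref{D1} and the induction hypothesis reduce everything to the algebraic identity
$$(I+z_{\gamma}-z_{\beta})\,\underset{\alpha\in\Lambda^{\le\beta}}{\Pi}z_\alpha=z_{\gamma}\,\underset{\alpha\in\Lambda^{\le\beta}}{\Pi}z_\alpha,$$
which follows from assumption~1: writing $\underset{\alpha\in\Lambda^{\le\beta}}{\Pi}z_\alpha=z_\beta\cdot\underset{\alpha\in\Lambda^{<\beta}}{\Pi}z_\alpha$, the idempotency $z_\beta z_\beta=z_\beta$ cancels the $-z_\beta$ term, and the right-hand side equals $\underset{\alpha\in\Lambda^{\le\gamma}}{\Pi}z_\alpha$.

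The limit step will be the main obstacle. For a limit element $\gamma\in\Lambda$, I would first apply Lemma \ref{L22}(b) — legitimate because the $x$-family is multipliable with invertible elements and invertible product by assumption~3 — to get $x_\beta\to I$ as $\beta\to\gamma-$, whence $\underset{\alpha\in\Lambda^{\le\beta}}{\Pi}x_\alpha\to\underset{\alpha\in\Lambda^{<\gamma}}{\Pi}x_\alpha$ by condition~(ii) of Definition \ref{D1}. The induction hypothesis then shows that the partial products $\underset{\alpha\in\Lambda^{\le\beta}}{\Pi}z_\alpha$ converge, so I would set $\underset{\alpha\in\Lambda^{<\gamma}}{\Pi}z_\alpha:=\big(\underset{\alpha\in\Lambda^{<\gamma}}{\Pi}x_\alpha\big)z_a$; since $S(\beta)\to\gamma-$ when $\beta\to\gamma-$, condition~(ii) is satisfied for the $z$-family and $(z_\alpha)_{\alpha\in\Lambda^{\le\gamma}}$ is multipliable. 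The one remaining point is that $\underset{\alpha\in\Lambda^{\le\gamma}}{\Pi}z_\alpha=z_\gamma\cdot\underset{\alpha\in\Lambda^{<\gamma}}{\Pi}z_\alpha$ should equal $\big(\underset{\alpha\in\Lambda^{\le\gamma}}{\Pi}x_\alpha\big)z_a=\big(\underset{\alpha\in\Lambda^{<\gamma}}{\Pi}x_\alpha\big)z_a$ (here $x_\gamma=I$), i.e.\ that $z_\gamma$ acts as the identity on $\underset{\alpha\in\Lambda^{<\gamma}}{\Pi}z_\alpha$; this is exactly where assumption~2 enters, since from $z_\beta\to z_\gamma$, continuity of multiplication, and $z_\beta\cdot\underset{\alpha\in\Lambda^{<\beta}}{\Pi}z_\alpha=\underset{\alpha\in\Lambda^{\le\beta}}{\Pi}z_\alpha$, letting $\beta\to\gamma-$ yields $z_\gamma\cdot\underset{\alpha\in\Lambda^{<\gamma}}{\Pi}z_\alpha=\underset{\alpha\in\Lambda^{<\gamma}}{\Pi}z_\alpha$. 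This closes the induction; the case $\gamma=b$ then completes the proof.
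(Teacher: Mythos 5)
Your proposal is correct and follows essentially the same route as the paper: Corollary \ref{C31} gives the product integrability and the first equality of \eqref{E43}, and the second equality is obtained by transfinite induction on the identity $\underset{\alpha\in\Lambda^{\le\gamma}}{\Pi}z_\alpha=\bigl(\underset{\alpha\in\Lambda^{\le\gamma}}{\Pi}x_\alpha\bigr)z_a$, using $(I+z_{S(\beta)}-z_\beta)z_\beta=z_{S(\beta)}z_\beta$ at successor steps and a continuity/limit argument at limit elements. If anything, you are slightly more explicit than the paper at the limit step, where you spell out via assumption~2 why $z_\gamma$ acts as the identity on $\underset{\alpha\in\Lambda^{<\gamma}}{\Pi}z_\alpha$, and your convergence argument avoids the paper's division by $\|z_a\|$ and hence the separate case $z_a=0$.
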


\begin{proof}
The Kurzweil-Stieltjes product integrability of $A$ is guaranteed  by Corollary \ref{C31}.
The first equality of \eqref{E43} follows also from Corollary \ref{C31} and the fact that $A(a)=z_a$.
It remains to prove the second equality of \eqref{E43}. We assume that $z_a\ne 0$ (the equality is obvious if $z_a=0$).

We apply transfinite induction to prove that for every $\gamma\in\Lambda$, the product $\underset{\alpha\in\Lambda^{\le\gamma}}{\Pi}z_\alpha$
is well defined and 
$$\underset{\alpha\in\Lambda^{\le\gamma}}{\Pi}z_\alpha=\left(\underset{\alpha\in\Lambda^{\le\gamma}}{\Pi}x_\alpha\right)\cdot z_a.$$
Notice first that  $(z_\alpha)_{\alpha\in\Lambda^{\le a}}$ is obviously multipliable, and 
\begin{equation}\label{trans}
\underset{\alpha\in\Lambda^{\le a}}{\Pi}z_\alpha=z_a=x_a\cdot z_a=\left(\underset{\alpha\in\Lambda^{\le a}}{\Pi}x_\alpha\right)\cdot z_a.
\end{equation}
Next, make an induction hypothesis: assume that for every $\beta\in\Lambda^{<\gamma}$, $(z_\alpha)_{\alpha\in\Lambda^{\le\beta}}$ is multipliable, 
and its product is $\left(\underset{\alpha\in\Lambda^{\le\beta}}{\Pi}x_\alpha\right) \cdot z_a$.

If $\gamma=S(\beta)$ for some $\beta\in\Lambda^{<\gamma}$, then $(z_\alpha)_{\alpha\in\Lambda^{\le \gamma}}$ is obviously multipliable, 
$x_\gamma=I+z_{S(\beta)}-z_\beta$, and
\begin{equation*}
\left(\underset{\alpha\in\Lambda^{\le\gamma}}{\Pi}x_\alpha\right)\cdot z_a
=x_\gamma\cdot\left(\underset{\alpha\in\Lambda^{\le\beta}}{\Pi}x_\alpha\right)\cdot z_a
=(I+z_{S(\beta)}-z_\beta)\left(\underset{\alpha\in\Lambda^{\le\beta}}{\Pi}z_\alpha\right)
=(I+z_{S(\beta)}-z_\beta)\cdot z_\beta\cdot\left(\underset{\alpha\in\Lambda^{<\beta}}{\Pi}z_\alpha\right).
\end{equation*}
Because $z_\beta z_\beta = z_\beta$, we get
\begin{equation*}
(I+z_{S(\beta)}-z_\beta)z_\beta=z_\beta+z_{S(\beta)}z_\beta - z_\beta z_\beta=z_\beta+z_{S(\beta)}z_\beta - z_\beta=z_{S(\beta)}z_\beta,
\end{equation*}
and therefore 
$$
\left(\underset{\alpha\in\Lambda^{\le\gamma}}{\Pi}x_\alpha\right)\cdot z_a 
=z_{S(\beta)}z_\beta\cdot\left(\underset{\alpha\in\Lambda^{<\beta}}{\Pi}z_\alpha\right)
=\underset{\alpha\in\Lambda^{\le\gamma}}{\Pi}z_\alpha.
$$ 
Assume next that $\gamma$ is a limit element of $\Lambda$, and let $\varepsilon > 0$ be given. Since the family $(x_\alpha)_{\alpha\in\Lambda}$ is multipliable, there is by Definition \ref{D1} (ii) a $\beta_\varepsilon\in \Lambda^{<\gamma}$ such that
$$
\left\|\underset{\alpha\in \Lambda^{\le\beta}}{\Pi}x_\alpha - \underset{\alpha\in \Lambda^{<\gamma}}{\Pi}x_\alpha\right\|=\left\|\underset{\alpha\in \Lambda^{<S(\beta)}}{\Pi}x_\alpha - \underset{\alpha\in \Lambda^{<\gamma}}{\Pi}x_\alpha\right\|<\frac{\varepsilon}{\|z_a\|}, \quad \beta\in \Lambda\cap[\beta_\varepsilon,\gamma).
$$
Then
$$
\left\|\underset{\alpha\in \Lambda^{\le\beta}}{\Pi}x_\alpha\cdot z_a - \underset{\alpha\in \Lambda^{<\gamma}}{\Pi}x_\alpha\cdot z_a\right\|
\le \|z_a\|\left\|\underset{\alpha\in \Lambda^{\le\beta}}{\Pi}x_\alpha - \underset{\alpha\in \Lambda^{<\gamma}}{\Pi}x_\alpha\right\|
<\varepsilon, \quad \beta\in \Lambda\cap[\beta_\varepsilon,\gamma).
$$
In view of the above result and the induction hypothesis we get
$$ 
\left\|\underset{\alpha\in \Lambda^{\le \beta}}{\Pi}z_\alpha - \underset{\alpha\in \Lambda^{<\gamma}}{\Pi}x_\alpha\cdot z_a\right\|<\varepsilon, \quad \beta\in \Lambda\cap[\beta_\varepsilon,\gamma).
$$
This result, condition (ii) of Definition \ref{D1}, and the fact that $x_\gamma=I$ imply that 
$$
\underset{\alpha\in\Lambda^{\le\gamma}}{\Pi}z_\alpha=\underset{\alpha\in\Lambda^{<\gamma}}{\Pi}x_\alpha\cdot z_a=\underset{\alpha\in\Lambda^{\le\gamma}}{\Pi}x_\alpha\cdot z_a.
$$
This completes the proof by transfinite induction. 
\end{proof}

\section{Parallel translation}\label{S5}

Parallel translation (also known as parallel transport)
of vectors along curves on manifolds
is one of the basic concepts of
differential geometry, and has important applications in physics. Given an oriented path in a Riemannian manifold, the parallel translation is a certain mapping from the tangent space at the initial point to the tangent space at the endpoint, which is linear and isometric. In the simplest case when the path is a geodesic on a two-dimensional surface, the vector which is being translated moves continuously along the path so that its length and its angle with the curve  remain constant. For a nice description of the geometrical meaning of parallel translation including the higher-dimensional case, see \cite{Arnold}. In differential geometry textbooks, the definition of parallel translation is usually based on the concept of covariant derivative; see~\cite{Spivak}.

The observation that the concept of parallel translation is related to Stieltjes-type product integrals goes back to the paper~\cite{HH78} by H.~Haahti and S.~Heikkil\"a, 
who considered translation of vectors along paths on Banach manifolds, and
employed the Riemann-Stieltjes product integral (although they did not call it by that name).
To see the connection with product integrals, let us begin by recalling the alternative description of parallel translation from the beginning of \cite{HH78}.
\smallskip

Consider a polyhedral surface $\mathcal M$ in $\mathbb R^3$ and an oriented path $\ell$ on $\mathcal M$ which does not cross any vertex of~$\mathcal M$. 
Assume that $\ell$ can be decomposed into a finite union of subpaths $\ell_0\cup\cdots\cup\ell_{m}$, where for every $i\in\{1,\ldots,m-1\}$, the endpoints of $\ell_i$ are the only points of $\ell_i$ lying on the edges of~$\mathcal M$. For every $i\in\{0,\ldots,m\}$, let $H_i$ be the face of $\mathcal M$ that contains $\ell_i$. Also, let $M_i$ denote the tangent space of $H_i$, i.e., the 2-dimensional subspace of $\mathbb R^3$ parallel to $H_i$. We would like to define the parallel translation  of tangent vectors of  $\mathcal M$ along $\ell$, i.e., a mapping $T_\ell:M_0\to M_m$.

If $m=0$, i.e., the whole path is contained within the single face $H_0$, then  
the parallel translation~$T_\ell$ of tangent vectors along $\ell$
is just the Euclidean parallel translation.  Thus $T_\ell$ is the identity operator on the tangent space $M_0$. Alternatively, $T_\ell$
can be interpreted as the restriction to $M_0$ of the orthogonal projection operator $P_0$ which maps  $\mathbb R^3$ onto $M_0$; we write $T_\ell=P_0$.   

If $m=1$, the tangent vectors are first translated in the Euclidean sense along $\ell_0$. At the terminal point of $\ell_0$, which is in $H_0\cap H_1$, they are projected by the orthogonal projection operator $P_1$ from  $\mathbb R^3$ onto the tangent space $M_1$, and finally translated in the Euclidean sense along $\ell_1$.  This yields the translation operator $T_\ell=P_1P_0$.

Continuing in this way, we conclude that in the general case, we get the translation operator $T_\ell=P_{m}\cdots P_0$, where $P_i$ is the orthogonal projection from  $\mathbb R^3$ onto $M_{i}$. Notice that $T_\ell$ does not depend on the exact shape of $\ell$ on the faces of $\mathcal M$; its values
change only
on the edges crossed by $\ell$.

The process of translating a vector along a path on a polyhedral surface is depicted in Figure 1, which is taken over from \cite{HH78}.

\smallskip
\begin{figure*}[ht]
\centering
\includegraphics{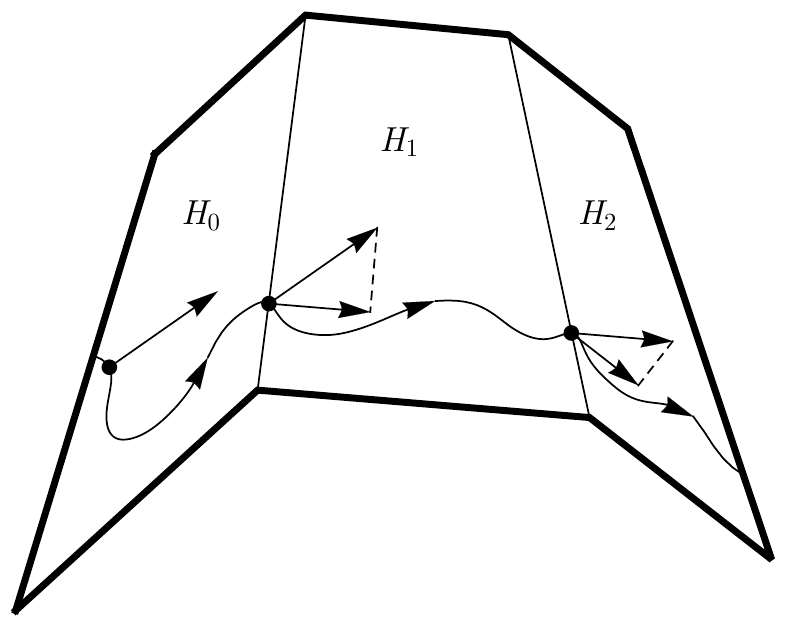}
\caption{Parallel translation along a path on a polyhedral surface}
\end{figure*} 

Next, consider the more complicated situation when $\mathcal M$ is a smooth surface in $\mathbb R^3$, and $\ell:[a,b]\to\mathcal M$ is 
a~path  of finite length with the initial point $x=\ell(a)$ and terminal point $y=\ell(b)$. The parallel translation operator should be a mapping $T_\ell$ from the tangent space at $x$ to the tangent space at $y$.
To obtain $T_\ell$, it is natural to approximate $\mathcal M$ along $\ell$ by a sequence of tangent planes, i.e., by a~polyhedral surface.
Choose $m+1$ successive points $x_i=\ell(t_i)$, $i\in\{0,\ldots,m\}$, corresponding to a~partition $D:a=t_0<t_1<\dots<t_{m}=b$.
For every $i\in\{0,\dots,m\}$, let $H_i$ be the tangent plane of~$\mathcal M$ at~$x_i$, and $M_i$ the tangent space of $H_i$. Assume that if the partition $D$ is fine enough, then each two successive tangent planes $H_{i}$ and $H_{i+1}$ have an intersection. (For example, this assumption is true if $\ell$ is continuously differentiable.)
Let $\ell_D$ be a path on $H_0\cup\cdots\cup H_m$, starting from $x$, passing through $x_1,\ldots,x_{m-1}$, and terminating at $y$.
We already know that the parallel translation operator corresponding to translation along $\ell_D$ is $T_{\ell_D}=P(x_m)P(x_{m-1})\cdots P(x_0)$, where $P(x_i)$ is the orthogonal projection from  $\mathbb R^3$ onto $M_{i}$.

If the limit of $T_{\ell_D}$  exists when the norm of the partition $D$ tends to zero, it is denoted by $T_\ell$ and 
 called the~parallel translation operator along $\ell$. 
Thus $T_\ell=\underset{|D|\to 0}{\lim}\prod_{i=m}^0 P(l(t_i))$, where $|D|=\max\limits_{1\le i\le m}(t_i-t_{i-1})$.
If we denote $A=P\circ\ell$, then $T_\ell$ is the Riemann-Haahti product of $A$ in the sense of the following definition.

\begin{definition}\label{E71}
Consider a mapping $A:[a,b]\to E$, where $E$ is a Banach algebra. Assume there exists an
element $P_A\in E$ with the following property: For each $\varepsilon > 0$, there exists 
 a gauge $\delta:[a,b]\to\mathbb R^+$ such that 
if $(\xi_i,[t_{i-1},t_i])_{i=1}^m$ is a $\delta$-fine tagged partition of $[a,b]$, then 
$\left\|P_A-\prod_{i=m}^0 A(t_i)\right\|<\varepsilon$.
In this case, $P_A$ is called the Kurzweil-Haahti product of $A$, and will be denoted by $\prod_a^b A(t)$.

If  $\delta$ is assumed to be constant on $[a,b]$, then $P_A=\prod_a^b A(t)$ is called the Riemann-Haahti product of $A$. 
\end{definition}

Note that the definition of $\prod_a^b A(t)$ is very similar to the definition of the product integral $\prod_a^b V(t,{\rm d}t)$, where $V(t,[x,y])=A(y)$; the only difference is that we don't require the element $P_A$ to be invertible.

\smallskip

The next theorem clarifies the relation between Haahti products and Stieltjes product integrals: it shows that the Haahti product $\prod_a^b A(t)$ exists if $A$ is an idempotent-valued mapping, i.e., if $A(t)\cdot A(t)=A(t)$ for all $t\in[a,b]$, and if $A$ is Stieltjes product integrable. The idea of the proof is borrowed from \cite[Lemma 2.1]{HH78}.

\begin{theorem}\label{L41} Assume that $E$ is a unital Banach algebra, and that $A:[a,b]\to E$ is an idempotent-valued mapping.
If the Kurzweil-Stieltjes or the Riemann-Stieltjes product integral $\prod_a^b(I+{\rm d}A(t))$  exists, then the Kurzweil-Haahti or Riemann-Haahti product $\prod_a^b A(t)$ exists as well, and 
$\prod_a^b A(t)=\left(\prod_a^b(I+{\rm d}A(t))\right)\cdot A(a)$.
\end{theorem}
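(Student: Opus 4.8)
The plan is to exploit the fact that each building block $I+A(t_i)-A(t_{i-1})$ of the Stieltjes product integral acts on an idempotent in a trivial way. The key algebraic observation is that whenever $P,Q\in E$ satisfy $P^2=P$ and $Q^2=Q$, then
$$(I+Q-P)P=P+QP-P^2=QP.$$
First I would fix an arbitrary tagged partition $D=(\xi_i,[t_{i-1},t_i])_{i=1}^m$ of $[a,b]$, with $t_0=a$ and $t_m=b$, and put $V(t,[x,y])=I+A(y)-A(x)$, so that $P(V,D)=\prod_{i=m}^{1}(I+A(t_i)-A(t_{i-1}))$ depends only on the division points, not on the tags. By induction on $k\in\{1,\dots,m\}$ I would prove the identity
$$\left(\prod_{i=k}^{1}(I+A(t_i)-A(t_{i-1}))\right)A(t_0)=A(t_k)A(t_{k-1})\cdots A(t_1)A(t_0)=\prod_{i=k}^{0}A(t_i):$$
the case $k=1$ is exactly the displayed identity with $P=A(t_0)$, $Q=A(t_1)$, and the inductive step follows from the same identity applied with $P=A(t_{k-1})$, $Q=A(t_k)$ to peel off the leftmost factor. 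Taking $k=m$ yields the exact, partition-independent identity $P(V,D)\cdot A(a)=\prod_{i=m}^{0}A(t_i)$ for every tagged partition $D$ of $[a,b]$. This is the precise analogue of the computation in \cite[Lemma 2.1]{HH78}.

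Once this identity is established, passing to the limit is routine. If $A(a)=0$, then both $\prod_{i=m}^{0}A(t_i)$ and $\left(\prod_a^b(I+\mathrm{d}A(t))\right)A(a)$ equal $0$, so the Haahti product exists and has the claimed value trivially. If $A(a)\neq 0$, fix $\varepsilon>0$; by the assumed existence of $\prod_a^b(I+\mathrm{d}A(t))$ there is a gauge $\delta$ on $[a,b]$ (a constant $\delta$ in the Riemann-Stieltjes case) such that $\|P(V,D)-\prod_a^b(I+\mathrm{d}A(t))\|<\varepsilon/\|A(a)\|$ for every $\delta$-fine tagged partition $D$. For such $D$,
$$\left\|\prod_{i=m}^{0}A(t_i)-\left(\prod_a^b(I+\mathrm{d}A(t))\right)A(a)\right\|=\left\|\left(P(V,D)-\prod_a^b(I+\mathrm{d}A(t))\right)A(a)\right\|<\varepsilon,$$
which says exactly that the Kurzweil-Haahti product $\prod_a^b A(t)$ exists and equals $\left(\prod_a^b(I+\mathrm{d}A(t))\right)A(a)$; using constant gauges throughout gives the Riemann-Haahti case.

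I do not expect any serious obstacle: idempotency enters only through the one-line identity $(I+Q-P)P=QP$, and all of the convergence bookkeeping is inherited verbatim from the definition of the Stieltjes product integral. The only minor points requiring care are the order of multiplication (factors with larger index stand on the left, and $A(a)$ must be multiplied on the \emph{right}) and the separate, trivial treatment of the degenerate case $A(a)=0$.
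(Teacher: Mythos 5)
Your proof is correct and follows essentially the same route as the paper: the one-line idempotency identity $(I+Q-P)P=QP$ is exactly the computation $A(t_i)A(t_{i-1})=(I+A(t_i)-A(t_{i-1}))A(t_{i-1})$ used in the paper's proof, iterated over the partition, followed by the same routine passage to the limit. The only difference is that you spell out the limit argument (including the harmless case split on $A(a)=0$) that the paper leaves implicit.
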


\begin{proof} Consider an arbitrary partition $a=t_0<t_1<\cdots<t_m=b$. For every $i\in\{1,\dots,m\}$, we have 
$$ 
\begin{aligned}
A(t_i)A(t_{i-1})&= (A(t_{i-1})+(A(t_i)-A(t_{i-1})))A(t_{i-1})=A(t_{i-1})A(t_{i-1})+(A(t_i)-A(t_{i-1}))A(t_{i-1})\\
&= A(t_{i-1})+ (A(t_i)-A(t_{i-1}))A(t_{i-1})=(I+(A(t_i)-A(t_{i-1})))A(t_{i-1}).
\end{aligned}
$$
Using this result repeatedly for $k=m$, $m-1$, \dots, 1, we obtain
\begin{equation*}
\prod_{i=m}^0A(t_i)=\prod_{i=m}^1(I+A(t_i)-A(t_{i-1}))A(a),
\end{equation*}
which implies the statement of the theorem.
\end{proof}

The following example shows that the Riemann-Haahti product can exist although neither the Riemann-Stieltjes nor the Kurzweil-Stieltjes product integral exist.

\begin{example}\label{Ex711} 
Consider the Banach algebra $E=l^\infty$ of all bounded sequences equipped with componentwise 	multiplication. For every
$n\in\mathbb N$, let $e^n\in l^\infty$ be the sequence $(e_i^n)_{i=1}^\infty$ given by 
$$e_i^n=\begin{cases} 1,&\quad n=i,\\ 0,&\quad n\ne i.\end{cases}$$    
Let $A:[0,1]\to l^\infty$ be defined by
$$
A(t)=\begin{cases} e^1+e^{n+2},&\quad 1-2^{-n}< t \le 1-2^{-(n+1)}, \ n\in\mathbb N_0,\\ e^1,&\quad t\in\{0,1\}. \end{cases} 
$$
Clearly, $A$ is an idempotent-valued mapping. For every partition $0=t_0<t_1<\cdots<t_m=1$, we have $\prod_{i=m}^0 A(t_i)=e^1$. Hence, the Riemann-Haahti product of $A$ exists and $\prod_0^1 A(t)=e^1$.

On the other hand, $A$ is neither Riemann-Stieltjes nor Kurzweil-Stieltjes product integrable. To see this, consider again a partition $D:0=t_0<t_1<\cdots<t_m=1$. For every $i\in\{1,\ldots,m-1\}$, we have $A(t_i)=e^1+e^{n_i}$ for a certain $n_i\ge 2$. Hence,
$$P(D)=\prod_{i=m}^1 (I+A(t_i)-A(t_{i-1}))=(I-e^{n_{m-1}})(I+e^{n_{m-1}}-e^{n_{m-2}})\cdots(I+e^{n_{2}}-e^{n_{1}})(I+e^{n_1}).$$
The right-hand side represents an element of $l^\infty$ whose components at positions $n_1,\ldots,n_{m-1}$ are zero, and all other components are equal to 1. Thus, no matter how we choose a gauge $\delta:[0,1]\to\mathbb R^+$, we can always find two $\delta$-fine partitions $D_1$, $D_2$ of $[0,1]$ such that $\|P(D_1)-P(D_2)\|=1$. It follows that $A$ cannot be Kurzweil-Stieltjes product integrable.
\end{example}

\begin{remark}\label{R710} The Banach algebra $E$ in Definition \ref{E71} need not be unital. In this case, neither the Kurzweil-Stieltjes nor the Riemann-Stieltjes product integral is defined. On the other hand, the Riemann-Haahti or Kurzweil-Haahti product may exist. For instance,  
it is enough to replace $l^\infty$ in Example \ref{Ex711} by $c_0$, which is not unital.
\end{remark}

The problem of parallel translation described in the introduction 
can be reformulated in a more abstract setting where
$\mathcal M$ is a Hausdorff topological space, $\ell:[a,b]\to\mathcal M$ is a~continuous path, and $P:\mathcal M\to L(X)$ is a projection-valued mapping (i.e., $P^2=P$) from $\mathcal M$ to the space of bounded linear operators on a~certain Banach space $X$. It is then natural to define the corresponding parallel translation operator as the Riemann-Haahti product $\prod_a^bP(\ell(t))$ whenever it exists.

The next result is an immediate consequence of Theorems \ref{L31} and \ref{L41}.

\begin{theorem}\label{P51}  Let $\mathcal M$ be a Hausdorff topological space, $X$ a Banach space, $P:\mathcal M\to L(X)$ a projection-valued mapping, and $\ell:[a,b]\to \mathcal M$ a continuous path in $\mathcal M$. Assume that $A=P\circ\ell$ is right- or left-continuous at each point of $(a,b)$, has a finite $p$-variation for a certain $p\in(0,2)$, $I+\Delta^+A(t)$ is invertible for all $t\in[a,b)$, and $I+\Delta^-A(t)$ is invertible for all $t\in(a,b]$. Then both the  Riemann-Stieltjes product integral  and  the Riemann-Haahti product  of $P\circ\ell$ exist, and 
$$\prod_a^bP(\ell(t))=\prod_a^b(I+{\rm d}P(\ell(t)))\cdot P(\ell(a)).$$
\end{theorem}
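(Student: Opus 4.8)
The plan is to recognize this statement as a direct combination of Theorem \ref{L31} and Theorem \ref{L41}, so essentially all the work consists of verifying that the hypotheses transfer. First I would set $A=P\circ\ell$ and observe that $A$ maps $[a,b]$ into the unital Banach algebra $E=L(X)$, whose unit is the identity operator $I$. Since $P$ is projection-valued, i.e.\ $P(x)^2=P(x)$ for every $x\in\mathcal M$, we get $A(t)^2=P(\ell(t))^2=P(\ell(t))=A(t)$ for all $t\in[a,b]$, so $A$ is an idempotent-valued mapping.

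Next I would check that $A$ satisfies the three hypotheses of Theorem \ref{L31}. Hypothesis (1) --- finite $p$-variation for some $p\in(0,2)$ --- is assumed. Hypothesis (2) --- left- or right-continuity at every point of $(a,b)$ --- is also assumed. For hypothesis (3) I would invoke the fact recalled just before Theorem \ref{L31} (see \cite[Lemma 2.4]{DN}) that any mapping with finite $p$-variation for some $p\in(0,\infty)$ is regulated; hence $A$ is regulated, and the remaining parts of (3), namely invertibility of $I+\Delta^+A(t)$ for all $t\in[a,b)$ and of $I+\Delta^-A(t)$ for all $t\in(a,b]$, are part of the assumptions. Thus Theorem \ref{L31} applies and yields that $A=P\circ\ell$ is Riemann-Stieltjes product integrable.

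Finally, I would apply Theorem \ref{L41} to this $A$: it is idempotent-valued and its Riemann-Stieltjes product integral $\prod_a^b(I+{\rm d}A(t))$ exists, so the Riemann-Haahti product $\prod_a^b A(t)$ exists as well and equals $\left(\prod_a^b(I+{\rm d}A(t))\right)\cdot A(a)$. Rewriting $A(t)=P(\ell(t))$ and $A(a)=P(\ell(a))$ then gives exactly the claimed identity $\prod_a^bP(\ell(t))=\prod_a^b(I+{\rm d}P(\ell(t)))\cdot P(\ell(a))$. There is no genuine obstacle in this argument; the only point requiring a moment's care is confirming that ``finite $p$-variation'' supplies the regularity demanded by Theorem \ref{L31}, which is precisely the content of the cited \cite[Lemma 2.4]{DN}.
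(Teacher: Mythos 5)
Your proposal is correct and follows exactly the route the paper intends: the paper states Theorem \ref{P51} as an immediate consequence of Theorems \ref{L31} and \ref{L41}, and your verification of the hypotheses (idempotency of $A=P\circ\ell$, regulatedness via finite $p$-variation, and the assumed invertibility conditions) fills in precisely the details left implicit there.
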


In \cite{HH78}, the authors were dealing with the case where $\mathcal M$ is a~$C^0$-manifold modeled on Banach spaces. Both the Riemann-Stieltjes product integral $\prod_a^b(I+{\rm d}P(\ell(t)))$ and the Riemann-Haahti product $\prod_a^bP(\ell(t))$ were referred to as the parallel translation operators and denoted by $B_\ell$, $T_\ell$. A sufficient condition for the existence of these operators was presented in \cite[Theorem 3.1]{HH78}, where it was assumed that $A=P\circ\ell$ has bounded variation, is right- or left-continuous at each point of $(a,b)$, and has only a~finite number of discontinuity points. 
Theorem \ref{P51} replaces the assumption of bounded variation by the finiteness of\break $p$-variation for some $p\in(0,2)$. Also, $A$ can have up to countably many discontinuities (recall that a mapping with finite $p$-variation is necessarily regulated, and therefore has at most countably many discontinuities). The extra assumptions concerning the invertibility of $I+\Delta^+A(t)$ and $I+\Delta^-A(t)$ guarantee that the parallel translation operator $B_\ell$ (i.e., the product integral $\prod_a^b(I+{\rm d}P(\ell(t)))$) is invertible.   
The invertibility of $B_\ell$ was investigated in \cite[Proposition 4.1]{HH78}, where it was assumed that $A$ is continuous. Hence, our conditions are less restrictive. 

\smallskip

It was also shown in \cite{HH78} that if $P\circ\ell$ is continuous (an essential hypothesis) and has  bounded variation, then the Riemann-Stieltjes integral equations
\begin{equation*}
B(t)=I+\int_a^t{\rm d}(P(\ell(s)))B(s), \quad \hbox{ and } \quad T(t) = P(\ell(a)) + \int_a^t{\rm d}(P(\ell(s)))T(s), \quad t\in[a,b]
\end{equation*}
have unique solutions $B,\,T:[a,b]\to L(X)$, and $B_\ell =B(b)$, $T_\ell=T(b)$. 
If $\mathcal M$ is a $C^1$-manifold and the path~$\ell$ is smooth, the two integral equations reduce to the initial value problems
\begin{equation*}
B'(t)=P'(\ell(t))\ell'(t)B(t), \ B(a)=I, \quad \hbox{ and } \quad T'(t) = P'(\ell(t))\ell'(t)T(t), \ T(a)=P(\ell(a)).
\end{equation*}
When $\mathcal M$ is a $C^0$-manifold and $P:\mathcal M\to L(X)$ is continuous and projection-valued, the  mappings $T=\ell \mapsto T_\ell$ and $B=\ell\mapsto B_\ell$, defined for those $\ell$ for which $P\circ\ell$ has bounded variation, are called in \cite{HH78} $P$-connexions. A~result on the invariance of a scalar product,
defined by a bounded bilinear function of $X$, under these connexions generalizes the result that in the classical Levi-Civita parallelism, the scalar product of any two tangent vectors remains constant. 

\smallskip

The definitions of parallel translation operators can also be based on  Kurzweil-Stieltjes product integrals and Kurzweil-Haahti products. The next  
theorem provides a new existence result for the Kurzweil-Stieltjes product integral and the  
Kurzweil-Haahti product of $P\circ\ell$. It is a~straightforward consequence of Corollary~\ref{C31}, Theorems \ref{T41} and \ref{L41}.

\begin{theorem}\label{T51} Let $\mathcal M$ be a Hausdorff topological space, $X$ a Banach space, $P:\mathcal M\to L(X)$ a projection-valued mapping, and $\ell:[a,b]\to \mathcal M$ a continuous path in $\mathcal M$. Assume $A=P\circ\ell$ has representation \eqref{E30}, where $\lim_{\beta\to\gamma-}z_\beta=z_\gamma$ for each limit element $\gamma\in\Lambda$, the family $(x_\alpha)_{\alpha\in\Lambda}$ given by \eqref{E601} is multipliable,  and its elements as well as its product are invertible.  
Then both the Kurzweil-Stieltjes product integral and the Kurzweil-Haahti product of $P\circ\ell$ exist, and
$$
\prod_a^bP(\ell(t))=\prod_a^b(I+{\rm d}P(\ell(t))) \cdot P(\ell(a))=\underset{\alpha\in\Lambda}{\Pi}z_\alpha.
$$
\end{theorem}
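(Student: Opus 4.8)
The plan is to verify that $A=P\circ\ell$ meets every hypothesis of Theorem \ref{T41} and then to chain Theorem \ref{T41} with Theorem \ref{L41}. First I would note that $A$ is idempotent-valued: since $P$ is projection-valued, $A(t)\cdot A(t)=P(\ell(t))\cdot P(\ell(t))=P(\ell(t))=A(t)$ for every $t\in[a,b]$. Because the representation \eqref{E30} attains the value $z_\alpha$ at the point $t=\alpha$ for each $\alpha\in\Lambda^{<b}$, and the value $z_b$ at $t=b$, idempotency of $A$ yields $z_\alpha\cdot z_\alpha=z_\alpha$ for all $\alpha\in\Lambda$ (including limit elements of $\Lambda$), so assumption 1 of Theorem \ref{T41} holds. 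Assumptions 2 and 3 of Theorem \ref{T41} --- namely $\lim_{\beta\to\gamma-}z_\beta=z_\gamma$ for every limit element $\gamma\in\Lambda$, and multipliability of the family $(x_\alpha)_{\alpha\in\Lambda}$ from \eqref{E601} with invertible elements and invertible product --- are precisely the hypotheses already assumed in the present theorem.

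Next I would invoke Theorem \ref{T41} (whose hypotheses also cover those of Corollary \ref{C31}). It gives that $A$ is Kurzweil-Stieltjes product integrable, that $(z_\alpha)_{\alpha\in\Lambda}$ is multipliable, and that
\[\prod_a^b(I+{\rm d}A(t))\cdot A(a)=\left(\underset{\alpha\in\Lambda}{\Pi}x_\alpha\right)z_a=\underset{\alpha\in\Lambda}{\Pi}z_\alpha.\]
Since $A$ is idempotent-valued and its Kurzweil-Stieltjes product integral exists, Theorem \ref{L41} then guarantees that the Kurzweil-Haahti product $\prod_a^b A(t)$ exists and equals $\left(\prod_a^b(I+{\rm d}A(t))\right)\cdot A(a)$. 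Concatenating these identities and recalling $A=P\circ\ell$, $A(a)=P(\ell(a))$, I would obtain
\[\prod_a^b P(\ell(t))=\prod_a^b A(t)=\left(\prod_a^b(I+{\rm d}A(t))\right)\cdot A(a)=\prod_a^b(I+{\rm d}P(\ell(t)))\cdot P(\ell(a))=\underset{\alpha\in\Lambda}{\Pi}z_\alpha,\]
which is the asserted equality.

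There is essentially no serious obstacle here: the statement is a bookkeeping consequence of Theorems \ref{T41} and \ref{L41}. The only place deserving a word of care is the verification that every $z_\alpha$ --- not merely those indexed by successors --- is idempotent; as noted above this is immediate from \eqref{E30}, since each $z_\alpha$ equals $A(\alpha)$ and $z_b$ equals $A(b)$, and $A$ is idempotent-valued throughout $[a,b]$.
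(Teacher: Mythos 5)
Your proof is correct and follows exactly the route the paper intends: the paper states only that Theorem \ref{T51} is a straightforward consequence of Corollary \ref{C31} and Theorems \ref{T41} and \ref{L41}, and your argument supplies precisely that chain, including the only nontrivial verification (that idempotency of $A=P\circ\ell$ on all of $[a,b]$ forces $z_\alpha z_\alpha=z_\alpha$ for every $\alpha\in\Lambda$, since $A(\alpha)=z_\alpha$ by \eqref{E30}). Nothing is missing.
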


\section{Linear generalized differential equations}

In this section, we discuss the relation between strong Kurzweil product integrals $\prod_a^b V(t,{\rm d}t)$ introduced in Definition \ref{strongDef} and J.~Kurzweil's theory of generalized differential equations (see \cite{Kurzweil 2012,Sch}). In particular, we focus on linear generalized differential equations of the form
\begin{equation}\label{linearGODE}
x(s)=x(a)+\int_a^s D_t[U(\tau,t)x(\tau)],\quad s\in[a,b],
\end{equation}
where $x$ and $U$ take values in a unital Banach algebra $E$, and the integral on the right-hand side is the strong Kurzweil-Henstock integral introduced in \cite[Definition 14.5]{Kurzweil 2012}. Readers who are not familiar with the definition of this integral might regard the following Lemma \ref{GODE-equivalence} as an equivalent definition of solutions to Eq.~\eqref{linearGODE}.

Equation \eqref{linearGODE} represents a special case of the nonlinear generalized differential equations studied in\break J.~Kurzweil's recent book \cite{Kurzweil 2012}. Some results specific for the linear case were obtained in \cite{Slavik 2013}. Moreover, there is an extensive literature devoted to the particular case of Eq.~\eqref{linearGODE} where $U$ does not depend on $\tau$; see e.g.~\cite{MS,MT,Sch} and the references there. 

The next lemma is a special case of \cite[Theorem 15.3]{Kurzweil 2012}, and provides a necessary and sufficient condition for a function $x:[a,b]\to E$ to be a~solution of Eq.~\eqref{linearGODE}.

\begin{lemma}\label{GODE-equivalence}
	Consider a pair of functions $U:[a,b]\times[a,b]\to E$, $x:[a,b]\to E$. Then the following statements are equivalent:
	\begin{enumerate}
		\item $x$ is a strong Kurzweil-Henstock solution of the linear generalized differential equation \eqref{linearGODE}.
		\item For every $\varepsilon>0$, there is a gauge $\delta:[a,b]\to\mathbb R^+$ such that
		$$\sum_{i=1}^m\|x(t_i)-x(t_{i-1})-(U(\xi_i,t_i)-U(\xi_i,t_{i-1}))x(\xi_i)\|<\varepsilon$$
		for every $\delta$-fine tagged partition of $[a,b]$.	
	\end{enumerate}
\end{lemma}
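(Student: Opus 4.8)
The plan is to obtain the equivalence by unwinding the definition of the strong Kurzweil-Henstock integral from \cite[Definition 14.5]{Kurzweil 2012} applied to the integrand $D_t[U(\tau,t)x(\tau)]$, so that both statements collapse to the same gauge condition; alternatively, one may simply specialize \cite[Theorem 15.3]{Kurzweil 2012} to the present linear, state-independent situation. Write $f(\tau,t)=U(\tau,t)x(\tau)$. By definition, $x$ is a strong Kurzweil-Henstock solution of \eqref{linearGODE} exactly when the function $H(s)=x(s)-x(a)$ is the indefinite strong Kurzweil-Henstock integral of $D_t[f(\tau,t)]$, which means: for every $\varepsilon>0$ there is a gauge $\delta:[a,b]\to\mathbb R^+$ such that every $\delta$-fine tagged partition $(\xi_i,[t_{i-1},t_i])_{i=1}^m$ of $[a,b]$ satisfies
$$\sum_{i=1}^m\bigl\|f(\xi_i,t_i)-f(\xi_i,t_{i-1})-(H(t_i)-H(t_{i-1}))\bigr\|<\varepsilon .$$

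For the implication $1\Rightarrow 2$, I would substitute $H(t_i)-H(t_{i-1})=x(t_i)-x(t_{i-1})$ into this inequality and use the identity $f(\xi_i,t_i)-f(\xi_i,t_{i-1})=(U(\xi_i,t_i)-U(\xi_i,t_{i-1}))x(\xi_i)$, which is valid in any algebra; this turns it into precisely the inequality of statement~2.

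For the implication $2\Rightarrow 1$, fix $\varepsilon>0$ and let $\delta$ be the gauge provided by statement~2. Given $s\in[a,b]$, take an arbitrary $\delta$-fine tagged partition of $[a,s]$ and concatenate it with a $\delta$-fine tagged partition of $[s,b]$ (which exists by Cousin's lemma) to form a $\delta$-fine tagged partition of $[a,b]$; applying statement~2 and discarding the nonnegative terms coming from $[s,b]$, the partial sum over the subintervals of $[a,s]$ is still $<\varepsilon$. Since $\sum_i\bigl(x(t_i)-x(t_{i-1})\bigr)$ telescopes to $x(s)-x(a)$, this is exactly the assertion that $x(s)-x(a)$ is the value of the strong Kurzweil-Henstock integral $\int_a^s D_t[f(\tau,t)]$, i.e.\ that $x$ solves \eqref{linearGODE} on $[a,b]$.

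I do not expect a genuine obstacle here, only a need to stay faithful to the precise definition in \cite{Kurzweil 2012}. The one point deserving care is that statement~2 imposes a single gauge condition on partitions of the whole interval $[a,b]$, whereas being a solution of \eqref{linearGODE} requires the integral identity on every subinterval $[a,s]$; the passage from the former to the latter rests on the nonnegativity of the summands (a Saks--Henstock-type observation already built into the strong integral) together with the telescoping of the increments of $x$.
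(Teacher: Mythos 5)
Your argument is correct and coincides with what the paper itself does: the paper gives no proof of this lemma but simply cites it as a special case of \cite[Theorem 15.3]{Kurzweil 2012}, and your direct unwinding of the strong Kurzweil--Henstock integral (via $f(\tau,t)=U(\tau,t)x(\tau)$ and the primitive $H=x-x(a)$) is exactly the content of that citation. The one point you flag --- recovering the integral identity on each $[a,s]$ from the single whole-interval gauge condition --- is handled correctly by your Saks--Henstock restriction argument, since the summands are norms and may be discarded.
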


The second statement of the previous lemma makes it obvious that only the values $U(\tau,t)$ where $t$ is sufficiently close to $\tau$ are important. More precisely, assume that $\delta_0:[a,b]\to\mathbb R^+$ is an arbitrary function and for each $\tau\in[a,b]$, $U(\tau,t)$ is defined only for $t\in(\tau-\delta_0(\tau),\tau+\delta_0(\tau))\cap[a,b]$. Since
 the gauge $\delta$ in Lemma \ref{GODE-equivalence} can be always chosen to be smaller than or equal to $\delta_0$, it does not matter that $U$ is not defined on the whole square $[a,b]\times[a,b]$. This observation will be utilized in the following theorems.

To establish the relation between generalized differential equations and product integrals, we need the
next lemma.

\begin{lemma}\label{Vestimate}
If $V:[a,b]\times \mathcal I \to E$ is strongly Kurzweil product integrable and satisfies conditions (V1)--(V4), then there exist a constant $K\ge 1$ and a gauge $\delta_1:[a,b]\to\mathbb R^+$ with the following properties:
\begin{enumerate}
\item If $x\in(\tau-\delta_1(\tau),\tau]$, then $\|V(\tau,[x,\tau])\|\le K$ and $\|V(\tau,[x,\tau])^{-1}\|\le K$.
\item If $y\in[\tau,\tau+\delta_1(\tau))$, then $\|V(\tau,[\tau,y])\|\le K$ and $\|V(\tau,[\tau,y])^{-1}\|\le K$.
\end{enumerate}
\end{lemma}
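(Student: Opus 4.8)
The plan is to exploit conditions (V3) and (V4) together with the invertibility of the limit elements $V_{\pm}(\tau)$, and then patch the resulting local bounds into a single gauge. First I would fix an arbitrary $\tau\in[a,b]$. By (V3), $V_+(\tau)=\lim_{y\to\tau+}V(\tau,[\tau,y])$ exists and is invertible; by (V4), $V_-(\tau)=\lim_{x\to\tau-}V(\tau,[x,\tau])$ exists and is invertible (when $\tau=a$ only the right limit is relevant, when $\tau=b$ only the left one, and the statement is vacuous on the corresponding empty one-sided neighbourhood). Since the set of invertible elements is open and inversion is continuous, there is a $\delta_1(\tau)>0$ such that for all $y\in[\tau,\tau+\delta_1(\tau))$ the element $V(\tau,[\tau,y])$ is invertible with $\|V(\tau,[\tau,y])-V_+(\tau)\|<1$, and for all $x\in(\tau-\delta_1(\tau),\tau]$ the element $V(\tau,[x,\tau])$ is invertible with $\|V(\tau,[x,\tau])-V_-(\tau)\|<1$; here I use (V1), which gives $V(\tau,[\tau,\tau])=I$, so the one-sided limits are genuinely attained from both sides at $\tau$ itself. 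From $\|V(\tau,[\tau,y])-V_+(\tau)\|<1$ we get $\|V(\tau,[\tau,y])\|\le\|V_+(\tau)\|+1$, and a standard Neumann-series estimate bounds $\|V(\tau,[\tau,y])^{-1}\|$ in terms of $\|V_+(\tau)^{-1}\|$; symmetrically on the left.

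The remaining issue is that the constants $\|V_\pm(\tau)\|$, $\|V_\pm(\tau)^{-1}\|$ a priori depend on $\tau$, so one needs a uniform bound to produce a single $K$. I would get this from the boundedness results already available in the excerpt: by Theorem~\ref{indefinite-regulated} and the theorem preceding it, the functions $s\mapsto\prod_a^sV(t,\mathrm dt)$ and $s\mapsto\bigl(\prod_a^sV(t,\mathrm dt)\bigr)^{-1}$ are bounded on $[a,b]$, say by some $M$, and Theorem~\ref{indefinite-regulated} expresses the one-sided limits of the indefinite integral in terms of $V_\pm(s)$ and $\prod_a^sV(t,\mathrm dt)$; solving those identities for $V_\pm(s)$ shows $V_-(s)^{-1}=\bigl(\lim_{\beta\to s-}\prod_a^\beta V\bigr)\bigl(\prod_a^sV\bigr)^{-1}$ and $V_+(s)=\bigl(\lim_{\beta\to s+}\prod_a^\beta V\bigr)\bigl(\prod_a^sV\bigr)^{-1}$, whence $\|V_\pm(s)\|\le M^2$ and $\|V_\pm(s)^{-1}\|\le M^2$ uniformly in $s$. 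Then setting $K=2M^2+1$ (or any convenient constant $\ge1$ dominating $\|V_\pm(s)\|+1$ and the corresponding Neumann bound for the inverse) works for every $\tau$, and the gauge $\delta_1$ defined pointwise above has the two required properties by construction.

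The main obstacle I anticipate is the uniformity: the pointwise argument via (V3)--(V4) is routine, but without the global boundedness of the indefinite product integral one could only assert a $\tau$-dependent bound, which is not enough for the statement. Everything hinges on feeding in the already-proven boundedness of $s\mapsto\prod_a^sV(t,\mathrm dt)$ and its inverse, and on the fact that (V3), (V4) identify the relevant one-sided limits as continuous images of that bounded indefinite integral. Once those are in hand, the proof is a short assembly: fix $\tau$, extract a local radius from openness of the invertible group and continuity of inversion, bound the norms and inverse-norms by the uniform constant $M^2$, and take $K=2M^2+1$.
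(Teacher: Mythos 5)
Your proposal is correct and follows essentially the same route as the paper: fix $\tau$, use (V3)--(V4) with openness of the invertible group and continuity of inversion to get a local radius, and obtain the uniform constant from $V_{\pm}(\tau)=W(\tau\pm)W(\tau)^{\mp 1}$-type identities of Theorem~\ref{indefinite-regulated} together with the boundedness of $W=\prod_a^{\cdot}V(t,{\rm d}t)$ and $W^{-1}$, yielding $\|V_{\pm}(\tau)\|,\|V_{\pm}(\tau)^{-1}\|\le M^2$ and $K$ a fixed function of $M$. The only cosmetic difference is that the paper bounds the inverses by demanding $\|V(\tau,[x,\tau])^{-1}-V_-(\tau)^{-1}\|<1$ directly (continuity of inversion) rather than via a Neumann series, which avoids having to shrink the perturbation below $1/\|V_{\pm}(\tau)^{-1}\|$.
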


\begin{proof}
We prove only the first statement, since the second one is similar.
Consider the indefinite product integral $W(t)=\prod_a^t V(s,{\rm d}s)$, $t\in[a,b]$.
Both $W$ and $W^{-1}$ are bounded, i.e., there exists a constant $M$ such that 
	$\|W(\tau)\|\le M$ and $\|W(\tau)^{-1}\|\le M$ for all $\tau\in[a,b]$. Let $K=1+M^2$.
	
		Choose an arbitrary $\tau\in(a,b]$.
    Since ${\lim}_{x\to \tau-} V(\tau,[x,\tau])=V_-(\tau)$ and ${\lim}_{x\to \tau-} V(\tau,[x,\tau])^{-1}=V_-(\tau)^{-1}$, there exists a $\delta_1(\tau)>0$ such that $\|V(\tau,[x,\tau])-V_-(\tau)\|<1$ and $\|V(\tau,[x,\tau])^{-1}-V_-(\tau)^{-1}\|<1$ whenever $x\in(\tau-\delta_1(\tau),\tau)$.
		(Note that $V_-(\tau)$ is invertible and the set of all invertible elements in $E$ is open, which means that $V(\tau,[x,\tau])$ is invertible whenever $x$ is sufficiently close to $\tau$.)
		By Theorem \ref{indefinite-regulated}, we have
			$$V_{-}(\tau)=\left(\prod _a^\tau V(t,{\rm d}t)\right)\cdot\left(\underset{s\to \tau-}{\lim}\prod_a^s V(t,{\rm d}t)\right)^{-1}=W(\tau)W(\tau-)^{-1}$$
		and therefore 
		$$\begin{aligned}
		&\|V_-(\tau)\|\le\|W(\tau)\|\cdot\|W(\tau-)^{-1}\|\le M^2,\\
		&\|V_-(\tau)^{-1}\|=\|W(\tau-)W(\tau)^{-1}\|\le\|W(\tau-)\|\cdot\|W(\tau)^{-1}\|\le M^2.
		\end{aligned}$$
		Consequently, if $x\in(\tau-\delta_1(\tau),\tau)$, we get
$$\begin{aligned}
&\|V(\tau,[x,\tau])\|\le \|V(\tau,[x,\tau])-V_-(\tau)\|+\|V_-(\tau)\|<1+M^2=K,\\
&\|V(\tau,[x,\tau])^{-1}\|\le\|V(\tau,[x,\tau])^{-1}-V_-(\tau)^{-1}\|+\|V_-(\tau)^{-1}\|<1+M^2=K.
\end{aligned}$$
		Obviously, the estimates $\|V(\tau,[x,\tau])\|\le K$ and $\|V(\tau,[x,\tau])^{-1}\|\le K$ hold also when $x=\tau$.
\end{proof}

The next result shows that if $V:[a,b]\times \mathcal I \to E$ satisfies conditions (V1)--(V4), then the strong Kurzweil product integral provides a solution to a certain linear generalized differential equation. 

\begin{theorem}\label{GODEsolution}
	Assume that $V:[a,b]\times \mathcal I \to E$ is strongly Kurzweil product integrable and satisfies conditions (V1)--(V4). 
	Then the indefinite product integral $W(t)=\prod_a^t V(s,{\rm d}s)$, $t\in[a,b]$, is a strong Kurzweil-Henstock solution of the linear generalized differential equation \eqref{linearGODE},
	where
	$U$ is given by
	\begin{equation}\label{Udef}
	U(\tau,t)=\begin{cases}
	V(\tau,[\tau,t]),&\quad t\ge\tau,\\
	V(\tau,[t,\tau])^{-1},&\quad t<\tau.
	\end{cases}
	\end{equation}
\end{theorem}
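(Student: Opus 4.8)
The plan is to verify statement 2 of Lemma \ref{GODE-equivalence} directly for the function $W(t)=\prod_a^t V(s,{\rm d}s)$ and the point–interval data encoded by \eqref{Udef}. Fix $\varepsilon>0$. By Definition \ref{strongDef}, there is a gauge $\delta_0:[a,b]\to\mathbb R^+$ such that for every $\delta_0$-fine tagged partition $(\xi_i,[t_{i-1},t_i])_{i=1}^m$ of $[a,b]$ we have $\sum_{i=1}^m\|V(\xi_i,[t_{i-1},t_i])-W(t_i)W(t_{i-1})^{-1}\|<\varepsilon/(2K^2)$, where $K\ge 1$ is the constant from Lemma \ref{Vestimate}. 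I would then form the gauge $\delta=\min(\delta_0,\delta_1)$, with $\delta_1$ also coming from Lemma \ref{Vestimate}, so that every $\delta$-fine partition satisfies both the strong-integrability estimate and the local norm bounds on $V(\tau,[\cdot,\cdot])$ and its inverse. For such a partition, the quantity to control is $\sum_{i=1}^m\|W(t_i)-W(t_{i-1})-(U(\xi_i,t_i)-U(\xi_i,t_{i-1}))W(\xi_i)\|$, since here $x=W$.

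The key algebraic step is to rewrite a single summand. Because $\xi_i\in[t_{i-1},t_i]$, we have $U(\xi_i,t_i)=V(\xi_i,[\xi_i,t_i])$ and $U(\xi_i,t_{i-1})=V(\xi_i,[t_{i-1},\xi_i])^{-1}$; moreover condition (V1) and (V2) tie these two pieces to $V(\xi_i,[t_{i-1},t_i])$. I would first replace $V(\xi_i,[t_{i-1},t_i])$ by $V(\xi_i,[\xi_i,t_i])V(\xi_i,[t_{i-1},\xi_i])$ at the cost of an error that, after refining $\delta$ using (V2) to be smaller than a suitable $\sigma(\xi_i,\varepsilon)$, sums to at most $\varepsilon$ (this is the standard trick for handling the interior-tag case, analogous to the treatment of $V(c,[x,y])$ in the proof of the additivity theorem). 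Then, writing $W(t_i)-W(t_{i-1})=\bigl(W(t_i)W(t_{i-1})^{-1}-I\bigr)W(t_{i-1})$ and inserting $W(\xi_i)W(\xi_i)^{-1}=I$ in the right places, the summand becomes a combination of terms of the form $\bigl(V(\xi_i,[\xi_i,t_i])-W(t_i)W(\xi_i)^{-1}\bigr)$ and $\bigl(V(\xi_i,[t_{i-1},\xi_i])-W(\xi_i)W(t_{i-1})^{-1}\bigr)$, each multiplied by factors whose norms are bounded by $K$ (using Lemma \ref{Vestimate} for the $V$-factors and the boundedness of $W$, $W^{-1}$ for the rest). Finally I would observe that $(\xi_i,[\xi_i,t_i])$ and $(\xi_i,[t_{i-1},\xi_i])$ together form a $\delta_0$-fine tagged partition refining the original one (here is where $\delta\le\delta_0$ and $\xi_i$ being an endpoint of each sub-subinterval matters), so the two families of differences sum, over all $i$, to less than $\varepsilon/(2K^2)$ by the strong-integrability estimate. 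Multiplying through by the $K^2$ bound and adding the (V2)-error yields a total bound of the form $C\varepsilon$ for an absolute constant $C$, which proves statement 2 of Lemma \ref{GODE-equivalence}.

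The main obstacle I anticipate is the bookkeeping around the interior tag: one must split each tagged pair $(\xi_i,[t_{i-1},t_i])$ into the two pairs $(\xi_i,[t_{i-1},\xi_i])$ and $(\xi_i,[\xi_i,t_i])$, check that the resulting collection is still a $\delta_0$-fine tagged partition of $[a,b]$ (degenerate pieces with $\xi_i=t_{i-1}$ or $\xi_i=t_i$ must be allowed or discarded via (V1)), and keep the telescoping of $W$ consistent across the split. The (V2) error term and the norm estimates from Lemma \ref{Vestimate} are routine once the gauge is chosen as the minimum of $\delta_0$, $\delta_1$, and the (V2)-gauge, but assembling the final inequality requires care so that the constant multiplying $\varepsilon$ does not depend on the partition.
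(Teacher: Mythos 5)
Your core mechanism is exactly the one used in the paper: split each summand additively at the tag, using $U(\xi_i,\xi_i)=V(\xi_i,[\xi_i,\xi_i])=I$ from (V1), so that
$W(t_i)-W(t_{i-1})-(U(\xi_i,t_i)-U(\xi_i,t_{i-1}))W(\xi_i)$ decomposes into a piece involving $W(t_i)-V(\xi_i,[\xi_i,t_i])W(\xi_i)$ and a piece involving $V(\xi_i,[t_{i-1},\xi_i])^{-1}W(\xi_i)-W(t_{i-1})$; then note that the partition obtained by splitting each $[t_{i-1},t_i]$ at $\xi_i$ is still $\delta$-fine, so the strong-integrability estimate applied to that refined partition controls $\sum_i\|V(\xi_i,[\xi_i,t_i])-W(t_i)W(\xi_i)^{-1}\|$ and $\sum_i\|V(\xi_i,[t_{i-1},\xi_i])-W(\xi_i)W(t_{i-1})^{-1}\|$ simultaneously; finally absorb the remaining factors using Lemma \ref{Vestimate} and the bounds on $W$ and $W^{-1}$. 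This is the published proof, and the bookkeeping you worry about (degenerate subintervals when $\xi_i=t_{i-1}$ or $\xi_i=t_i$) is harmless since such pieces contribute $I-I=0$.

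The one place you go astray is the (V2) step. First, it is unnecessary: the quantity from Lemma \ref{GODE-equivalence} involves only $U(\xi_i,t_i)=V(\xi_i,[\xi_i,t_i])$ and $U(\xi_i,t_{i-1})=V(\xi_i,[t_{i-1},\xi_i])^{-1}$; the expression $V(\xi_i,[t_{i-1},t_i])$ never occurs, so there is nothing to replace by $V(\xi_i,[\xi_i,t_i])V(\xi_i,[t_{i-1},\xi_i])$. Second, as you describe it the step is not justified: (V2) provides, for each fixed $\xi$ and each tolerance, a $\sigma$ making the \emph{single} term $\|V(\xi,[x,y])-V(\xi,[\xi,y])V(\xi,[x,\xi])\|$ small, but a $\delta$-fine partition has an unbounded number $m$ of terms with distinct tags, so choosing $\delta(\xi)\le\sigma(\xi,\varepsilon)$ only bounds the accumulated error by $m\varepsilon$, which does not sum ``to at most $\varepsilon$''. (The paper does later prove that this sum can be made small when $V$ is strongly Kurzweil product integrable — that is precisely the theorem on the equivalence of $V$ and $\tilde V$ — but its proof again runs through the split-partition trick, not through (V2) alone.) Delete that detour and your argument coincides with the paper's.
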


\begin{proof}
	We know from the proof of Lemma \ref{Vestimate} that for each $\tau\in(a,b]$, $V(\tau,[x,\tau])$ is invertible whenever $x$ is sufficiently close to $\tau$. Hence, the definition of $U$ is meaningful.
	
	Since both $W$ and $W^{-1}$ are bounded, there exists a constant $M$ such that 
	$\|W(\tau)\|\le M$ and $\|W(\tau)^{-1}\|\le M$ for all $\tau\in[a,b]$.
	Choose an arbitrary $\varepsilon>0$ and let $\delta:[a,b]\to\mathbb R^+$ be the corresponding gauge from Definition~\ref{strongDef} such that
	if $D=(\xi_i,[t_{i-1},t_i])_{i=1}^m$ is a $\delta$-fine tagged partition of $[a,b]$, then
	$$
	\sum_{i=1}^m\|V(\xi_i,[t_{i-1},t_i])-W(t_i)W(t_{i-1})^{-1}\|<\varepsilon.
	$$
	Without loss of generality, assume that $\delta\le\delta_1$, where $\delta_1$ is the gauge from Lemma \ref{Vestimate}; let $K\ge 1$ be the constant from this lemma.
	 
	Consider a tagged partition $D^*$ that is obtained from $D$ by splitting each interval $[t_{i-1},t_i]$ at $\xi_i$. In other words, $D^*$ consists of the intervals $[t_{i-1},\xi_i]$ and $[\xi_i,t_i]$, $i\in\{1,\ldots,m\}$, which share the same tag $\xi_i$. Note that $D^*$ is $\delta$-fine, and therefore
	\begin{equation}\label{est2}
	\sum_{i=1}^m\|V(\xi_i,[t_{i-1},\xi_i])-W(\xi_i)W(t_{i-1})^{-1}\|+\sum_{i=1}^m\|V(\xi_i,[\xi_i,t_i])-W(t_i)W(\xi_i)^{-1}\|<\varepsilon.
	\end{equation}
	Using successively the triangle inequality, the definition of~$U$,  condition~(V1), and the previous estimates, we obtain
	$$\sum_{i=1}^m\|W(t_i)-W(t_{i-1})-(U(\xi_i,t_i)-U(\xi_i,t_{i-1}))W(\xi_i)\|$$
	$$\le\sum_{i=1}^m\|W(t_i)-W(\xi_i)-(U(\xi_i,t_i)-U(\xi_i,\xi_i))W(\xi_i)\|
	+\sum_{i=1}^m\|W(\xi_i)-W(t_{i-1})-(U(\xi_i,\xi_i)-U(\xi_i,t_{i-1}))W(\xi_i)\|$$
	$$\begin{aligned}
	=&\sum_{i=1}^m\|W(t_i)-W(\xi_i)-(V(\xi_i,[\xi_i,t_i])-V(\xi_i,[\xi_i,\xi_i]))W(\xi_i)\|\\
	+&\sum_{i=1}^m\|W(\xi_i)-W(t_{i-1})-(V(\xi_i,[\xi_i,\xi_i])-V(\xi_i,[t_{i-1},\xi_i])^{-1})W(\xi_i)\|\\
	\end{aligned}
	$$
	$$=\sum_{i=1}^m\|W(t_i)-V(\xi_i,[\xi_i,t_i])W(\xi_i)\|+\sum_{i=1}^m\|V(\xi_i,[t_{i-1},\xi_i])^{-1}W(\xi_i)-W(t_{i-1})\|$$
	$$\le\sum_{i=1}^m\|W(t_i)W(\xi_i)^{-1}-V(\xi_i,[\xi_i,t_i])\|\cdot\|W(\xi_i)\|$$
	$$+\sum_{i=1}^m\|V(\xi_i,[t_{i-1},\xi_i])^{-1}\|\cdot\|W(\xi_i)W(t_{i-1})^{-1}-V(\xi_i,[t_{i-1},\xi_i])\|\cdot\|W(t_{i-1})\|$$
	$$\le M\sum_{i=1}^m\|W(t_i)W(\xi_i)^{-1}-V(\xi_i,[\xi_i,t_i])\|
	+KM\sum_{i=1}^m\|W(\xi_i)W(t_{i-1})^{-1}-V(\xi_i,[t_{i-1},\xi_i])\|\le KM\varepsilon.$$
The statement of the theorem now follows immediately from Lemma \ref{GODE-equivalence}.
\end{proof}

The following concept of equivalent functions was introduced in \cite[Definition 2.8]{JK}.

\begin{definition}
Two functions $V_1,V_2:[a,b]\times \mathcal I \to E$ are called equivalent if for every $\varepsilon>0$, there is a~gauge $\delta:[a,b]\to\mathbb R^+$ such that
$$\sum_{i=1}^m\|V_1(\xi_i,[t_{i-1},t_i])-V_2(\xi_i,[t_{i-1},t_i])\|<\varepsilon$$
for each $\delta$-fine partition of $[a,b]$.
\end{definition}

The usefulness of the previous definition stems from the following obvious fact: If $V_1$, $V_2:[a,b]\times \mathcal I \to E$ are equivalent, then $V_1$ is strongly Kurzweil product integrable if and only if  $V_2$ is strongly Kurzweil product integrable; in this case,  both integrals have the same value.

\begin{theorem}
	Assume that $V:[a,b]\times \mathcal I \to E$ is strongly Kurzweil product integrable and satisfies conditions (V1)--(V4). Then $V$ is equivalent to the function $\tilde V:[a,b]\times \mathcal I \to E$ given by
	\begin{equation}\label{tildeV}
	\tilde V(\xi,[x,y])=V(\xi,[\xi,y])V(\xi,[x,\xi]).
	\end{equation}
\end{theorem}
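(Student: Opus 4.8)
The plan is to show that the difference $V(\xi_i,[t_{i-1},t_i]) - \tilde V(\xi_i,[t_{i-1},t_i]) = V(\xi_i,[t_{i-1},t_i]) - V(\xi_i,[\xi_i,t_i])V(\xi_i,[t_{i-1},\xi_i])$ is small in the sum over a suitably $\delta$-fine partition. Fix $\varepsilon > 0$. First I would invoke condition (V2): for each $\tau \in [a,b]$ there is a $\sigma(\tau) > 0$ such that $\|V(\tau,[x,y]) - V(\tau,[\tau,y])V(\tau,[x,\tau])\| < \varepsilon$ whenever $\tau - \sigma(\tau) < x \le \tau \le y < \tau + \sigma(\tau)$. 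This alone does not give a finite bound on the sum, because the number $m$ of subintervals is unbounded; so the gauge must be refined further to control $m$, or rather to control the total contribution.

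The key trick is that a $\delta$-fine tagged partition has a bounded number of tags that can "see" any fixed length — more precisely, I would choose the gauge so that on each interval the estimate from (V2) applies, and then bound the sum by a constant multiple of $\varepsilon$ using the fact that, for a $\delta$-fine partition, the tag $\xi_i$ lies in $[t_{i-1},t_i]$, so applying (V2) with $x = t_{i-1}$, $y = t_i$, $\tau = \xi_i$ gives each term $< \varepsilon$; to turn the sum of $m$ such terms into something $O(\varepsilon)$, I would instead pick the gauge so that $\sum$ of interval lengths where each term can be nonzero is controlled, or — cleaner — use a Cousin-type / Henstock covering argument: refine $\delta$ so that additionally $\delta(\tau) < \sigma(\tau)$ and so that the total length $b-a$ is split among intervals in a way that the relevant discrepancy sums telescope. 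Actually the cleanest route: note that any $\delta$-fine partition of $[a,b]$ with $\delta \le \sigma$ has at most finitely many intervals, and for those the per-term bound is $\varepsilon$; to get $\sum < C\varepsilon$ one shrinks $\delta$ so that each term is bounded by $\varepsilon \cdot (t_i - t_{i-1})/(b-a)$ instead, which is possible because (V2) gives convergence $V(\tau,[x,y]) - V(\tau,[\tau,y])V(\tau,[x,\tau]) \to 0$ as $x,y \to \tau$, hence the $\sigma$ in (V2) can be chosen to force the difference below any prescribed function of the interval length. Summing then gives $\sum_{i=1}^m \|V(\xi_i,[t_{i-1},t_i]) - \tilde V(\xi_i,[t_{i-1},t_i])\| < \varepsilon$, which is exactly the equivalence condition.

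The main obstacle is the passage from the pointwise (V2) estimate to a uniform bound on the sum: (V2) as stated only bounds a single term by $\varepsilon$, so one needs to observe that (V2) in fact asserts convergence to $0$, and therefore for each $\tau$ and each prescribed tolerance (even one depending on the interval) there is an appropriate $\sigma$; choosing $\delta(\tau) \le \sigma(\tau)$ accordingly, and using that a $\delta$-fine partition has pairwise non-overlapping intervals with lengths summing to $b-a$, yields the estimate. Once $V$ is shown equivalent to $\tilde V$, the remark preceding the theorem gives that $\tilde V$ is strongly Kurzweil product integrable with the same value, which is the intended payoff (and, combined with Theorem \ref{GODEsolution} applied to $\tilde V$, connects the product integral to the generalized differential equation with kernel built from $\tilde V$). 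I would close by noting explicitly that no further hypotheses beyond (V1)--(V4) and strong product integrability are used, since (V2) is the only condition invoked and it is available by assumption.
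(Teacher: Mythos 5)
There is a genuine gap at the heart of your argument: the step where you upgrade condition (V2) from ``each term is $<\varepsilon$'' to ``each term is $<\varepsilon\,(t_i-t_{i-1})/(b-a)$''. Condition (V2) is purely qualitative: it says that
$$\sup\bigl\{\|V(\tau,[x,y])-V(\tau,[\tau,y])V(\tau,[x,\tau])\|:\ \tau-\sigma<x\le\tau\le y<\tau+\sigma\bigr\}\longrightarrow 0\quad\text{as }\sigma\to 0+,$$
but it gives no rate. To bound the $i$-th term by $\varepsilon\,(t_i-t_{i-1})/(b-a)$ you would need the discrepancy to be $O(y-x)$ as $y-x\to 0$, i.e.\ a Lipschitz/differentiability-type estimate at $\tau$; if the supremum above behaves like $\sqrt{\sigma}$ or $1/\log(1/\sigma)$, then no choice of gauge makes your per-term bound true, and a sum of $m$ terms each merely $<\varepsilon$ is useless because $m$ is unbounded. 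This is the same obstruction that prevents one from deducing a Saks--Henstock-type summed estimate from a pointwise limit in ordinary Henstock--Kurzweil integration. Your closing remark that only (V2) is invoked should have been a warning: the conclusion is a summed (Henstock-type) smallness condition, and no purely pointwise hypothesis can deliver it.

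The missing idea is to use the hypothesis of \emph{strong} Kurzweil product integrability, which is itself a summed condition and is where the control over $m$ comes from. Let $W(t)=\prod_a^t V(s,{\rm d}s)$ and choose the gauge $\delta$ from Definition \ref{strongDef} so that $\sum_i\|V(\xi_i,[t_{i-1},t_i])-W(t_i)W(t_{i-1})^{-1}\|<\varepsilon$ for every $\delta$-fine tagged partition. The partition $D^*$ obtained by splitting each $[t_{i-1},t_i]$ at its tag $\xi_i$ is again $\delta$-fine, so one also gets
$$\sum_{i}\|V(\xi_i,[t_{i-1},\xi_i])-W(\xi_i)W(t_{i-1})^{-1}\|+\sum_i\|V(\xi_i,[\xi_i,t_i])-W(t_i)W(\xi_i)^{-1}\|<\varepsilon.$$
Now insert $W(t_i)W(\xi_i)^{-1}\,W(\xi_i)W(t_{i-1})^{-1}$ between $V(\xi_i,[t_{i-1},t_i])$ and $V(\xi_i,[\xi_i,t_i])V(\xi_i,[t_{i-1},\xi_i])$, and use the boundedness of $W$ and $W^{-1}$ together with the bound $\|V(\xi,[x,\xi])\|\le K$ from Lemma \ref{Vestimate}; this yields $\sum_i\|V(\xi_i,[t_{i-1},t_i])-\tilde V(\xi_i,[t_{i-1},t_i])\|<(1+M^2+K)\varepsilon$. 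Condition (V2) is in fact not needed in the estimate at all.
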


\begin{proof}
Consider the indefinite product integral $W(t)=\prod_a^t V(s,{\rm d}s)$, $t\in[a,b]$.
Both $W$ and $W^{-1}$ are bounded, i.e., there exists a constant $M$ such that 
	$\|W(\tau)\|\le M$ and $\|W(\tau)^{-1}\|\le M$ for all $\tau\in[a,b]$. 
	
Choose an arbitrary $\varepsilon>0$ and let $\delta:[a,b]\to\mathbb R^+$ be the corresponding gauge from Definition \ref{strongDef} such that if $D=(\xi_i,[t_{i-1},t_i])_{i=1}^m$ is a $\delta$-fine tagged partition of $[a,b]$, then
$$
\sum_{i=1}^m\|V(\xi_i,[t_{i-1},t_i])-W(t_i)W(t_{i-1})^{-1}\|<\varepsilon
$$
The partition $D^*$ that is obtained from $D$ by splitting each interval $[t_{i-1},t_i]$ at $\xi_i$ is also $\delta$-fine, and therefore
$$\sum_{i=1}^m\|V(\xi_i,[t_{i-1},\xi_i])-W(\xi_i)W(t_{i-1})^{-1}\|+\sum_{i=1}^m\|V(\xi_i,[\xi_i,t_i])-W(t_i)W(\xi_i)^{-1}\|<\varepsilon.$$
			
Without loss of generality, assume that $\delta\le\delta_1$ on $[a,b]$, where $\delta_1$ is the gauge from Lemma \ref{Vestimate}; let $K$ be the constant from this lemma. Then, for each $\delta$-fine tagged partition of $[a,b]$, we obtain the estimate
		$$\begin{aligned}
&\sum_{i=1}^m\|V(\xi_i,[t_{i-1},t_i])-\tilde V(\xi_i,[t_{i-1},t_i])\|
		=\sum_{i=1}^m\|V(\xi_i,[t_{i-1},t_i])-V(\xi_i,[\xi_i,t_i])V(\xi_i,[t_{i-1},\xi_i])\|\\
		\le&\sum_{i=1}^m\|V(\xi_i,[t_{i-1},t_i])-W(t_i)W(\xi_i)^{-1}W(\xi_i)W(t_{i-1})^{-1}\|\\
		+&\sum_{i=1}^m\|W(t_i)W(\xi_i)^{-1}W(\xi_i)W(t_{i-1})^{-1}-V(\xi_i,[\xi_i,t_i])V(\xi_i,[t_{i-1},\xi_i])\|\\
		<\varepsilon+&\sum_{i=1}^m\|W(t_i)\|\cdot\|W(\xi_i)^{-1}\|\cdot\|W(\xi_i)W(t_{i-1})^{-1}-V(\xi_i,[t_{i-1},\xi_i])\|\\
		+&\sum_{i=1}^m\|W(t_i)W(\xi_i)^{-1}-V(\xi_i,[\xi_i,t_i])\|\cdot\|V(\xi_i,[t_{i-1},\xi_i])\|<\varepsilon+(M^2+K)\varepsilon,
		\end{aligned}$$
		which proves that $V$ and $\tilde V$ are equivalent.
\end{proof}

\begin{remark}
The previous theorem can be rephrased as follows: If  $V:[a,b]\times \mathcal I \to E$ is strongly Kurzweil product integrable and satisfies conditions (V1)--(V4), then for each $\varepsilon>0$, there is a gauge $\delta:[a,b]\to\mathbb R^+$ such that
$$\sum_{i=1}^m\|V(\xi_i,[t_{i-1},t_i])-V(\xi_i,[\xi_i,t_i])V(\xi_i,[t_{i-1},\xi_i])\|<\varepsilon$$
for every $\delta$-fine partition of $[a,b]$. Note that this statement represents a stronger version of condition~(V2).
\end{remark}

The next result can be regarded as a converse to Theorem \ref{GODEsolution}.

\begin{theorem}\label{solvability}
Consider a function $V:[a,b]\times \mathcal I \to E$ and assume that the following conditions are satisfied:
\begin{enumerate}
\item $V(\xi,[\xi,\xi])=I$ for each $\xi\in[a,b]$.
\item For each $\tau\in(a,b]$, $V(\tau,[x,\tau])$ is invertible whenever $x$ is sufficiently close to $\tau$.
\item The generalized differential equation \eqref{linearGODE},
	where $U$ is given by \eqref{Udef}, has a strong Kurzweil-Henstock solution $W:[a,b]\to E$ such that $W(t)^{-1}$ exists for all $t\in[a,b]$, and both $W$ and $W^{-1}$ are bounded.
\item $V$ is equivalent to the function $\tilde V$ given by \eqref{tildeV}.
\item There exist a constant $K\ge 1$ and a gauge $\delta_1:[a,b]\to\mathbb R^+$ such that for each $\xi\in[a,b]$, we have $\|V(\xi,[x,\xi])\|\le K$ if $x\in(\xi-\delta_1(\xi),\xi]$, and $\|V(\xi,[\xi,y])\|\le K$ if $y\in[\xi,\xi+\delta_1(\xi))$.
\end{enumerate}
Then $V$ is strongly Kurzweil product integrable and $\prod_a^b V(t,{\rm d}t)=W(b)W(a)^{-1}$.
\end{theorem}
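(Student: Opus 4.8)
The plan is to deduce the statement first for the auxiliary function $\tilde V$ of \eqref{tildeV} and then transfer it to $V$ via hypothesis~4, using the fact recorded right after the definition of equivalence that two equivalent functions are strongly Kurzweil product integrable simultaneously and with a common value. Thus it suffices to prove that $\tilde V$ is strongly Kurzweil product integrable with $\prod_a^b\tilde V(t,{\rm d}t)=W(b)W(a)^{-1}$, where $W$ is the solution supplied by hypothesis~3. For a tagged partition $D=(\xi_i,[t_{i-1},t_i])_{i=1}^m$ I would write $a_i=V(\xi_i,[\xi_i,t_i])$ and $b_i=V(\xi_i,[t_{i-1},\xi_i])$, so that $\tilde V(\xi_i,[t_{i-1},t_i])=a_ib_i$; by \eqref{Udef} together with hypothesis~1 one has $a_i=U(\xi_i,t_i)$ and $b_i=U(\xi_i,t_{i-1})^{-1}$, and hypothesis~2 guarantees that these objects are well defined as soon as $\delta$ is small enough.

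Next I would fix $\varepsilon>0$ and invoke Lemma~\ref{GODE-equivalence} to obtain a gauge $\delta$, which we may take to satisfy $\delta\le\delta_1$ (the gauge of hypothesis~5), such that the defining sum of Eq.~\eqref{linearGODE} is $<\varepsilon$ for every $\delta$-fine tagged partition. Applying this to the partition $D^{*}$ obtained from $D$ by splitting each $[t_{i-1},t_i]$ at $\xi_i$ (again $\delta$-fine; degenerate pieces $[\xi_i,\xi_i]$ being discarded exactly as in the proof of Theorem~\ref{GODEsolution}) and using $U(\xi_i,\xi_i)=I$, each pair of summands simplifies and the inequality splits into
$$\sum_{i=1}^m\|b_i^{-1}W(\xi_i)-W(t_{i-1})\|<\varepsilon,\qquad \sum_{i=1}^m\|a_iW(\xi_i)-W(t_i)\|<\varepsilon.$$
Left-multiplying $b_i^{-1}W(\xi_i)-W(t_{i-1})$ by $b_i$ and using $\|b_i\|\le K$ (hypothesis~5) turns the first bound into $\sum_{i=1}^m\|W(\xi_i)-b_iW(t_{i-1})\|\le K\varepsilon$.

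The final step rests on the algebraic identity
$$a_ib_i-W(t_i)W(t_{i-1})^{-1}=a_i\bigl(b_iW(t_{i-1})-W(\xi_i)\bigr)W(t_{i-1})^{-1}+\bigl(a_iW(\xi_i)-W(t_i)\bigr)W(t_{i-1})^{-1}.$$
Taking norms, using $\|a_i\|\le K$ (hypothesis~5), $\|W(t_{i-1})^{-1}\|\le M$ (boundedness of $W^{-1}$), and summing over $i$, I would get $\sum_{i=1}^m\|\tilde V(\xi_i,[t_{i-1},t_i])-W(t_i)W(t_{i-1})^{-1}\|\le M(K^2+1)\varepsilon$ for every $\delta$-fine tagged partition of $[a,b]$. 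Since $\varepsilon>0$ is arbitrary and $W$, $W^{-1}$ are bounded, this is precisely strong Kurzweil product integrability of $\tilde V$ with value $W(b)W(a)^{-1}$, and hypothesis~4 carries the conclusion over to $V$. I do not expect a genuinely hard step here; the only points that need care are the verification that the split partition $D^{*}$ is an admissible $\delta$-fine partition (already handled in the proof of Theorem~\ref{GODEsolution}) and keeping the roles of $V(\xi,[\xi,\cdot])$ versus $V(\xi,[\cdot,\xi])$ — and of their inverses — straight when passing between $\tilde V$ and $U$.
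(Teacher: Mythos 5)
Your proposal is correct and follows essentially the same route as the paper's proof: reduce to $\tilde V$ via hypothesis~4, use the split partition $D^{*}$ together with $U(\xi,\xi)=I$ to extract the two sums $\sum_i\|a_iW(\xi_i)-W(t_i)\|<\varepsilon$ and $\sum_i\|b_i^{-1}W(\xi_i)-W(t_{i-1})\|<\varepsilon$, and then bound $\sum_i\|a_ib_i-W(t_i)W(t_{i-1})^{-1}\|$ by a telescoping identity using $\|a_i\|,\|b_i\|\le K$ and $\|W^{-1}\|\le M$. The only difference is cosmetic (you pre-multiply by $b_i$ before assembling the identity, the paper factors $b_i$ out inside it), and your constant $M(K^2+1)\varepsilon$ is in fact slightly more careful than the paper's stated $K^2M\varepsilon$.
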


\begin{proof}
According to the second condition, the definition of $U$ is meaningful. By the third condition, there exists a constant $M$ such that 
	$\|W(\tau)\|\le M$ and $\|W(\tau)^{-1}\|\le M$ for all $\tau\in[a,b]$.

	Choose an arbitrary $\varepsilon>0$. 
Since $W$ is a strong Kurzweil-Henstock solution of Eq.~\eqref{linearGODE}, there exists a gauge $\delta:[a,b]\to\mathbb R^+$ such that if $D=(\xi_i,[t_{i-1},t_i])_{i=1}^m$ is a $\delta$-fine tagged partition of $[a,b]$, then
$$\sum_{i=1}^m\|W(t_i)-W(t_{i-1})-(U(\xi_i,t_i)-U(\xi_i,t_{i-1}))W(\xi_i)\|<\varepsilon.$$
Without loss of generality, assume that $\delta\le\delta_1$ on $[a,b]$.

The partition $D^*$ that is obtained from $D$ by splitting each interval $[t_{i-1},t_i]$ at $\xi_i$ is also $\delta$-fine, and therefore
	$$\sum_{i=1}^m\|W(t_i)-W(\xi_i)-(U(\xi_i,t_i)-U(\xi_i,\xi_i))W(\xi_i)\|+\sum_{i=1}^m\|W(\xi_i)-W(t_{i-1})-(U(\xi_i,\xi_i)-U(\xi_i,t_{i-1}))W(\xi_i)\|<\varepsilon.$$ 
Using the definition of $U$ and the fact that $U(\xi,\xi)=V(\xi,[\xi,\xi])=I$, the last estimate can be simplified to
	$$\sum_{i=1}^m\|W(t_i)-V(\xi_i,[\xi_i,t_i])W(\xi_i)\|+\sum_{i=1}^m\|V(\xi_i,[t_{i-1},\xi_i])^{-1}W(\xi_i)-W(t_{i-1})\|<\varepsilon.$$

Consequently, we obtain 
	$$\sum_{i=1}^m\|\tilde V(\xi_i,[t_{i-1},t_i])-W(t_i)W(t_{i-1})^{-1}\|=\sum_{i=1}^m\|V(\xi_i,[\xi_i,t_i])V(\xi_i,[t_{i-1},\xi_i])-W(t_i)W(t_{i-1})^{-1}\|$$
	$$\begin{aligned}
\le&\sum_{i=1}^m\|V(\xi_i,[\xi_i,t_i])V(\xi_i,[t_{i-1},\xi_i])-V(\xi_i,[\xi_i,t_i])W(\xi_i)W(t_{i-1})^{-1}\|\\
	+&\sum_{i=1}^m\|V(\xi_i,[\xi_i,t_i])W(\xi_i)W(t_{i-1})^{-1}-W(t_i)W(t_{i-1})^{-1}\|\\ 
	\le&\sum_{i=1}^m\|V(\xi_i,[\xi_i,t_i])\|\cdot\|V(\xi_i,[t_{i-1},\xi_i])\|\cdot\|W(t_{i-1})-V(\xi_i,[t_{i-1},\xi_i])^{-1}W(\xi_i)\|\cdot\|W(t_{i-1})^{-1}\|\\
	+&\sum_{i=1}^m\|V(\xi_i,[\xi_i,t_i])W(\xi_i)-W(t_i)\|\cdot\|W(t_{i-1})^{-1}\|<K^2M\varepsilon,
	\end{aligned}$$
	which means that $\tilde V$ is strongly Kurzweil product integrable and $\prod_a^b \tilde V(t,{\rm d}t)=W(b)W(a)^{-1}$. Since $V$ is equivalent to $\tilde V$, it is also strongly Kurzweil product integrable and $\prod_a^b V(t,{\rm d}t)=W(b)W(a)^{-1}$.
\end{proof}

\begin{remark}
The reader can verify that the previous theorem remains true if the conditions 1, 2, 5 are replaced by the assumption that $V$ satisfies conditions (V1)--(V4), where $V_-$ and $V_+$ are bounded on $[a,b]$.
\end{remark}

We now focus on the simpler situation when the function $U$ in Eq.~\eqref{linearGODE} does not depend on $\tau$, i.e., the equation has the form
\begin{equation}\label{simpleGODE}
x(t)=x(a)+\int_a^t D_s[A(s)]x(s),\quad t\in[a,b],
\end{equation}
for a certain function $A:[a,b]\to E$.
In this case, the strong Kurzweil-Henstock integral $\int_a^t D_s[A(s)]x(s)$ on the right-hand side is simply the strong Kurzweil-Stieltjes integral, which is  usually denoted by $\int_a^t {\mathrm d}[A(s)]x(s)$. The next result might be regarded as an infinite-dimensional counterpart to Theorem 2.15 from \cite{Sch90}. In contrast to \cite{Sch90}, we do not assume that $A$ has bounded variation.

\begin{theorem}\label{GODEsolution2}
	Assume that $A:[a,b]\to E$ is a regulated function such that $I+\Delta^+A(t)$ is invertible for all $t\in[a,b)$, and $I-\Delta^-A(t)$ is invertible for all $t\in(a,b]$. 
	Let $V:[a,b]\times \mathcal I \to E$ be given by 
	\begin{equation}\label{Vdef}
	V(\xi,[x,y])=(I+A(y)-A(\xi))(I+A(x)-A(\xi))^{-1}.
	\end{equation}
	Then the following statements are equivalent:
	\begin{enumerate}
	\item $V$ is strongly Kurzweil product integrable.
	\item The linear generalized differential equation \eqref{simpleGODE} has a strong Kurzweil-Henstock solution $W:[a,b]\to E$ such that $W(t)^{-1}$ exists for all $t\in[a,b]$, and both $W$ and $W^{-1}$ are bounded.
	\end{enumerate}
\end{theorem}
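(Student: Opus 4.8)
The plan is to verify that the function $V$ defined by \eqref{Vdef} satisfies conditions (V1)--(V4) with $V_-$, $V_+$ bounded, and then to apply Theorem \ref{GODEsolution} (for the direction $1\Rightarrow 2$) together with Theorem \ref{solvability}, as modified by the final remark, for the direction $2\Rightarrow 1$. First I would check (V1): from \eqref{Vdef}, $V(\xi,[\xi,\xi])=(I+A(\xi)-A(\xi))(I+A(\xi)-A(\xi))^{-1}=I$. Next I would observe that for $y$ close to $\xi$ the factor $I+A(y)-A(\xi)$ is close to $I$ (regularity of $A$ from the appropriate side, using that $A$ is regulated), hence invertible, so $V$ is well defined near the diagonal and bounded there by some constant $K$; the invertibility hypotheses on $I+\Delta^+A(t)$ and $I-\Delta^-A(t)$ guarantee that $V_+(\xi)=\lim_{y\to\xi+}V(\xi,[\xi,y])=(I+\Delta^+A(\xi))$ and $V_-(\xi)=\lim_{x\to\xi-}V(\xi,[x,\xi])=(I-\Delta^-A(\xi))^{-1}$ exist and are invertible, which gives (V3), (V4); boundedness of $V_\pm$ on $[a,b]$ follows because a regulated function has only finitely many jumps exceeding any given size. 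For (V2), a direct computation shows
$$V(\xi,[\xi,y])V(\xi,[x,\xi])=(I+A(y)-A(\xi))\bigl(I+A(x)-A(\xi)\bigr)^{-1}=V(\xi,[x,y]),$$
so condition (V2) holds with error exactly $0$.

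The key point is that with this choice of $V$, the function $U$ from \eqref{Udef} reduces to the $\tau$-independent quantity appearing in Eq.~\eqref{simpleGODE}. Indeed, for $t\ge\xi$ we have $U(\xi,t)=V(\xi,[\xi,t])=(I+A(t)-A(\xi))$, and for $t<\xi$ we have $U(\xi,t)=V(\xi,[t,\xi])^{-1}=(I+A(t)-A(\xi))$ as well, by the same cancellation as above. Hence $U(\xi,t)-U(\xi,\tau)=A(t)-A(\tau)$ is independent of $\xi$, and the generalized differential equation \eqref{linearGODE} with this $U$ is literally Eq.~\eqref{simpleGODE}. (One should also note the harmless mismatch that $V$ in \eqref{Vdef} is not defined on all of $[a,b]\times\mathcal I$, but only near the diagonal; this is precisely the situation addressed by the paragraph following Lemma \ref{GODE-equivalence}, so it causes no difficulty.)

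With these observations, the direction $1\Rightarrow 2$ is immediate: if $V$ is strongly Kurzweil product integrable, Theorem \ref{GODEsolution} says that $W(t)=\prod_a^t V(s,\mathrm ds)$ is a strong Kurzweil-Henstock solution of \eqref{linearGODE}, which here is \eqref{simpleGODE}; moreover $W$ and $W^{-1}$ are bounded and $W(t)^{-1}$ exists for all $t$ by the definition of the strong product integral. For $2\Rightarrow 1$ I would invoke Theorem \ref{solvability} in the form indicated in its concluding remark, namely replacing its hypotheses 1, 2, 5 by the statement that $V$ satisfies (V1)--(V4) with $V_\pm$ bounded --- which we have just verified --- and replacing hypothesis 4 (equivalence of $V$ and $\tilde V$) by the exact identity $V=\tilde V$ established in the computation above, which makes equivalence trivial. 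Hypothesis 3 of Theorem \ref{solvability} is exactly statement 2 of the present theorem. Theorem \ref{solvability} then yields that $V$ is strongly Kurzweil product integrable. The main obstacle I anticipate is bookkeeping rather than conceptual: one must be careful that the various invertibility conditions ($I+\Delta^+A(t)$ versus $I-\Delta^-A(t)$, with the sign reversal coming from the inverse in the $t<\xi$ branch of \eqref{Udef}) line up correctly so that $U(\xi,\cdot)$ is genuinely well defined near $\xi$ for every $\xi$, and that the boundedness of $V$ near the diagonal (hypothesis 5 of Theorem \ref{solvability}) is extracted uniformly --- this uses that $A$ is regulated and hence bounded with uniformly controllable one-sided oscillation.
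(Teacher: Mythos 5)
Your proposal is correct and follows essentially the same route as the paper: the exact identity $V(\xi,[\xi,y])V(\xi,[x,\xi])=V(\xi,[x,y])$, the reduction of $U$ from \eqref{Udef} to the $\tau$-independent form $I+A(t)-A(\tau)$, Theorem \ref{GODEsolution} for $1\Rightarrow 2$, and Theorem \ref{solvability} for $2\Rightarrow 1$. The only cosmetic difference is that the paper verifies hypothesis 5 of Theorem \ref{solvability} directly (bounding $\|(I-\Delta^-A(t))^{-1}\|$ via a Neumann series off a finite exceptional set and then propagating to a neighbourhood of the diagonal) rather than citing the unproven remark about replacing hypotheses 1, 2, 5 by (V1)--(V4) with $V_\pm$ bounded, but the underlying estimate is exactly the one you sketch.
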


\begin{proof}
Using \eqref{Vdef}, we find that the definition of $U$ given in \eqref{Udef} reduces to $U(\tau,t)=I+A(t)-A(\tau)$ for all pairs~$\tau,t$. Thus
$$\sum_{i=1}^m\|W(t_i)-W(t_{i-1})-(U(\xi_i,t_i)-U(\xi_i,t_{i-1}))W(\xi_i)\|=\sum_{i=1}^m\|W(t_i)-W(t_{i-1})-(A(t_i)-A(t_{i-1}))W(\xi_i)\|,
$$
which means that $W$ is a strong Kurzweil-Henstock solution of Eq.~\eqref{linearGODE} if and only if $W$ is a strong Kurzweil-Henstock solution of Eq.~\eqref{simpleGODE}.

Using the assumptions on $A$, one can easily check that the function $V$ given by \eqref{Vdef} satisfies conditions (V1)--(V4); in particular, condition (V2) is satisfied because
\begin{equation}\label{V2}
V(\xi,[\xi,y])V(\xi,[x,\xi])=(I+A(y)-A(\xi))(I+A(\xi)-A(x))^{-1}=V(\xi,[x,y]).
\end{equation}

Let us prove the implication $1\Rightarrow2$.
According to Theorem~\ref{GODEsolution}, the indefinite product integral $W$ is a~strong Kurzweil-Henstock solution of Eq.~\eqref{linearGODE}, and therefore also a strong solution of Eq.~\eqref{simpleGODE}. Using the properties of the indefinite product integral, it is obvious that $W(t)^{-1}$ exists for all $t\in[a,b]$, and both $W$ and $W^{-1}$ are bounded.

It remains to prove the implication $2\Rightarrow 1$. We already know that the solution $W$ of Eq.~\eqref{simpleGODE} is also a solution of Eq.~\eqref{linearGODE}.
Consider the function $\tilde V$ given by \eqref{tildeV}. It follows from Eq.~\eqref{V2} that $\tilde V$ and $V$ are identical, and therefore equivalent. Since $A$ is regulated, there exists a constant $M$ such that $\|A(t)\|\le M$ for all $t\in[a,b]$. It follows that 
$$\|I+\Delta^+A(t)\|=\|I+A(t+)-A(t)\|\le 1+2M.$$
There exist only finitely many points $t_1,\ldots,t_l\in(a,b]$ such that $\|\Delta^-A(t)\|\ge\frac{1}{2}$. For each $t\in(a,b]\setminus\{t_1,\ldots,t_l\}$, we have
$(I-\Delta^-A(t))^{-1}=\sum_{k=0}^\infty (\Delta^-A(t))^k$, and therefore
$$\|(I-\Delta^-A(t))^{-1}\|\le\sum_{k=0}^\infty \|\Delta^-A(t)\|^k<\sum_{k=0}^\infty \frac{1}{2^k}=2.$$
If we let 
$L=\max(2,\|(I-\Delta^-A(t_1))^{-1}\|,\ldots,\|(I-\Delta^-A(t_l))^{-1}\|)$,
it follows that 
$$\|(I-\Delta^-A(t))^{-1}\|\le L,\quad t\in(a,b].$$

For each $\xi\in[a,b]$, there exists a number $\delta_1(\xi)>0$ such that
$$\|I+A(y)-A(\xi)-(I+\Delta^+A(\xi))\|<1,\quad y\in(\xi,\xi+\delta_1(\xi)),$$
$$\|(I-(A(\xi)-A(x)))^{-1}-(I-\Delta^-A(\xi))^{-1}\|<1,\quad x\in(\xi-\delta_1(\xi),\xi).$$
It follows that
$$\|I+A(y)-A(\xi)\|\le \|I+A(y)-A(\xi)-(I+\Delta^+A(\xi))\|+\|I+\Delta^+A(\xi)\|<2+2M,$$
$$\|(I-(A(\xi)-A(x)))^{-1}\|\le \|(I-(A(\xi)-A(x)))^{-1}-(I-\Delta^-A(\xi))^{-1}\|+\|(I-\Delta^-A(\xi))^{-1}\|<1+L.$$
Hence, if we denote $K=\max(2+2M,1+L)$, we obtain
$$\|V(\xi,[\xi,y])\|=\|I+A(y)-A(\xi)\|\le K,\quad y\in[\xi,\xi+\delta_1(\xi)),$$
$$\|V(\xi,[x,\xi])^{-1}\|=\|(I-(A(\xi)-A(x)))^{-1}\|\le K,\quad x\in(\xi-\delta_1(\xi),\xi].$$
The previous considerations imply that the five conditions of Theorem \ref{solvability} are satisfied, and therefore $V$ is strongly Kurzweil product integrable.
\end{proof}

\section{Open problems}

We conclude our paper with several open problems:
\begin{itemize}
\item  It is known that already in the finite-dimensional Banach algebra $E=\mathbb R^{2\times 2}$,  Kurzweil product integrability and Henstock-Kurzweil integrability of a function $A:[a,b]\to E$ do not imply each other (see \cite[Example 4.7]{JK}). However, the existing examples are not quite elementary. 
Is there a step mapping with well-ordered steps which is Kurzweil product integrable but not Henstock-Kurzweil integrable (or vice versa)? Equivalently, is there a well-ordered set $\Lambda\subset[a,b]$ and a family $(x_\alpha)_{\alpha\in\Lambda}$ such that $(\exp x_\alpha)_{\alpha\in\Lambda^{<b}}$ is multipliable and its product is invertible, but $( x_\alpha)_{\alpha\in\Lambda^{<b}}$ is not summable (or vice versa)?
\item Is there an example of a mapping  with well-ordered steps which is Kurzweil-Stieltjes product integrable but not Riemann-Stieltjes product integrable?
\item If $A:[a,b]\to E$ satisfies the assumptions of Theorem \ref{L31}, is it true that $A$ is strongly Kurzweil-Stieltjes product integrable?
\item According to Theorem \ref{K-right-regulated}, the Kurzweil product integral and its strong counterpart are equivalent for all functions $A:[a,b]\to E$ with countably many discontinuities. For Kurzweil-Stieltjes product integrals, we have a~similar  
Theorem \ref{T31} dealing with a~certain class of step functions with well-ordered steps. Is it possible to extend this statement to a~wider class of functions? More generally, are there some simple conditions on a function $V:[a,b]\times \mathcal I \to E$ guaranteeing the equivalence of the ordinary and strong product integral $\prod_a^b V(t,{\rm d}t)$?
\end{itemize}

\section*{Acknowledgement}
We are indebted to an anonymous referee for comments which led to a substantial improvement of the manuscript. In particular, the referee suggested to include the material of Section 8, outlined the proof of Theorem \ref{GODEsolution}, and simplified our original version of Example \ref{Ex33}.

\footnotesize

\end{document}